\documentclass[10pt]{amsart}

\usepackage[top=1in,bottom=1in,left=1in,right=1in,heightrounded]{geometry}
\usepackage{setspace}
\usepackage[parfill]{parskip}
\usepackage{xspace}
\allowdisplaybreaks
\usepackage{quiver}
\usepackage{circledsteps}
\usepackage{longtable}
\usepackage{amsmath,amsthm,amssymb} 
\usepackage{mathtools}
\usepackage{mathrsfs}
\usepackage{stmaryrd}
\usepackage{thmtools}
\usepackage{thm-restate}
\usepackage{comment}
\usepackage{tikz}
\usepackage{tikz-cd}
\usetikzlibrary{decorations.pathmorphing,decorations.markings,patterns,hobby,arrows.meta}
\tikzset{>={Straight Barb[scale=0.75]}}
\tikzcdset{arrow style=tikz,
           diagrams={>=Straight Barb}
           }
\usepackage{pgfplots}
    \pgfplotsset{compat=1.18}
\usepackage{graphicx}
\usepackage{array}
\usepackage{makecell}

\usepackage{diagbox}
\usepackage{enumitem}
\usepackage{caption} 
\usepackage{float}
\usepackage{subcaption}
\usepackage{fancyhdr}
\usepackage{xcolor,colortbl}
\usepackage{soul}
\definecolor{cb-brown}{RGB}{106,74,60}
\definecolor{cb-blue}{RGB}{15,101,161}
\usepackage{hyperref}
\usepackage{url}
\hypersetup{colorlinks,citecolor=blue,filecolor=blue,linkcolor=blue,urlcolor=blue}
\usepackage[capitalise,noabbrev]{cleveref}
\usepackage[numbers]{natbib}
\usepackage{lmodern}
\usepackage{mathpazo}
\usepackage[cal=boondox,scr=boondox]{mathalfa}
\AtBeginDocument{
  \DeclareSymbolFont{AMSb}{U}{msb}{m}{n}
  \DeclareSymbolFontAlphabet{\mathbb}{AMSb}}
\DeclareMathAlphabet{\mathbx}{U}{BOONDOX-ds}{m}{n}
\SetMathAlphabet{\mathbx}{bold}{U}{BOONDOX-ds}{b}{n}
\DeclareMathAlphabet{\mathbbx}{U}{BOONDOX-ds}{b}{n}
\SetMathAlphabet{\mathcal}{bold}{U}{dutchcal}{b}{n}
\DeclareMathAlphabet{\mathbcal}{U}{dutchcal}{b}{n}
\newtheorem{theorem}{Theorem}[section]
\newtheorem{lemma}[theorem]{Lemma}
\newtheorem{corollary}[theorem]{Corollary}
\newtheorem{proposition}[theorem]{Proposition}
\newtheorem*{theproblem*}{Problem Statement}

\theoremstyle{definition}
\newtheorem{example}[theorem]{Example}
\newtheorem{remark}[theorem]{Remark}
\newtheorem{definition}[theorem]{Definition}
\newtheorem{question}[theorem]{Question}
\newcommand{\zz}{\mathbx Z}   
\newcommand{\qq}{\mathbx Q}   
\newcommand{\ff}{\mathbx F}   
\newcommand{\kk}{\mathbx k}   
\newcommand{\M}{\mathcal M}   
\renewcommand{\aa}{\mathcal A}   
\newcommand{\bt}{\mathrm{BT}_1}

\newcommand{\word}[1]{\textnormal{\textbf{\texttt{#1}}}}
\newcommand{\id}{\operatorname{id}}          
\newcommand{\im}{\operatorname{im}}          
\newcommand{\set}[1]{\left\{#1\right\}}     
\newcommand{\setp}[2]{\left\{#1\ \middle|\ #2\right\}} 
\newcommand{\Dd}{Dieudonn\'{e}\xspace}
\newcommand{\xM}{\mathbx{M}} 
\newcommand{\xW}{\mathbx{W}} 
\newcommand{\xe}{\mathbx{e}}
\newcommand{\xF}{\mathbx{F}}
\newcommand{\xV}{\mathbx{V}}
\newcommand{\TL}{\mathrm{TL}}
\newcommand{\NP}{\mathrm{NP}}
\newcommand{\STC}{\mathsf{STC}}
\newcommand{\xx}{\mathbx{x}}
\newcommand{\xy}{\mathbx{y}}

\newcommand{\frob}{\mathrm{\mathsf{Frob}}}
\newcommand{\ind}{\mathsf{Ind}}
\newcommand{\uni}{\mathsf{Uni}}
\newcommand{\bi}{\mathsf{Bi}}
\newcommand{\serre}{\textnormal{Serre}}
\newcommand{\odd}{\textnormal{odd}}
\newcommand{\even}{\textnormal{even}}
\newcommand{\none}{\cellcolor{gray!40}}

\renewcommand{\epsilon}{\varepsilon}
\renewcommand{\phi}{\varphi}
\renewcommand{\emptyset}{\varnothing}
\renewcommand{\geq}{\geqslant}
\renewcommand{\leq}{\leqslant}

\renewcommand{\preceq}{\preccurlyeq}

\numberwithin{equation}{section}

\colorlet{deewangcolor}{cyan!50}

\colorlet{emeraldcolor}{blue!30}

\colorlet{heidicolor}{magenta!50}

\colorlet{miacolor}{yellow!90}

\colorlet{sandracolor}{green!23!yellow}

\colorlet{stevencolor}{orange!55}

\title{On the classification of indecomposable Ekedahl--Oort strata in unitary Shimura varieties, and related Newton polygons}
\subjclass[2020]{11G18, 14G35, 11G10, 14L15}

\author{Emerald Andrews} 
\address{Department of Mathematics and Computer Science, Washington College, Chestertown, Maryland 21620, USA}
\email{emerald.andrews@washcoll.edu}

\author{Deewang Bhamidipati} 
\address{Department of Mathematics and Statistics, Carleton College, Northfield, MN 55057, USA} 
\email{bdeewang@carleton.edu}

\author{Maria Fox} 
\address{Department of Mathematics, Oklahoma State University, Stillwater, OK 74078, USA}
\email{maria.fox@okstate.edu}

\author{Heidi Goodson} 
\address{Department of Mathematics, Brooklyn College, City University of New York, Brooklyn, NY 11210 USA}
\email{heidi.goodson@brooklyn.cuny.edu}

\author{\\ Steven R. Groen} 
\address{Korteweg-de Vries Institute for Mathematics, University of Amsterdam, Amsterdam, Netherlands}
\email{s.r.groen@uva.nl}

\author{Sandra Nair} 
\address{Department of Mathematics, Colorado State University, Fort Collins, CO 80523, USA}
\email{sandra.nair@colostate.edu}

\begin{document}

\begin{abstract}
In this paper, we give a complete classification of indecomposable Ekedahl--Oort strata of Shimura varieties associated to the unitary group $\mathsf{GU}(a, b)$ over an odd inert prime. We show that each indecomposable stratum is one of four types: unitary unicycle, unitary bicycle, Serre unicycle, or Serre bicycle; the latter two types are named for a tensor construction of abelian varieties developed by Serre. We provide an algorithm that translates the description of a stratum in terms of words in the alphabet $\{\word{f},\word{v}\}$ to the corresponding Weyl group coset representative. Finally, using a $p$-adic lift, we construct a `tautological' point in each Ekedahl--Oort stratum, and compute its Newton polygon. As an application, we show that the indecomposable Ekedahl--Oort strata corresponding to unitary unicycles and Serre unicycles always intersect the supersingular locus. 
\end{abstract}

\maketitle

\section{Introduction}

{
Let $A$ be an abelian variety of dimension $q$ defined over $\kk$, an algebraically closed field of characteristic $p > 0$. In this positive characteristic setting, an interesting invariant is its $p$-torsion group subscheme $A[p]$, which is a finite commutative $\kk$-group scheme of rank $p^{2q}$. Such group schemes may be studied using linear algebraic methods via Dieudonn{\'e} theory, which associates to a finite commutative $\kk$-group scheme annihilated by $p$ a finite-dimensional $\kk$-vector space equipped with operators $F$ and $V$ (Frobenius and Verschiebung), called a mod-$p$ \Dd{} module. 

In an unpublished manuscript \cite{KraftpGruppen}, Kraft establishes a structure theorem for mod-$p$ \Dd{} modules by describing \emph{indecomposable} mod-$p$ \Dd{} modules, those that cannot be decomposed as a direct sum of (lower-dimensional) mod-$p$ \Dd{} modules. This allows for an elegant description of the (finitely many) possible isomorphism types of mod-$p$ \Dd{} modules of a fixed $\kk$-dimension.

The mod-$p$ \Dd{} modules that arise from $p$-torsion group schemes of abelian varieties possess a constraint in terms of the operators $F$ and $V$; they therefore belong to a sub-class of mod-$p$ \Dd{} modules called $\bt$ modules. Beginning with the work of Ekedahl and Oort \cite{OortStrat}, isomorphism types of $\bt$ modules have been used to establish a stratification, called the Ekedahl--Oort stratification, of characteristic $p$ fibers of Shimura varieties of PEL type, which have a moduli interpretation as moduli spaces of abelian varieties over $\kk$ with additional structure. 

The additional structure on an abelian variety parameterized by a Shimura varieties of PEL type imparts analogous additional structure on the associated $\bt$ modules. For example, in the case of the characteristic $p$ Siegel modular variety $\aa_q$, which parameterizes polarized abelian varieties over $\kk$ of dimension $q$, the associated $\bt$ modules admit a symplectic form and are called \emph{polarized} $\bt$ modules. 

In this paper, we are interested in the $\bt$ modules arising from characteristic $p$ fibers of unitary Shimura varieties for $p>2$. The characteristic $p$ unitary Shimura variety $\M(a,b)$ is the moduli space of $(a+b)$-dimensional abelian varieties over $\kk$ with a prime-to-$p$ polarization, an action by an order in an imaginary quadratic field wherein $p$ is inert, and level structure. The action satisfies a ``signature $(a,b)$'' condition. The associated $\bt$ modules admit additional structure and are named \emph{unitary} $\bt$ modules.

One of the main results of the present paper is the classification of \emph{indecomposable} unitary $\bt$ modules in arbitrary signature  (see \cref{thm: structure of unitary BT1s}). 
Geometrically, an indecomposable unitary $\bt$ module corresponds to an Ekedahl--Oort stratum of a unitary Shimura variety $\M(a,b)$ which is disjoint from the images of the \emph{product maps}
\[\M(a',b') \times \M(a - a',b - b') \to \M(a,b),\]
from unitary Shimura varieties of lower signatures, given by taking the product of the parameterized abelian varieties and their additional structure.

For the case of unitary Shimura varieties with signature $(q-1,1)$, \cite{bultel2006congruence} gives a complete description of the structure of indecomposable Ekedahl--Oort strata, using mod-$p$ Dieudonn\'e modules and a ``braid construction''. This is further expanded in \cite{VollaardWedhorn}. In \cite[Section 4]{EOstrata1}, we give a way to generate all decomposable Ekedahl--Oort strata of unitary Shimura varieties with signature $(q-2,2)$; we identify decomposable strata by exhibiting that they are in the images of product maps from unitary Shimura varieties of lower signature.

In this paper, we use the classical construction of Kraft diagrams \cite{KraftpGruppen} and words in the letters $\set{\text{\texttt{f}, \texttt{v}}}$ to classify the indecomposable unitary $\bt$ modules. As a starting point, we expand on results of \cite{OortStrat, PriesUlmerBT1Fermat} through a study of indecomposable polarized $\bt$ modules (see \cref{sec: words and indecomposable bt1 modules}). In \cref{sec:unitary bt1 modules}, we turn our attention to unitary $\bt$ modules and show that there are four indecomposable types: unitary unicycles and bicycles (named for the number of ``wheels'' in the Kraft diagrams) and Serre unicycles and bicycles (named for their relation to the Serre tensor construction for abelian varieties). Moreover, every unitary $\bt$ module decomposes uniquely as a direct sum of these indecomposable types. In \cref{appendix:indecomp-count}, we count how many of each of these  indecomposable types occur for a given signature.

Results of \cite{Moonengsas} establish a correspondence between Ekedahl--Oort strata of unitary Shimura varieties and cosets in certain Weyl groups. One advantage of this description is that the topological ordering between Ekedahl--Oort strata based on closure relations is captured directly by the partial ordering first developed by Bruhat and later generalized by He (see, for example, \cite{EOstrata1,bjorner2006combinatorics, HeOrder2007,PinkWedhornZiegler2011, ViehmannWedhorn2013,wedhorn2005specialization, wooding_2016}). 
In this present work, we give a method for going from words 
in the letters $\set{\text{\texttt{f}, \texttt{v}}}$
to Weyl group coset representatives expressed using the index set defined in \cite[Section 3]{EOstrata1}. This process is described in \cref{sec: word Weyl}.

Another invariant of interest for an abelian variety $A$ over $\kk$ is its $p$-divisible group $A[p^\infty]$. The isogeny-types of $p$-divisible groups yield a different stratification of Shimura varieties of PEL type, called the Newton stratification. In addition to studying the Ekedahl--Oort stratification by itself, one may also study how it interacts with the Newton stratification. More specifically, one can ask when a given Ekedahl--Oort stratum and a given Newton stratum intersect.

This is studied for general PEL type Shimura varieties in \cite{ViehmannWedhorn2013}, but more can be said if one restricts to a smaller class of Shimura varieties. Such interactions between the stratifications are well-known for Shimura varieties of Coxeter type \cite{goertzhe}. This includes unitary Shimura varieties in signature $(q,0)$ and  $(q-1,1)$ \cite{bultel2006congruence,VollaardWedhorn}, as well as the unitary Shimura variety in signature $(2,2)$ \cite{goertzhe,ghn-coxeter, howard2014supersingular}. In these cases, every Newton stratum is a union of Ekedahl--Oort strata.
Beyond these signatures, the situation is more nuanced since the varieties are no longer of Coxeter type, making it challenging to explicitly describe the intersection behavior.

The two stratifications for signature $(q-2,2)$ unitary Shimura varieties were studied in \cite{EOstrata1,shimada2024supersingular}, with the techniques in \cite{shimada2024supersingular} primarily focusing on the interaction of the supersingular locus, the unique closed stratum  of the Newton stratification, with the Ekedahl--Oort stratification. Studying Newton strata beyond the supersingular locus provides additional challenges, but has recently been done in some specific cases. For example, complete descriptions of the interactions between the two stratifications were given for the unitary Shimura variety with signature $(3,2)$ \cite{EOstrata2} and the Siegel modular variety $\aa_5$ \cite{GroenLupoianParker2026}, which both parameterize certain abelian varieties of dimension $5$. 

In \cref{sec: Tautological lifts and Newton strata} of this paper, we expand on the literature through a study of the interaction between the Ekedahl--Oort and Newton stratifications of unitary Shimura varieties in arbitrary signature. Using an extension of the tautological lift construction developed in \cite{Oort05simple}, we construct a point in each Ekedahl--Oort stratum and compute its Newton polygon (see \cref{thm: intersection}), implying that the given Ekedahl--Oort stratum and Newton stratum intersect. As a special case, we show that Ekedahl--Oort strata that decompose into unitary unicycles and Serre unicycles always intersect the supersingular locus of the Newton stratification (see \cref{corollary:balancedssintersection1,corollary:balancedssintersection2}).
}

\section*{Acknowledgments}

This collaboration was supported by the American Institute of Mathematics (AIM) SQuaRE program. The authors thank AIM and the NSF for their support through this valuable program. 

The authors thank Haley Covington for translating and typesetting the Kraft manuscript \cite{KraftpGruppen}. Her work was funded by the Toll Summer Research Program at Washington College.

H.G. was supported by NSF grant DMS-2201085 and the Max Planck Institute for Mathematics (MPIM) in Bonn, Germany. She is grateful to MPIM for its hospitality and financial support. S.R.G. was supported by grant VI.Vidi.223.028 of the Dutch Research Council (NWO).

\section{Preliminaries}

Throughout this paper, $\kk$ is an algebraically closed field of prime characteristic $p>2$. We study invariants associated with abelian varieties parameterized by certain Shimura varieties. 

\subsection{Shimura varieties} 
Let $K$ be an imaginary quadratic field such that $p$ is inert in $K$, with residue field $\ff_{p^2}$ at $p$.  The unitary Shimura variety $\M(a,b)$ arises as the characteristic $p$ fiber of an integral unitary Shimura variety associated to a PEL datum, constructed by Kottwitz \cite{Ko}; see \cite{vollaard} or \cite[Section 2.1]{EOstrata1} for a detailed definition. It is a quasi-projective variety over $\ff_{p^2}$ of dimension $ab$.

The $\kk$-points of $\M(a,b)$ correspond to isomorphism classes of tuples $(A, \lambda, \iota, \xi)$, where 
\begin{enumerate}[label=$\bullet$]
    \item $A$ is an abelian variety over $\kk$ of dimension $a+b$;
    \item $\lambda: A \overset{\sim}{\to} A^\vee$ is a prime-to-$p$ polarization;
    \item $\iota: \mathcal{O}_K \hookrightarrow \mathrm{End}(A)$ is an action of signature $(a,b)$ satisfying the Rosati involution condition; and
    \item $\xi:A[N](\kk) \to (\zz /N\zz)^{2(a+b)}$ is a level structure on $A$ for some $N$ coprime to $p$;
\end{enumerate}
subject to some compatibility conditions.

The characteristic $p$ {Siegel modular variety}, denoted by $\aa_q$, is a quasi-projective variety over $\ff_p$ of dimension $q(q+1)/{2}$. Its $\kk$-points correspond to isomorphism classes of $(A, \lambda, \xi)$, where 
\begin{enumerate}[label=$\bullet$]
    \item $A$ is an abelian variety over $\kk$ of dimension $q$;
    \item $\lambda: A \overset{\sim}{\to} A^\vee$ is a prime-to-$p$ polarization; and
    \item $\xi:A[N](\kk) \to (\zz /N\zz)^{2q}$ is a level structure on $A$ for some $N$ coprime to $p$,
\end{enumerate}
subject to some compatibility conditions.

\subsection{The Ekedahl--Oort and Newton stratification}\label{subsec:eo-newton-def}
An effective method of studying unitary Shimura varieties and Siegel modular varieties or, equivalently, the abelian varieties they parameterize, is via the \emph{Ekedahl--Oort} and  \emph{Newton stratifications} of these Shimura varieties. We briefly give the moduli description of these stratifications for the unitary Shimura variety $\M(a,b)$. These can be analogously described for the Siegel modular variety $\aa_q$. 

The Ekedahl--Oort stratification is based on the isomorphism class of the $p$-torsion group scheme of the parameterized abelian varieties. Two $\kk$-points $(A, \lambda, \iota, \xi)$ and $(A', \lambda', \iota', \xi')$ of $\M(a,b)$ are in the same \emph{Ekedahl--Oort stratum} if and only if the $p$-torsion group schemes equipped with induced action and polarization, $(A[p], \lambda, \iota)$ and $(A'[p], \lambda', \iota')$, are isomorphic as group schemes over  $\kk$. The Ekedahl--Oort strata are locally closed, and the closure of each Ekedahl--Oort stratum is a union of Ekedahl--Oort strata. For more details on Ekedahl--Oort stratifications, see \cite{EOstrata1,ViehmannWedhorn2013}.

The Newton stratification is based on the isogeny class of the $p$-divisible group of the parameterized abelian varieties. Two $\kk$-points $(A, \lambda, \iota, \xi)$ and $(A', \lambda', \iota', \xi')$ of $\M(a,b)$ are in the same \emph{Newton stratum} if and only if the $p$-divisible groups equipped with induced action and polarization, $(A[p^\infty], \lambda, \iota)$ and $(A'[p^\infty], \lambda', \iota')$, are isogenous over $\kk$. The Newton strata are also locally closed, and the closure of each Newton stratum is a union of Newton strata. The unique closed Newton stratum of a Shimura variety is called the \emph{basic locus}; in the case of $\M(a,b)$ this is the supersingular locus, which we denote as $\M(a,b)^{\textnormal{ss}}$. A $\kk$-point $(A, \lambda, \iota, \xi)$  of $\M(a,b)$ lies in $\M(a,b)^{\textnormal{ss}}$ if and only if $A$ is a supersingular abelian variety.

\subsection{\Dd{} Theory}\label{sec:preliminary DD theory}
The Ekedahl--Oort and Newton stratifications are defined in terms of finite commutative $p$-torsion group schemes and $p$-divisible groups. There is an equivalence between these objects and certain modules over $\xW \coloneqq \xW(\kk)$, the ring of Witt vectors of $\kk$, by \Dd{} theory. We briefly recall this equivalence (more detail can be found in \cite{DemazureGroups1972}).

Define the (non-commutative) \emph{\Dd{} ring} to be the following quotient of the free $\xW$-algebra generated by letters $\xF$ and $\xV$:
\[\xW\set{\xF,\xV} \coloneqq \xW\langle \xF,\xV\rangle /\langle \xF a - a^\sigma \xF,\, \xV a^\sigma - a \xV, \xF\xV - p, \xV\xF - p\ :\ a \in \xW\rangle,\]
where $\sigma = \xW(\frob)$ is the lift of the Frobenius automorphism $\frob\colon x \mapsto x^p$ on $\kk$ to $\xW$.

A \Dd{} module over $\kk$ is a left $\xW\set{\xF,\xV}$-module which is finitely generated as a $\xW$-module. There is a canonical categorical anti-equivalence, called the \emph{\Dd{} equivalence} and denoted $\mathbx{D}$, from the category of finite commutative $p$-torsion group schemes (resp. $p$-divisible groups) over $\kk$ to the category of \Dd{} modules over $\kk$ that are of finite $\xW$-length and annihilated by $p$ (resp. free and of finite rank as $\xW$-modules). Note that this is the statement of \emph{contravariant \Dd{} theory}; in this we differ here from \cite{DemazureGroups1972}, which discusses \emph{covariant \Dd{} theory}. 

In practice, it is useful to work with the following equivalent descriptions of the objects in the essential image(s) of $\mathbx{D}$.

\begin{definition} \label{def: p-adic DD}
A \emph{$p$-adic \Dd module} over $\kk$ is a triple $\overline{\xM} = (\xM ,\xF, \xV)$ such that
\begin{enumerate}
    \item $\xM$ is a free $\xW$-module with finite rank;
    \item $\xF: \xM \to \xM$ is a $\sigma$-linear map;
    \item $\xV: \xM \to \xM$ is a $\sigma^{-1}$-linear map; and
    \item $\xF \xV = \xV \xF = p$.
\end{enumerate}
A $p$-adic \Dd{} module over $\kk$ is equivalent to a \Dd{} module over $\kk$ that is free and of finite rank as a $\xW$-module. Therefore, $p$-adic \Dd{} modules are precisely those $\xW$-modules that arise from $p$-divisible groups.
\end{definition}

Reducing the \Dd{} ring modulo $p$, one obtains the $\kk$-algebra $\kk\set{F,V} = \xW\set{\xF,\xV}/p\xW\set{\xF,\xV}$; here $F$ and $V$ are the images of $\xF$ and $\xV$ in the quotient respectively. \Dd{} modules over $\kk$ that are annihilated by $p$ are canonically modules over $\kk\set{F,V}$. 

\begin{definition}
    A \emph{mod-$p$ \Dd{} module} over $\kk$ is a triple $(M,F,V)$ such that
\begin{enumerate}
    \item $M$ is a finite-dimensional $\kk$-vector space;
    \item $F:M \to M$ is a $\frob$-linear map;
    \item $V: M \to M$ is a $\frob^{-1}$-linear map; and
    \item $FV = VF = 0$. 
\end{enumerate}
A mod-$p$ \Dd{} module over $\kk$ is equivalent to a \Dd{} module over $\kk$ which is of finite $\xW$-length and annihilated by $p$. Therefore, mod-$p$ \Dd{} modules are precisely those $\xW$-modules that arise from finite commutative $p$-torsion group schemes.
\end{definition}

$\bt$ group schemes are those finite commutative $p$-torsion group schemes that arise as $p$-kernels of $p$-divisible groups. The name ``$\bt$'' stands for Barsotti-Tate groups truncated at level 1, where ``Barsotti-Tate groups'' is another name for $p$-divisible groups. Under the \Dd{} equivalence, the $\bt$ group schemes carve out the full subcategory of {$\bt$ modules} in the category of mod-$p$ \Dd{} modules.

\begin{definition} \label{def: BT1}
A \emph{$\bt$ module} over $\kk$ is a triple $\overline{M} = (M,F,V)$ such that
\begin{enumerate}
    \item $(M,F,V)$ is a mod-$p$ \Dd{} module; and
    \item $\ker (F)=\im(V)$ and $\ker(V)=\im(F)$.
\end{enumerate}
\end{definition}

Via the \Dd{} equivalence, $\bt$ modules are precisely the reductions modulo $p$ of $p$-adic \Dd{} modules. 
In the case of an abelian variety $A$ over $\kk$, its $p$-torsion group scheme $A[p]$ is the $p$-kernel of its $p$-divisible group $A[p^\infty]$. Therefore $\mathbx{D}(A[p])$ is a $\bt$ module. Hence the Ekedahl--Oort stratification is determined by isomorphism types of $\bt$ modules.

The additional structures on the abelian varieties parameterized by the unitary Shimura variety and the Siegel modular variety impart additional structures on the associated $p$-adic \Dd{} modules and thereby on the $\bt$ modules, as their reductions modulo $p$.

The $\bt$ modules equipped with additional structure arising from the Siegel modular variety are called \emph{polarized} $\bt$ modules; these are discussed in \cref{subsec:pol-bt1}. Those arising from the unitary Shimura variety are called \emph{unitary} $\bt$ modules. These modules, explored in \cref{subsec:unitary bt1 modules}, will be our primary objects of study.  In \cref{subsec:unitary-p-adic-mod}, we work with unitary $p$-adic \Dd{} modules and investigate certain lifts of unitary $\bt$ modules to \emph{unitary} $p$-adic \Dd{} modules, which are $p$-adic \Dd{} modules with additional structure arising from the unitary Shimura variety.

\subsection{Weyl group cosets}\label{sec:background Weyl} 
Moonen \cite{Moonengsas} relates Ekedahl--Oort strata of Shimura varieties of PEL type to cosets in certain Weyl groups. The Weyl group that is relevant for the study of $\M(a,b)$ is $\mathfrak{S}_q$, the symmetric group on $q = a+b$ elements. We consider $\mathfrak{S}_q$ as a Coxeter group with a set of \emph{simple reflections}
$S = \{s_1, \dots, s_{q-1} \}, \text{where }s_i = (i, \ i+1 ).$

Let $\mathbf{W}_{(a,b)}$ denote the subgroup of $\mathfrak{S}_q$ generated by the subset $\{s_1, ..., s_{q-1} \} \setminus \{s_b\}$ of $S$, and let $\mathbf{W}(a,b)$ denote the set of minimal-length coset representatives of $\mathbf{W}_{(a,b)}\setminus \mathfrak{S}_q$. The following theorem is a paraphrasing of \cite[Theorem~6.7]{Moonengsas}:

\begin{theorem}[Moonen]\label{thm:Moonenbij}
There is a bijection of sets:
\[\set{\text{Ekedahl--Oort Strata of $\M(a,b)$}} \longleftrightarrow \mathbf{W}(a,b).\]
\end{theorem}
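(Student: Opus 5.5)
The plan is to factor the bijection in \cref{thm:Moonenbij} through the classification of unitary $\bt$ modules, following Moonen's general strategy for PEL-type data.

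\textbf{Step 1: from strata to $\bt$ modules.} By the definition in \cref{subsec:eo-newton-def}, Ekedahl--Oort strata of $\M(a,b)$ are in bijection with isomorphism classes of $(A[p],\lambda,\iota)$. Under the \Dd{} equivalence these become isomorphism classes of unitary $\bt$ modules of signature $(a,b)$. For this step to yield a bijection onto all such classes, we must also know that every unitary $\bt$ module of signature $(a,b)$ actually occurs as the $p$-torsion of some point. I would obtain this from the smoothness of $\M(a,b)$ together with the standard lifting and deformation argument: any unitary $\bt$ module lifts to a unitary $p$-adic \Dd{} module, hence to a polarized $p$-divisible group with $\mathcal O_K$-action, and Serre--Tate theory then places it at a point of the moduli space.

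\textbf{Step 2: splitting the module.} Because $p$ is inert, $\mathcal O_K\otimes\kk\cong\kk\times\kk$, so $M=M_1\oplus M_2$. Here $F$ and $V$ interchange the two factors, and the polarization pairs $M_1$ with $M_2$. The signature condition gives $\dim M_1/FM_2=a$ and $\dim M_2/FM_1=b$, in the order fixed by the convention. Consequently the whole structure is determined by $M_1$ together with a flag-type datum.

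\textbf{Step 3: canonical filtration and coset.} Next, I would build Moonen's canonical filtration. Starting from $0\subset M_1$, one repeatedly applies $F$ and $V^{-1}$ across both components until the filtration stabilizes. One then chooses a basis adapted to this filtration, which records a permutation in $\mathfrak{S}_q$. Here $q=\dim M_1$ and the filtration is refined to a complete flag via the duality. Changing the choice within the Hodge filtration of fixed dimension $b$ changes the permutation by left multiplication by $\mathbf W_{(a,b)}$. Taking the minimal-length representative therefore yields a well-defined map to $\mathbf{W}(a,b)$. Injectivity follows because the canonical type reconstructs the module up to isomorphism: an adapted basis with the prescribed $F,V$ relations is unique up to automorphism, as in Kraft/Oort for the polarized case. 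Surjectivity follows by writing down, for each $w$, an explicit standard module $M_w$ with $F$ and $V^{-1}$ permuting basis vectors according to $w$, and then checking the $\bt$, polarization and signature conditions.

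\textbf{Main obstacle.} The hardest step is showing that the canonical type is a complete invariant. The key point is that the $\frob$-semilinear structure can be normalized over algebraically closed $\kk$ by solving Lang-type equations $x^{p^2}=cx$. I would handle this by following Moonen's argument, \cite[Theorem~6.7]{Moonengsas}, essentially verbatim: invoke his general classification of $\bt$ modules with given PEL-type structure via ${}^J W$, and specialize the Weyl group computation to $\mathsf{GU}(a,b)$. In that specialization, the relevant $W$ is $\mathfrak S_q$ and the parabolic subgroup is $\mathbf W_{(a,b)}$.
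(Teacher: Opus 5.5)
Your Steps~2--3 and your ``main obstacle'' paragraph come down to invoking \cite[Theorem~6.7]{Moonengsas} and specializing it to $\mathsf{GU}(a,b)$. That is all the paper does as well: it states \cref{thm:Moonenbij} as a paraphrase of Moonen and gives no independent proof. Your description of the canonical filtration is slightly off. It is generated from $0\subset M$, not $0\subset M_1$, by applying $F$ and $V^{-1}$, and only afterwards is it intersected with one eigenspace, as in \cref{sec: Weyl intro}. Since you defer to Moonen's argument anyway, this is harmless.

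The genuine gap is in Step~1. Moonen's classification gives a bijection between isomorphism classes of unitary $\bt$ modules of signature $(a,b)$ and $\mathbf{W}(a,b)$. To get a bijection with Ekedahl--Oort strata, every such class must occur at a $\kk$-point of $\M(a,b)$, and you rightly flag this. Your first moves are fine: a unitary $\bt$ module lifts to a unitary $p$-adic \Dd{} module (this is exactly \cref{prop: TL(M) is lift}), and hence to a unitary $p$-divisible group $(X,\lambda_X,\iota_X)$. But ``Serre--Tate theory then places it at a point'' does not work. Serre--Tate identifies deformations of a \emph{given} abelian variety $A$ over nilpotent thickenings of $\kk$ with deformations of $A[p^\infty]$. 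So the fibre over $\kk$, and with it $A[p]$, is fixed from the start and can never be moved to a prescribed $X$. Smoothness of $\M(a,b)$ adds only that the local deformation rings are power series rings. Realizing a prescribed unitary $p$-divisible group (or even a prescribed $\bt$) at a point is a global non-emptiness statement. It needs a genuinely new input, for instance \cite[Theorem 1.6 (2)]{ViehmannWedhorn2013}, which the paper itself invokes in the proof of \cref{thm: intersection}. An alternative is a point in the minimal (closed) Ekedahl--Oort stratum together with openness of the morphism from $\M(a,b)$ to the stack of $G$-zips. If you replace the Serre--Tate sentence by \cite[Theorem 1.6 (2)]{ViehmannWedhorn2013} applied to the tautological lift, Step~1 closes, and Moonen's classification then yields the bijection.
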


We expand on Weyl group cosets associated to unitary Shimura varieties in \cref{sec: Weyl}. Such a correspondence also exists for Ekedahl--Oort strata of Siegel modular varieties $\aa_q$, where the Weyl group is a certain subgroup of $\mathfrak{S}_{2q}$. C.f. \cite[Section 3]{Moonengsas} or \cite[Section 5]{EOstrata1} for details. 

\section{Words and Decomposing \  (Polarized) \texorpdfstring{${\bt}$}{} Modules}\label{sec: words and indecomposable bt1 modules}

In this section, we recall the classical description of $\bt$ modules in terms of words in the alphabet $\{\word{f},\word{v}\}$. The main results are \cref{prop: Oort} and \cref{cor:Oort}, which summarize Oort's results \cite{OortStrat} on the decomposition of \emph{polarized} $\bt$ modules. See also \cite[Section 3]{PriesUlmerBT1Fermat} for an excellent exposition. In \cref{subsec:unitary bt1 modules} we will prove analogues of these results for \emph{unitary} $\bt$ modules.

\subsection{Words and \texorpdfstring{$\boldsymbol{\bt}$}{} modules}

In the manuscript \cite{KraftpGruppen}, Kraft proves a structure theorem for $\bt$ modules. Let $w=u_{\ell-1} \cdots u_0$ be a word in the alphabet $\{\word{f},\word{v}\}$ where $\ell \coloneqq \ell(w)$ is the \emph{length} of $w$.
The following construction associates a $\bt$ module of $\kk$-dimension $\ell$ to the word $w$. Let $M$ be the $\kk$-vector space spanned by $\{a_0, \ldots ,a_{\ell-1}\}$. The actions of $F$ and $V$ are defined on the basis $\{a_0, \ldots, a_{\ell-1}\}$ as follows: 
\[F(a_j) = \begin{cases} a_{j+1} &\hbox{if $u_j=\word{f}$,} \\ 0 &\hbox{if $u_j=\word{v}$,} \end{cases} \hspace{1cm} \text{and} \hspace{1cm}
    V(a_{j+1}) = \begin{cases} a_{j} &\hbox{if $u_j=\word{v}$,} \\ 0 &\hbox{if $u_j=\word{f}$.} \end{cases} \]

When $j=\ell-1$, it is understood that $a_{j+1}$ denotes $a_0$. The actions of $F$ and $V$ are extended $\frob$-linearly and $\frob^{-1}$-linearly respectively. This defines a $\bt$ module $\overline{M}(w)$.

This construction gives rise to a directed graph called a \emph{Kraft diagram}. The vertices correspond to the basis elements $\{a_0, \ldots , a_{\ell-1}\}$ and the edges between $a_j$ and $a_{j+1}$ represent either $F(a_j)=a_{j+1}$ or $V(a_{j+1})=a_j$. The letter $u_j$ of the word $w=u_{\ell-1} \cdots u_0$ determines the choice between these two possibilities. This is visualized in \cref{fig:cyclicKraft}.
\begin{figure}[H]
\begin{tikzcd}
& a_0 \arrow[r, "u_0", no head]      & a_1 \arrow[rd, "u_1", no head, bend left] &                                           \\
a_{\ell-1} \arrow[rd, "u_{\ell-2}"', no head, bend right] \arrow[ru, "u_{\ell-1}", no head, bend left] &                                    &                                           & a_2 \arrow[ld, "u_2", no head, bend left] \\
                & a_{\ell-2} \arrow[r, no head, dotted] & a_3  & 
\end{tikzcd}
\caption{A Kraft diagram}\label{fig:cyclicKraft}
\end{figure}

\begin{example}
Consider the word $w=\word{vfff}$. The Kraft diagram of $\overline{M}(\word{vfff})$ is given by
$$
\begin{tikzcd}
a_0 \arrow[r, "F",bend left] \arrow[d, "V"',bend right]                              & a_1 \arrow[d, "F",bend left] \\
a_3 & a_2 \arrow[l, "F"', bend left]
\end{tikzcd}
$$
Note that $u_3=\word{v}$ indicates that $V(a_0) = a_3$ and $F(a_3)=0$. Similarly, for $i\in \{0,1,2\}$, we have $u_i = \word{f},$ indicating that $ F(a_i)=a_{i+1}$ and $V(a_{i+1})=0$.
\end{example}

Let $\tau$ be the rotation operator, i.e. $\tau(u_{\ell-1} \cdots u_0) = u_0 u_{\ell-1} \cdots u_1$. Then it is clear that, for any $n$, we have $\overline{M}(\tau^n(w)) \cong \overline{M}(w)$ by relabeling the basis vectors. Conversely, $\overline{M}(w) \cong \overline{M}(w')$ implies that $w' = \tau^n(w)$ for some $n$.

For a word $w$ and an integer $n>0$, we denote by $w^n$ the word that is obtained by concatenating $n$ copies of $w$. With this setup we have $\overline{M}(w^n) \cong \overline{M}(w)^n$. A word $w$ is called \emph{primitive} if it is not of the form $z^n$ for some word $z$ and $n>1$. Equivalently, $w$ is primitive if its rotation orbit has maximal size, i.e., the rotation orbit has size equaling $\ell(w)$. 

\begin{remark} \label{rmk: M((w)^n) = M(w)^n}
For later purposes, it is useful to make the isomorphism $\overline{M}(w^n) \cong \overline{M}(w)^n$ explicit. A precise description of the correspondence relies on the fact that $F$ and $V$ are semilinear operators. Let $w=u_{\ell-1} \cdots u_0$ and let $\overline{M}(w^n)$ be spanned by $\{a_0, \ldots, a_{n\ell-1}\}$, so that, for every $0 \leq j \leq \ell-1$ and $0\leq i \leq n-1$, we have 
\[F(a_{j+i\ell}) = \begin{cases} a_{j+1+i\ell} &\hbox{if $u_j=\word{f}$,} \\ 0 &\hbox{if $u_j=\word{v}$,} \end{cases} \hspace{2cm}
    V(a_{j+1+i\ell}) = \begin{cases} a_{j+i\ell} &\hbox{if $u_j=\word{v}$,} \\ 0 &\hbox{if $u_j=\word{f}$.} \end{cases} \]
Now let $\alpha_1, \ldots , \alpha_n$ be elements of  $\ff_{p^{n\ell}}$ that are linearly independent over $\ff_p$. 
We define the following change of basis: for $1\leq k \leq n$ and $0\leq j \leq \ell-1$, set
\[ b_{k,j}  \coloneqq  \sum_{i=0}^{n-1} \alpha_k^{p^{j+i\ell}} a_{j+i\ell}.\]
We claim that, for each $k$, the module spanned by $\{b_{k,0}, \ldots ,b_{k,\ell-1}\}$ is isomorphic to $\overline{M}(w)$. Indeed, we have
\begin{align*}
    F(b_{k,j})&= \sum_{i=0}^{n-1} \alpha_k^{p^{j+1+i\ell}} a_{j+1+i\ell}= b_{k,j+1} &\hbox{if $u_j=\word{f}$,} \\
    V(b_{k,j+1})&= \sum_{i=0}^{n-1} \alpha_k^{p^{j+i\ell}} a_{j+i\ell}=b_{k,j} &\hbox{if $u_j=\word{v}$},
\end{align*}
and both equal 0 otherwise.
Crucially, we have $\alpha_k^{p^{n\ell}}=\alpha_k$ since $\alpha_k \in \ff_{p^{n\ell}}$, which is needed to ensure $F(b_{k,\ell-1})=F(b_{k,0})$ for every $k$. Finally, the fact that $b_{k,i}$ are linearly independent is implied by the condition that $\alpha_1, \ldots ,\alpha_n$ are linearly independent over $\ff_p$ (see \cite[page 196]{MooreMatrix}).  This isomorphism is unique up to the choice of $\alpha_1, \ldots, \alpha_n$.

An isomorphism in the opposite direction can be obtained in a similar fashion. We may pick $\beta_1, \ldots, \beta_n \in \ff_{p^{n\ell}}$ that are linearly independent over $\ff_p$ and define
\[a_{j+i\ell}= \sum_{k=1}^{n} \beta_k^{j+i\ell} b_{k,j}.\]
\end{remark}

\begin{proposition}[Kraft, {\cite[Section 5]{KraftpGruppen}}]\label{prop: Kraft}
Any $\bt$ module $\overline{M}$ decomposes as follows:
\begin{equation*}
    \overline{M}=\bigoplus_{w \in W} \overline{M}(w).
\end{equation*}
Here $W$ is a multiset of primitive words in the alphabet $\{\word{f,v}\}$. This decomposition is unique up to rotating the words and permuting the factors.
\end{proposition}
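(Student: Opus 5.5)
We would prove the existence of the decomposition in \cref{prop: Kraft} by induction on $\dim_\kk M$, using the semilinear-algebra fact that, over the algebraically closed field $\kk$, a $\frob$-linear map has a canonical splitting into a bijective part and a nilpotent part. The bijective part has a basis of fixed vectors. The same holds for $\frob^{-1}$-linear maps.

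\textbf{Existence.} The plan has three steps.
\begin{enumerate}
\item Work with the canonical filtration. Start from $M$ and repeatedly apply $F$ and $V^{-1}$ to subspaces: set $F(N)$ and $V^{-1}(N)=\{x : V x \in N\}$. This produces a finite filtration $0=M_0\subset M_1\subset\cdots\subset M_r=M$ that is stable under both operations. Because $\ker F=\im V$ and $\ker V=\im F$, each of $F$ and $V^{-1}$ sends a jump $M_{i}/M_{i-1}$ either bijectively (as a semilinear map) onto another jump or to zero, in a well-defined manner.
\item Form the combinatorial data. This data is a permutation on the set of (one-dimensional after refinement) jumps. Each cycle of the permutation is labeled by a periodic word in $\{\word{f},\word{v}\}$, read off according to whether the step was via $F$ or via $V^{-1}$.
\item Lift the cycles. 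For a cycle of length $m$ with word $w^n$, $w$ primitive, the composite map around the cycle is a $\frob^{e}$-semilinear bijection of a graded piece, where $e=\#\word{f}-\#\word{v}$ along the cycle. Lang's theorem (or the Dieudonné--Manin fixed-vector lemma for $e\neq 0$, and invertibility of $\frob^{-1}$ twists for $e=0$) gives a basis of the piece. We then lift this basis through the filtration using exactness $\ker F=\im V$ to choose compatible representatives. This yields a submodule isomorphic to $\overline{M}(w^n)\cong\overline{M}(w)^n$, the latter by \cref{rmk: M((w)^n) = M(w)^n}.
\end{enumerate}
Finally, we show that the sum of these submodules is direct and fills $M$. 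Directness follows by comparing dimensions with the graded pieces.

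\textbf{Uniqueness.} A cleaner route is to reduce to isomorphism invariants. For each word $z$, the dimension of the subspace of $M$ obtained by applying the operators spelled by $z$, namely $z(M)$ with $\word{f}\mapsto F$ and $\word{v}\mapsto V^{-1}$, is additive under direct sums. For $\overline{M}(w)$ this dimension is computed combinatorially: it counts the positions of the cyclic word $w$ at which $z$ is a suffix-compatible reading. Rotation classes of primitive words are determined by these counts, by a triangularity argument ordering words by length. The multiset $W$ is therefore recovered from the function $z\mapsto\dim z(M)$, up to rotation.

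\textbf{Main obstacle.} The hardest step is step 3 of the existence argument: making the lifted basis vectors simultaneously compatible with $F$ and $V$ all the way around each cycle, rather than only modulo the lower filtration step. We would first try to choose the lifts inductively along the cycle and correct the final discrepancy by solving a semilinear equation $x - \phi(x)=c$. Such an equation is solvable over algebraically closed $\kk$ because maps of the form $x\mapsto x-\phi(x)$, with $\phi$ a Frobenius-semilinear map, are surjective (Lang).
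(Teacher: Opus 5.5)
The paper gives no proof of this proposition; it quotes it from Kraft. So the comparison is with the standard argument, which your outline follows: final filtration, a permutation of one-dimensional jumps labelled by \word{f}/\word{v}, normalization on graded pieces, then lifting. Your uniqueness invariants are the canonical-filtration dimensions that the paper itself uses, in lexicographic form, in the proof of \cref{thm: word Weyl}.

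\textbf{The gap: the semilinearity claim in step 3.} Since $V$ is $\frob^{-1}$-linear, its inverse on a jump where it is bijective is $\frob$-linear, because $V(\alpha^p x)=\alpha V(x)$. Hence the composite of $F$'s and $V^{-1}$'s around a cycle of length $m$ is $\frob^{m}$-linear, not $\frob^{e}$-linear with $e=\#\word{f}-\#\word{v}$.

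As written, your argument fails for balanced cycles, which include every self-dual word and hence every unicycle. In your model, with $e=0$, the monodromy on a line is $x\mapsto cx$. Replacing $x$ by $tx$ forces a rescaling of the whole cycle and changes $c$ by $t^{p^e-1}=1$. So $c$ would be an isomorphism invariant, and each balanced word would give a $\kk^\times$-family of non-isomorphic modules, contradicting the statement. Your remark about ``invertibility of $\frob^{-1}$ twists'' does not repair this.

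With the correct exponent, normalizing means solving $t^{p^m-1}=c^{-1}$, which is always possible. The same point is needed to choose monodromy-stable flags when you refine the canonical filtration. This semilinearity is exactly why the classification is discrete, whereas linear band modules over $\kk[x,y]/(xy)$ carry a parameter. Likewise, your final equation $x-\phi(x)=c$ is solvable only because $\phi$ is $\frob^{\pm m}$-semilinear with $m\geq 1$: use the Fitting decomposition, then Lang plus Artin--Schreier on the bijective part. For linear $\phi$, e.g.\ $\phi=\id$, it is not solvable in general.

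\textbf{Organizing the lifting.} Once corrected, the lifting goes through along your lines; write $C_\bullet$ for the final filtration.
\begin{enumerate}
\item Cycles may be lifted independently. Any family $Z_i\in C_i\setminus C_{i-1}$ is automatically a basis. Moreover $\ker F=\im V$ and $\ker V=\im F$ make the zero relations automatic once the nonzero ones hold exactly.
\item Within a cycle, lifts are not free step by step, since an \word{f}-edge forces the next vector. Choose lifts only at sources of the Kraft diagram (where $u_{j-1}=\word{v}$ and $u_j=\word{f}$). Propagate forward by $F$ and backward by $V$, and impose $F^{r}(x_s)=V^{t}(x_{s'})$ at each sink.
\item $F^r$ maps the filtration step below the jump of $s$ onto the step below the jump of the sink, and $V^t$ maps the step below the jump of $s'$ into it. Parametrize the corrections through a complement of $\ker F^r$ and substitute around the cycle. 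Everything reduces to a single equation $x-\Psi(x)=c$ with $\Psi$ $\frob^{-m}$-semilinear.
\end{enumerate}

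\textbf{Uniqueness.} The combinatorics is lexicographic, not ``suffix-compatible'': $a_j\in z(\overline{M}(w))$ if and only if $u_{j-1}u_{j-2}\cdots u_{j-\ell(z)}\preceq z$, with indices mod $\ell(w)$. Differences of $\dim z(M)$ over consecutive $z$ of a fixed length count the occurrences of each cyclic factor. By Fine--Wilf, a factor of length at least $2\dim_\kk M$ determines its primitive necklace. This recovers $W$ with multiplicities.
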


\begin{corollary} \label{cor: Kraft}
For any $\ell \in \zz_{\geq 0}$, the assignment $w \mapsto \overline{M}(w)$ constitutes a bijection from the set of $\tau$-orbits of primitive words of length $\ell$ to the set of isomorphism classes of indecomposable $\bt$ modules of dimension $\ell$ as a $\kk$-vector space. 
\end{corollary}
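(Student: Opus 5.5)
The plan is to read the corollary off \cref{prop: Kraft}, using the facts recorded just before it about rotation and powers of words. There are three things to check: the map is well defined on $\tau$-orbits, each $\overline{M}(w)$ with $w$ primitive is indecomposable of dimension $\ell$, and the map is surjective and injective.

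\emph{Well-definedness and dimension.} Since $\overline{M}(\tau^n(w)) \cong \overline{M}(w)$ by relabeling the basis, the isomorphism class of $\overline{M}(w)$ depends only on the $\tau$-orbit of $w$. By construction $\overline{M}(w)$ has $\kk$-basis $a_0,\dots,a_{\ell-1}$, so it has dimension $\ell(w)=\ell$.

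\emph{Indecomposability.} Let $w$ be primitive and suppose $\overline{M}(w)\cong \overline{M}_1\oplus\overline{M}_2$ with both summands nonzero. Apply \cref{prop: Kraft} to each summand. This writes $\overline{M}(w)$ as a direct sum over a multiset of primitive words with at least two members. On the other hand, $\overline{M}(w)$ is already its own Kraft decomposition, with multiset $\{w\}$. Uniqueness in \cref{prop: Kraft}, up to rotation and permutation, gives a contradiction. This gives a bijection between the right-hand set and the indecomposable $\bt$ modules of dimension $\ell$.

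\emph{Surjectivity and injectivity.} Surjectivity: if $\overline{M}$ is an indecomposable $\bt$ module of dimension $\ell$, its Kraft decomposition has exactly one summand $\overline{M}(w)$ with $w$ primitive, and comparing dimensions gives $\ell(w)=\ell$. Injectivity: if $\overline{M}(w)\cong\overline{M}(w')$ with $w,w'$ primitive, uniqueness in \cref{prop: Kraft} (equivalently, the converse statement recorded before \cref{rmk: M((w)^n) = M(w)^n}) gives $w'=\tau^n(w)$ for some $n$, so $w$ and $w'$ lie in the same $\tau$-orbit.

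The only delicate point is the uniqueness clause of \cref{prop: Kraft}, which is what carries both injectivity and indecomposability. Once that is assumed, the argument is bookkeeping.
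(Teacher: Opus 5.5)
Your proposal is correct and matches the paper's route: the paper states this corollary without separate proof, as an immediate consequence of \cref{prop: Kraft}, with existence of the decomposition giving surjectivity and the uniqueness clause giving both indecomposability of $\overline{M}(w)$ for primitive $w$ and injectivity on $\tau$-orbits. The only blemish is the sentence at the end of your indecomposability paragraph claiming a bijection, which is premature there (the bijection is only established in the next paragraph) and should be deleted.
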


\subsection{Polarized \texorpdfstring{$\boldsymbol{\bt}$}{} modules}\label{subsec:pol-bt1}

Our next step is to consider \emph{polarized} $\bt$ modules. These are $\bt$ modules that can occur as the \Dd module of the $p$-torsion group scheme of an abelian variety that admits a prime-to-$p$ polarization. For this reason, isomorphism classes of polarized $\bt$ modules of $\kk$-dimension $2q$ correspond to Ekedahl--Oort strata of $\aa_q$. In what follows, we work towards stating Oort's structure theorem (\cref{prop: Oort}) and include additional details that will be used later in the paper.

\begin{definition} \label{def:polBTmod}
A \emph{polarized $\bt$ module} is a quadruple $\widehat{M}=(M,F,V,\lambda)$ such that
\begin{enumerate} 
    \item $(M,F,V)$ is a $\bt$ module;
    \item $\lambda: M\times M \to k$ is a symplectic (bilinear, alternating, non-degenerate) pairing; and
    \item $\lambda( F(x), y ) = \lambda ( x,V(y) )^p$ for every $x,y \in M$. \label{property:FVcomp}
\end{enumerate}
\end{definition}

We say a $\bt$ module $(M,F,V)$ is \emph{polarizable} if there exists a polarized $\bt$ module $(M,F,V,\lambda)$. By \cite[Corollary 4.2]{PriesUlmerBT1Fermat}, the pairing $\lambda$ is unique, up to isomorphism, if it exists. Two polarized $\bt$ modules are isomorphic if and only if their underlying $\bt$ modules are isomorphic. It is a consequence of \Dd theory that a $\bt$ module $\overline{M}$ is polarizable if and only if it is isomorphic to its dual $\kk\set{F,V}$-module. 

Given a word $w=u_{\ell-1} \cdots u_0$, we define its \emph{dual word} $w^{\star}$ to be the word obtained from $w$ by changing every $\word{f}$ to $\word{v}$ and vice versa. A word $w$ is called \emph{self-dual} if $w^{\star}=\tau^n(w)$ for some $n$. 

For example, $(\word{ffffv})^\star =\word{vvvvf}$ , and $\word{fffvvv}$ is self-dual, since $(\word{fffvvv})^\star = \word{vvvfff}= \tau^3({\word{fffvvv}})$.

Since the dual module of $\overline{M}(w)$ is $\overline{M}(w^{\star})$ by \Dd theory, it follows that $\overline{M}(w) \cong \overline{M}(w^{\star})$ if and only if $w$ is self-dual. 

\subsubsection{Unicycles}

We now focus on $\bt$ modules $\overline{M}(w)$ that are self-dual (thus polarizable). This is equivalent to the word $w$ being self-dual. We begin by classifying what such words look like.

\begin{lemma} \label{lem: self-dual word}
Let $w$ be a self-dual word. Then $w=(yy^{\star})^n$ for some word $y$ such that $yy^{\star}$ is primitive, and for some integer $n>0$.
\end{lemma}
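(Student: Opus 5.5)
We first reduce to the primitive case. Write $w=z^n$ with $z$ primitive and $n>0$; such a decomposition exists and is unique, since $z$ is the primitive root of $w$. The dual operation commutes with concatenation and with rotation: $(z^n)^\star=(z^\star)^n$, and $\tau^m(z^n)=(\tau^{m}(z))^n$ because rotating $z^n$ by $m$ letters is the $n$-th power of $z$ rotated by $m$ letters (indices taken mod $\ell(z)$). So $w^\star=\tau^m(w)$ gives $(z^\star)^n=(\tau^m z)^n$. Comparing the first $\ell(z)$ letters yields $z^\star=\tau^m(z)$, so $z$ is self-dual. It therefore suffices to show that a primitive self-dual word $z$ has the form $yy^\star$.

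Now let $z$ be primitive of length $\ell$ with $z^\star=\tau^m(z)$, where $0\le m<\ell$. Dualizing twice returns the original word, and dualizing commutes with $\tau$. Hence
\[z=(z^\star)^\star=\tau^m(z^\star)=\tau^{2m}(z).\]
Primitivity means the rotation orbit of $z$ has size $\ell$, so $\tau^{2m}(z)=z$ forces $\ell\mid 2m$. We cannot have $m=0$, since $z^\star\neq z$ for any nonempty word (every letter changes). Thus $2m=\ell$, so $\ell$ is even and $z^\star=\tau^{\ell/2}(z)$.

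Write $z=y_1y_2$ with $\ell(y_1)=\ell(y_2)=\ell/2$. Then $\tau^{\ell/2}(z)=y_2y_1$, so $y_1^\star y_2^\star=y_2y_1$. Comparing halves gives $y_2=y_1^\star$. Setting $y=y_1$, we get $z=yy^\star$, which is primitive by assumption, and $w=(yy^\star)^n$ as claimed.

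The main obstacle is only bookkeeping: fixing the convention for $\tau$ (the paper rotates so that $\tau(u_{\ell-1}\cdots u_0)=u_0u_{\ell-1}\cdots u_1$) and confirming that $\tau^{\ell/2}$ swaps the two halves. This holds for either direction of rotation, since rotating by half the length in either direction gives the same word. The remaining point is the standard fact that a primitive word has trivial rotation stabilizer, which the paper records as the equivalent characterization of primitivity.
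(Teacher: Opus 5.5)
Your proof is correct and takes the same route as the paper: reduce to the primitive root $z$, deduce $\tau^{2m}(z)=z$ from $z^\star=\tau^m(z)$, use primitivity to force $2m=\ell(z)$, and split $z$ into halves. The differences are cosmetic. You get $\tau^{2m}(z)=z$ from the fact that $\star$ is an involution commuting with $\tau$, where the paper compares indices letter by letter. You also spell out the reduction to $z$ and the exclusion of $m=0$, both of which the paper leaves implicit.
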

\begin{proof}
First, note that $w=z^n$ for some primitive word $z$ and some integer $n>0$, and $w$ is self-dual if and only if $z$ is as well. Thus, it suffices to show that $z$ is of the desired form.

Since $z=u_{\ell-1} \cdots u_0$ is self-dual, for some $0<m<\ell$, we have that $\tau^m(z)=z^{\star}$. Then for every $0\leq i < \ell-m$, we have that $u_{i+m}=u_i^{\star}$  and for $\ell-m \leq  i \leq \ell-1$, we have that $u_{i+m-\ell}=u_i^{\star}$. As a consequence, we have $\tau^{2m}(z)=z$. Now, since $z$ is primitive, this implies $2m=\ell$. Thus the lemma follows by letting $y$ be the first $m$ letters of $z$. 
\end{proof}

Note that the word $y$ here need not itself be primitive. For instance, taking $y=\word{ff}$ results in the primitive word $w=\word{ffvv}$.

We now describe explicitly the pairing $\lambda$ on $\overline{M}(w)$ when $w$ is self-dual.

\begin{lemma} \label{lem: pairing unicycle}
Let $w=(u_{\ell-1} \cdots u_0 u_{\ell-1}^{\star} \cdots u_0^{\star})^n$ be a self-dual word and consider the $\bt$ module $\overline{M}(w)=(M,F,V)$ with basis $\{a_0, \ldots, a_{2n\ell-1}\}$. Fix $c \in \kk^\times$ satisfying $c^{p^{\ell}}=-c$. Define a bilinear pairing by
$$\lambda(a_i,a_j) = \begin{cases}
    c^{p^i} &\hbox{if $j\equiv i+\ell \pmod{2n\ell}$,} \\
    0 &\hbox{otherwise.}
\end{cases}$$
Then $\widehat{M}=(M,F,V,\lambda)$ is a polarized $\bt$ module.
\end{lemma}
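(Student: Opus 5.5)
The plan is to check the three conditions of \cref{def:polAT} directly on the basis $\{a_0,\dots,a_{N-1}\}$, where $N=2n\ell$ and all indices are read modulo $N$; the first condition holds because $\overline{M}(w)$ is a $\bt$ module by Kraft's construction. Write $w=U_{N-1}\cdots U_0$. The defining feature of the word is
\[U_{i+\ell}=U_i^{\star}\quad\text{for all } i \pmod N,\]
which is just the statement that $w^\star=\tau^{\ell}(w)$. We also need the pairing to be well defined under reduction of indices mod $N$. This holds because $c^{p^{2\ell}}=(-c)^{p^\ell}=c$, hence $c^{p^{N}}=c$.

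\emph{Compatibility with $F$ and $V$.} First reduce to basis vectors. The map $x\mapsto\lambda(F x,y)$ is $\frob$-linear in $x$ and linear in $y$. The map $(x,y)\mapsto\lambda(x,Vy)^p$ is also $\frob$-linear in $x$, and it is linear in $y$ because $V$ is $\frob^{-1}$-linear. So it suffices to check $\lambda(Fa_i,a_j)=\lambda(a_i,Va_j)^p$ for all $i,j$, splitting on the letter $U_i$.
\begin{itemize}
\item If $U_i=\word{f}$, then $Fa_i=a_{i+1}$, so the left side is $c^{p^{i+1}}$ exactly when $j\equiv i+1+\ell$. On the right, $Va_j=a_{j-1}$ only when $U_{j-1}=\word{v}$, and the pairing is nonzero only when $j-1\equiv i+\ell$. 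In that case $U_{i+\ell}=U_i^\star=\word{v}$, so the right side is $(c^{p^i})^p$, which matches.
\item If $U_i=\word{v}$, the left side is $0$. The right side could only be nonzero for $j=i+\ell+1$ with $U_{i+\ell}=\word{v}$, but $U_{i+\ell}=\word{f}$, so the right side is also $0$.
\end{itemize}

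\emph{Non-degeneracy.} The Gram matrix is a permutation matrix (the shift $i\mapsto i+\ell$) with nonzero entries $c^{p^i}$, so the pairing is non-degenerate.

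\emph{Alternating.} Since $p>2$ and the Gram matrix has zero diagonal, it is enough to check skew-symmetry: $\lambda(a_i,a_j)=-\lambda(a_j,a_i)$ for all $i,j$. This is the step I expect to be the main obstacle. The nonzero entry $\lambda(a_i,a_{i+\ell})=c^{p^i}$ must be matched by $\lambda(a_{i+\ell},a_i)$, which is nonzero only if $i\equiv i+2\ell\pmod{N}$, i.e.\ only when $n=1$. When $n=1$ the computation works:
\[\lambda(a_{i+\ell},a_i)=c^{p^{i+\ell}}=\bigl(c^{p^\ell}\bigr)^{p^i}=(-c)^{p^i}=-c^{p^i}.\]

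For $n>1$ I would first try to repair the shift. One option is to use the shift $i\mapsto i+n\ell$ with a constant satisfying $c^{p^{n\ell}}=-c$. This is compatible because $w^\star=\tau^{n\ell}(w)$ also holds when $n$ is odd, and for $n$ even one would pair differently. The other option, which I would use if the statement is meant literally, is to avoid the issue: by \cref{rmk: M((w)^n) = M(w)^n}, $\overline{M}(w)\cong\overline{M}(yy^\star)^n$. Each factor carries the $n=1$ pairing constructed above, and the orthogonal direct sum of these is a polarization on $\overline{M}(w)$. I would then check whether transporting this orthogonal sum back through the explicit change of basis reproduces the displayed formula, and adjust the indexing convention of the statement if it does not.
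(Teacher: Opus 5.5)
For $n=1$ your argument is the paper's proof. It uses the same reduction to basis vectors, the same case split on the letter $U_i$ via $U_{i+\ell}=U_i^\star$ for $F$/$V$-compatibility, and the same computation $c^{p^{i+\ell}}=(-c)^{p^i}=-c^{p^i}$ for skew-symmetry.

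The obstacle you flag for $n\geq 2$ is genuine. It is a defect of the statement and of the paper's proof, not of your proposal.

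\begin{itemize}
\item \textbf{The paper's error.} The alternation step asserts $\lambda(a_j,a_i)=c^{p^j}$ for $j=i+\ell$. By definition that entry is nonzero only if $i\equiv j+\ell\pmod{2n\ell}$, i.e.\ $2\ell\equiv 0\pmod{2n\ell}$, i.e.\ $n=1$.
\item \textbf{The lemma fails for $n\geq 2$.} There one has $\lambda(a_i,a_{i+\ell})=c^{p^i}\neq 0=\lambda(a_{i+\ell},a_i)$. So the displayed form is not alternating, and the lemma is false as written.
\item \textbf{Shift by $n\ell$.} This repair works exactly when $n$ is odd, since then $c^{p^{n\ell}}=-c$ and $\tau^{n\ell}(w)=\tau^{\ell}(w)=w^\star$.
\item \textbf{Even $n$.} No non-degenerate pairing supported on a single shift $s$ exists. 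Your compatibility computation with $\ell$ replaced by $s$ shows such a pairing needs $\tau^{s}(w)=w^\star$. Skew-symmetry with zero diagonal forces $s\equiv n\ell\pmod{2n\ell}$. But $\tau^{n\ell}(w)=w$ for even $n$.
\item \textbf{Your final check.} The orthogonal sum transported from $\overline{M}(yy^\star)^n$ is the right construction for general $n$. However, the check you plan at the end is moot: the transported pairing cannot reproduce the displayed formula, because that formula is not alternating.
\end{itemize}

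The clean fix is to state the lemma only for $n=1$, which is all the paper actually uses. Unicycles are defined only for primitive self-dual $w$, and powers $(yy^\star)^n$ receive their pairing through the isomorphism $\overline{M}(w^n)\cong\overline{M}(w)^n$.
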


\begin{proof}
By construction, $(M,F,V)$ is a $\bt$ module and $\lambda$ is a non-degenerate bilinear pairing. We first show $\lambda$ is alternating. If $j=i+\ell$, then
\begin{equation}\label{eq:polarizedBT1_lambdacomputation1}
    \lambda(a_j,a_i) = c^{p^j}= c^{p^{i+\ell}}=\left(c^{p^{ \ell}}\right)^{p^i}=(-c)^{p^i} = -c^{p^i} = -\lambda(a_i,a_j).
\end{equation}
Furthermore, if $j=i+\ell-2n\ell,$ note that $c^{p^{2\ell}}=c$, so that
\begin{equation*}\label{eq:polarizedBT1_lambdacomputation2}
\lambda(a_j,a_i) = c^{p^j}= c^{p^{i+\ell-2n\ell}}=\left( c^{p^{-2n\ell}}\right)^{p^{i+\ell}}=c^{p^{i+\ell}} = -\lambda(a_i,a_j),
\end{equation*}
where the last equality holds by Equation \eqref{eq:polarizedBT1_lambdacomputation1}.

It remains to check the condition 
\begin{equation} \label{eq: pairing FV}
\lambda(F(x),y)=\lambda(x,V(y))^p.
\end{equation}
It suffices to check this on the basis $\{a_0, \ldots ,a_{2n\ell-1}\}$, so let $x=a_i$ and $y=a_j$. By the definition of $\lambda$, both sides of Equation~\eqref{eq: pairing FV} vanish unless $j=i+1+\ell$ or $j=i+1+\ell-2n\ell$, both of which imply $u_i=u_{j-1}^{\star}$. We consider the two options for $u_i$.
\begin{enumerate}
    \item Suppose $u_i=\word{f}$. Then $u_{j-1}=\word{v}$, and the construction of $\overline{M}(w)$ yields $F(a_i)=a_{i+1}$ and $V(a_j)=a_{j-1}$. Hence we obtain
    $$\lambda(F(a_i),a_j) = \lambda(a_{i+1},a_j)=c^{p^{i+1}}= \lambda(a_i,a_{j-1})^p = \lambda(a_i,V(a_j))^p,$$
    as desired. Note that this also holds when $i=2\ell-1$, since $c^{p^{2\ell}}=c$.
    \item Suppose $u_i=\word{v}$. Then $u_{j-1}=\word{f}$, so $F(a_i)=V(a_j)=0$.\qedhere
\end{enumerate}
\end{proof}

\begin{definition} \label{def: unicycle}

For a primitive, self-dual word $w$, the polarized $\bt$ module  $\widehat{M}(w)$ in \cref{lem: pairing unicycle} is called a \emph{unicycle}. The underlying $\bt$ module of $\widehat{M}(w)$ is $\overline{M}(w)$.
\end{definition}

Observe that unicycles are invariant under rotation: $\widehat{M}(\tau^n(w)) \cong \widehat{M}(w)$ for every $n$.

\subsubsection{Bicycles}

Note that, for any word $w$, the $\bt$ module $\overline{M}(w) \oplus \overline{M}(w^{\star})$ is self-dual, as the summands are each other's dual. We now detail the pairing on a $\bt$ module of this form.

\begin{lemma} \label{lem: pairing bicycle}
Let $w=u_{\ell-1}\cdots u_0$ be a word and consider the $\bt$ modules $\overline{M}(w)$, with basis $\{a_0, \ldots, a_{\ell-1}\}$, and $\overline{M}(w^{\star})$, with basis $\{b_0,\ldots ,b_{\ell-1}\}$. Let $(M,F,V)=\overline{M}(w) \oplus \overline{M}(w^{\star})$. Define the pairing
$$\lambda(a_i,b_j)=\delta_{i,j},$$ 
where $\delta_{i,j}$ is the Kronecker delta function, and extend this symplectically. Then $\widehat{M}=(M,F,V,\lambda)$ is a polarized $\bt$ module.
\end{lemma}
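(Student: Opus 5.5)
The plan is to follow the template of the proof of \cref{lem: pairing unicycle}. First I will make the pairing explicit. ``Extend symplectically'' means we set $\lambda(a_i,a_j)=\lambda(b_i,b_j)=0$ and $\lambda(b_j,a_i)=-\delta_{i,j}$, and extend bilinearly. This form is alternating by construction: on a general vector $x=\sum x_ia_i+\sum y_ib_i$ we get $\lambda(x,x)=\sum x_iy_i-\sum y_ix_i=0$. It is non-degenerate, because its Gram matrix in the basis $\{a_0,\ldots,a_{\ell-1},b_0,\ldots,b_{\ell-1}\}$ is the standard block matrix $\begin{pmatrix}0&I\\-I&0\end{pmatrix}$. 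Since $(M,F,V)$ is a direct sum of $\bt$ modules, it is itself a $\bt$ module. So the only remaining point is the compatibility $\lambda(F(x),y)=\lambda(x,V(y))^p$.

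Both sides are additive in $x$ and in $y$. In $x$, the left side is $\frob$-semilinear (via $F$) and so is the right side (via the $p$-th power). In $y$, the left side is linear, and the right side is linear too, since $V$ is $\frob^{-1}$-linear and raising to the $p$-th power undoes this. Hence it suffices to check the identity on basis vectors. The cases $(x,y)=(a_i,a_j)$ and $(b_i,b_j)$ are trivial: $F$ and $V$ preserve each summand, and $\lambda$ vanishes on each summand, so both sides are $0$.

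For $x=a_i$, $y=b_j$, recall that in $\overline{M}(w^\star)$ the letter in position $j$ is $u_j^\star$. Both sides vanish unless $j=i+1$ (indices taken modulo $\ell$). If $j=i+1$ and $u_i=\word{f}$, then $F(a_i)=a_{i+1}$, and since the $i$-th letter of $w^\star$ is $\word{v}$, we get $V(b_{i+1})=b_i$. Thus both sides equal $1$, using $1^p=1$. If $u_i=\word{v}$, then $F(a_i)=0$, and since the $i$-th letter of $w^\star$ is $\word{f}$, also $V(b_{i+1})=0$. The case $x=b_i$, $y=a_j$ is the same argument with the roles of $w$ and $w^\star$ exchanged, and the signs agree on both sides because $(-1)^p=-1$ for odd $p$. (In characteristic $2$ the sign issue is vacuous anyway.)

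The wraparound index $i=\ell-1$ causes no trouble. The constants involved are $\pm1$, so they are fixed by Frobenius, and unlike the unicycle case no condition such as $c^{p^\ell}=-c$ is needed. The main care point is the bookkeeping of which letter governs $V(b_{j})$, namely that $V(b_{j})=b_{j-1}$ exactly when the $(j-1)$-th letter of $w^\star$ is $\word{v}$, i.e.\ when $u_{j-1}=\word{f}$.
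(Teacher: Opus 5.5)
Your proposal is correct and is essentially the paper's proof: reduce to basis vectors, observe that both sides of $\lambda(F(a_i),b_j)=\lambda(a_i,V(b_j))^p$ vanish unless $j=i+1$, and treat the cases $u_i=\word{f}$ and $u_i=\word{v}$ separately. You also make explicit the points the paper leaves implicit, all of which check out: alternation and non-degeneracy, the semilinearity reduction to basis vectors, and the $(b_i,a_j)$ case where $(-1)^p=-1$ is used.
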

\begin{proof}
It suffices to check that $\lambda(F(a_i),b_j)=\lambda(a_i,V(b_j))^p$ for each $i,j$. Note that both sides of this equation vanish unless $j=i+1$. For $j=i+1$, there are two possibilities:
\begin{enumerate}
    \item Suppose $u_i=\word{f}$, so that $F(a_i)=a_{i+1}$. In this case, we have $u_i^{\star}=\word{v}$, which implies that $V(b_{i+1})=b_i$. Hence, 
$$\lambda(F(a_i),b_{i+1}) = \lambda(a_{i+1},b_{i+1})=1=\lambda(a_i,b_i)^p = \lambda(a_i,V(b_{i+1}))^p.$$
    \item Suppose $u_i=\word{v}$, so that $F(a_i)=V(b_{i+1})=0$. Then \[\lambda(F(a_i),b_{i+1})=\lambda(a_i,V(b_{i+1}))^p=0.\qedhere\]
 
\end{enumerate}
\end{proof}

\begin{definition} \label{def: bicycle}

For a primitive, non-self-dual word $w$, the polarized $\bt$ module  $\widehat{M}(w,w^{\star})$ in \cref{lem: pairing bicycle} is called a \emph{bicycle}. The underlying $\bt$ module of $\widehat{M}(w,w^{\star})$ is $\overline{M}(w) \oplus \overline{M}(w^{\star})$.
\end{definition}

Observe that bicycles are invariant under rotation: $\widehat{M}(w,w^{\star}) \cong \widehat{M}(\tau(w),\tau(w)^{\star})$. Bicycles are also invariant under dualizing $w$: we have $\widehat{M}(w,w^{\star}) \cong \widehat{M}(w^{\star},w)$.

\subsubsection{Structure theorem}

We can now state the structure theorem for polarized $\bt$ modules.

\begin{proposition}[Oort, {\cite[9.9]{OortStrat}}] \label{prop: Oort}
Any polarized $\bt$ module $\widehat{M}=(M,F,V,\lambda)$ decomposes as follows
$$\widehat{M} = \bigoplus_{w \in W_1} \widehat{M}(w) \oplus \bigoplus_{w \in W_2} \widehat{M}(w,w^{\star}).$$
Here, $W_1$ is a multiset of primitive self-dual words and $W_2$ is a multiset of primitive non-self-dual words. The decomposition is unique up to rotation, dualizing the words in $W_2$, and permuting the summands.
\end{proposition}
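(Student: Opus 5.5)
The plan has two parts: existence of the decomposition, then uniqueness. Existence reduces to Kraft's theorem plus an analysis of how the pairing interacts with the Kraft summands. Uniqueness reduces to Kraft's uniqueness together with the uniqueness of polarizations quoted above.

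\textbf{Existence.} Apply \cref{prop: Kraft} to the underlying $\bt$ module. This writes $\overline{M}=\bigoplus_{w\in W}\overline{M}(w)$ with $W$ a multiset of primitive words.

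\emph{Step 1: the word multiset is closed under duality.} Since $\lambda$ is non-degenerate and satisfies $\lambda(Fx,y)=\lambda(x,Vy)^p$, it induces an isomorphism of $\overline{M}$ with its dual $\kk\set{F,V}$-module. The dual of $\bigoplus \overline{M}(w)$ is $\bigoplus \overline{M}(w^{\star})$. By the uniqueness in \cref{prop: Kraft}, the multiset $W$ is therefore stable under $w\mapsto w^{\star}$ up to rotation, with multiplicities preserved.

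\emph{Step 2: regroup the summands.} Collect the summands into $\bt$ submodules of two kinds:
\begin{itemize}
\item[(i)] for each self-dual primitive $w$, the submodule $\overline{M}(w)$;
\item[(ii)] for each pair $\{w,w^{\star}\}$ of non-self-dual primitive words, counted with multiplicity, the submodule $\overline{M}(w)\oplus\overline{M}(w^{\star})$.
\end{itemize}
Each of these carries a polarization, by \cref{lem: pairing unicycle} and \cref{lem: pairing bicycle} respectively. Their orthogonal direct sum $\widehat{N}$ is a polarized $\bt$ module whose underlying $\bt$ module is isomorphic to $\overline{M}$.

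\emph{Step 3: transfer the decomposition to $\widehat{M}$.} By \cite[Corollary 4.2]{PriesUlmerBT1Fermat}, two polarized $\bt$ modules with isomorphic underlying $\bt$ modules are isomorphic. Hence $\widehat{M}\cong\widehat{N}$, which is exactly the claimed decomposition with $W_1$ the self-dual words and $W_2$ one representative from each dual pair.

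\textbf{Uniqueness.} Suppose $\widehat{M}$ has two decompositions of the stated form. Forgetting $\lambda$ gives two Kraft decompositions of the underlying module. The first has word multiset $W_1\cup W_2\cup W_2^{\star}$; the second has the analogous multiset $W_1'\cup W_2'\cup W_2'^{\star}$. By \cref{prop: Kraft} these agree up to rotation.

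Self-duality of a word is rotation invariant. So the self-dual words in each multiset must match, giving $W_1=W_1'$ up to rotation. The remaining words must also match. Since they are grouped into dual pairs, $W_2$ and $W_2'$ agree up to rotation and replacing some $w$ by $w^{\star}$.

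\textbf{Main obstacle.} The delicate point is the input from \cite[Corollary 4.2]{PriesUlmerBT1Fermat}, namely that the polarization is unique up to isomorphism. If I could not simply cite it, I would instead prove directly that a non-degenerate compatible pairing can be orthogonally split along the regrouping of Step 2. The approach would be to choose a summand on which $\lambda$ restricts non-degenerately, or a dual pair of summands paired perfectly with each other, and induct via orthogonal complements, checking that each complement is again $F,V$-stable.

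A secondary obstacle is that when a non-self-dual $w$ occurs with multiplicity, the Kraft summands need not be canonical. There one would use the explicit base changes of \cref{rmk: M((w)^n) = M(w)^n} to adjust bases so that the pairing takes the standard form of \cref{lem: pairing bicycle}.
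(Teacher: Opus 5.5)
Your proposal is correct. The paper does not prove this proposition itself; it simply cites Oort. Your argument is, however, exactly the template the paper uses for its unitary analogue, \cref{thm: structure of unitary BT1s}:
\begin{enumerate}
\item take a Kraft decomposition;
\item match each $\overline{M}(w)$ with an $\overline{M}(w^{\star})$ of equal multiplicity (\cref{lem: M(w) and M(w-c) both});
\item use uniqueness of the pairing up to isomorphism to install the standard pairings of \cref{lem: pairing unicycle,lem: pairing bicycle} (\cref{lem: construct pairing}).
\end{enumerate}

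Your Step 1 obtains the multiplicity matching from the self-duality of $\overline{M}$ together with the uniqueness in \cref{prop: Kraft}. This is cleaner than the paper's element-wise argument in \cref{lem: M(w) and M(w-c) both}.

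Once \cite[Corollary 4.2]{PriesUlmerBT1Fermat} is invoked, neither of the ``obstacles'' you list actually arises. You never need to split or re-base the given $\lambda$ directly.
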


\begin{corollary} \label{cor:Oort}
If $\widehat{M}=(M,F,V,\lambda)$ is indecomposable as a polarized $\bt$ module, then $\widehat{M}$ is isomorphic to either a unicycle or a bicycle.
\end{corollary}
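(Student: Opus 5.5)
The corollary should follow from \cref{prop: Oort}, so the plan is a short argument by contradiction on the number of summands. Let $\widehat{M}$ be indecomposable as a polarized $\bt$ module. By \cref{prop: Oort} we can write
\[
\widehat{M} \cong \bigoplus_{w \in W_1} \widehat{M}(w) \oplus \bigoplus_{w \in W_2} \widehat{M}(w,w^{\star}),
\]
where $W_1$ is a multiset of primitive self-dual words and $W_2$ is a multiset of primitive non-self-dual words. Each summand is itself a polarized $\bt$ module: the unicycles by \cref{lem: pairing unicycle} and the bicycles by \cref{lem: pairing bicycle}. Each summand is also nonzero, since the words have positive length. The decomposition is orthogonal for the pairing, because the pairing on $\widehat{M}$ is the orthogonal sum of the pairings on the summands.

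First, assume the total number of summands $|W_1| + |W_2|$ is at least $2$. Pick one summand $N_1$ and let $N_2$ be the direct sum of all the others. Then $\widehat{M} \cong N_1 \oplus N_2$ is an orthogonal decomposition into two nonzero polarized $\bt$ modules. This contradicts indecomposability.

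Second, the total number of summands cannot be $0$, provided $M \neq 0$. If the zero module is allowed, it is conventionally excluded from being indecomposable, so this case does not arise.

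Hence there is exactly one summand. If it comes from $W_1$, then $\widehat{M}$ is isomorphic to a unicycle $\widehat{M}(w)$. If it comes from $W_2$, then $\widehat{M}$ is isomorphic to a bicycle $\widehat{M}(w,w^{\star})$.

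The only step needing care is the meaning of ``indecomposable as a polarized $\bt$ module'': it should mean there is no nontrivial decomposition into a \emph{polarized} orthogonal sum. The decomposition in \cref{prop: Oort} is of exactly that kind, so the argument goes through. Note that no claim is made here about whether a bicycle is indecomposable as a polarized module, even though its underlying $\bt$ module decomposes; the statement only asserts the forward direction. There is no substantial obstacle once \cref{prop: Oort} is available.
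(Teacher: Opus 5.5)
Your proposal is correct and matches the paper's approach: the paper gives no separate proof and treats the corollary as an immediate consequence of \cref{prop: Oort}. An indecomposable polarized $\bt$ module must consist of exactly one summand in Oort's orthogonal decomposition, and that summand is a unicycle or a bicycle.
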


\begin{remark}\label{rmk:pol-complement}
For our purposes, a Kraft-style decomposition theorem is essential as it allows us to identify the indecomposable polarized $\bt$ modules. But one may also prove a weaker semisimplicity result for polarized $\bt$ modules. 

Let $\widehat{M}=(M,F,V,\lambda)$ be a polarized $\bt$ module. If $\widehat{N}=(N,F\vert_N,V\vert_N,\lambda\vert_{N\times N})$ is a polarized $\bt$ module, for a $\bt$ submodule $N$ of $M$, we call $\widehat{N}$ a \emph{polarized $\bt$ submodule} of $M$. The following lemma is a reframing of \cite[Lemma~3.4]{Achter-Pries} in this language. 
\begin{lemma}[Achter--Pries, {\cite[Lemma~3.4]{Achter-Pries}}]
    Let $P = \setp{m \in M}{\lambda_M(m,N) = 0}$ be the symplectic complement 
    of $N$ in $M$. Then $\widehat{P} = (P,F_M\vert_P,V_M\vert_P,\lambda_M\vert_{P\times P})$ is a polarized $\bt$ module and $\widehat{M} = \widehat{N} \oplus \widehat{P}$.
\end{lemma}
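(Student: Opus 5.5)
Since $\widehat{N}$ is assumed to be a polarized $\bt$ submodule, $\lambda_M$ restricted to $N\times N$ is non-degenerate. The plan is to get the direct sum from linear algebra, and then the structural facts from the compatibility $\lambda(F x,y)=\lambda(x,Vy)^p$ together with $\dim$ counting.

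\textbf{Step 1 (direct sum as vector spaces).} Non-degeneracy on $N$ gives $N\cap P=0$. Non-degeneracy of $\lambda_M$ gives $\dim P=\dim M-\dim N$. Hence $M=N\oplus P$. Moreover $\lambda_M|_{P\times P}$ is non-degenerate: if $p\in P$ pairs trivially with $P$, it also pairs trivially with $N$, so it pairs trivially with all of $M$, forcing $p=0$. Alternating and bilinear are inherited. By construction $\lambda(N,P)=0$, so the pairing is an orthogonal sum.

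\textbf{Step 2 ($F$ and $V$ preserve $P$).} Take $m\in P$ and $n\in N$. Then $\lambda(Fm,n)=\lambda(m,Vn)^p=0$, because $Vn\in N$. For $V$, use the alternating property to rewrite the compatibility as $\lambda(Vx,y)=\lambda(x,Fy)^{p^{-1}}$. Indeed, $\lambda(x,Fy)=-\lambda(Fy,x)=-\lambda(y,Vx)^p=\lambda(Vx,y)^p$, and $p$-th roots are unique in $\kk$. Hence $\lambda(Vm,n)=\lambda(m,Fn)^{1/p}=0$. So $P$ is stable under $F$ and $V$, and $(P,F|_P,V|_P)$ is a mod-$p$ \Dd module. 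Thus $M=N\oplus P$ as $\kk\{F,V\}$-modules, and $\lambda$ respects the sum, so $\widehat M=\widehat N\oplus\widehat P$ once $\widehat P$ is a polarized $\bt$ module. Property~(3) of \cref{def:polBTmod} on $P$ is inherited by restriction.

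\textbf{Step 3 (the $\bt$ condition on $P$).} This is the only non-formal point. Since $M=N\oplus P$ with $F,V$ acting diagonally, we have $\ker F_M=\ker F_N\oplus\ker F_P$ and $\operatorname{im}V_M=\operatorname{im}V_N\oplus\operatorname{im}V_P$. The equality $\ker F_M=\operatorname{im}V_M$, together with the inclusions $\operatorname{im}V_P\subseteq\ker F_P$ (which hold because $FV=0$), forces equality componentwise. One can see this by projecting to $P$, or by comparing dimensions using that the $N$-components already agree. The same argument gives $\ker V_P=\operatorname{im}F_P$.

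The main obstacle is simply getting the $V$-stability right via the semilinear twist. Everything else is dimension counting.
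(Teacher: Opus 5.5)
Your argument is correct and complete: non-degeneracy of $\lambda$ on $N$ and on $M$ gives $M=N\oplus P$ with $\lambda|_{P\times P}$ symplectic, the compatibility $\lambda(Fx,y)=\lambda(x,Vy)^p$ makes $P$ stable under $F$ and $V$, and the conditions $\ker F=\operatorname{im}V$ and $\ker V=\operatorname{im}F$ then split componentwise along this $F,V$-stable decomposition. The paper gives no proof of its own here (it only cites Achter--Pries), and yours is the standard argument; one small simplification is that $V$-stability follows directly from $\lambda(n,Vm)^p=\lambda(Fn,m)=0$ for $n\in N$, $m\in P$, so you do not need to derive the adjoint form of the compatibility first.
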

    
\end{remark}

\section{Decomposing Unitary \texorpdfstring{${\bt}$}{} modules}\label{sec:unitary bt1 modules}

The goal of this section is to establish an analogue
of \cref{prop: Kraft,prop: Oort} for \emph{unitary} $\bt$ modules. This is achieved in \cref{thm: structure of unitary BT1s}.

\subsection{Unitary \texorpdfstring{$\boldsymbol{\bt}$}{} modules}\label{subsec:unitary bt1 modules}

In what follows, if $A,B$ are $\kk$-vector spaces, $h:A \to B$ is a linear map, and $C$ is a subspace of $A$, then we define $C[h] \coloneqq \ker(h|_C).$

\begin{definition} \label{def: unitary BT1}
A \emph{unitary $\bt$ module} is a quintuple $\widetilde{M}=(M,F,V,\lambda,M=M_0 \oplus M_1)$ such that
\begin{enumerate}
    \item $(M,F,V,\lambda)$ is a polarized $\bt$ module;
    \item $F$ and $V$ are homogeneous of degree $1$ with respect to the decomposition $M=M_0 \oplus M_1$ (i.e. they exchange $M_0$ and $M_1$); and \label{property:homogeneous}
    \item $M_0$ and $M_1$ are Lagrangian with respect to $\lambda$ 
    (this is called the \emph{Rosati involution condition}). \label{property:RIC}
\end{enumerate}
Following \cite[(4.6)]{Moonengsas}, we define $(a,b) \coloneqq (\text{dim}_{\kk}(M_1[F]), \text{dim}_{\kk}(M_0[F]))$ 
to be the \emph{signature} of the unitary $\bt$ module $(M,F,V,\lambda,M=M_0 \oplus M_1)$. We will write $q \coloneqq a+b$, so that $\dim_\kk M = 2q$.
\end{definition}

\begin{remark} \label{rmk: iota}

Properties \eqref{property:homogeneous} and \eqref{property:RIC} in \cref{def: unitary BT1} are equivalent to equipping $M$ with an action $\iota: \ff_{p^2} \to \mathrm{End}_{\kk}(M)$ of $\ff_{p^2}$ such that it satisfies the \emph{Rosati involution condition} \[\lambda(\iota(\alpha)m,m') = \lambda(m,\iota(\alpha^p)m')\]
for $\alpha \in \ff_{p^2}$ and $m,m' \in M$. With this condition we have
\[
M_0 = \setp{m \in M}{\iota(\alpha)m = \alpha m} \hspace{1cm} \text{and} \hspace{1cm}
M_1 = \setp{m \in M}{\iota(\alpha)m = \alpha^p m}.
\]
Therefore, a unitary $\bt$ module $\widetilde{M}=(M,F,V,\lambda,M=M_0 \oplus M_1)$ may equivalently be expressed as the quintuple $\widetilde{M}=(M,F,V,\lambda,\iota)$, where $\iota$ is the action of $\ff_{p^2}$ corresponding to the decomposition $M = M_0 \oplus M_1$. 
\end{remark}

Just like how isomorphism classes of polarized $\bt$ modules of $\kk$-dimension $2q$ correspond to Ekedahl--Oort strata of $\aa_q$, isomorphism classes of unitary $\bt$ modules of signature $(a,b)$ correspond to Ekedahl--Oort strata of the unitary Shimura variety $\M(a,b)$. 

We say that a $\bt$ module $(M,F,V)$, or a polarized $\bt$ module $(M,F,V,\lambda)$, \emph{admits unitary structure} if there exists a unitary $\bt$ module $(M,F,V,\lambda,M=M_0 \oplus M_1)$. Note that two different unitary $\bt$ modules can have the same underlying (polarized) $\bt$ module. For instance, there are superspecial unitary $\bt$ modules of every signature. There are also pairs of non-isomorphic unitary $\bt$ modules with the same signature and the same underlying polarized $\bt$ module.

Given a unitary $\bt$ module $\widetilde{M}=(M,F,V,\lambda,M=M_0 \oplus M_1)$, we can form its \emph{conjugate unitary $\bt$ module} $\widetilde{M}^*$ by exchanging $M_0$ and $M_1$. That is, define 
$$\widetilde{M}^*  \coloneqq  (M,F,V,\lambda,M=M_1 \oplus M_0).$$
From the definition, it is clear that $\widetilde{M}^{**}=\widetilde{M}$ and $\widetilde{M}^*$ has signature $(b,a)$.

In this section, we will prove a Kraft-style decomposition theorem for unitary $\bt$ modules, similar to Oort's result for polarized $\bt$ modules (\cref{prop: Oort}). This will allow us to identify the indecomposable unitary $\bt$ modules. But one may state and prove a weaker semisimplicity result for unitary $\bt$ modules as well, similar to \cref{rmk:pol-complement} for polarized $\bt$ modules. 

\begin{remark}\label{rmk:uni-complement}
Let $\widetilde{M}=(M,F,V,\lambda,M = M_0 \oplus M_1)$ be a unitary $\bt$ module. For a (polarized) $\bt$ submodule $N$ of $M$, define $N_i \coloneqq N \cap M_i$ for $i=0,1$. 
If $\widetilde{N}=(M,F\vert_N,V\vert_N,\lambda\vert_{N \times N},N = N_0 \oplus N_1)$ is a unitary $\bt$ module, we call $\widetilde{N}$ a \emph{unitary $\bt$ submodule} of $M$.

Consider the symplectic complement $P = \setp{m \in M}{\lambda(m,N) = 0}$ of $N$ in $M$, and define \[P_0  \coloneqq  P \cap M_0 = \setp{m \in M_0}{\lambda(m,N_1) = 0} \quad \text{and} \quad P_1  \coloneqq  P \cap M_1 = \setp{m \in M_1}{\lambda(m,N_0) = 0}.\] 
Then $\widetilde{P} = (P,F\vert_P,V\vert_P,\lambda\vert_{P\times P},P = P_0 \oplus P_1)$ is a unitary $\bt$ submodule of $\widetilde{M}$ and $\widetilde{M} = \widetilde{N} \oplus \widetilde{P}$ as unitary $\bt$ modules.
    
\end{remark}

In \cite[(4.9)]{Moonengsas}, Moonen constructs a `standard object' for each isomorphism class of $\bt$ modules with given additional endomorphisms, of which unitary $\bt$ modules are a special case. In particular, he exhibits a basis $B_0 \coloneqq \{e_{0,1}, \ldots ,e_{0,q}\}$ of $M_0$ and a basis $B_1 \coloneqq \{e_{1,1}, \ldots ,e_{1,q}\}$ of $M_1$ such that the basis $B \coloneqq B_0\cup B_1$ of $M$ is preserved by $F$ and $V$.

We begin by constructing, in \cref{subsec:unitary-unicycles,subsec:unitary-bicycles}, the unitary analogues of the polarized unicycles and bicycles constructed in \cref{lem: pairing unicycle,lem: pairing bicycle}, respectively.
Our main result in this section, \cref{thm: structure of unitary BT1s}, is that the underlying polarized $\bt$ module structure of indecomposable unitary $\bt$ modules will either be a unicycle or bicycle or a \emph{Serre tensor construction} of them respectively. The Serre tensor construction is explained in \cref{section:STC}.

\subsection{Unitary unicycles}\label{subsec:unitary-unicycles}

\begin{lemma} \label{lem: unitary M(w)}
Let $w$ be a primitive word of even length. Then $(M,F,V)=\overline{M}(w)$ admits unitary structure if and only if $w=yy^{\star}$ for some word $y$ of odd length. 
\end{lemma}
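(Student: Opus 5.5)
The argument has two directions. Throughout, write $w = u_{2m-1}\cdots u_0$ with basis $a_0,\dots,a_{2m-1}$ of $\overline{M}(w)$.

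\emph{Necessity.} Suppose $(M,F,V)=\overline{M}(w)$ admits unitary structure $M=M_0\oplus M_1$. Then $\overline{M}(w)$ is polarizable, so $w$ is self-dual. Since $w$ is primitive, \cref{lem: self-dual word} gives $w=yy^\star$ with $y$ of length $m$. It remains to show that $m$ is odd.

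The key tool is the $\ff_{p^2}$-action $\iota$ of \cref{rmk: iota}. Because $F$ and $V$ swap $M_0$ and $M_1$, the $\bt$ module carries a $\mathbb{Z}/2$-grading compatible with $F$ and $V$. Equivalently, $\iota$ commutes with $F$ and $V$ up to Frobenius twist: $F\iota(\alpha)=\iota(\alpha^p)F$. Such gradings of $\overline{M}(w)$ for primitive $w$ are rigid. By Kraft's uniqueness, or by using Moonen's standard basis preserved by $F$ and $V$, one may assume each $a_j$ is homogeneous. Adjacent basis vectors $a_j,a_{j+1}$ are linked by $F$ or $V$, so they lie in opposite summands, and degrees alternate along the cycle. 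This is consistent since $\ell(w)=2m$ is even, and it gives $a_j\in M_{j \bmod 2}$ up to swapping $M_0$ and $M_1$.

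Next, the Lagrangian condition forces the pairing to be nonzero only between $M_0$ and $M_1$. The pairing from \cref{lem: pairing unicycle} (with $n=1$, $\ell=m$) pairs $a_i$ with $a_{i+m}$. By uniqueness of the polarization \cite[Corollary 4.2]{PriesUlmerBT1Fermat}, the general pairing is isomorphic to this one, and nondegeneracy pairs each $a_i$ nontrivially with exactly the partner $a_{i+m}$ in this normal form. Hence $i$ and $i+m$ must have opposite parity, so $m$ is odd.

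The main obstacle is justifying that we may simultaneously choose a homogeneous Kraft basis and the standard pairing. I would handle this by working directly with Moonen's standard object, whose basis $B_0\cup B_1$ is preserved by $F$ and $V$. In that basis the Kraft cycle is built from homogeneous vectors. Alternatively, I would argue invariantly. The operator $F$ (or $V$, as appropriate) maps each homogeneous piece into the opposite one, so the element of the $\mathbb{Z}/2$-grading attached to $M[F]$ versus $\operatorname{im}$ is determined. The pairing on the graded pieces is then determined by the requirement that $\lambda(M_0,M_0)=0$. A self-pairing between $a_i$ and $a_{i+m}$ of the same degree with $m$ even would contradict this.

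\emph{Sufficiency.} Suppose $w=yy^\star$ with $m=\ell(y)$ odd. Use the pairing of \cref{lem: pairing unicycle} with $n=1$, and set $M_0=\operatorname{span}\{a_j : j \text{ even}\}$ and $M_1=\operatorname{span}\{a_j : j \text{ odd}\}$. Since $2m$ is even, $F$ and $V$ send $a_j$ to $a_{j\pm1}$ or $0$ (indices mod $2m$), so they exchange $M_0$ and $M_1$. Since $\lambda(a_i,a_j)\neq 0$ only when $j\equiv i+m$ and $m$ is odd, each $M_i$ is isotropic. Each $M_i$ has dimension $m$, which is half of $\dim M$, so each is Lagrangian. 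This verifies \cref{def: unitary BT1}.
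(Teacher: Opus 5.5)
The problem is in the necessity direction. Your sufficiency direction is the paper's, and the early steps of necessity are fine: reducing to $w=yy^\star$, and choosing an $F,V$-stable homogeneous Kraft basis $a_j\in M_{j \bmod 2}$ via Moonen's basis.

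\textbf{The gap.} The proof breaks at the step asserting that, in this homogeneous basis, $\lambda(a_i,a_{i+m})\neq 0$.

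Uniqueness of the polarization (Pries--Ulmer) only gives an automorphism $g$ of $(M,F,V)$ carrying $\lambda$ to the standard pairing. That $g$ need not respect $M_0\oplus M_1$. For $w=\word{ffvv}$, the nilpotent map $N$ with $N(a_2)=\beta a_3$, $N(a_3)=\beta^p a_0$, $N(a_0)=N(a_1)=0$ commutes with $F$ and $V$ but shifts parity, so $1+N$ is such an automorphism. Hence the basis in which $\lambda$ is antipodal need not be homogeneous.

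Conversely, in the homogeneous basis $\lambda$ need not be antipodal-only. Again for $w=\word{ffvv}$, take the alternating form with $\lambda(a_0,a_2)=c$, $\lambda(a_1,a_3)=c^p$, $\lambda(a_1,a_2)=s$, $\lambda(a_2,a_3)=s^p$, and all other $\lambda(a_i,a_j)$ with $i<j$ zero, where $c^{p^2}=-c\neq 0$ and $s$ is arbitrary. It satisfies $\lambda(F(\cdot),\cdot)=\lambda(\cdot,V(\cdot))^p$, has Pfaffian $-c^{p+1}\neq 0$, and yet pairs $a_1$ with the adjacent vector $a_2$.

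So your closing observation, that for $m$ even the Lagrangian condition kills the antipodal entries, is not yet a contradiction. You still have to show that a compatible pairing with vanishing antipodal entries is degenerate. You flagged this as the main obstacle, but neither of your proposed fixes supplies it.

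\textbf{How to close it.} The missing input is that Moonen's standard object provides two things at once: the homogeneous $F,V$-stable basis, and a pairing of matching type, in which $e_{0,j}$ pairs only with $e_{1,m+1-j}$. This is what the paper's proof uses when it works with $e_{0,1}\in B_0$.

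With a matching pairing, the compatibility condition propagates around the cycle. Suppose $a_i$ is matched with $a_j$.
\begin{itemize}
\item If $u_i=\word{f}$, let $a_t$ be the partner of $a_{i+1}=F a_i$. Then $0\neq\lambda(Fa_i,a_t)=\lambda(a_i,Va_t)^p$ forces $Va_t=a_j$. So $u_j=\word{v}$ and $a_{i+1}$ is matched with $a_{j+1}$.
\item If $u_i=\word{v}$ and $u_j=\word{f}$, the same argument applies with the roles swapped, by alternation.
\item The case $u_i=u_j=\word{v}$ is impossible, since $\lambda(Vx,Vy)^p=\lambda(FVx,y)=0$.
\end{itemize}
Hence $u_{i+k}=u_{j+k}^\star$ for all $k$, and primitivity of $yy^\star$ forces $j\equiv i+m$. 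Your parity argument then finishes the proof.

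Alternatively, you can keep an arbitrary compatible pairing and argue as follows. If $\lambda(a_i,a_j)\neq 0$ with $j\not\equiv i+m$, then the word read forward from $a_j$ is lexicographically smaller than the one read forward from $a_{i+m}$. Ordering the basis by these words makes the Gram matrix anti-triangular, with the antipodal entries on the anti-diagonal. Nondegeneracy then forces $\lambda(a_i,a_{i+m})\neq 0$.
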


\begin{proof}
We begin with the ``only if'' statement. Note that $w$ must be self-dual for $\overline{M}(w)$ to be polarizable. 
Hence \cref{lem: self-dual word} yields that $w=yy^{\star}$ for some word $y$. It remains to be shown that $q \coloneqq \ell(y)$ is odd. 
This comes essentially from the Rosati involution condition. More precisely, let $e_{0,1} \in B_0$. Because of the condition $\lambda(F(x),y) = \lambda(x,V(y))^p$ and the assumption that $yy^\star$ is primitive, it follows that $e_{0,1}$ can only pair non-trivially with the element of $M$ that is `antipodal' to $e_{0,1}$, i.e. the element that is $q$ steps away from $e_{0,1}$ in the Kraft diagram. However, any element of $M$ that is an even number of steps away from $e_{0,1}$ is an element of $M_0$, and the Rosati involution condition prescribes that $M_0$ is Lagrangian with respect to $\lambda$. Thus $q$ must be odd. 

For the ``if'' statement, let $w=yy^{\star}$ with $y$ of odd length $q$. Form $\widehat{M}(w)$ as in \cref{lem: pairing unicycle}. Let
\[
M_0  \coloneqq  \text{span}_{\kk} \{a_j \; | \; j \text{ even}\} \hspace{1cm} \text{and} \hspace{1cm}
M_1  \coloneqq  \text{span}_{\kk} \{a_j \; | \; j \text{ odd} \}.
\]

By construction, $F$ and $V$ interchange the summands $M_0$ and $M_1$. Finally, the fact that $M_0$ and $M_1$ are Lagrangian with respect to $\lambda$ follows from the fact that $q$ is odd, which ensures that $a_j$ lies in a different summand than $a_{j\pm q}$. \end{proof}

Note that, since $q$ is odd in the context of \cref{lem: unitary M(w)}, when defining the pairing $\lambda$, one may pick the constant $c$ in \cref{lem: pairing unicycle} to simply satisfy $c^p=-c$. This then implies that $c^{p^q}=-c$. 

Assume $yy^{\star}$ is primitive and $y$ has odd length. For any $n>0$, the construction in \cref{lem: unitary M(w)}, combined with \cref{rmk: M((w)^n) = M(w)^n}, allows one to equip $\overline{M}((yy^\star)^n)$ with unitary structure.

\begin{definition} \label{def: unitary unicycle}
For a word $w=(yy^{\star})^n$ with $y$ a word of odd length and $yy^{\star}$ primitive, we let $\widetilde{M
}(w)$ denote the unitary $\bt$ module constructed in \cref{lem: unitary M(w)}. In particular, the underlying $\bt$ module of $\widetilde{M}(w)$ is $\overline{M}(w)$. When $w$ is primitive, we call $\widetilde{M}(w)$ a \emph{unitary unicycle}.
\end{definition}

Note that unitary unicycles are indecomposable as unitary $\bt$ modules, since they are indecomposable as $\bt$ modules.

\begin{remark} \label{rmk: don't rotate unicycles!}
It is important to note that $\widetilde{M}(w)$ is \emph{not} invariant under rotation, because this exchanges $M_0$ and $M_1$. In fact, rotating the word $w$ amounts to conjugating $\widetilde{M}(w)$: 
\[ \widetilde{M}(\tau(w)) \cong \widetilde{M}(w)^*.\]
Since rotating twice preserves $M_0$ and $M_1$, we have that $\widetilde{M}(\tau^{2n}(w)) \cong \widetilde{M}(w)$ for every $n$.
\end{remark}

\begin{remark} \label{rmk: unicycle signature}
Let $w=yy^{\star}=u_{q-1} \cdots u_0 u_{q-1}^{\star} \cdots u_0^{\star}$ be primitive and assume $\ell(y)=q$ is odd. We can read off the signature of $\widetilde{M}(w)$ as follows. The number $b=\dim_{\kk}(M_0[F])$ equals the number of $a_{2j}$ that are killed by $F$. 

When $2j<q$, observe that $F(a_{2j})=0$ is equivalent to $u_{2j}^\star=\texttt{v}$. For $2j>q$, we have $F(a_{2j})=0$ if and only if $u_{2j-q}=\texttt{v}$. We conclude 
\begin{align*}
    a&= \#\{0\leq j \leq \lfloor q/2 \rfloor \mid u_{2j} = \word{v} \} + \#\{1 \leq j \leq \lfloor q/2 \rfloor  \mid  u_{2j-1}=\word{f}\}, \\
    b&= \#\{1\leq j \leq \lfloor q/2 \rfloor \mid u_{2j-1}=\word{v} \} + \#\{0\leq j \leq \lfloor q/2 \rfloor \mid u_{2j}=\word{f}\}.
\end{align*}
\end{remark}

\begin{example} \label{example:unicycle diagram}
The word $\word{ffvffvvfvv}$ with Kraft diagram shown in \cref{fig: ffvffvvfvv} is a unitary unicycle of the form $yy^\star$, where $y = \word{ffvff}$. 

\begin{figure}[H]
    \centering
\[\begin{tikzcd}[cramped]
	& a_0 & a_1 & a_2 & a_3 & \\
	a_9 &&&&& a_4 \\
	& a_8 & a_7 & a_6 & a_5
	\arrow["F", from=1-2, to=1-3]
	\arrow["V"', curve={height=12pt}, from=1-2, to=2-1]
	\arrow["F", from=1-3, to=1-4]
	\arrow["V"', from=1-5, to=1-4]
	\arrow["F", curve={height=-12pt}, from=1-5, to=2-6]
	\arrow["V"', curve={height=12pt}, from=2-1, to=3-2]
	\arrow["F", curve={height=-12pt}, from=2-6, to=3-5]
	\arrow["F", from=3-3, to=3-2]
	\arrow["V"', from=3-3, to=3-4]
	\arrow["V"', from=3-4, to=3-5]
\end{tikzcd}\]     \caption{The Kraft diagram for $\word{ffvffvvfvv}$}
    \label{fig: ffvffvvfvv}
\end{figure}
We conclude that the signature of $\widetilde{M}(\word{ffvffvvfvv})$ is $(3,2)$ by applying \cref{rmk: unicycle signature} and noting that we have $u_0, u_1, u_3, u_4 = \word{f}$, and $u_2 = \word{v}$.
\end{example}

\subsection{Unitary bicycles and Serre tensor constructions}\label{subsec:unitary-bicycles}

In an analogous fashion to the preceding subsection, we now construct unitary $\bt$ modules whose underlying polarized $\bt$ module is of the form $\widehat{M}(w,w^\star)$.

\begin{lemma} \label{lem: unitary M(w)+M(w)-c}
Let $w$ be a word of even length. Then $(M,F,V)=\overline{M}(w) \oplus \overline{M}(w^{\star})$ admits unitary structure. 
\end{lemma}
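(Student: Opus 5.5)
Start from the polarized bicycle of \cref{lem: pairing bicycle}. Write $w=u_{\ell-1}\cdots u_0$ with $\ell$ even, take $\overline{M}(w)$ with basis $\{a_0,\ldots,a_{\ell-1}\}$ and $\overline{M}(w^\star)$ with basis $\{b_0,\ldots,b_{\ell-1}\}$, and equip $M=\overline{M}(w)\oplus\overline{M}(w^\star)$ with the pairing $\lambda(a_i,b_j)=\delta_{i,j}$, extended symplectically (so $\lambda(a_i,a_j)=\lambda(b_i,b_j)=0$). By \cref{lem: pairing bicycle} this is a polarized $\bt$ module. The only remaining task is to choose a $\mathbb{Z}/2$-grading $M=M_0\oplus M_1$ satisfying properties \eqref{property:homogeneous} and \eqref{property:RIC} of \cref{def: unitary BT1}.

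I would take the grading by parity of index, shifted by one on the second summand:
\[
M_0 \coloneqq \operatorname{span}_{\kk}\{a_j,\ b_{j+1} \mid j \text{ even}\},\qquad
M_1 \coloneqq \operatorname{span}_{\kk}\{a_j,\ b_{j+1} \mid j \text{ odd}\},
\]
with indices mod $\ell$. Since $\ell$ is even, parity of indices mod $\ell$ is well defined, and this is exactly where the hypothesis enters.

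Homogeneity is immediate. $F$ and $V$ send $a_j$ to a multiple of $a_{j\pm1}$ and $b_j$ to a multiple of $b_{j\pm1}$, so they change parity and hence swap $M_0$ and $M_1$. The wrap-around edge between index $\ell-1$ and $0$ is consistent because $\ell$ is even.

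For the Lagrangian condition, the only nonzero pairings are $\lambda(a_i,b_i)$. In our grading $a_i$ has parity $i$ while $b_i$ has parity $i-1$, so they always lie in opposite summands. Hence $\lambda|_{M_0\times M_0}=0$ and $\lambda|_{M_1\times M_1}=0$. Since $\dim M_0=\dim M_1=\ell=\tfrac12\dim M$, both summands are Lagrangian.

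The main point requiring care, and the reason for the shift, is the Lagrangian step. With the naive grading in which $a_j$ and $b_j$ both receive parity $j$, $M_0$ would contain the pairs $a_j,b_j$ and would not be isotropic. Shifting the grading on $\overline{M}(w^\star)$ by one resolves this, and that shift is only well defined because $\ell$ is even.
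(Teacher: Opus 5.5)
Your proposal is correct and is essentially the paper's proof. The paper also takes the bicycle pairing of \cref{lem: pairing bicycle} and uses the grading $M_0=\operatorname{span}_{\kk}\{a_j \mid j \text{ even}\}\oplus\operatorname{span}_{\kk}\{b_j \mid j \text{ odd}\}$, with $M_1$ the complementary span, which is exactly your shifted grading; your explicit checks of homogeneity (which needs $\ell$ even at the wrap-around) and of the Lagrangian property (since $a_i$ and $b_i$ always lie in opposite summands) spell out what the paper leaves as ``one readily verifies.''
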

\begin{proof}
Write $w=u_{q-1} \ldots u_0$, where $q$ is even. As in \cref{lem: pairing bicycle}, let $\{a_0, \ldots , a_{q-1}\}$ be the basis of $\overline{M}(w)$ and let $\{b_0, \ldots ,b_{q-1}\}$ be the basis of $\overline{M}(w^{\star})$. Let $\lambda$ be as defined in \cref{lem: pairing bicycle} and set 
\begin{align*}
    M_0&=\text{span}_{\kk} \{a_j\mid j \text{ even}\} \oplus \text{span}_{\kk} \{b_j \mid j \text{ odd}\}, \\
    M_1&=\text{span}_{\kk} \{a_j \mid j \text{ odd}\} \oplus \text{span}_{\kk} \{b_j \mid j \text{ even}\}.
\end{align*}
Then one readily verifies that $F$ and $V$ exchange $M_0$ and $M_1$ and moreover $M_0$ and $M_1$ are Lagrangian with respect to $\lambda$.
\end{proof}

\begin{definition} \label{def: widetilde M(w,wc)}
For a word $w$ of even length, we let $\widetilde{M}(w,w^{\star})$ denote the unitary $\bt$ module constructed in \cref{lem: unitary M(w)+M(w)-c}. In particular, the underlying $\bt$ module of $\widetilde{M}(w,w^{\star})$ is $\overline{M}(w)\oplus \overline{M}(w^{\star})$.
\end{definition}

The following special cases of $\widetilde{M}(w,w^\star)$ will be of primary interest:

\begin{definition} \label{def: (Serre) bicycles}
Let $\widetilde{M}(w,w^{\star})$ be a unitary $\bt$ module. Then 
\begin{enumerate}[label=$\mathrlap{\oplus}\otimes$]
    \item if $w$ is a non-self-dual-primitive word of even length, we refer to $\widetilde{M}(w,w^{\star})$ as a \emph{unitary bicycle};
    \item if $w$ is a self-dual-primitive word whose length is divisible by $4$, we refer to $\widetilde{M}(w,w^{\star})$ as a \emph{Serre unicycle}; and 
    \item if $w=zz$ for a primitive word $z$ of odd length (necessarily non-self-dual) we refer to $\widetilde{M}(w,w^{\star})$ as a \emph{Serre bicycle}. 
\end{enumerate}

\end{definition}

See \cref{example: bicycles} for a demonstration of each type of unitary $\bt$ module described above.

\begin{remark} \label{rmk: Serre tensor products}
If $w$ is a self-dual-primitive word whose length is divisible by $4$, then $\widehat{M}(w)$ is a unicycle that does not admit unitary structure (see \cref{lem: unitary M(w)}). In that case $\widetilde{M}(w,w^{\star})$ is isomorphic to the \emph{Serre tensor construction} of the unicycle $\widehat{M}(w)$; see \cref{prop: STC unicycle}. 

Similarly, if $w=zz$ and $z$ is a non-self-dual-primitive word of odd length, then $\widehat{M}(z,z^{\star})$ is a bicycle that does not admit unitary structure (since the Kraft diagram has a loop of odd length), and $\widetilde{M}(w,w^{\star})$ is the Serre tensor construction of the bicycle $\widehat{M}(z,z^{\star})$. See \cref{section:STC} for details on the Serre tensor construction and how it applies to our setting. 
\end{remark}

\begin{lemma} \label{lem: (Serre) bicycle indecomposable}
In all cases of \cref{def: (Serre) bicycles}, $\widetilde{M}(w,w^{\star})$ is indecomposable as a unitary $\bt$ module.
\end{lemma}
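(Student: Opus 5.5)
We argue by contradiction. Suppose $\widetilde{M}(w,w^{\star}) \cong \widetilde{N}\oplus\widetilde{P}$ with $\widetilde{N},\widetilde{P}$ nonzero unitary $\bt$ modules. Forgetting the unitary structure, this is in particular a decomposition of the underlying $\bt$ module $\overline{M}(w)\oplus\overline{M}(w^{\star})$. By the uniqueness in \cref{prop: Kraft}, the underlying $\bt$ modules of $\widetilde{N}$ and $\widetilde{P}$ are sums of the Kraft summands of $\overline{M}(w)\oplus\overline{M}(w^\star)$. So we first list these Kraft summands in each case of \cref{def: (Serre) bicycles}, and then show that no nontrivial splitting of them can carry unitary structure.

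In the unitary bicycle case, $w$ is primitive and non-self-dual, so the Kraft summands are $\overline{M}(w)$ and $\overline{M}(w^\star)$. These are not isomorphic. The only nontrivial split is $N\cong\overline{M}(w)$, $P\cong\overline{M}(w^\star)$. But $\widetilde{N}$ carries a polarization, so $\overline{M}(w)$ would be self-dual, i.e. $w\cong w^\star$ up to rotation. This contradicts non-self-duality.

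In the Serre unicycle case, $w$ is self-dual primitive of length divisible by $4$. Here $\overline{M}(w^\star)\cong\overline{M}(w)$, so the summands are two copies of $\overline{M}(w)$. A split forces $\widetilde{N}$ to have underlying $\bt$ module $\overline{M}(w)$ with unitary structure. By \cref{lem: unitary M(w)}, this requires $w=yy^\star$ with $\ell(y)$ odd. Since $w$ is primitive, \cref{lem: self-dual word} gives $w=yy^\star$ with $2\ell(y)=\ell(w)$, so $\ell(y)$ is even, a contradiction. The only point to check is that a decomposition as $\bt$ modules yields each summand with its own polarization. This follows from the definition of a unitary direct sum, since $\widetilde{N}$ is itself a unitary $\bt$ module.

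In the Serre bicycle case, $w=zz$ with $z$ primitive of odd length and non-self-dual. By \cref{rmk: M((w)^n) = M(w)^n}, the underlying module is $\overline{M}(z)^2\oplus\overline{M}(z^\star)^2$, and a proper summand $N$ has the form $\overline{M}(z)^i\oplus\overline{M}(z^\star)^j$ with $0\le i,j\le 2$. Self-duality of $N$ forces $i=j$, so $N\cong\overline{M}(z)\oplus\overline{M}(z^\star)$. It remains to show that this module admits no unitary structure. Suppose $N=N_0\oplus N_1$ with $F$ and $V$ exchanging the two pieces, and write $\mathbb{Z}/2$ for the two-element group. On the Kraft cycle of $\overline{M}(z)$, with basis $a_0,\dots,a_{\ell-1}$, the operators $F$ and $V$ carry $a_j$ to $a_{j\pm1}$ up to scalar, which would force a $\mathbb{Z}/2$-grading on a cycle of odd length. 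This is the main obstacle: the grading on $N$ need not be compatible with the chosen Kraft basis. To handle it, we use the $\mathbb{Z}/2$-grading on the whole $\bt$ module. The operator $\iota(\alpha)$ commutes with $F$ and $V$ only up to Frobenius twist, so a cleaner invariant is needed. The grading gives an endomorphism $\epsilon=\pm1$ on $N_0,N_1$ with $F\epsilon=-\epsilon F$ and $V\epsilon=-\epsilon V$. Composing around the cycle, the $\sigma$-semilinear word operator sending $a_0\mapsto c\,a_0$ (a composite of $F$ and $V^{-1}$ along the $\ell$ edges) anticommutes with $\epsilon$ an odd number of times. On the top graded pieces, the indecomposable summand $\overline{M}(z)$ is generated by a vector on which this composite acts invertibly. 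Decomposing that vector into its $N_0$ and $N_1$ parts and using primitivity (the stabilizer argument of Kraft) gives a contradiction, because the composite would swap $N_0$ and $N_1$ while preserving the $\overline{M}(z)$-isotypic line of length-$\ell$ cycles.

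An alternative, cleaner route would be to compute the signature-type invariants $\dim N_i[F]$ on each cycle. I would try the $\epsilon$-anticommutation argument first.
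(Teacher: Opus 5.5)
\textbf{Verdict.} Your unitary bicycle and Serre unicycle cases are correct and are essentially the paper's argument. In the Serre bicycle case the reduction is also right. The proof has a genuine gap, however: the final step is only sketched, and the sketch would not go through as written.

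\textbf{What is correct.} In the unitary bicycle case, non-self-duality of $w$ rules out a split. In the Serre unicycle case, \cref{lem: unitary M(w)} applies together with the fact that $\ell(y)=\ell(w)/2$ is even. In the Serre bicycle case, Kraft uniqueness plus self-duality of a polarizable summand show that a nontrivial unitary factor $\widetilde{N}$ must have underlying $\bt$ module $\overline{M}(z)\oplus\overline{M}(z^\star)$. The paper gets the same conclusion from Oort's decomposition into two copies of $\widehat{M}(z,z^\star)$.

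\textbf{The gap.} You must show that $\overline{M}(z)\oplus\overline{M}(z^\star)$, with $\ell=\ell(z)$ odd, admits no splitting $N=N_0\oplus N_1$ exchanged by $F$ and $V$. You correctly identify the obstacle but do not overcome it.
\begin{enumerate}
\item A composite of $F$'s and $V^{-1}$'s is not an operator on $N$, since $V$ is not injective.
\item $\epsilon$ need not preserve $\kk a_0$, nor the summand $\overline{M}(z)$.
\item ``Top graded pieces'' and ``the stabilizer argument of Kraft'' are never specified. Splitting a generating vector into homogeneous components therefore yields nothing.
\end{enumerate}
What is needed is a subquotient that is intrinsic to $(N,F,V)$ (hence graded), one-dimensional, and carries a bijective composite of $\ell$ maps of odd degree. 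None of this is established.

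Your fallback of computing $\dim N_i[F]$ cycle by cycle presupposes that the Kraft cycles are homogeneous, which is exactly the issue. Also, contrary to what you write, $\iota(\alpha)$ commutes with $F$ and $V$ exactly: for $m\in M_0$ one has $F(\alpha m)=\alpha^pF(m)=\iota(\alpha)F(m)$, because $F(m)\in M_1$.

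\textbf{How to close it.} The missing ingredient is the one the paper relies on: Moonen's standard basis (\cite[(4.9)]{Moonengsas}, recalled in \cref{subsec:indecompuni}).
\begin{enumerate}
\item Applied to $\widetilde{N}$, it gives a basis $B_0\cup B_1$ of $N_0\oplus N_1$ preserved by $F$ and $V$.
\item Its Kraft cycles alternate between $B_0$ and $B_1$. Hence $N\cong\bigoplus_i\overline{M}(w_i)$ with every $\ell(w_i)$ even.
\item Write $w_i=h_i^{n_i}$ with $h_i$ primitive. Uniqueness in \cref{prop: Kraft} forces each $h_i$ to be a rotation of $z$ or $z^\star$.
\item These have odd length, so each $n_i$ is even. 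Then $\overline{M}(z)$ would occur with even multiplicity, whereas it occurs exactly once.
\end{enumerate}

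Alternatively, your $\epsilon$ idea becomes a proof via the canonical filtration.
\begin{enumerate}
\item The filtration is built by applying $F$ and $V^{-1}$ to subspaces, and $\epsilon$ commutes with these operations up to sign. So every step is $\epsilon$-stable, i.e.\ graded (using $p\neq 2$).
\item Since $z$ is primitive and not a rotation of $z^\star$, the $\ell$ graded pieces indexed by rotations of $z$ are one-dimensional. Each therefore lies in a single degree.
\item Consecutive pieces are matched by bijections induced by $F$ or $V$, which change the degree.
\item Going once around a cycle of odd length $\ell$ then gives $d=d+1$ in $\zz/2$, a contradiction.
\end{enumerate}
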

\begin{proof}
First note that unitary bicycles are indecomposable as polarized $\bt$ modules, so in particular they are indecomposable as unitary $\bt$ modules.

Now, let $\widetilde{M}(w,w^{\star})$ be a Serre unicycle. Then the underlying $\bt$ module of $\widetilde{M}(w,w^{\star})$ is isomorphic to $\overline{M}(w) \oplus \overline{M}(w)$. Since $w$ is primitive, any non-trivial factor of $\widetilde{M}(w,w^{\star})$ would be isomorphic to $\overline{M}(w)$ as a $\bt$ module. But such a $\bt$ module does not admit unitary structure by \cref{lem: unitary M(w)}.

Finally, we consider the case when $\widetilde{M}(w,w^{\star})$ is a Serre bicycle. Then the underlying polarized $\bt$ module of $\widetilde{M}(w,w^{\star})$ is isomorphic to $\widehat{M}(z,z^{\star}) \oplus \widehat{M}(z,z^{\star})$ and both summands are indecomposable as polarized $\bt$ modules. Hence any non-trivial factor of $\widetilde{M}(w,w^{\star})$ would be isomorphic to $\widehat{M}(z,z^{\star})$ as a polarized $\bt$ module. But this is not possible, since the loop $\overline{M}(z)$ has odd length and so we cannot form subspaces $M_0$ and $M_1$ that are exchanged by $F$ and $V$. 
\end{proof}

\begin{remark} \label{rmk: don't rotate bicycles!}
Similar to \cref{rmk: don't rotate unicycles!}, unitary bicycles are also not invariant under rotation, but are invariant under double rotation. Moreover, dualizing $w$ has, up to isomorphism, the same effect as rotation:
$$\widetilde{M}(\tau(w),\tau(w)^{\star}) \cong \widetilde{M}(w,w^{\star})^* \cong \widetilde{M}(w^{\star},w),$$
so that $\widetilde{M}(\tau(w)^{\star},\tau(w)) \cong \widetilde{M}(w,w^{\star})$. Note the distinction between $\star$ (dual) and $*$ (conjugate) in the previous equation.

On the other hand, if $\widetilde{M}(w,w^{\star})$ is a Serre unicycle or a Serre bicycle, then it is self-conjugate and, therefore, invariant under rotation and under dualizing $w$.
\end{remark}

\begin{remark} \label{rmk: bicycle signature}
We compute the signature of a unitary $\bt$ module of the form $\overline{M}(w)\oplus \overline{M}(w^{\star})$ with $w=u_{q-1} \cdots u_0$ in the following way. By definition, $b=\dim_{\kk} (M_0[F])$ equals the number of $a_{2j}$ and $b_{2j-1}$ that are killed by $F$. We observe that $F(a_{2j})=0$ is equivalent to $u_{2j}=\word{v}$. Similarly, $F(b_{2j-1})=0$ is equivalent to $u_{2j-1}^{\star}=\word{v}$, which implies $u_{2j-1}=\word{f}$. We conclude
\begin{align*}
    a&= \#\{ 1\leq j \leq q/2 \; | \; u_{2j-1}=\word{v} \}+ \# \{0\leq j < q/2 \; | \; u_{2j}=\word{f}\},
     \\
    b&= \# \{0\leq j < q/2 \; | \; u_{2j}=\word{v}\} + \#\{ 1\leq j \leq q/2 \; | \; u_{2j-1}=\word{f} \} .
\end{align*}

If $\widetilde{M}(w,w^{\star})$ is a Serre unicycle or a Serre bicycle then, by \cref{def: (Serre) bicycles}, we must have $a=b$. We deduce that Serre unicycles can only occur in parallel signature $(a,a)$ with $a$ even, while Serre bicycles can only occur in parallel signature $(a,a)$ with $a$ odd.  

\end{remark}

\begin{example}\label{example: bicycles}
We demonstrate the above definitions of unitary $\bt$ modules $\widetilde{M}(w,w^{\star})$:  
\begin{enumerate}[label=(\roman*)]
    \item Consider $w = \word{ffffvf}$, a primitive and non-self-dual word of even length. Then $\widetilde{M}(w,w^\star)$ 
    is the unitary bicycle shown below. Note that applying \cref{rmk: bicycle signature}, we uncover the signature $(4,2)$.

\[\begin{tikzcd}[cramped]
	& a_2 & a_3 &&& b_5 & b_0 \\
	a_1 &&& a_4 & b_4 &&& b_1 \\
	& a_0 & a_5 &&& b_3 & b_2
	\arrow["F", from=1-2, to=1-3]
	\arrow["V"', curve={height=12pt}, from=1-2, to=2-1]
	\arrow["F", curve={height=-12pt}, from=1-3, to=2-4]
	\arrow["F", from=1-6, to=1-7]
	\arrow["V"', curve={height=12pt}, from=1-6, to=2-5]
	\arrow["V"', curve={height=12pt}, from=2-8, to=1-7]
	\arrow["F", curve={height=-12pt}, from=2-4, to=3-3]
	\arrow["V"', curve={height=12pt}, from=3-7, to=2-8]
	\arrow["F", curve={height=-12pt}, from=3-2, to=2-1]
	\arrow["F", from=3-3, to=3-2]
	\arrow["V"', curve={height=12pt}, from=2-5, to=3-6]
	\arrow["V"', from=3-6, to=3-7]
\end{tikzcd}\]   
    \item Consider $w = \word{ffvv}$, a primitive and self-dual word with length divisible by $4$. Then $\widetilde{M}(w,w^\star)$ is a Serre unicycle, and, by applying \cref{rmk: bicycle signature}, we find that the signature is $(2,2)$.

\[\begin{tikzcd}[cramped]
	& a_2 &&& b_2 \\
	a_1 && a_3 & b_1 && b_3 \\
	& a_0 &&& b_0
	\arrow["V"', curve={height=12pt}, from=1-2, to=2-1]
	\arrow["F", curve={height=-12pt}, from=1-2, to=2-3]
	\arrow["V"', curve={height=12pt}, from=1-5, to=2-4]
	\arrow["F", curve={height=-12pt}, from=1-5, to=2-6]
	\arrow["V"', curve={height=12pt}, from=2-1, to=3-2]
	\arrow["F", curve={height=-12pt}, from=2-3, to=3-2]
	\arrow["V"', curve={height=12pt}, from=2-4, to=3-5]
	\arrow["F", curve={height=-12pt}, from=2-6, to=3-5]
\end{tikzcd}\]

 \item Lastly, we consider $w=zz$, where $z=\word{ffv}$ is a primitive and non-self-dual word of odd length. Then $\widetilde{M}(w,w^\star)$ is a Serre bicycle in signature $(3,3)$.

\[\begin{tikzcd}[cramped]
	& a_1 & a_2 &&& b_2 & b_3 \\
	a_0 &&& a_3 & b_1 &&& b_4 \\
	& a_5 & a_4 &&& b_0 & b_5
	\arrow["F", from=1-2, to=1-3]
	\arrow["V"', curve={height=12pt}, from=1-2, to=2-1]
	\arrow["F", curve={height=-12pt}, from=1-3, to=2-4]
	\arrow["F", from=1-6, to=1-7]
	\arrow["V"', curve={height=12pt}, from=1-6, to=2-5]
	\arrow["V"', curve={height=12pt}, from=2-8, to=1-7]
	\arrow["V"', curve={height=12pt}, from=3-3, to=2-4]
	\arrow["V"', curve={height=12pt}, from=3-7, to=2-8]
	\arrow["F", curve={height=-12pt}, from=3-2, to=2-1]
	\arrow["F", from=3-3, to=3-2]
	\arrow["V"', curve={height=12pt}, from=2-5, to=3-6]
	\arrow["F", from=3-7, to=3-6]
\end{tikzcd}\] 
\end{enumerate} 

\end{example}

\subsection{A structure theorem for unitary \texorpdfstring{$\boldsymbol{\bt}$}{} modules}\label{subsec:indecompuni}

The goal of this section is to prove a structure theorem for unitary $\bt$ modules, akin to \cref{prop: Kraft,prop: Oort}. More precisely, we will prove that unitary $\bt$ modules decompose as products of unitary unicycles, unitary bicycles, Serre unicycles, and Serre bicycles. See \cref{thm: structure of unitary BT1s} for the precise statement.

Recall that \cite[(4.9)]{Moonengsas} gives a basis $B=B_0 \cup B_1$ that is preserved by $F$ and $V$ and such that $M_i = \text{span}_{\kk} B_i$ for $i\in \{0,1\}$. This gives a decomposition of $\bt$ modules
\begin{equation} \label{eq: decomp B basis}
(M,F,V)\cong \bigoplus_{w\in W} \overline{M}(w),
\end{equation}
where each $\overline{M}(w)$ is spanned by elements of $B$. Note that the words $w$ in this decomposition need not be primitive and the decomposition need not be unique. Observe that all words $w\in W$ must have even length, since the elements of $B_0$ and $B_1$ alternate in the Kraft diagram. 

\begin{lemma} \label{lem: shorten words}
Let $(M,F,V,\lambda,M=M_0 \oplus M_1)$ decompose as in Equation~\eqref{eq: decomp B basis}. Then we may assume the words $w\in W$ are either primitive or of the form $w=zz$, where $z$ is primitive of odd length.
\end{lemma}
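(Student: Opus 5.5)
Each summand $\overline{M}(w)$ with $w$ non-primitive will be replaced by smaller summands still spanned by vectors homogeneous for $M=M_0\oplus M_1$; \cref{rmk: M((w)^n) = M(w)^n} supplies the splitting. Write $w=z^n$ with $z$ primitive and $n>1$; $w$ has even length. The plan is to split $\overline{M}(z^n)$ into pieces spanned by homogeneous vectors and iterate. Since the number of non-primitive factors, or the total excess $\sum(\ell(w)-\ell(\text{primitive root}))$, strictly decreases, the process terminates. At the end each word is primitive or equals $zz$ with $z$ primitive of odd length.

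\textbf{Case 1: $\ell(z)$ even.} Apply the change of basis of \cref{rmk: M((w)^n) = M(w)^n} to the basis $\{a_0,\dots,a_{n\ell-1}\}$ of $\overline{M}(z^n)$, where $\ell=\ell(z)$. The new vector $b_{k,j}=\sum_i \alpha_k^{p^{j+i\ell}}a_{j+i\ell}$ involves only $a_{j+i\ell}$ with indices congruent to $j$ modulo $\ell$. Because $\ell$ is even, these indices all have the same parity as $j$. The basis elements of $B$ alternate between $B_0$ and $B_1$ along the Kraft cycle, so each $b_{k,j}$ lies in $M_0$ or in $M_1$. Hence $\overline{M}(z^n)\cong\overline{M}(z)^n$ with each copy of $\overline{M}(z)$ spanned by homogeneous vectors, giving $n$ primitive summands of even length. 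The new vectors are still permuted (or sent to $0$) by $F$ and $V$ and are homogeneous, so the new basis still satisfies the conventions behind Equation~\eqref{eq: decomp B basis}; the lemma only needs these properties, not membership in Moonen's $B$.

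\textbf{Case 2: $\ell(z)$ odd.} Then $n$ is even, say $n=2m$, and $w=(zz)^m$. If $m=1$ we are done. Otherwise set $y=zz$, of even length $2\ell$, and apply \cref{rmk: M((w)^n) = M(w)^n} to $w=y^m$. Each $b_{k,j}$ now combines the $a_{j+2i\ell}$, again of the same parity, so we obtain $m$ homogeneous copies of $\overline{M}(zz)$, as required.

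Decomposing $\overline{M}(zz)$ any further is impossible: it would give copies of $\overline{M}(z)$, an odd cycle that cannot be spanned by vectors alternating between $M_0$ and $M_1$. So $zz$ is the terminal form.

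\textbf{Main obstacle.} The delicate point is verifying that the change of basis in \cref{rmk: M((w)^n) = M(w)^n} keeps every new vector inside a single $M_i$. This is exactly the parity argument: in Case 1 the period $\ell$ is even, and in Case 2 we apply the remark to the even period $2\ell$ rather than $\ell$. One must also check that the statement only asks for the words to be of the stated shape, with $\lambda$ playing no role yet.
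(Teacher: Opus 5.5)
Your proposal is correct and follows the paper's proof. You apply the explicit isomorphism of \cref{rmk: M((w)^n) = M(w)^n} with an even-length repeated word ($z$ itself, or $zz$ when $\ell(z)$ is odd), use parity to see that every new basis vector stays in $M_0$ or $M_1$, so the unitary structure (including $\lambda$, which a change of basis inside $M$ leaves untouched) is unaffected, and stop at $zz$ with $z$ primitive of odd length.

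One small caveat you inherit from that remark: the coefficient matrix $(\alpha_k^{p^{j+ie}})_{k,i}$, with $e$ the length of the repeated word, is a Moore matrix in $p^{e}$-th powers. So the $\alpha_k$ must be linearly independent over $\ff_{p^{e}}$, not merely over $\ff_p$, for the $b_{k,j}$ to form a basis. Such $\alpha_k$ exist, so the argument is unaffected.
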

\begin{proof}
Consider the isomorphism $\phi:\overline{M}(w^n) \overset{\sim}{\to} \overline{M}(w)^n$ as is detailed in \cref{rmk: M((w)^n) = M(w)^n}. When $w$ has even length, $\phi$ preserves the summands $M_0$ and $M_1$. Therefore it provides bases $B'_i$ of $M_i$ such that $B'=B'_0 \cup B'_1$ is preserved by $F$ and $V$. As for the pairing, $\phi$ maps $\lambda$ to the pairing $\lambda_{\phi}(-,-)=\lambda(\phi^{-1}(-) , \phi^{-1}(-))$. Since $\phi$ is an isomorphism and $\lambda$ is a symplectic pairing, $\lambda_{\phi}$ is a symplectic pairing. Furthermore, since $\phi$ preserves $M_0$ and $M_1$, they are Lagrangian with respect to $\lambda_{\phi}$. We conclude that we can apply $\phi$ without affecting the unitary structure.

Repeatedly applying isomorphisms like $\phi$ to $w\in W$ with even length eventually leads to a basis $B'$ and decomposition as in Equation~\eqref{eq: decomp B basis} where every word $w$ is either primitive or of the form $w=zz$, where $z$ is primitive of odd length. In the latter case, applying $\phi$ would not preserve the unitary structure and hence we cannot simplify further. \end{proof}

We assume from here on out that all the words $w$ in Equation~\eqref{eq: decomp B basis} are either primitive or of the form $w=zz$, where $z$ is primitive of odd length. 

\begin{lemma} \label{lem: M(w) and M(w-c) both}
Let $w$ be a word such that $\overline{M}(w)$ does not admit unitary structure and suppose that $\overline{M}(w)$ occurs in Equation~\eqref{eq: decomp B basis} with multiplicity $m$. Then $\overline{M}(w^{\star})$ also occurs with multiplicity $m$.
\end{lemma}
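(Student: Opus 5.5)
The plan is to reduce the statement to the uniqueness part of Kraft's decomposition (\cref{prop: Kraft}), combined with the self-duality of the underlying $\bt$ module. Since $(M,F,V,\lambda)$ is a polarized $\bt$ module, the pairing $\lambda$ identifies $(M,F,V)$ with its dual $\kk\set{F,V}$-module. By \Dd theory the dual of $\overline{M}(u)$ is $\overline{M}(u^{\star})$ for any word $u$. Hence dualizing Equation~\eqref{eq: decomp B basis} gives
\[
(M,F,V)\cong\bigoplus_{u\in W}\overline{M}(u^{\star}).
\]

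Next I refine both decompositions into Kraft decompositions by primitive words. By \cref{lem: shorten words}, each $u\in W$ is either primitive of even length, or of the form $u=zz$ with $z$ primitive of odd length; in the latter case $\overline{M}(zz)\cong\overline{M}(z)^2$ by \cref{rmk: M((w)^n) = M(w)^n}. So the Kraft multiset of $(M,F,V)$ consists of two kinds of entries: the primitive even-length words in $W$, and each odd-length $z$ counted twice for every occurrence of $zz$ in $W$. Applying $\star$ preserves length and primitivity, so the Kraft multiset of the dual is obtained by dualizing every entry. By the uniqueness in \cref{prop: Kraft}, the two multisets agree up to rotation. The key bookkeeping point is that the even-length and odd-length parts of the multisets can be compared separately, since rotation and $\star$ preserve length.

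Now I split into cases according to the shape of $w$.
\begin{enumerate}
\item If $w$ is primitive (necessarily of even length, as all words in $W$ are), then comparing multiplicities of the rotation class of $w$ among even-length primitive words gives: the multiplicity of $w$ in $W$ equals the multiplicity of $w^{\star}$ in $W$. Here multiplicities are counted up to rotation, which is how ``occurs'' is meant since $\overline{M}(\tau^n(u))\cong\overline{M}(u)$. When $w$ is self-dual the claim is trivial, because $w^{\star}$ is a rotation of $w$.
\item If $w=zz$ with $z$ odd and primitive, then among odd-length primitive words, $z$ has Kraft multiplicity $2m$. Hence $z^{\star}$ also has Kraft multiplicity $2m$, and the only way an odd-length word can appear is through an entry $z^{\star}z^{\star}$ of $W$ (up to rotation). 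So $z^{\star}z^{\star}=w^{\star}$ occurs exactly $m$ times.
\end{enumerate}
The hypothesis that $\overline{M}(w)$ admits no unitary structure is not really needed for the counting. It only singles out the cases of interest, namely non-self-dual primitive $w$, self-dual primitive $w$ of length divisible by $4$, and $w=zz$.

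The main obstacle I expect is justifying the identification of the dual module summand-by-summand while keeping track of rotation classes. In particular, in the $zz$ case I must make sure that rotations of $zz$ correspond to rotations of $z$, so that no mixing occurs between the contributions of different odd words. This is handled by the fact that $\tau^k(zz)=\tau^k(z)\tau^k(z)$.
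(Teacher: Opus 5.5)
Your Kraft-uniqueness argument is correct for primitive non-self-dual $w$ and for $w=zz$ with $z$ odd. Comparing the Kraft multisets of $(M,F,V)$ and of its dual, to which $\lambda$ makes it isomorphic, does give equal multiplicities of $[w]$ and $[w^\star]$. Your bookkeeping via $\tau^k(zz)=\tau^k(z)\tau^k(z)$ is also fine. This is essentially the observation in the remark after the lemma that the non-self-dual case follows from \cref{prop: Oort}.

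The gap is the case $w=yy^\star$ with $\ell(y)$ even, which you dismiss as trivial. Your remark that the hypothesis ``$\overline{M}(w)$ admits no unitary structure'' is never used is the symptom. The remark after the lemma says the lemma is meant to cover exactly this case. The next step, Equation~\eqref{eq: B decomposition 2}, groups all copies of such $\overline{M}(w)$ into summands $\overline{M}(w)\oplus\overline{M}(w^\star)$. So the content actually needed is that the copies of $\overline{M}(yy^\star)$ pair off with \emph{other} copies, i.e.\ that its multiplicity is even. Your literal reading ($m=m$) does not give this. No argument using only the self-duality of $(M,F,V)$ can give it either: the unicycle $\widehat{M}(\word{ffvv})$ is a polarized $\bt$ module in which $\overline{M}(\word{ffvv})$ occurs exactly once. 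The Rosati condition has to enter.

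One way to supply the missing step, which is the rigorous form of the paper's sketch, is to work in Moonen's basis before shortening words.
\begin{enumerate}
\item Take the standard pairing $\lambda(e_{0,j},e_{1,j'})=c$ if $j+j'=q+1$ and $0$ otherwise. Then each $x\in B$ pairs nontrivially with exactly one $x^\dagger\in B$, and $\dagger$ is an involution exchanging $B_0$ and $B_1$.
\item Since $F,V$ map $B$ into $B\cup\{0\}$, the identity $\lambda(F(x),y)=\lambda(x,V(y))^p$ gives $F(x)=x'\Rightarrow V(x'^\dagger)=x^\dagger$ and $V(x)=x'\Rightarrow F(x'^\dagger)=x^\dagger$.
\item Hence $\dagger$ permutes the Kraft cycles on $B$. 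It sends a cycle $a_0,\dots,a_{\ell-1}$ with word $u$ to the cycle $a_0^\dagger,\dots,a_{\ell-1}^\dagger$ with word $u^\star$.
\item If a cycle is fixed, $\dagger$ acts on it as $a_j\mapsto a_{j+k}$ with $\tau^k(u)=u^\star$. Here $k$ is odd because $\dagger$ swaps $M_0$ and $M_1$.
\item For non-self-dual $u$ this is impossible. For $u=(yy^\star)^n$ with $yy^\star$ primitive it forces $k\equiv \ell(y)\pmod{2\ell(y)}$, which is impossible when $\ell(y)$ is even.
\end{enumerate}
So cycles of non-unitary type are matched with distinct cycles of the dual type. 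Applying \cref{lem: shorten words} to each cycle turns $h^n$ into $n$ copies of $h$, or $n/2$ copies of $hh$ if $\ell(h)$ is odd. This yields the equality of multiplicities and, in the self-dual case, the evenness the paper needs.

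Do this before shortening. After the transport of structure in \cref{lem: shorten words}, the pairing need no longer be monomial in the new basis.
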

\begin{proof}
Since $\overline{M}(w)$ does not admit unitary structure, it must be that any pairing $\lambda$ that makes $(M,F,V,\lambda,M=M_0 \oplus M_1)$ a unitary $\bt$ module is degenerate when restricted to $\overline{M}(w)$. Let $\lambda$ be any such pairing and let $e_{i,j} \in \overline{M}(w)$ be an element that pairs trivially with all of $\overline{M}(w)$. Let $e_{i',j'} \in M$ be such that $\lambda(e_{i,j},e_{i',j'})\neq 0$. Then it follows from the condition $\lambda(F(x),y) = \lambda(x,V(y))^p$ that $e_{i',j'}$ must lie on a copy of $\overline{M}(w^{\star})$. 
\end{proof}

\begin{remark}
If $w$ is non-self-dual, then \cref{lem: M(w) and M(w-c) both} follows immediately from \cref{prop: Oort}. However, \cref{lem: M(w) and M(w-c) both} also covers the case when $w=yy^{\star}$ for $y$ of even length, so that $\overline{M}(w)$ is polarizable but does not admit unitary structure.
\end{remark}

Observe that the summands $\overline{M}(zz)$ in Equation~\eqref{eq: decomp B basis} are not polarizable, since $z$ has odd length and is therefore non-self-dual. Moreover, \cref{lem: unitary M(w)} shows that, for $w$ primitive, $\overline{M}(w)$ admits unitary structure exactly when $w=yy^\star$ with $\ell(y)$ odd.

We conclude that there is a decomposition of $\bt$ modules
\begin{equation} \label{eq: B decomposition 2}
    (M,F,V) \cong \bigoplus_{w\in W'_1} \overline{M}(w) \oplus \bigoplus_{w\in W'_2} \left(\overline{M}(w) \oplus \overline{M}(w^{\star}) \right),
\end{equation}
where each $\overline{M}(w)$ is spanned by elements of $B$. Here $W'_1$ is a multiset of primitive words $w=yy^\star$ with $\ell(y)$ odd, and $W'_2$ is a multiset of words that either are primitive, but not of the form $w=yy^\star$ with $\ell(y)$ odd, or are of the form $w=zz$, with $z$ primitive of odd length.

A priori, Equation~\eqref{eq: B decomposition 2} only gives a decomposition of $\bt$ modules and does not take the pairing into account. When $(M,F,V,\lambda,M=M_0\oplus M_1)$ is a unitary $\bt$ module, the pairing $\lambda$ is unique up to an isomorphism of unitary $\bt$ modules (see \cite[Theorem~6.7]{Moonengsas} or \cite[Theorem~(9.4)]{OortStrat}). Thus, it suffices to exhibit one suitable pairing.

\begin{lemma} \label{lem: construct pairing}
Let $(M,F,V,\lambda,M=M_0 \oplus M_1)$ decompose as in Equation~\eqref{eq: B decomposition 2}. Then we may assume $\lambda$ is of the following form:
\begin{enumerate}
    \item For $w\in W'_1$, the pairing $\lambda$ on $\overline{M}(w)$ is as described in \cref{lem: pairing unicycle};
    \item For $w\in W'_2$, the pairing $\lambda$ on $\overline{M}(w) \oplus \overline{M}(w^{\star})$ is as described in \cref{lem: pairing bicycle}.
\end{enumerate}

\end{lemma}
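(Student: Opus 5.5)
The plan is to use the uniqueness statement quoted just before the lemma: by \cite[Theorem~6.7]{Moonengsas} (or \cite[Theorem~(9.4)]{OortStrat}), the unitary $\bt$ module is determined up to isomorphism by its underlying $\bt$ module with the grading $M=M_0\oplus M_1$ given by the basis $B$. So it suffices to exhibit one pairing $\lambda'$ on $(M,F,V)$, decomposed as in Equation~\eqref{eq: B decomposition 2}, that makes $(M,F,V,\lambda',M=M_0\oplus M_1)$ a unitary $\bt$ module and has the stated form on each summand. We then replace $\lambda$ by $\lambda'$.

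To build $\lambda'$, we take the orthogonal direct sum of local pairings. On each summand $\overline{M}(w)$ with $w\in W'_1$, where $w=yy^\star$ is primitive with $\ell(y)$ odd, we use the pairing of \cref{lem: pairing unicycle}. The grading $M_0\cap\overline{M}(w)$, $M_1\cap\overline{M}(w)$ induced by $B$ is spanned by alternate basis vectors $a_j$. After possibly rotating by one step, which only conjugates the structure as in \cref{rmk: don't rotate unicycles!}, this is exactly the grading of \cref{lem: unitary M(w)}. That lemma then shows the summands are Lagrangian.

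On each pair $\overline{M}(w)\oplus\overline{M}(w^\star)$ with $w\in W'_2$, we use the pairing of \cref{lem: pairing bicycle}. Here we must check that the $B$-grading matches the one in \cref{lem: unitary M(w)+M(w)-c}, namely that $a_j$ and $b_j$ lie in opposite summands. The grading on each cycle is determined by the parity of the index up to a global swap on that cycle, since every word has even length and $F,V$ alternate the summands. If both cycles carry the same parity convention, we relabel $b_j\mapsto b_{j+2}$, or rotate $w^\star$ appropriately, to fix this.

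\textbf{Main obstacle.} The delicate point is this relabeling. A rotation of $w^\star$ by one step changes the word, so we must instead choose the companion copy so that the index shift is compatible. Since $w$ has even length, we can shift the pairing to $\lambda(a_i,b_{j})=\delta_{i,j+1}$-type conventions, or equivalently pair $\overline{M}(w)$ with a rotation $\tau^{2k+1}$ chosen so the grading matches. Dually, the multiplicity matching from \cref{lem: M(w) and M(w-c) both} lets us choose which copy of $\overline{M}(w^\star)$ pairs with which copy of $\overline{M}(w)$. For $w=zz$, any copy of $\overline{M}(w^\star)$ will do.

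Finally, we verify the global conditions. Nondegeneracy and the relation $\lambda'(Fx,y)=\lambda'(x,Vy)^p$ hold summand by summand, by \cref{lem: pairing unicycle,lem: pairing bicycle}. Homogeneity of $F$ and $V$ is built into $B$. $M_0$ and $M_1$ are Lagrangian because they are Lagrangian in each orthogonal block. This gives a unitary $\bt$ module of the same signature and the same underlying graded $\bt$ module, so the uniqueness theorem identifies it with the original.
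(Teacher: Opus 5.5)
Your strategy is the paper's: use uniqueness of the pairing up to isomorphism, and exhibit one pairing as an orthogonal sum of the pairings of \cref{lem: pairing unicycle,lem: pairing bicycle}. The paper's proof just cites \cref{lem: unitary M(w),lem: unitary M(w)+M(w)-c}, tacitly assuming that the $B$-grading on each block is the one used there, so you are right to flag the alignment issue. For $w\in W'_1$ nothing needs fixing: the pairing only links $a_i$ with $a_{i+q}$ with $q$ odd, so it is Lagrangian for either parity convention. For $w=zz$ your remark is correct, since the aligned labelling of the partner cycle may be shifted by the odd number $\ell(z)$.

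Your repair of the misaligned case for primitive $w\in W'_2$, however, would fail. Label the partner cycle by $b_j$ so that its word is exactly $w^\star$ in the indexing of the $a_i$. For primitive $w$ this labelling is unique, so the degree of $b_0$ is forced. The shift $b_j\mapsto b_{j+2}$ does not change parity. Moreover, a pairing linking each $a_i$ with $b_{i+s}$ satisfies $\lambda(F(a_i),b_j)=\lambda(a_i,V(b_j))^p$ only if $\tau^s(w)=w$, i.e.\ $\ell(w)\mid s$, which forces $s$ to be even. So neither a $\delta_{i,j+1}$-type pairing nor pairing with $\tau^{2k+1}(w^\star)$ is compatible with $F$ and $V$.

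What is missing is the reason the misaligned case never arises once partners are chosen correctly. The original $\lambda$ identifies $(M,M_0,M_1)$ with its graded dual. The graded dual of the cycle $\overline{M}(w)$ with $a_0\in M_\epsilon$ is the aligned cycle $\overline{M}(w^\star)$ with $b_0\in M_{1-\epsilon}$. By Krull--Schmidt for finite-length $\zz/2$-graded $\kk\{F,V\}$-modules, these two graded isomorphism classes occur in the $B$-decomposition with equal multiplicity. Match copies accordingly, and when $\epsilon=1$ relabel both cycles by one step (replacing $w$ by $\tau(w)$). This puts every block in the grading of \cref{lem: unitary M(w)+M(w)-c}. \cref{lem: M(w) and M(w-c) both} cannot supply this matching, since it counts copies of $\overline{M}(w^\star)$ without regard to their grading.
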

\begin{proof}
Since the pairing is unique up to isomorphism of unitary $\bt$ modules, it suffices to show that $\lambda$ can be of the given form. It follows from \cref{lem: pairing unicycle,lem: pairing bicycle} that $(M,F,V,\lambda)$ is a polarized $\bt$ module. Moreover, \cref{lem: unitary M(w),lem: unitary M(w)+M(w)-c} provide that $M_0$ and $M_1$ are Lagrangian with respect to $\lambda$.
\end{proof}

We can now state and prove our structure theorem for unitary $\bt$ modules.

\begin{theorem} \label{thm: structure of unitary BT1s}
Any unitary $\bt$ module $\widetilde{M}=(M,F,V,\lambda,M=M_0\oplus M_1)$ decomposes as a product of unitary unicycles, unitary bicycles, Serre unicycles, and Serre bicycles. Formally, we have
\begin{equation*} \label{eq: unitray structure thm}
\widetilde{M} \cong \bigoplus_{w \in W_1} \widetilde{M}(w) \oplus \bigoplus_{w\in W_2} \widetilde{M}(w,w^{\star}) \oplus \bigoplus_{w\in W_3} \widetilde{M}(w,w^{\star}) \oplus \bigoplus_{w \in W_4} \widetilde{M}(w,w^{\star}),
\end{equation*}
where 
\begin{enumerate}[label=$\mathrlap{\oplus}\otimes$]
    \item $W_1$ is a multiset of primitive self-dual words $w=yy^{\star}$ where $y$ has odd length (unitary unicycles);
    \item $W_2$ is a multiset of primitive non-self-dual words of even length (unitary bicycles);
    \item $W_3$ is a multiset of primitive self-dual words $w=yy^{\star}$ where $y$ has even length (Serre unicycles);
    \item $W_4$ is a multiset of words $w=zz$ where $z$ is primitive of odd length (Serre bicycles).
\end{enumerate}
The decomposition is unique up to the action of $\tau^2$ on words in $W_1$ and $W_2$, the action of $w \mapsto \tau(w)^{\star}$ on words in $W_2$, the action of $\tau$ on $W_3$ and $W_4$, and the action of $w \mapsto w^{\star}$ on $W_4$.
\end{theorem}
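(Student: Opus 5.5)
Existence comes essentially from assembling the lemmas of this subsection, and uniqueness from Kraft's theorem plus bookkeeping of the unitary structure.

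\emph{Existence.} Start from Moonen's standard basis $B=B_0\cup B_1$, which gives the decomposition \eqref{eq: decomp B basis} into words of even length. By \cref{lem: shorten words}, each word may be taken to be primitive or of the form $zz$ with $z$ primitive of odd length. Now sort the summands:
\begin{enumerate}[label=(\roman*)]
\item Primitive words $w=yy^\star$ with $\ell(y)$ odd admit unitary structure by \cref{lem: unitary M(w)}; they form $W_1$.
\item Every other summand $\overline{M}(w)$ (a primitive non-self-dual word, a word $yy^\star$ with $\ell(y)$ even, or a word $zz$) does not admit unitary structure. By \cref{lem: M(w) and M(w-c) both}, these summands pair off with copies of $\overline{M}(w^\star)$ of equal multiplicity, which yields \eqref{eq: B decomposition 2}.
\end{enumerate}
One subtlety in (ii): for $w=yy^\star$ with $w^\star$ a rotation of $w$, the pairing must match two copies of $\overline{M}(w)$. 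I would handle this by counting multiplicities; the lemma's argument shows each such copy pairs nondegenerately only with another copy. If the multiplicity were odd, a leftover copy would have to pair with itself, contradicting the lemma. I would also need to choose the basis of the partner copy so that its $M_0/M_1$ parity matches the convention of \cref{lem: unitary M(w)+M(w)-c}. A rotation by an odd step swaps parity, and since $\ell(y)$ is even here, $w^\star=\tau^{\ell(y)}(w)$ with $\ell(y)$ even, so the relabeling can be made parity-preserving; I would verify this carefully.

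With \eqref{eq: B decomposition 2} in hand, \cref{lem: construct pairing} lets us replace $\lambda$ by the explicit block-diagonal pairing. This exhibits $\widetilde{M}$ as the direct sum of the $\widetilde{M}(w)$ for $w\in W_1$ and of the $\widetilde{M}(w,w^\star)$ for the paired words. The paired words then sort by \cref{def: (Serre) bicycles} into $W_2$ (primitive non-self-dual), $W_3$ (self-dual primitive with length divisible by $4$) and $W_4$ (words $zz$).

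\emph{Uniqueness.} Apply \cref{prop: Kraft} to the underlying $\bt$ module. The multiset of primitive words is unique up to rotation, and this determines the primitive words of $W_1$, the pairs $\{w,w^\star\}$ in $W_2$, and the words $W_3$ and $z$ (for $W_4$) up to rotation and duality, since each type contributes a distinguishable pattern of primitive Kraft words. Concretely, $W_1$ contributes self-dual words with $\ell(y)$ odd appearing once, $W_3$ contributes self-dual words with $\ell(y)$ even appearing twice, $W_2$ contributes non-self-dual words of even length, and $W_4$ contributes odd-length words.

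It remains to track the residual ambiguity coming from the unitary structure. By \cref{rmk: don't rotate unicycles!,rmk: don't rotate bicycles!}, rotating by $\tau$ conjugates unitary unicycles and bicycles, while Serre modules are self-conjugate. For $W_1$ and $W_2$, I expect the main obstacle to be showing that the $M_0/M_1$ labeling of each isotypic block is determined, so that only $\tau^2$ (and $w\mapsto\tau(w)^\star$) survives. My first approach is to use signature-type invariants restricted to isotypic components, such as $\dim (N\cap M_0)[F]$ for the $\overline{M}(w)$-isotypic part $N$; this is canonical because the isotypic part is functorial. If that fails, I would instead invoke Moonen's bijection (\cref{thm:Moonenbij}): isomorphism classes are classified by the Moonen standard objects, which are determined by the $F,V$-action on the graded basis up to the stated moves.
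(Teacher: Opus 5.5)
Your existence half is the paper's proof. The published argument applies \cref{lem: construct pairing} to \eqref{eq: B decomposition 2} and sorts $W_2'$ into $W_2,W_3,W_4$. Your multiplicity/parity bookkeeping for the $W_3$ blocks is a reasonable refinement of \cref{lem: M(w) and M(w-c) both}; it becomes literal if you keep Moonen's standard pairing $\lambda(e_{0,j},e_{1,j'})=c\,\delta_{j+j',q+1}$, which matches basis vectors and hence matches cycles spanned by $B$ with cycles.

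The gap is in uniqueness. The paper only points to \cref{rmk: don't rotate unicycles!,rmk: don't rotate bicycles!}, i.e.\ it checks that the listed moves do not change the isomorphism class. You aim at the converse, and the mechanism you propose does not work.

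First, ``the $\overline{M}(w)$-isotypic part'' is not functorial. $\bt$ modules are not semisimple, and there are nonzero maps between non-isomorphic indecomposables, even maps respecting $M_0\oplus M_1$. For example, $a_0\mapsto a_0$, $a_1\mapsto 0$ defines a map $\overline{M}(\word{vf})\to\overline{M}(\word{fffvvv})$, because $a_0\in\overline{M}(\word{fffvvv})$ is killed by both $F$ and $V$. So such a summand is only defined up to automorphism, and comparing two decompositions already needs a Krull--Schmidt statement.

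Second, and decisively, the invariant you name is too coarse. Take $w=\word{ffffvv}$. The unitary bicycle $\widetilde{M}(w,w^\star)$ and its conjugate both have signature $(3,3)$. For both, $\dim(N\cap M_0)[F]$ is $1$ on the $\overline{M}(w)$-part and $2$ on the $\overline{M}(w^\star)$-part. Yet they are not isomorphic: \cref{thm: word Weyl} assigns them $\gamma_{1,4,6}$ and $\gamma_{2,4,5}$, and they are the two unitary bicycles of signature $(3,3)$ counted in \cref{table:parallel-sig-list}. Your fallback, that Moonen's standard objects are ``determined by the graded basis up to the stated moves'', restates what has to be proved.

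The missing ingredient is Krull--Schmidt in the graded category.
\begin{itemize}
\item By uniqueness of the pairing (as used in \cref{lem: construct pairing}), $\widetilde{M}$ is determined by the graded $\bt$ module $(M,F,V,M_0\oplus M_1)$. Via $\iota$, this is a finite-length module over $\kk\{F,V\}\otimes_{\ff_p}\ff_{p^2}$, so its decomposition into graded indecomposables is unique up to isomorphism.
\item Splitting Moonen's basis cycles as in \cref{lem: shorten words}, the graded indecomposables are the cycles $\overline{M}(w)$ with $a_0\in M_0$, where $w$ is primitive of even length or $w=zz$ with $z$ primitive of odd length. The cycle $zz$ cannot split, since any graded $\bt$ module has $\dim M_0=\dim M_1$.
\item Two of these are graded-isomorphic iff the words lie in the same $\tau^2$-orbit. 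For primitive $w$, an automorphism of $\overline{M}(w)$ preserves the canonical filtration, whose successive quotients are spanned by single vectors $a_j$, so it cannot carry $M_0$ onto $M_1$.
\item Finally, $\widetilde{M}(w)$ contributes the orbit $[w]$, and $\widetilde{M}(w,w^\star)$ contributes $[w]$ and $[\tau(w^\star)]$ (\cref{remark:multiset W}). For $W_3$ these are $[w]$ and $[\tau w]$; for $W_4$ they are all rotations of $zz$ and all rotations of $z^\star z^\star$.
\end{itemize}
Reading off when two choices of words give the same multiset of $\tau^2$-orbits yields exactly the moves in the statement.
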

\begin{proof}
Applying \cref{lem: construct pairing} to Equation~\eqref{eq: B decomposition 2} gives the decomposition
$$\widetilde{M} \cong \bigoplus_{w\in W'_1} \widetilde{M}(w) \oplus \bigoplus_{w\in W'_2} \widetilde{M}(w,w^{\star}).$$
By definition $W_1'=W_1$. On the other hand, $W'_2$ is a multiset of words that are either primitive, but not of the form $w=yy^\star$ with $\ell(y)$ odd, or of the form $w=zz$, with $z$ primitive of odd length.
It is clear that the case $w=zz$ corresponds to the multiset $W_4$. Furthermore, if $w\in W_2'$ is primitive and self-dual, it must be of the form $w=yy^{\star}$ by \cref{lem: self-dual word} and $\ell(y)$ must be even, so that $w\in W_3$. The remaining $w\in W_2'$ lie in $W_2$.

For the uniqueness statement, see \cref{rmk: don't rotate unicycles!} for $W_1$ and \cref{rmk: don't rotate bicycles!} for $W_2$, $W_3$, and $W_4$.
\end{proof}

\begin{corollary} \label{cor: indecomposables}
If $\widetilde{M}$ is indecomposable as a unitary $\bt$ module, then $\widetilde{M}$ is isomorphic to a unitary unicycle, a unitary bicycle, a Serre unicycle, or a Serre bicycle.
\end{corollary}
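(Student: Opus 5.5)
The corollary should follow directly from \cref{thm: structure of unitary BT1s}, so the plan is to apply that decomposition to an indecomposable $\widetilde{M}$ and count summands. By the theorem there are multisets $W_1,\dots,W_4$ with
\[
\widetilde{M} \cong \bigoplus_{w \in W_1} \widetilde{M}(w) \oplus \bigoplus_{w\in W_2} \widetilde{M}(w,w^{\star}) \oplus \bigoplus_{w\in W_3} \widetilde{M}(w,w^{\star}) \oplus \bigoplus_{w \in W_4} \widetilde{M}(w,w^{\star}),
\]
and this is a decomposition as unitary $\bt$ modules. Indeed, each summand carries its own $\lambda$, from \cref{lem: pairing unicycle} or \cref{lem: pairing bicycle}, and its own grading $M_0\oplus M_1$, from \cref{lem: unitary M(w)} or \cref{lem: unitary M(w)+M(w)-c}. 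The summands are mutually $\lambda$-orthogonal by the construction in \cref{lem: construct pairing}. So each summand is a unitary $\bt$ submodule, and the sum is a direct sum in the category of unitary $\bt$ modules.

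Next I will rule out the trivial case. We take $M\neq 0$, since indecomposable objects are nonzero by convention. Then at least one of the multisets $W_1,\dots,W_4$ is nonempty.

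If the total number of summands were at least two, we could group one summand against the sum of all the others. This writes $\widetilde{M}$ as a direct sum of two nonzero unitary $\bt$ modules, which contradicts indecomposability. Hence exactly one multiset has exactly one element, and every other multiset is empty.

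Reading off which multiset is nonempty then gives the four cases. If it is $W_1$, then $\widetilde{M}\cong\widetilde{M}(w)$ with $w=yy^\star$ primitive and $\ell(y)$ odd, which is a unitary unicycle by \cref{def: unitary unicycle}. If it is $W_2$, $W_3$, or $W_4$, then by \cref{def: (Serre) bicycles} $\widetilde{M}$ is a unitary bicycle, a Serre unicycle, or a Serre bicycle, respectively. In the $W_3$ case the word has length $2\ell(y)$ with $\ell(y)$ even, so its length is divisible by $4$, as that definition requires.

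The only step needing care is the first one: checking that the isomorphism in the theorem respects the full unitary structure, not just the underlying $\bt$ modules. I expect this to go through because the proof of the theorem builds the pairing summand by summand through \cref{lem: construct pairing}, and invokes uniqueness of the pairing up to unitary isomorphism. Since that step is part of the theorem's proof, the corollary needs no argument beyond this bookkeeping.
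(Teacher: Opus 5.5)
Your proposal is correct and matches the paper: the corollary is stated there without separate proof, as an immediate consequence of \cref{thm: structure of unitary BT1s}. Your bookkeeping is exactly the implicit argument. A nonzero indecomposable module admits only one summand in that decomposition, and you then read off its type from which multiset $W_i$ contains it, including the length-divisible-by-$4$ check for $W_3$.
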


\begin{remark}\label{rmk: sign-fix-indecomp}
    Consider integers $q > 0$ and $0 \leq b \leq q$. \cref{thm: structure of unitary BT1s} allows us to list the indecomposable types occurring for a given signature $(q-b,b)$. This depends on the parity of $q$. 

Case I. \emph{$q$ is odd}.

In this case, the indecomposable unitary $\bt$ modules of signature $(q-b,b)$ are of the form $\widetilde{M}(w)$, where $w$ is a primitive self-dual word of length $2q$ in which $\word{v}$ occurs in $b$ places with even index. In other words, when $q$ is odd, the only indecomposable type possible is that of a unitary unicycle.

Case II. \emph{$q$ is even}.

In this case, the indecomposable unitary $\bt$ modules of signature $(q-b,b)$ are of the form $\widetilde{M}(w,w^\star)$ with $b$ equaling the sum of the number of $\word{v}$'s in even places and the number of $\word{f}$'s in odd places of $w$. There are three indecomposable types for $q$ even. 

For any even $q>2$, the indecomposable type of a unitary bicycle occurs. In this case, the word $w$ is a non-self-dual-primitive word of even length. The other two indecomposable types occur in the case of parallel signature $(q/2,q/2)$, depending on the parity of $q/2$.
    \begin{enumerate}[label=$\rhd$,leftmargin=1.75em]
        \item When $q\equiv 0 \bmod{4}$, the indecomposable type of a Serre unicycle occurs. In this case, the word $w$ is a self-dual-primitive word whose length is divisible by $4$.
        \item When $q\equiv 2 \bmod{4}$, the indecomposable type of a Serre bicycle occurs. In this case, the word $w=zz$ for a primitive word $z$ of odd length. 
    \end{enumerate}

    When $q = 2$, indecomposables only appear in signature $(1,1)$, where there is a unique one with the indecomposable type of a Serre bicycle.
    
    See \cref{example:unicycle diagram,example: bicycles} for specific examples of each indecomposable type.

    In \cref{appendix:indecomp-count}, we give the number of unitary unicycles, unitary bicycles, Serre unicycles and Serre bicycles that occur with signature $(q-b,b)$.
\end{remark}

\subsection{Epilogue: The Serre Tensor Construction}\label{section:STC}

In this section, we discuss the Serre tensor construction of an abelian variety (and of a $p$-divisible group) and its relation to $\bt$ modules. For more details, see \cite{stc-amir,cm-lifting,conrad-gross-zagier,Serre2002Appendix}.

Consider a polarized abelian variety $(A,\lambda)$ over $\kk$, and let $K$ be an imaginary quadratic field such that $p$ is inert in $K$. Then, the \emph{Serre tensor construction} of $A$, denoted $\mathcal{O}_K \otimes_{\zz} A$, is an abelian variety over $\kk$ with a natural action $\iota$ of $\mathcal{O}_K$; see \cite[Appendix]{Serre2002Appendix}, \cite[Section 1]{stc-amir} or \cite[Proposition~1.7.4.5]{cm-lifting}.

Let $h$ be the trace pairing $(\alpha,\beta) \mapsto \mathrm{Tr}(\alpha\overline{\beta})$, which is a positive definite hermitian form on $\mathcal{O}_K$. Then \cite[Theorem A]{stc-amir} allows us to conclude that $(\mathcal{O}_K \otimes_{\zz} A,h \otimes \lambda,\xi)$ is a polarized abelian variety with an action $\iota$ of $\mathcal{O}_K$. Additionally, by \cite[Corollary 7.9]{conrad-gross-zagier}, we have an isomorphism of $p$-torsion group schemes $\ff_{p^2} \otimes_{\ff_p} A[p] \cong (\mathcal{O}_K \otimes_{\zz} A)[p]$. 

Let $\widehat{D} = (D,F,V,\lambda)$ be the polarized $\bt$ module corresponding to $A[p]$ under the \Dd equivalence. Then the isomorphism $\ff_{p^2} \otimes_{\ff_p} A[p] \cong (\mathcal{O}_K \otimes_{\zz} A)[p]$, under the \Dd{} equivalence,  produces the following unitary \Dd{} module (in the sense of \cref{rmk: iota}): 
\[\ff_{p^2} \otimes_{\ff_p} \widehat{D} \coloneqq (\ff_{p^2} \otimes_{\ff_p} D,\id \otimes\, F, \id \otimes\, V,\widetilde{\lambda},\iota),\] where
\begin{enumerate}
    \item $\iota$ is the natural action of $\ff_{p^2}$ on $\ff_{p^2} \otimes_{\ff_p} D$; and
    \item $\widetilde{\lambda} \coloneqq \widetilde{\mathrm{Tr}} \otimes \lambda$
    is a pairing on $\ff_{p^2} \otimes_{\ff_p} D$, where $\widetilde{\mathrm{Tr}}$ is the trace pairing on $\ff_{p^2}$ defined as
    \[\widetilde{\mathrm{Tr}}(\alpha,\beta) = \mathrm{Tr}(\alpha\beta^p).\]
\end{enumerate}

\begin{definition}\label{def:stc-bt1}
    Consider $\widehat{N} = (N,F,V,\lambda)$, a polarized $\bt$ module. Let $\phi_0,\phi_1: \ff_{p^2} \hookrightarrow \kk$ be the canonical $\ff_p$-linear embeddings. For $i = 0,1$, let $N_i$ be the $\kk$-vector space $N$ equipped with an action of $\ff_{p^2}$ via $\phi_i$.

    Define operators $F_{\mathsf{STC}}$ and $V_{\mathsf{STC}}$ on $\mathsf{STC}(N) \coloneqq N_0 \oplus N_1$ as follows:
    \[F_{\mathsf{STC}}(n_0,n_1) = (F(n_1),F(n_0)) \quad \text{and} \quad V_{\mathsf{STC}}(n_0,n_1) = (V(n_1),V(n_0)),\]
    define an action $\iota_{\mathsf{STC}}$ of $\ff_{p^2}$ on $N_0 \oplus N_1$ as follows: 
\[\iota_{\mathsf{STC}}: \ff_{p^2} \to \mathrm{End}_{\kk}(N_0 \oplus N_1),\; \alpha \mapsto \left[(n_0,n_1) \mapsto (\phi_0(\alpha)n_0,\phi_1(\alpha) n_1)\right],\]
and finally define a pairing $\lambda_{\mathsf{STC}}$ on $N_0 \oplus N_1$ as follows:
\[\lambda_{\mathsf{STC}}: (N_0 \oplus N_1) \times (N_0 \oplus N_1) \to \kk,\; ((n_0,n_1),(n_0',n_1')) \mapsto \lambda(n_0,n_1') - \lambda(n_0',n_1).\]

The quintuple \[\mathsf{STC}(\widehat{N}) \coloneqq (\mathsf{STC}(N),F_{\mathsf{STC}},V_{\mathsf{STC}},\lambda_{\mathsf{STC}},\mathsf{STC}(N) = N_0 \oplus N_1)\] is called the \emph{Serre tensor construction} of $\widehat{N}$. 
\end{definition}

\begin{proposition}
    Consider a $\bt$ module $\widehat{N} = (N,F,V,\lambda)$. Then its Serre tensor construction $\mathsf{STC}(\widehat{N})$ is a unitary $\bt$ module of signature $(q,q)$, where $q = \dim_{\kk} N$.
\end{proposition}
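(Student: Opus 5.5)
We verify the axioms of \cref{def: unitary BT1} for $\STC(\widehat{N})$ directly, treating the hypothesis as "$\widehat{N}$ is a polarized $\bt$ module" (which is what \cref{def:stc-bt1} requires). Throughout, $N_0$ and $N_1$ are the same underlying $\kk$-space as $N$. We record once that $\iota_{\STC}$ acts on $N_0$ through $\phi_0$ and on $N_1$ through $\phi_1=\frob\circ\phi_0$. Hence the eigenspace decomposition of \cref{rmk: iota} is exactly $N_0\oplus N_1$.

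\textbf{$\bt$ structure.} The maps $F_{\STC}$ and $V_{\STC}$ are $\frob$- and $\frob^{-1}$-semilinear because $F$ and $V$ are. Both compositions vanish:
\[F_{\STC}V_{\STC}(n_0,n_1)=(FV n_0,\, FV n_1)=0,\]
and likewise $V_{\STC}F_{\STC}=0$. The kernel and image conditions hold componentwise:
\[\ker F_{\STC}=\ker F\oplus\ker F=\im V\oplus \im V=\im V_{\STC},\]
and symmetrically $\ker V_{\STC}=\im F_{\STC}$.

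\textbf{Homogeneity.} By definition, $F_{\STC}$ and $V_{\STC}$ swap the two coordinates, so they exchange $N_0$ and $N_1$. This is condition \eqref{property:homogeneous}.

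\textbf{Pairing.} The form $\lambda_{\STC}$ is bilinear. It is alternating, since
\[\lambda_{\STC}(x,x)=\lambda(n_0,n_1)-\lambda(n_0,n_1)=0.\]
It is non-degenerate: if $\lambda_{\STC}((n_0,n_1),-)=0$, then testing against $(0,n_1')$ gives $\lambda(n_0,N)=0$, so $n_0=0$, and testing against $(n_0',0)$ gives $n_1=0$. The summand $N_0$ is isotropic, since for $x=(n_0,0)$ and $x'=(n_0',0)$ both terms of $\lambda_{\STC}(x,x')$ vanish. The same holds for $N_1$. Each summand has half the dimension, so both are Lagrangian; this is the Rosati condition \eqref{property:RIC}.

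\textbf{Compatibility with $F$ and $V$.} This is the one step with any content. We expand
\[\lambda_{\STC}(F_{\STC}x,y)=\lambda(Fn_1,n_1')-\lambda(n_0',Fn_0),\]
\[\lambda_{\STC}(x,V_{\STC}y)^p=\lambda(n_0,Vn_0')^p-\lambda(n_1,Vn_1')^p.\]
The first terms match by the polarized compatibility $\lambda(Fn_1,n_1')=\lambda(n_1,Vn_1')^p$ only after we use that $\lambda$ is alternating. Using antisymmetry and compatibility, we rewrite
\[-\lambda(n_0',Fn_0)=\lambda(Fn_0,n_0')=\lambda(n_0,Vn_0')^p,\]
and
\[\lambda(Fn_1,n_1')=\lambda(n_1,Vn_1')^p=-\lambda(n_1,Vn_1')^p\ ?\]
The second line shows that a sign appears to mismatch. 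The main obstacle is therefore sign and ordering bookkeeping in this identity. I would first recheck which coordinate of $F_{\STC}(x)$ lands where, and note that
\[\lambda_{\STC}(F_{\STC}x,y)=\lambda(Fn_1,n_1')-\lambda(n_0',Fn_0).\]
With this reading the terms pair up correctly: $\lambda(Fn_1,n_1')$ corresponds to the pair $(n_1,n_1')$ and $-\lambda(n_0',Fn_0)$ to $(n_0,n_0')$, matching the other side after the antisymmetry swap. If a residual sign persists, I would adjust by replacing $\lambda_{\STC}$ with an isomorphic form (for example, scaling the $N_1$ component by a $c$ with $c^p=-c$, as in \cref{lem: unitary M(w)}). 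Such a rescaling preserves all the other properties.

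\textbf{Signature.} The signature is
\[(\dim N_1[F_{\STC}],\,\dim N_0[F_{\STC}]).\]
Since $F_{\STC}$ restricted to $N_1$ is $F\colon N\to N$, we get
\[\dim N_1[F_{\STC}]=\dim\ker F=\dim\im V=\tfrac12\dim N,\]
where the last equality comes from $\ker V=\im F$ together with rank–nullity. The same count applies to $N_0$. So the signature is $(q,q)$ with $q=\tfrac12\dim_\kk N$, i.e.\ half the dimension of the $\bt$ module, consistent with $\dim \STC(N)=2\dim N$. If the statement's "$q=\dim_\kk N$" is intended literally, I would note that the counting forces $q=\tfrac12\dim N$ (the abelian-variety dimension).
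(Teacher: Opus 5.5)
Your route (checking the axioms of \cref{def: unitary BT1} directly) is the same as the paper's, which simply says the claim is clear from the definition. You also read the statement correctly: $\widehat{N}$ must be polarized, and the signature is $(g,g)$ with $\dim_\kk N=2g$. This matches \cref{prop: uni-serre-count}, where polarized modules of dimension $q$ give signature $(q/2,q/2)$.

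\textbf{First gap: the compatibility identity.} This is the step you flag as the only one with content, and it is not actually established. The sign mismatch comes from mis-expanding the right-hand side. Since $V_{\STC}y=(Vn_1',Vn_0')$ has $Vn_1'$ as its $N_0$-coordinate, the definition gives
\[\lambda_{\STC}(x,V_{\STC}y)=\lambda(n_0,Vn_0')-\lambda(Vn_1',n_1)=\lambda(n_0,Vn_0')+\lambda(n_1,Vn_1'),\]
not $\lambda(n_0,Vn_0')-\lambda(n_1,Vn_1')$. Taking $p$-th powers and applying $\lambda(Fx,y)=\lambda(x,Vy)^p$ to each term gives
\[\lambda(Fn_0,n_0')+\lambda(Fn_1,n_1')=\lambda(Fn_1,n_1')-\lambda(n_0',Fn_0)=\lambda_{\STC}(F_{\STC}x,y).\]
So the identity holds exactly, and no modification of the form is needed.

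Your fallback would not have worked anyway. Rescaling the $N_1$-coordinate by $c$ multiplies $\lambda_{\STC}$ by $c$, so the two sides get multiplied by $c$ and $c^p=-c$ respectively. That flips a global sign and cannot repair a discrepancy in only one of the two terms. Besides, the proposition concerns the specific form $\lambda_{\STC}$, not an isomorphic replacement.

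\textbf{Second gap: the half-dimension in the signature count.} What you cite does not give it. The relations $\ker V=\im F$, $\ker F=\im V$ together with rank--nullity only give $\dim\ker F+\dim\ker V=\dim N$, i.e.\ the tautology $\dim\im V=\dim\ker F$. For a general $\bt$ module, $\dim\ker F$ need not be $\tfrac12\dim N$; for example $\overline{M}(\word{f})$ has $\ker F=0$. What forces the half-dimension is the polarization. From $\lambda(Fx,y)=\lambda(x,Vy)^p$ and non-degeneracy of $\lambda$ one gets $(\im F)^{\perp}=\ker V=\im F$. Hence $\im F$ is Lagrangian and $\dim\ker F=\dim N-\dim\im F=\tfrac12\dim N$.

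With these two repairs, the rest of your verification is correct: the $\bt$ conditions, homogeneity, alternation, non-degeneracy, and the Lagrangian property of $N_0$ and $N_1$. Then $N_1[F_{\STC}]\cong N_0[F_{\STC}]\cong\ker F$ gives signature $(g,g)$.
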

\begin{proof}
    This is clear from the definition.
\end{proof}

The following proposition justifies calling $\mathsf{STC}(\widehat{N})$ a Serre tensor construction.
\begin{proposition}\label{prop:stc-bt1-const}
    Consider a $\bt$ module $\widehat{N} = (N,F,V,\lambda)$ and its Serre tensor construction $\mathsf{STC}(\widehat{N})$. Given the quintuple $\ff_{p^2} \otimes_{\ff_p} \widehat{N} \coloneqq (\ff_{p^2} \otimes_{\ff_p} N,\id \otimes\, F, \id \otimes\, V,\widetilde{\lambda},\iota)$ , where
$\iota$ is the natural action of $\ff_{p^2}$ on $\ff_{p^2} \otimes_{\ff_p} N$ and $\widetilde{\lambda} = \widetilde{\mathrm{Tr}} \otimes \lambda$, the following isomorphism of $\kk$-vector spaces
\[\ff_{p^2} \otimes_{\ff_p} N \to N_0 \oplus N_1: \alpha \otimes n \mapsto (\phi_0(\alpha)n, \phi_1(\alpha)n)\]
commutes with the given additional structures on $\ff_{p^2} \otimes_{\ff_p} N$ and $N_0 \oplus N_1$ respectively.
\end{proposition}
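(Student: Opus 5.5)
Write $\Psi\colon \ff_{p^2}\otimes_{\ff_p} N \to N_0\oplus N_1$ for the map $\alpha\otimes n\mapsto(\phi_0(\alpha)n,\phi_1(\alpha)n)$. The plan is to check directly that $\Psi$ is an isomorphism and that it intertwines each piece of structure. Throughout, $\ff_{p^2}\otimes_{\ff_p} N$ is a $\kk$-vector space via the $N$ factor, and I identify $\phi_1=\phi_0\circ\frob$, so $\phi_1(\alpha)=\phi_0(\alpha)^p$.

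\textbf{Bijectivity.} $\Psi$ is $\kk$-linear. Both sides have dimension $2\dim_\kk N$. Choose an $\ff_p$-basis $\{1,\zeta\}$ of $\ff_{p^2}$. Then $\Psi$ sends $1\otimes n+\zeta\otimes n'$ to $(n+\zeta n',\,n+\zeta^p n')$. Since $\zeta\neq\zeta^p$, this is a nondegenerate Vandermonde-type transformation, so $\Psi$ is injective and hence an isomorphism.

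\textbf{Action of $\ff_{p^2}$.} We have $\Psi(\iota(\beta)(\alpha\otimes n))=\Psi(\beta\alpha\otimes n)=(\phi_0(\beta)\phi_0(\alpha)n,\phi_1(\beta)\phi_1(\alpha)n)$, which equals $\iota_{\STC}(\beta)\Psi(\alpha\otimes n)$.

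\textbf{$F$ and $V$.} Here the semilinearity matters. The operator $F$ is $\frob$-linear, so $(\id\otimes F)(\alpha\otimes n)=\alpha\otimes F(n)$, and this maps to $(\phi_0(\alpha)F(n),\phi_1(\alpha)F(n))$. On the other side, $F_{\STC}(\phi_0(\alpha)n,\phi_1(\alpha)n)=(F(\phi_1(\alpha)n),F(\phi_0(\alpha)n))=(\phi_1(\alpha)^pF(n),\phi_0(\alpha)^pF(n))$. Now $\phi_1(\alpha)^p=\phi_0(\alpha)^{p^2}=\phi_0(\alpha)$ and $\phi_0(\alpha)^p=\phi_1(\alpha)$, so the two sides agree. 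The argument for $V$ is the same with $\frob^{-1}$ in place of $\frob$.

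\textbf{Pairing.} Compare $\widetilde{\lambda}(\alpha\otimes n,\beta\otimes n')=\mathrm{Tr}(\alpha\beta^p)\lambda(n,n')$ with
\[
\lambda_{\STC}(\Psi(\alpha\otimes n),\Psi(\beta\otimes n'))=\phi_0(\alpha)\phi_1(\beta)\lambda(n,n')-\phi_0(\beta)\phi_1(\alpha)\lambda(n',n).
\]
Since $\lambda$ is alternating, $-\lambda(n',n)=\lambda(n,n')$, so the right side is $(\alpha\beta^p+\alpha^p\beta)\lambda(n,n')$ after identifying through $\phi_0$. This equals $\mathrm{Tr}(\alpha\beta^p)\lambda(n,n')$. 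Both pairings are bilinear in the appropriate sense, so checking them on pure tensors is enough.

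The main obstacle is bookkeeping. One has to fix the convention relating $\phi_0$ and $\phi_1$ (which one is $\frob$ composed with the other). One also has to get the sign convention in $\lambda_{\STC}$ right so that it matches $\widetilde{\lambda}$ exactly rather than up to a sign or a conjugation. If a mismatch appears, I would absorb it by swapping the roles of $N_0$ and $N_1$ or by rescaling by a constant $c$ with $c^p=-c$. Either change still gives an isomorphic unitary $\bt$ module, which is all the proposition requires.
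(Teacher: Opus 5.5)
Your direct verification is correct and is the same approach as the paper, whose proof is the single line ``verified directly, using \cref{rmk: iota} to translate between the $\iota$ and the $\oplus$ descriptions''; your computations for bijectivity, $\iota$, $F$, $V$ and the pairing all check out. The convention worry in your last paragraph is moot: because $\frob$ is an involution on $\ff_{p^2}$, both $\phi_1=\phi_0\circ\frob$ and $\phi_0=\phi_1\circ\frob$ hold, and your expression $\phi_0(\alpha)\phi_1(\beta)+\phi_0(\beta)\phi_1(\alpha)$ is symmetric in $\phi_0,\phi_1$, so the fallback you describe is never needed (and it would in any case prove something weaker than the claim about this specific map).
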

\begin{proof}
    This may be verified directly, using \cref{rmk: iota} to translate between the $\iota$ and the $\oplus$ descriptions. 
\end{proof}

The underlying polarized $\bt$ module of a Serre tensor construction has a simple description.

\begin{lemma}\label{lemma: stc-inj}
For a polarized $\bt$ module $\widehat{N} = (N,F,V,\lambda)$, we have an isomorphism of polarized $\bt$ modules for the underlying polarized $\bt$ module of its Serre tensor construction $\STC(\widehat{N})$,
\[(\STC(N),F_{\STC},V_{\STC},\lambda_{\STC}) \cong \widehat{N} \oplus \widehat{N}\]
Moreover, if $\widehat{N}'$ is another polarized $\bt$ module such that there is an isomorphism $\STC(\widehat{N}) \cong \STC(\widehat{N}')$ of unitary $\bt$ modules, then $\widehat{N} \cong \widehat{N}'$ as polarized $\bt$ modules. That is, the Serre tensor construction is injective on isomorphism classes of polarized $\bt$ modules.
\end{lemma}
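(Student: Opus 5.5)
The plan is to construct the isomorphism in the first claim explicitly, and then to deduce injectivity from the uniqueness in Kraft's decomposition (\cref{prop: Kraft}), together with the fact, recalled after \cref{def:polBTmod}, that two polarized $\bt$ modules are isomorphic if and only if their underlying $\bt$ modules are.

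\textbf{Step 1: the two embeddings.} As $\kk$-vector spaces, $N_0$ and $N_1$ are both just $N$; the embeddings $\phi_i$ only affect $\iota_{\STC}$, which we forget. Fix $c \in \kk^\times$ with $c^p = -c$. I will define two $\kk$-linear maps $N \to \STC(N)$ by
\[
\psi_+(n) = (n,n), \qquad \psi_-(n) = (cn,-cn).
\]
For $\psi_+$, we have $F_{\STC}(n,n) = (Fn,Fn)$ and $V_{\STC}(n,n) = (Vn,Vn)$, so $\psi_+$ intertwines $F$ and $V$.

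For $\psi_-$, the $\frob$-semilinearity of $F$ gives
\[
F_{\STC}(cn,-cn) = (-c^pFn,\, c^pFn) = (cFn,\, -cFn).
\]
For $V$, first note that $c^p = -c$ implies $c^{1/p} = -c$. Hence
\[
V_{\STC}(cn,-cn) = (-c^{1/p}Vn,\, c^{1/p}Vn) = (cVn,\, -cVn).
\]
So $\psi_-$ also commutes with $F$ and $V$. The images of $\psi_+$ and $\psi_-$ together span $\STC(N)$, because $c \neq 0$ and $p > 2$. Therefore $\STC(N) \cong N \oplus N$ as $\bt$ modules.

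\textbf{Step 2: the pairing.} Next I compute $\lambda_{\STC}$ on the images.
\begin{itemize}
\item On $\im\psi_+$: $\lambda_{\STC}((n,n),(m,m)) = \lambda(n,m) - \lambda(m,n) = 2\lambda(n,m)$.
\item On $\im\psi_-$: $\lambda_{\STC}((cn,-cn),(cm,-cm)) = -2c^2\lambda(n,m)$.
\item Cross terms: $\lambda_{\STC}((n,n),(cm,-cm)) = -c\lambda(n,m) - c\lambda(m,n) = 0$ by the alternating property.
\end{itemize}
So the decomposition is orthogonal. On each summand the induced pairing is $\mu\lambda$ with $\mu \in \{2, -2c^2\}$. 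Since $(c^2)^p = c^2$, we have $\mu \in \ff_p^\times$, and so $\mu\lambda$ still satisfies $\lambda(Fx,y) = \lambda(x,Vy)^p$. Each summand is therefore a polarized $\bt$ module with the same underlying $\bt$ module as $\widehat{N}$. By the uniqueness of the pairing (\cite[Corollary 4.2]{PriesUlmerBT1Fermat}), each summand is isomorphic to $\widehat{N}$. This gives the first claim, $(\STC(N),F_{\STC},V_{\STC},\lambda_{\STC}) \cong \widehat{N} \oplus \widehat{N}$.

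If one prefers to avoid quoting uniqueness, the scalar $\mu$ can instead be absorbed directly by rescaling with an element $d$ satisfying $d^{p+1} = \mu$ along a single $\frob$-orbit of basis vectors. Checking that this rescaling is compatible with $F$ and $V$ is the only mildly delicate point, and it is why I would rather quote uniqueness.

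\textbf{Step 3: injectivity.} Suppose $\STC(\widehat{N}) \cong \STC(\widehat{N}')$ as unitary $\bt$ modules. Forgetting the extra structure and applying Step 2 to both sides gives $\overline{N} \oplus \overline{N} \cong \overline{N}' \oplus \overline{N}'$ as $\bt$ modules. By \cref{prop: Kraft}, the multisets of primitive words (up to rotation) on the two sides agree. Those multisets are the Kraft multisets of $\overline{N}$ and of $\overline{N}'$, each doubled, so the Kraft multisets of $\overline{N}$ and $\overline{N}'$ coincide and $\overline{N} \cong \overline{N}'$. Finally, by the fact recalled after \cref{def:polBTmod}, this gives $\widehat{N} \cong \widehat{N}'$ as polarized $\bt$ modules.

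The main obstacle is bookkeeping the semilinear signs in Step 1. The choice $c^p = -c$ is what makes the anti-diagonal copy $F$- and $V$-stable.
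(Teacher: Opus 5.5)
Your proposal is correct and is essentially the paper's argument made explicit. Your $\psi_+$ and $\psi_-$ are exactly the images of $1\otimes N$ and $c\otimes N$ under the isomorphism $\ff_{p^2}\otimes_{\ff_p}N\cong N_0\oplus N_1$ of \cref{prop:stc-bt1-const}, for the $\ff_p$-basis $\{1,c\}$ of $\ff_{p^2}$ (note $c^{p^2}=c$); this is the same splitting $\ff_{p^2}\cong\ff_p\oplus\ff_p$ the paper uses, and both arguments then conclude by uniqueness of the polarization, with your orthogonality check and $\mu\in\ff_p^\times$ computation a correct extra since the paper applies uniqueness to the whole module at once. For injectivity, the paper applies the uniqueness in Oort's decomposition (\cref{prop: Oort}) directly to $\widehat N\oplus\widehat N\cong\widehat N'\oplus\widehat N'$, while you use Kraft's uniqueness on the underlying $\bt$ modules and then the fact that the underlying $\bt$ module determines the polarized one; the two routes are equivalent.
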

\begin{proof}
By \cref{prop:stc-bt1-const}, we work instead with $\ff_{p^2} \otimes_{\ff_p} \widehat{N}$. Since one has $\ff_{p^2} \cong \ff_p \oplus \ff_p$, therefore
\[\ff_{p^2} \otimes_{\ff_p} N \cong (\ff_p \oplus \ff_p) \otimes_{\ff_p} N \cong (\ff_p \otimes_{\ff_p} N) \oplus (\ff_p \otimes_{\ff_p} N) \cong N \oplus N.\]
Necessarily, under this isomorphism, $\id \otimes\, F$ and $\id \otimes\, V$ correspond to $F \oplus F$ and $V \oplus V$. Hence, the isomorphism above is an isomorphism of $\bt$ modules $(\ff_{p^2} \otimes_{\ff_p} N,\id \otimes\, F, \id \otimes\, V) \cong (N \oplus N, F\oplus F, V\oplus V)$. 

Since the polarization is unique, up to isomorphism (see \cite[Theorem~6.7]{Moonengsas} or \cite[Theorem~(9.4)]{OortStrat}), we conclude that we have an isomorphism of polarized $\bt$ modules
\[(\STC(N),F_{\STC},V_{\STC},\lambda_{\STC}) \cong (\ff_{p^2} \otimes_{\ff_p} N,\id \otimes\, F,\id \otimes\, V,\widetilde{\lambda}) \cong \widehat{N} \oplus \widehat{N}\]

Consider now a polarized $\bt$ module $\widehat{N}'$ such that there is an isomorphism $\STC(\widehat{N}) \cong \STC(\widehat{N}')$ of unitary $\bt$ modules. By our arguments above, we obtain an isomorphism $\widehat{N} \oplus \widehat{N} \cong \widehat{N}' \oplus \widehat{N}'$. Relying on the uniqueness statement in \cref{prop: Oort}, we conclude $\widehat{N} \cong \widehat{N}'$ as polarized $\bt$ modules.
\end{proof}

The following two propositions explain the terminology \emph{Serre unicycle} and \emph{Serre bicycle}, respectively.

\begin{proposition} \label{prop: STC unicycle}
Let $w$ be a primitive, self-dual word, of length divisible by $4$. Then $\mathsf{STC}(\widehat{M}(w)) \cong \widetilde{M}(w,w^\star)$. That is, Serre unicycles arise as Serre tensor constructions of unicycles.
\end{proposition}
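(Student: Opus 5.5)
The plan is to avoid writing down an explicit isomorphism and instead apply the structure theorem \cref{thm: structure of unitary BT1s} to $\mathsf{STC}(\widehat{M}(w))$. The idea is to pin down its decomposition type from its underlying $\bt$ module alone.

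First, $\mathsf{STC}(\widehat{M}(w))$ is a unitary $\bt$ module; this is the (unnumbered) proposition immediately following \cref{def:stc-bt1}. By \cref{lemma: stc-inj}, its underlying polarized $\bt$ module is $\widehat{M}(w)\oplus\widehat{M}(w)$. In particular, its underlying $\bt$ module is $\overline{M}(w)\oplus\overline{M}(w)$. Since $w$ is primitive, \cref{prop: Kraft} says this decomposition is unique up to rotation. Hence every summand in the unitary decomposition of \cref{thm: structure of unitary BT1s} has underlying $\bt$ module a sum of copies of $\overline{M}(\tau^n(w))$, and there are exactly two such copies in total.

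Next, I rule out each indecomposable type except the Serre unicycle.
\begin{enumerate}
\item A unitary unicycle $\widetilde{M}(w')$ would have $\overline{M}(w')\cong\overline{M}(w)$, so $w'$ is a rotation of $w$. By \cref{lem: self-dual word}, $w=yy^\star$ with $\ell(y)=\ell(w)/2$, which is even because $4\mid\ell(w)$. Then \cref{lem: unitary M(w)} says $\overline{M}(w)$ admits no unitary structure, a contradiction.
\item A unitary bicycle $\widetilde{M}(w',w'^\star)$ requires $w'$ to be non-self-dual. But here $w'$ would be a rotation of the self-dual word $w$, which is impossible.
\item A Serre bicycle has underlying $\bt$ module $\overline{M}(zz)\oplus\overline{M}(z^\star z^\star)$. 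By \cref{prop: Kraft}, this decomposes into four copies of primitive words of length $\ell(w)/2$. That contradicts the decomposition into two copies of the length-$\ell(w)$ primitive word $w$.
\end{enumerate}
So $\mathsf{STC}(\widehat{M}(w))$ is a single Serre unicycle $\widetilde{M}(w',w'^\star)$, where $\overline{M}(w')\oplus\overline{M}(w'^\star)\cong\overline{M}(w)^2$. Therefore $w'=\tau^n(w)$ for some $n$.

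Finally, I invoke the uniqueness clause of \cref{thm: structure of unitary BT1s}, or more directly \cref{rmk: don't rotate bicycles!}: Serre unicycles are invariant under $\tau$. This gives $\widetilde{M}(\tau^n(w),\tau^n(w)^\star)\cong\widetilde{M}(w,w^\star)$, which finishes the proof.

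The step I expect to need the most care is the bookkeeping that matches the $\bt$-level Kraft decomposition to the summands of the unitary decomposition, in particular excluding a split into two summands, each with underlying $\overline{M}(w)$. This is exactly the unitary-unicycle exclusion, and it rests on \cref{lem: unitary M(w)} together with the evenness of $\ell(y)$.

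As a fallback, one could write an explicit basis. Take $N_0=\langle a_i\rangle$ and $N_1=\langle a_i'\rangle$. Set $e_i = a_i$ for $i$ even and $e_i = a'_i$ for $i$ odd, and let $f_i$ be the complementary choice. This exhibits $\overline{M}(w)\oplus\overline{M}(w)$ with parity-alternating gradings, matching \cref{lem: unitary M(w)+M(w)-c}. One would then appeal to uniqueness of the pairing \cite[Theorem~6.7]{Moonengsas}.
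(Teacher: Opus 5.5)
Your argument is correct, but it takes a different route from the paper. The paper writes down an explicit isomorphism $\psi\colon \STC(N)\to M$. It sends $n_{i,j}$ to $a_j$ if $i+j$ is even, and to $b_{\hat\jmath}$ (with $\hat\jmath\equiv j+\ell(w)/2$) if $i+j$ is odd. It then checks that $\psi$ intertwines $F$ and $V$ using $u_j^\star=u_{\hat\jmath}$, notes that $\psi(N_i)=M_i$ precisely because $\ell(w)/2$ is even, and appeals to uniqueness of the pairing. Your fallback is essentially this map, with one step missing: you must identify the second chain with $\overline{M}(w^\star)$ via the shift by $\ell(w)/2$, and that shift preserves the grading only because $4\mid\ell(w)$.

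Your main argument instead pins down the isomorphism class by classification. Kraft uniqueness applied to $\overline{M}(w)^{2}$ leaves room for only a single Serre unicycle on a rotation of $w$, and you finish because a Serre unicycle's class depends only on the $\tau$-orbit of $w$. The paper states this rotation invariance in \cref{rmk: don't rotate bicycles!} without proof, and the name ``Serre'' might suggest it is derived from the $\STC$ picture. You should therefore justify it directly, which takes one line and rules out any appearance of circularity. Since $w^\star=\tau^{\ell(w)/2}(w)$ with $\ell(w)/2$ even, double-rotation invariance gives $\widetilde{M}(w,w^\star)^*\cong\widetilde{M}(w^\star,w)=\widetilde{M}(\tau^{\ell(w)/2}(w),\tau^{\ell(w)/2}(w)^\star)\cong\widetilde{M}(w,w^\star)$. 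Hence $\widetilde{M}(\tau(w),\tau(w)^\star)\cong\widetilde{M}(w,w^\star)^*\cong\widetilde{M}(w,w^\star)$.

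As for what each approach buys: the paper's proof is elementary, needs only uniqueness of the polarization, and produces a concrete isomorphism. Yours avoids all index bookkeeping and carries over almost verbatim to \cref{prop: STC bicycle}; there only a Serre bicycle on a rotation of $z$ or $z^\star$ survives, and you use invariance under $\tau$ and $\star$. However, it rests on the full structure theorem \cref{thm: structure of unitary BT1s}, whose proof itself depends on Moonen's standard basis and the same uniqueness of pairings.
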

\begin{proof}
We apply \cref{def:stc-bt1} to the polarized $\bt$ module $\widehat{N}=\widehat{M}(w)$. The word $w$ can be written as 
\[w=u_{2q-1} \cdots u_0=\tau^q(w^\star)=u_{q-1}^\star \cdots u_0^\star u_{2q-1}^\star \cdots u_q^\star\]
with $2q = \ell(w)$ and $q$ even. Given $0\leq j \leq 2q-1$, we let $0\leq \hat{\jmath} \leq 2q-1$ be the integer satisfying $\hat{\jmath} \equiv j+q \pmod{2q}$, i.e., $\hat{\jmath}$ is either $j+q$ or $j-q$. 

Following \cref{def: unicycle}, we have $\widehat{N}=(N,F,V,\lambda)$, where $N=\mathrm{span}_\kk \{a_0, \ldots, a_{2q-1}\}$,
\[F(a_{j}) = \begin{cases} a_{j+1} &\hbox{if $u_j=\word{f},$} \\ 0 &\hbox{if $u_j=\word{v},$} \end{cases} \hspace{2cm} V(a_{j+1}) = \begin{cases} a_{j} &\hbox{if $u_j=\word{v},$} \\ 0 &\hbox{if $u_j=\word{f}$,} \end{cases} 
\]
and $\lambda$ is as in \cref{lem: pairing unicycle}.

Let $\{i, \hat{\imath}\} = \{0,1\}$. Now, applying \cref{def:stc-bt1}, we begin by formally defining the $\kk$-vector spaces $N_i=\mathrm{span}_\kk \{n_{i,0}, \ldots ,n_{i,2q-1}\}$ and set $\mathsf{STC}(N)=N_0\oplus N_1$, the underlying vector space of $\mathsf{STC}(\widehat{N})$.

By \cref{def:stc-bt1}, the actions of $F_{\mathsf{STC}}$ and $V_{\mathsf{STC}}$ are: 
\[
F_{\mathsf{STC}}(n_{i,j}) = \begin{cases} n_{\hat{\imath},j+1} &\hbox{if $u_j=\word{f},$} \\ 0 &\hbox{if $u_j=\word{v},$} \end{cases} \hspace{2cm} V_{\mathsf{STC}}(n_{i,j+1}) = \begin{cases} n_{\hat{\imath},j} &\hbox{if $u_j=\word{v},$} \\ 0 &\hbox{if $u_j=\word{f}$.} \end{cases}
\]

Together with the polarization $\lambda_{\STC}$, this construction yields the unitary $\bt$ module 
\[
\mathsf{STC}(\widehat{M}(w)) = (\mathsf{STC}(N), F_{\mathsf{STC}}, V_{\mathsf{STC}}, \lambda_{\mathsf{STC}}, \mathsf{STC}(N)=N_0\oplus N_1).  
\]

On the other hand, consider the construction $\widetilde{M}(w,w^\star)=(M,\widetilde{F},\widetilde{V},\widetilde{\lambda},M=M_0\oplus M_1)$ from Definition \ref{def: widetilde M(w,wc)} (see also \cref{lem: unitary M(w)+M(w)-c}). The underlying vector space is given by
\[
M \coloneqq \mathrm{span}_\kk \{a_0, \ldots, a_{2q-1},b_0, \ldots, b_{2q-1} \}.
\]
The actions of the morphisms $\widetilde{F}$ and $\widetilde{V}$ on $\{a_0, \ldots , a_{2q-1}\}$ are the same as the actions of $F$ and $V$ on $N=\mathrm{span}_\kk \{a_0, \ldots , a_{2q-1}\}$, whereas their actions on $\{b_0,\ldots ,b_{2q-1}\}$ are given by
\[
\widetilde{F}(b_j) = \begin{cases} b_{j+1} &\hbox{if $u_j^\star=\word{f},$} \\ 0 &\hbox{if $u_j^\star=\word{v},$} \end{cases} \hspace{2cm} \widetilde{V}(b_{j+1}) = \begin{cases} b_{j} &\hbox{if $u_j^\star=\word{v},$} \\ 0 &\hbox{if $u_j^\star=\word{f}$.} \end{cases}
\]
Note that $u_j^\star=u_{\hat{\jmath}}$ since $w$ is self-dual and primitive. Following \cref{lem: unitary M(w)+M(w)-c}, we define subspaces
\begin{align*}
    M_0&=\text{span}_{\kk} \{a_j\mid j \text{ even}\} \oplus \text{span}_{\kk} \{b_j \mid j \text{ odd}\}, \\
    M_1&=\text{span}_{\kk} \{a_j \mid j \text{ odd}\} \oplus \text{span}_{\kk} \{b_j \mid j \text{ even}\}.
\end{align*}
Together with the polarization $\widetilde{\lambda}$ from \cref{lem: pairing bicycle}, this construction yields the unitary $\bt$ module 
\[
\widetilde{M}(w)=(M, \widetilde{F}, \widetilde{V},\widetilde{\lambda}, M=M_0 \oplus M_1).
\]

We now exhibit an isomorphism between the constructions $\mathsf{STC}(\widehat{M}(w))$ and $\widetilde{M}(w)$. For this, we define the following isomorphism of the underlying vector spaces:
\begin{align*}
    \psi: \mathsf{STC}(N) &\to M \\
    n_{i,j} &\mapsto \begin{cases} 
    a_j &\hbox{if $i+j$ is even,} \\
    b_{\hat{\jmath}} &\hbox{if $i+j$ is odd}.
    \end{cases}
\end{align*}

One then verifies, using $u_j^\star=u_{\hat{\jmath}}$, that $\psi \circ F_\STC = \widetilde{F} \circ \psi$ and $\psi \circ V_\STC = \widetilde{V} \circ \psi$.

Hence, $\psi$ gives rise to an isomorphism between $(\STC(N),F_{\STC},V_\STC)$ and $(M,\widetilde{F},\widetilde{V})$. We observe that $\psi(N_i)=M_i$ for $i\in \{0,1\}$ since $q$ is even.

Since the polarization is unique, up to isomorphism (see \cite[Theorem~6.7]{Moonengsas} or \cite[Theorem~(9.4)]{OortStrat}), we conclude that the unitary $\bt$ modules $\STC(\widehat{M}(w))$ and $\widetilde{M}(w,w^\star)$ are isomorphic. 
\end{proof}

\begin{proposition} \label{prop: STC bicycle}
Let $z$ be a primitive word of odd length. Then $\STC(\widehat{M}(z,z^\star)) \cong \widetilde{M}(zz,z^\star z^\star)$. That is, Serre bicycles arise as Serre tensor constructions of bicycles.
\end{proposition}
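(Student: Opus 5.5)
We follow the template of the proof of \cref{prop: STC unicycle}. Write $z=u_{q-1}\cdots u_0$ with $q$ odd. By \cref{lem: pairing bicycle}, $\widehat{N}=\widehat{M}(z,z^\star)$ has basis $\{a_0,\ldots,a_{q-1}\}\cup\{b_0,\ldots,b_{q-1}\}$, with $F,V$ acting on the $a_j$ according to $z$ and on the $b_j$ according to $z^\star$. Applying \cref{def:stc-bt1} gives $\STC(N)=N_0\oplus N_1$. It has basis $n_{i,j}$ and $m_{i,j}$ for $i\in\{0,1\}$ and $0\le j\le q-1$, corresponding to $a_j$ and $b_j$ in the copy $N_i$. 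On this basis $F_\STC(n_{i,j})=n_{\hat\imath,j+1}$ if $u_j=\word{f}$ and $0$ otherwise, with the analogous rules for $V_\STC$ and for the $m_{i,j}$ with $u_j^\star$; indices $j$ are taken mod $q$.

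On the other side, $\widetilde{M}(zz,z^\star z^\star)$ has basis $\{A_0,\ldots,A_{2q-1}\}\cup\{B_0,\ldots,B_{2q-1}\}$. Its operators are read off from $zz$ and $z^\star z^\star$, so the letter at position $k$ is $u_{k \bmod q}$, respectively its dual. The grading is $M_0=\mathrm{span}\{A_k: k\text{ even}\}\oplus\mathrm{span}\{B_k:k\text{ odd}\}$, with $M_1$ the complement.

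The key observation is that the orbit of $n_{i,j}$ under ``stepping along the diagram'' passes through both copies before closing up. After $q$ steps (with $q$ odd) one lands in the other copy $N_{\hat\imath}$, so the two $q$-cycles of $a$'s in $N_0$ and $N_1$ glue into a single $2q$-cycle whose word is $zz$. The same holds for the $b$'s, giving $z^\star z^\star$. Concretely, for $0\le k\le 2q-1$ we define
\[\psi(n_{i,j})=A_k \quad\text{and}\quad \psi(m_{i,j})=B_{k'},\]
where $k\equiv j\pmod q$ and $k\equiv i\pmod 2$, and where $k'\equiv j\pmod q$ and $k'\equiv i+1\pmod 2$. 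Both congruence systems have a unique solution mod $2q$ by the Chinese remainder theorem, since $q$ is odd. This is a bijection on bases. We then check that $\psi\circ F_\STC=\widetilde F\circ\psi$ and $\psi\circ V_\STC=\widetilde V\circ\psi$. This holds because moving from $j$ to $j+1$ and from $i$ to $\hat\imath$ corresponds exactly to moving from $k$ to $k+1$, and the letter at $k$ is $u_{k\bmod q}=u_j$. The wraparound cases work automatically because everything is taken mod $q$ and mod $2$. By construction the parity convention gives $\psi(N_0)=M_0$ and $\psi(N_1)=M_1$: $A_k$ lies in $M_0$ exactly when $k$ is even, that is, when $i=0$, and $B_{k'}$ lies in $M_0$ exactly when $k'$ is odd, that is, when $i=0$.

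Finally, $\psi$ is an isomorphism of $\bt$ modules carrying the $\ff_{p^2}$-grading to the $\ff_{p^2}$-grading. We then invoke uniqueness of the pairing up to isomorphism of unitary $\bt$ modules (\cite[Theorem~6.7]{Moonengsas}), exactly as in the proof of \cref{prop: STC unicycle}, to conclude $\STC(\widehat{M}(z,z^\star))\cong\widetilde{M}(zz,z^\star z^\star)$.

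The main obstacle is bookkeeping. The parity convention for the $B$'s has to be chosen so that $M_0$ and $M_1$ match, and the CRT indexing has to be checked to be compatible with $F$ and $V$ at the wraparound.
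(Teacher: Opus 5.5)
Your proposal is correct and follows the paper's approach: the paper proves this only by saying it ``can be verified with the proof method of \cref{prop: STC unicycle},'' and you carry out that method. Your map $\psi$, with $\psi(n_{i,j})=A_k$ for $k\equiv j \pmod q$, $k\equiv i\pmod 2$, and $\psi(m_{i,j})=B_{k'}$ for $k'\equiv j\pmod q$, $k'\equiv i+1\pmod 2$, is the right analogue of the paper's $\psi$. It intertwines $F$ and $V$, including the wraparound case, sends $N_i$ onto $M_i$, and leaves the pairing to the same uniqueness result the paper invokes.
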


\begin{proof}
This can be verified with the proof method of \cref{prop: STC unicycle}.
\end{proof}

\begin{remark}
    We contextualize the results of this section, specifically the indecomposable types introduced so far, in terms of the forgetful functors
    \begin{align*}
        \widetilde{N} = (N,F,V,\lambda,M = M_0 \oplus M_1) &\mapsto (N,F,V,\lambda) \eqcolon \widehat{U}(\widetilde{N})\\
        \widehat{L} = (L,F,V,\lambda) &\mapsto (L,F,V) \eqcolon \overline{U}(\widehat{L})
    \end{align*}
which, respectively, takes a unitary $\bt$ module $\widetilde{N}$ to its underlying polarized $\bt$ module $\widehat{U}(\widetilde{N})$, and takes a polarized $\bt$ module $\widehat{L}$ to its underlying (undecorated) $\bt$ module $\overline{U}(\widehat{L})$.

Let the composition of these functor still be denoted as $\overline{U}$, i.e., the underlying (undecorated) $\bt$ module of a unitary $\bt$ module $\widetilde{N}$ is denoted as $\overline{U}(\widetilde{N})$.

Given an indecomposable unitary $\bt$ module $\widetilde{N}$, either the polarized $\bt$ module $\widehat{U}(\widetilde{N})$ remains indecomposable, or it is decomposable.

When $\widehat{U}(\widetilde{N})$ is indecomposable, either $\overline{U}(\widetilde{N})$ is also indecomposable, in which case $\widehat{U}(\widetilde{N})$ is a \emph{unicycle} and $\widetilde{N}$ is a \emph{unitary unicycle}, or $\overline{U}(\widetilde{N})$ is decomposable, in which case $\widehat{U}(\widetilde{N})$ is a \emph{bicycle} and $\widetilde{N}$ is a \emph{unitary bicycle}.

When $\widehat{U}(\widetilde{N})$ is decomposable, it follows from \cref{thm: structure of unitary BT1s,prop: STC unicycle,prop: STC bicycle} that $\widetilde{N} \cong \STC(\widehat{L})$ for some indecomposable polarized $\bt$ module $\widehat{L}$. Either $\overline{U}(\widehat{L})$ is also indecomposable, in which case $\widetilde{N}$ is a \emph{Serre unicycle}, or $\overline{U}(\widehat{L})$ is decomposable, in which case $\widetilde{N}$ is a \emph{Serre bicycle}.
\end{remark}

\section{Words and Weyl group cosets} \label{sec: Weyl}

In \cref{thm: structure of unitary BT1s}, Ekedahl--Oort strata of $\M(a,b)$ are described in terms of their associated words in the alphabet $\{\word{f},\word{v}\}$. In this section, we provide an algorithm, much like a dictionary, that describes these Ekedahl--Oort strata in terms of Weyl group cosets, as was first outlined in \cite{Moonengsas}. While the word construction is a natural way to describe and understand the decompositions of $\bt$ modules, the Weyl group coset characterization has the benefit of making geometric properties of the strata, such as the dimension and the topological closure, more apparent.

\subsection{Preliminaries} \label{sec: Weyl intro}

Let $q=a+b$. Recall from \cref{sec:background Weyl} that $\mathbf{W}(a,b)$ denotes the set of minimal length representatives of the Weyl group cosets in $\mathbf{W}_{(a,b)} \setminus \mathfrak{S}_{q}$. 

By the work of \cite[(3.5)]{Moonengsas} and \cite[Section 3]{EOstrata1}, there is a procedure to compute the $\gamma\in \mathbf{W}(a,b)$, corresponding to a unitary $\bt$ module $\widetilde{M}=(M,F,V,\lambda,M=M_0\oplus M_1)$ of signature $(a,b)$:

\begin{enumerate}[label=$\bullet$]
    \item Let $W_\bullet$ be a final filtration of $M$, i.e. a filtration stable under $F$ and $V^{-1}$ that contains a space of each dimension between $1$ and $2(a+b)=2q$;
    \item Define $C_{\bullet} = W_{\bullet} \cap M_0$ and define, for $s \in \{1, \ldots, q\}$, the function $\eta(s) \coloneqq \dim_k(C_{1,s}[F])$;
    \item Let $z_1< \cdots < z_b$ be the integers where $\eta$ jumps, i.e. $\eta(z_m)=\eta(z_m-1)+1$ for every $1\leq m \leq b$; 
    \item Then $\gamma_{z_1,\ldots ,z_b} \coloneqq (b, b+1, \ldots, z_b) \cdots (2,3,\ldots ,z_2)(1,2,\ldots ,z_1) \in \mathbf{W}(a,b)$ corresponds to $\widetilde{M}$.
\end{enumerate}
The goal of \cref{sec: word Weyl} is to describe the indices $z_1 < \cdots < z_b$ in terms of the words in the alphabet $\{\word{f,v}\}$ that are used in \cref{thm: structure of unitary BT1s}.

\subsection{The ``Word to Weyl'' algorithm} \label{sec: word Weyl}

Let $\widetilde{M}=(M,F,V,\lambda,M_0 \oplus M_1)$ be a unitary $\bt$ module of signature $(a,b)$ and let $\overline{M}=(M,F,V)$ be its underlying $\bt$ module. 
 
For $i\in \{0,1\}$, let $B_i=\{e_{i,j} \; | \; 1 \leq j \leq q\}$ be a basis of $M_i$ such that $F$ and $V$ preserve $B=B_0 \cup B_1$.

By \cref{lem: shorten words}, there is a decomposition
$$\overline{M} = \bigoplus_{w \in W} \overline{M}(w)$$
with the following properties:
\begin{enumerate}
    \item Every factor $\overline{M}(w)$ is spanned by elements of $B$ (which implies that every $w\in W$ must have even length); \label{property B}
    \item Every word $w\in W$ is either primitive or of the form $w=zz$, where $z$ is primitive of odd length. \label{property prim}
\end{enumerate}
In addition, we may and do assume, after rotating words if necessary, the following condition: 
\begin{enumerate} \setcounter{enumi}{2}
    \item For each $w=u_{\ell-1} \cdots u_0 \in W$, the letter $u_0$ signifies either $F(e_{0,j})=e_{1,j'}$ or $V(e_{1,j'})=e_{0,j}$ for some $j,j'$. \label{property 01} 
\end{enumerate}

Under these assumptions, we associate a permutation $\gamma(W) \in \mathbf{W}(a,b)$ to the multiset $W$. Recall the rotation operator $\tau$ defined by $\tau(u_{\ell-1} \cdots u_0) = u_0 u_{\ell-1} \cdots u_1$. We denote by $\preceq$ the lexicographical order that can compare any two words of equal length in the alphabet $\{\word{f},\word{v}\}$ using the rule $\word{f} \preceq \word{v}$. Note that when we order the words lexicographically, we work left-to-right, but when we are reading the words as maps, we apply the functions right-to-left. 

\begin{definition} \label{def: gamma(W)}
Let $W$ be a multiset of words with $L_W  \coloneqq  \text{LCM}_{w\in W}\{\ell(w)\}$. Define the multiset $$\widetilde{W} \coloneqq  \left\{\left(\tau^{2j}(w)\right) ^{L_W/\ell(w)} \; | \; w \in W, 0\leq j < \ell(w)/2 \right\}$$
and set $q_W \coloneqq \# \widetilde{W}$. Let $\widetilde{\omega}_1 \preceq \cdots \preceq \widetilde{\omega}_{q_W}$ be a lexicographical ordering of the elements (with multiplicity) 
of $\widetilde{W}$. Then define
\begin{align*}
    \setp{z_1, \ldots ,z_b}{z_1 <\cdots < z_b}
    & \coloneqq \setp{s}{\widetilde{\omega}_s \text{ has $\word{v}$ as its right-most letter}}; \text{ and} \\
    \gamma(W)& \coloneqq  \gamma_{z_1,\ldots, z_b} = (b, b+1, \ldots, z_b) \cdots (2,3,\ldots ,z_2)(1,2,\ldots ,z_1) \in \mathbf{W}(a,b).
\end{align*}
\end{definition}

\begin{remark} \label{rmk: gamma(W) canonical}
The multiset $\widetilde{W}$ records all the double rotations of words in $W$, with multiplicity. The words are raised to the power $L_W/\ell(w)$ to arrange that all elements of $\widetilde{W}$ are of equal length, so that the lexicographical order can compare any two distinct elements of $\widetilde{W}$. When $W$ contains multiple elements that are in the same $\tau^2$-orbit, then $\widetilde{W}$ is not a set and therefore the lexicographical ordering of $\widetilde{W}$ is not canonical. However, this amounts to the fact that we may permute words that are equal, so it is clear that the definition of $\gamma(W)$ is nonetheless canonical.

See \cref{eg: fv fffvvv} for the process of computing $\widetilde{W}$ and ordering the elements of this multiset.
\end{remark}

The following theorem is a ``word to Weyl'' algorithm, as it converts the word-based description from \cref{thm: structure of unitary BT1s} to the Weyl group coset representative of a given Ekedahl--Oort stratum.

\begin{theorem} \label{thm: word Weyl}
Let $\widetilde{M}$ be a unitary $\bt$ module of signature $(a,b)$ whose underlying $\bt$ module admits the decomposition $\overline{M} = \bigoplus _{w\in W} \overline{M}(w)$ satisfying Properties \eqref{property B}, \eqref{property prim} and \eqref{property 01} above. Let $\gamma(W)$ be as in \cref{def: gamma(W)}. Then $\gamma(W)$ is the minimal length representative of the coset of $(\mathfrak{S}_a \times \mathfrak{S}_b) \setminus \mathfrak{S}_q$, where $q=q_W$, that corresponds to the Ekedahl--Oort stratum of $\widetilde{M}$ via \cite[Theorem~6.7]{Moonengsas}.
\end{theorem}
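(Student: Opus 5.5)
The plan is to build an explicit final filtration of $M$ out of the $F,V$-stable basis $B$, ordered by the words in $\widetilde{W}$. We then read off the function $\eta$ from the procedure of \cref{sec: Weyl intro} and check that its jumps occur exactly at the indices $z_1<\cdots<z_b$ of \cref{def: gamma(W)}. Moonen's bijection (\cref{thm:Moonenbij}, via \cite[(3.5)]{Moonengsas} and \cite[Section 3]{EOstrata1}) does not depend on the choice of final filtration, so one convenient filtration suffices.

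\emph{Step 1: attach a word to each basis vector.} For each $w\in W$, with $\overline{M}(w)$ spanned by $a_0,\dots,a_{\ell-1}\in B$, Property \eqref{property 01} places $a_0$ in $B_0$, and since $\ell(w)$ is even, $a_j\in B_0$ exactly when $j$ is even. To each $a_{2j}\in B_0$ attach the word $(\tau^{2j}(w))^{L_W/\ell(w)}\in\widetilde{W}$; to each $a_{2j+1}\in B_1$ attach $(\tau^{2j+1}(w))^{L_W/\ell(w)}$. This is a bijection between $B_0$ and $\widetilde{W}$ (with multiplicity), so $q_W=\#B_0=q$. By construction, the right-most letter of the word attached to a vertex $e$ is the letter on the edge leaving $e$ in the direction of $F$. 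Hence that letter is $\word{v}$ if and only if $F(e)=0$.

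\emph{Step 2: the filtration is final.} Order all of $B$ by the lexicographic order $\preceq$ of the attached words, breaking ties consistently between vertices on different copies of the same word. Let $W_s$ be the span of the first $s$ vectors. We must show $F(W_s)$ and $V^{-1}(W_s)$ are members of the filtration. The mechanism is that applying $F$ (for an $\word{f}$-edge) or $V^{-1}$ (for a $\word{v}$-edge) moves from a vertex to its neighbour, which rotates the attached word by one step. This rotation respects $\preceq$ among words sharing the relevant letter, and the two letter-classes are separated because $\word{f}\preceq\word{v}$. This is the standard argument that Oort \cite{OortStrat} uses for the canonical filtration of $\overline{M}(w)$, adapted here; the paper's Section 3 already supplies the cycle-by-cycle combinatorics. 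One then gets $F(W_s)=W_{\phi(s)}$ with $\phi(s)$ the number of $\word{f}$-type vectors among the first $s$, and similarly for $V^{-1}$.

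\emph{Step 3: read off $\eta$.} Vertices of $B_0$ and $B_1$ alternate along every cycle. Therefore $C_\bullet=W_\bullet\cap M_0$ is spanned by initial segments of $B_0$ in the induced order, which is exactly the ordering $\widetilde\omega_1\preceq\cdots\preceq\widetilde\omega_{q_W}$. So $C_{1,s}$ is spanned by the vectors attached to $\widetilde\omega_1,\dots,\widetilde\omega_s$. Since $F$ sends distinct basis vectors to distinct basis vectors or to $0$, $\eta(s)=\dim C_{1,s}[F]$ equals the number of $t\le s$ for which $\widetilde\omega_t$ ends in $\word{v}$. Hence $\eta$ jumps precisely at the $z_m$, and $\gamma_{z_1,\dots,z_b}=\gamma(W)$. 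Counting the $\word{v}$'s also gives $b$, consistent with \cref{rmk: unicycle signature,rmk: bicycle signature}.

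The main obstacle is Step 2. First, the left-to-right comparison must be matched with the right-to-left action of $F$ and $V$: I expect to have to check carefully that comparing leading (left-most) letters records the history of how a vertex is reached, which is what a final filtration tracks. If the convention turns out reversed, I would replace the attached word by its reversal and recheck Step 1. Second, ties must be handled: equal words occur for repeated elements of $W$ and for the non-primitive words $zz$ allowed by Property \eqref{property prim}. I plan to show that vertices with equal attached words have equal attached words after applying $F$ or $V^{-1}$, so any tie-break that is applied uniformly along orbits keeps the filtration final. The powers $L_W/\ell(w)$ ensure these comparisons are well defined, as noted in \cref{rmk: gamma(W) canonical}.
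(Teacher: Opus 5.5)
Your Steps 1 and 3 match the paper's computation: attach $(\tau^{j}(w))^{L_W/\ell(w)}$ to $a_j$, intersect with $M_0$ using Property (3), and count the words ending in $\word{v}$. Your rotation argument in Step 2 also correctly shows that the \emph{coarse} filtration $N_\omega=\mathrm{span}\{e\in B : \text{word}(e)\preceq\omega\}$ is stable under $F$ and $V^{-1}$.

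The gap is the refinement of this filtration to a complete coordinate flag. For $F(W_s)$ and $V^{-1}(W_s)$ to be members of your flag for every $s$, the forward step $a_j\mapsto a_{j+1}$ (which is $F$ on an $\word{f}$-edge and $V^{-1}$ on a $\word{v}$-edge) must map each tie class onto the next one order-preservingly, at every edge of every cycle. If every word in $W$ is primitive, this can be arranged by ordering ties by cycle. However, Property (2) also allows $w=zz$ with $z$ primitive of odd length $\ell$. These words cannot be avoided: they come from Serre bicycle summands, which cannot be shortened without destroying the unitary structure. On such a cycle, $a_j$ and $a_{j+\ell}$ carry the same word. Then $\ell$ forward steps map their tie class to itself while swapping $a_j$ and $a_{j+\ell}$, which no order-preserving map does. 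So the ``tie-break applied uniformly along orbits'' you plan to use does not exist.

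Concretely, take the Serre bicycle $\widetilde{M}(\word{ff},\word{vv})$ of signature $(1,1)$. Here $F$ swaps $a_0$ and $a_1$, which both carry $\word{ff}$. One checks that no complete flag of $M$ by graded subspaces is stable under $F$ and $V^{-1}$ at all. So the problem is not the particular tie-break; it is the attempt to refine.

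The fix is what the paper does: never refine.
\begin{enumerate}
\item Work with $N_\omega$ itself, and compute $s_\omega=\dim(N_\omega\cap M_0)$ and $\eta(s_\omega)=\dim\,(N_\omega\cap M_0)[F]$ as in your Step 3.
\item Observe that $\eta$ is then forced at the intermediate indices for \emph{any} refinement. Indeed, $\eta$ is nondecreasing with increments at most $1$. Moreover, all $\widetilde\omega_s$ in one block of equal words share their last letter. Hence across a block $\eta$ either stays constant or increases by the full block length, that is, by exactly one at each step.
\end{enumerate}
This yields the jump set $\{s : \widetilde\omega_s \text{ ends in } \word{v}\}$ without ever exhibiting a final filtration.

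Alternatively, you could use the fact that the intra-cycle ties disappear after intersecting with $M_0$: since $\ell$ is odd, $a_j$ and $a_{j+\ell}$ lie in different summands. Graded flags on $M_0$ and $M_1$ with a cycle-index tie-break would then work. That route, however, requires Moonen's graded formulation of finality rather than a complete flag of $M$.
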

\begin{proof}
We follow the steps outlined in \cref{sec: Weyl intro} for computing $z_1,\ldots ,z_b$. The first step is to form a final filtration of $M$, for which we use \cite[Section 3.5]{PriesUlmerBT1Fermat}. Let $n$ be the number of distinct words. Following their notation, we write
$$\overline{M} = \bigoplus_{i=1}^n \overline{M}(w_i)^{m_i},$$
where $\ell_i$ is the length of the word $w_i=u_{i,\ell_i-1} \ldots u_{i,0}$ 
and $m_i$ is its multiplicity.  
The vector space $M$ has a basis $\{e_{i,j,k} \; | \; 1\leq i \leq n, \; 0\leq j< \ell_i, 1\leq k \leq m_i \}$, on which $F$ and $V$ act as follows:
\begin{align*}
    F(e_{i,j,k})&= \begin{cases} e_{i,j+1,k} &\hbox{if $u_{i,j}=\word{f}$,} \\ 0 &\hbox{if $u_{i,j}=\word{v}$,} \end{cases} \hspace{2cm}
    V(e_{i,j+1,k})= \begin{cases} e_{i,j,k} &\hbox{if $u_{i,j}=\word{v}$,} \\ 0 &\hbox{if $u_{i,j}=\word{f}$}. \end{cases}
\end{align*}

Now, for $1\leq i \leq n$, define $w_{i,0} \coloneqq w_i^{L_W/\ell_i}$ and $$w_{i,j} \coloneqq (\tau^j(w_i))^{L_W/\ell_i}=(u_{i,j-1} \cdots u_{i,0}u_{i,\ell_i-1} \cdots u_{i,j})^{L_W/\ell_i}$$ 
for $0< j < \ell_i$. Note that, by construction, every $w_{i,j}$ has the length $L_W$. Let $\Omega$ be the set of all $w_{i,j}$. For each $\omega\in \Omega$, we now define 
$$N_{\omega}  \coloneqq  \text{span} \left\{ e_{i,j,k} \; | \; w_{i,j} \preceq \omega  \right\}.$$

By \cite[Section 3.5]{PriesUlmerBT1Fermat}, the canonical filtration of $M$ is given by $\{N_\omega \; | \; \omega \in \Omega\}$. From the construction it is clear that $N_\omega \subseteq N_{\omega'}$ if and only if $\omega\preceq \omega'$.

The next step in \cref{sec: Weyl intro} is to intersect the final filtration with $M_0$. It suffices to intersect the canonical filtration with $M_0$. To this end, note that Property~\eqref{property 01} above ensures that $e_{i,0,k} \in M_0$. Furthermore, since $F$ and $V$ interchange $M_0$ and $M_1$, we see that $e_{i,j,k}\in M_0$ if and only if $j$ is even. Hence,
$$N_\omega^0 \coloneqq N_{\omega} \cap M_0 = \text{span} \{e_{i,j,k} \; | \; w_{i,j} \preceq \omega \text{ and } 2\mid j\}.$$

Let $q=q_W$ from \cref{def: gamma(W)}. The function $\eta:\{1,\ldots , q\} \to \{1, \ldots ,q\}$ will now be determined by its values at the numbers $s_\omega: =\dim(N_\omega^0)$ since it is irrelevant how the canonical filtration is refined to a final filtration. For this, recall the elements $\widetilde{\omega}_1 \preceq \cdots \preceq \widetilde{\omega}_q$ of $\widetilde{W}$ in \cref{def: gamma(W)}. Note that these elements correspond to the elements $\omega_{i,j} \in \Omega$ with $j$ even. We then observe:
\begin{align*}
    \dim\left(\text{span} \{e_{i,j,k} \; | \; w_{i,j} = \omega  \text{ and }  2 \mid j\} \right) &= \# \{1\leq s \leq q \; | \; \widetilde{\omega}_s=\omega\},  \\
    s_\omega=\dim\left( N_\omega^0\right)&= \#\{1\leq s\leq q \; | \; \widetilde{\omega}_s \preceq \omega \}.
\end{align*}

Equivalently, $s_{\omega}=\max\{s \; | \; \widetilde{\omega}_s \preceq \omega\}$.

The last step is to compute the dimension of $N_\omega^0[F]$ for each $\omega\in \Omega$. For this, we note that $F(e_{i,j,k})=0$ is equivalent to $u_{i,j}=\word{v}$, which is in turn equivalent to $w_{i,j}$ ending with $\word{v}$. Thus we conclude that, for each $\omega\in \Omega$, we have
\begin{align*}
    \eta(s_\omega) = \dim(N_\omega^0[F]) &= \#\{1 \leq s \leq q \; | \; \widetilde{\omega}_s \preceq \omega \text{ and } \widetilde{\omega}_s \text{ ends with $\word{v}$}\} \\
    &= \#\{1\leq s \leq s_\omega \; | \; \widetilde{\omega}_s \text{ ends with $\word{v}$}\}.
\end{align*}
Finally, the property $\eta(s)\leq \eta(s+1)\leq \eta(s)+1$ dictates the further behavior of the function $\eta$:
$$\eta(s)=\#\{1\leq t \leq s \; | \; \widetilde{\omega}_t \text{ ends with $\word{v}$}\}.$$

Thus, $\eta$ jumps at the values $\setp{z_1, \ldots ,z_b}{z_1 <\cdots < z_b} 
= \{1\leq s \leq q \; | \; \widetilde{\omega}_s \text{ ends with $\word{v}$}\}.$ \end{proof}

\begin{remark}\label{remark:multiset W}
By \cref{thm: structure of unitary BT1s}, every unitary $\bt$ module decomposes as 
\[
\widetilde{M} = \bigoplus_{w\in W_1} \widetilde{M}(w) \oplus \bigoplus_{w\in W_2} \widetilde{M}(w,w^\star).
\]

We now outline how to derive the multiset $W$ used for \cref{thm: word Weyl} from this decomposition. From the definitions of $\widetilde{M}(w)$ and $\widetilde{M}(w,w^\star)$ we obtain a decomposition of the underlying $\bt$ module
\[
\overline{M} = \bigoplus_{w\in W_1} \overline{M}(w) \oplus \bigoplus_{w\in W_2} \left( \overline{M}(w) \oplus \overline{M}(w^\star) \right).
\]
However, appealing to the construction in \cref{lem: unitary M(w)+M(w)-c}, we observe that the summands $\overline{M}(w^\star)$ do not satisfy Property~\eqref{property 01} above. To this end, we replace them by $\overline{M}(\tau(w^\star))$, which do satisfy Property~\eqref{property 01}. We conclude that 
\[
W=W_1 \cup W_2 \cup \{\tau(w^\star) \; | \; w\in W_2\}.
\]
\end{remark}

\begin{example} \label{eg: fv fffvvv}
Consider the unitary $\bt$ module $\widetilde{M}(\word{vf}) \oplus \widetilde{M}(\word{fffvvv})$ with $W=\{\word{vf},\word{fffvvv}\}$. We compute $L_W=6$ and, after lexicographically ordering,
we obtain for the multiset $\widetilde{W}$ 
\begin{center}
    \begin{tabular}{cccc}
      $\underbracket[.75pt]{\,\word{fffvvv}\,}_{\Circled[inner ysep=6pt]{1}}$\;, & $\underbracket[.75pt]{\,\word{fvvvff}\,}_{\Circled[inner ysep=6pt]{2}}$\;, & $\underbracket[.75pt]{\,\word{vfvfvf}\,}_{\Circled[inner ysep=6pt]{3}}$\;, & $\underbracket[.75pt]{\,\word{vvfffv}\,}_{\Circled[inner ysep=6pt]{4}}$. \\
    \end{tabular}
\end{center}
By examining which elements above end in $\word{v}$, one sees $\gamma(W)=\gamma_{1,4}= (2,3,4)\in \mathbf{W}(2,2)$.

On the other hand, considering instead $\widetilde{M}(\word{fv}) \oplus \widetilde{M}(\word{ffvvvf})$ with $W=\{\word{fv},\word{ffvvvf}\}$ gives
\begin{center}
    \begin{tabular}{cccc}
      $\underbracket[.75pt]{\,\word{ffvvvf}\,}_{\Circled[inner ysep=6pt]{1}}$\;, & $\underbracket[.75pt]{\,\word{fvfvfv}\,}_{\Circled[inner ysep=6pt]{2}}$\;, & $\underbracket[.75pt]{\,\word{vfffvv}\,}_{\Circled[inner ysep=6pt]{3}}$\;, & $\underbracket[.75pt]{\,\word{vvvfff}\,}_{\Circled[inner ysep=6pt]{4}}$ \\
    \end{tabular}
\end{center}
for the multiset $\widetilde{W}$. 

This results in $\gamma(W)=\gamma_{2,3}=(2,3)(1,2)=(3,2,1)\in \mathbf{W}(2,2)$. 

These are the 
two non-isomorphic unitary $\bt$ modules in signature $(2,2)$ that have the same underlying polarized $\bt$ module.
\end{example}

\begin{example} \label{ex:bicycle word to weyl}
Consider the unitary $\bt$ module $\widetilde{M}(\word{ffffvf}, \word{vvvvfv}) $ with $W=\{\word{ffffvf},\word{vvvvvf}\}$, which we first introduced in \cref{example: bicycles}. Letting $w=\word{ffffvf}$, note that $W=\{w,\tau(w^\star)\}$ in accordance with \cref{remark:multiset W}. We compute $L_W=6$ and, after lexicographically ordering,
we obtain for the multiset $\widetilde{W}$
\begin{center}
    \begin{tabular}{cccccc}
      $\underbracket[.75pt]{\,\word{ffffvf}\,}_{\Circled[inner ysep=6pt]{1}}$\;, & $\underbracket[.75pt]{\,\word{ffvfff}\,}_{\Circled[inner ysep=6pt]{2}}$\;, & $\underbracket[.75pt]{\,\word{vfffff}\,}_{\Circled[inner ysep=6pt]{3}}$\;, & $\underbracket[.75pt]{\,\word{vfvvvv}\,}_{\Circled[inner ysep=6pt]{4}}$\;, & $\underbracket[.75pt]{\,\word{vvvfvv}\,}_{\Circled[inner ysep=6pt]{5}}$\;, & $\underbracket[.75pt]{\,\word{vvvvvf}\,}_{\Circled[inner ysep=6pt]{6}}.$\\
    \end{tabular}
\end{center}

By examining which elements above end in $\word{v}$, we obtain that $\gamma(W) = \gamma_{4,5} \in \mathbf{W}(4,2).$ 
\end{example}

\section{Tautological lifts and Newton strata}\label{sec: Tautological lifts and Newton strata}

We now focus on the interaction between the Ekedahl--Oort stratification and the \emph{Newton stratification} of the unitary Shimura variety $\M(a,b)$. Recall from \cref{subsec:eo-newton-def} that the Newton stratification is based on the isogeny class of the $p$-divisible group of the parameterized abelian varieties. The unique closed Newton stratum of $\M(a,b)$ is the supersingular locus, and a $\kk$-point $(A, \lambda, \iota, \xi)$ is contained in the supersingular locus if and only if $A$ is a supersingular abelian variety.

The additional structure we consider on the abelian varieties parameterized by $\mathcal{M}(a,b)$ imposes additional structure on the corresponding $p$-divisible groups, and thereby the corresponding p-adic \Dd{} modules via the \Dd{} equivalence; this gives us the notion of a \emph{unitary} $p$-divisible group and \emph{unitary} $p$-adic \Dd{} module respectively.

\cref{thm: structure of unitary BT1s} describes the structure of unitary $\bt$ modules, which correspond to Ekedahl--Oort strata of unitary Shimura varieties. In this section, we construct, from a unitary $\bt$ module $\widetilde{M}$, an explicit unitary $p$-adic \Dd module called the \emph{tautological lift} of $\widetilde{M}$. 
 
We compute the Newton polygon of this lift and conclude that the given Ekedahl--Oort stratum intersects the resulting Newton stratum. This process relies on the word-based characterization of $\widetilde{M}$ given in \cref{thm: structure of unitary BT1s}.

\subsection{Unitary \texorpdfstring{$\boldsymbol{p}$}{}-divisible groups and \texorpdfstring{$\boldsymbol{p}$}{}-adic \Dd{} modules}\label{subsec:unitary-p-adic-mod}

Let $\xW =\xW(\kk)$ be the ring of Witt vectors of $\kk$ and define $\mathbx{K} \coloneqq \text{Frac}(\xW)=\xW[1/p]$ to be the fraction field of $\xW$. When $\kk=\overline{\ff}_p$, we have $\xW=\breve{\zz}_p$ and $\mathbx{K}=\breve{\qq}_p$, the completion of the maximal unramified extension of $\qq_p$. 

Let $\varphi_0$, $\varphi_1$ be the two embeddings of $\mathcal{O}_K$ into $\breve{\mathbx{Q}}_p$, where $K$ is the imaginary quadratic field in the moduli problem defining $\M(a,b)$. Finally, as before, we have $q =a+b$.

\begin{definition}\label{def:pdiv group} A \emph{unitary $p$-divisible group of signature $(a,b)$} over $\kk$ is a triple $(X, \lambda_X, \iota_X)$, where

\begin{enumerate}
\item{$X$ is a $p$-divisible group over $\kk$ of height $2(a+b)=2q$ and dimension $q$; }
\item{$\lambda_X: X \rightarrow X^\vee$ is a $p$-principal polarization; 
}
\item{$\iota_X: \mathcal{O}_K \otimes_{\mathbx{Z}} \mathbx{Z}_p \rightarrow \mathrm{End}(X)$ is an action satisfying the signature $(a,b)$ condition
$$\mathrm{charpol}(\iota(r) \mid \mathrm{Lie}(X) ) = (T - \phi_0(r))^{a}(T-\phi_1(r))^b \in \breve{\mathbx{Z}}_p[T],$$
for all $r \in \mathcal{O}_K$; and \label{def:pdiv group unitary structure}
\item $\lambda_X$ has the following $\mathcal{O}_K$-linear compatibility condition with $\iota_X$ \[\lambda_X\circ \iota_X(r) = \iota_X(\overline{r})^\vee \circ \lambda_X,\quad \text{for all $r \in \mathcal{O}_K$.}\]
}
\end{enumerate}
\end{definition}
The \Dd{} equivalence (\cref{sec:preliminary DD theory}) respects the additional structure of unitary $p$-divisible groups coming from $\iota_X$, giving rise to $p$-adic \Dd{} modules with additional structure. We call these \emph{unitary} $p$-adic \Dd{} modules.

Recall that $\sigma = \xW(\frob)$ is the lift of the Frobenius automorphism of $\kk$ to $\xW$.

\begin{definition} \label{def: unitary p-adic Dd}
A \emph{unitary $p$-adic \Dd module} is a tuple $\widetilde{\xM} = (\xM, \xF,\xV, \Lambda, \xM = \xM_0 \oplus \xM_1)$, where
\begin{enumerate}
    \item $(\xM, \xF, \xV)$ is a $p$-adic \Dd module over $\kk$;
    \item $\Lambda: \xM \times \xM \to \xW$ is a perfect alternating $\xW$-bilinear pairing;
    \item $\Lambda(\xF(\xx),\xy) = \Lambda(\xx, \xV(\xy))^\sigma$ for every $\xx,\xy\in \xM$;
    \item $\xM_0$ and $\xM_1$ are $\xW$-submodules of $\xM$ of equal rank such that $\xM=\xM_0 \oplus \xM_1$;
    \item $\xF$ and $\xV$ are homogeneous of degree $1$ with respect to the decomposition $\xM=\xM_0\oplus \xM_1$; 
    \item $\xM_0$ and $\xM_1$ are both totally isotropic with respect to $\Lambda$.
\end{enumerate}
We define the \emph{signature} of $\widetilde{\xM}$ to be $(a,b) \coloneqq (\dim_{\kk} (\xM_0/\xF(\xM_1)) , \dim_{\kk} (\xM_1/\xF(\xM_0))).$ It follows that the $\xW$-rank of $\xM$ equals $2(a+b)=2q$.
\end{definition}

Reducing a unitary $p$-adic \Dd module modulo $p$ gives a unitary $\bt$ module. 
Given a unitary $p$-adic \Dd{} module $\widetilde{\xM}$ and unitary $\bt$ module $\widetilde{M}$ such that $\widetilde{\xM}/p\widetilde{\xM} \cong \widetilde{M}$, we say $\widetilde{\xM}$ is a \emph{lift} of $\widetilde{M}$. 
 
Since the $p$-divisible group of an abelian variety determines both its Ekedahl--Oort stratum 
and its Newton stratum, constructing suitable unitary $p$-divisible groups lets us construct points in the intersection of an Ekedahl--Oort stratum and a Newton stratum of a unitary Shimura variety.

\subsection{Tautological lifts} \label{sec: TL}

The goal of this section is to construct suitable lifts of unitary $\bt$ modules and compute their Newton polygons.

In \cite[(2.6)]{Oort05simple}, Oort constructs a \emph{tautological lift}: given a $\bt$ module $\overline{M}$, a $p$-adic \Dd module $\overline{\xM}$ is constructed so that $\overline{\xM}/p\overline{\xM}$ is isomorphic to $\overline{M}$ as a $\bt$ module. In \cite[Remark 6.2]{Harashita_polygon}, this is called the \emph{standard lift}. We now generalize this construction to include a quasi-polarization and an action of $\zz_{p^2} \coloneqq \xW(\ff_{p^2})$, the ring of Witt vectors of $\ff_{p^2}$.

Let $\widetilde{M}=(M,F,V,\lambda,M=M_0\oplus M_1)$ be a unitary $\bt$ module of signature $(a,b)$. As in \cref{subsec:unitary bt1 modules}, let $B_i=\{e_{i,1}, \ldots ,e_{i,q}\}$  be a basis of $M_i$ such that $B=B_0 \cup B_1$ is preserved by $F$ and $V$. Fix a constant $c \in \kk^\times$ satisfying $c^p=-c$. For convenience, assume the pairing $\lambda$ is determined by
$$\lambda(e_{0,j},e_{1,j'}) =\begin{cases} c &\hbox{if $j+j'=q+1$,} \\ 0 &\hbox{else.} \end{cases}$$
This is the pairing given in \cite[(5.5)]{Moonengsas}. Note that this agrees with the pairing in \cref{lem: pairing unicycle}, but differs slightly from the pairing in \cref{lem: pairing bicycle}, replacing $\lambda$ by $c\lambda$.

For the definition below, we fix a constant $C \in \xW$ such that $C^\sigma=-C$ and $C \pmod p \equiv c$.

\begin{definition} \label{def: TL}
Let $\widetilde{M}$ be as above. We make the following definitions:
\begin{enumerate}
    \item let  
    $\xM_i  \coloneqq  \xW \{ \xe_{i,1} , \ldots ,\xe_{i,q}\}$ for $i\in \{0,1\}$, and set $\xM \coloneqq \xM_0 \oplus \xM_1$;
    \item whenever $F(e_{i,j})=e_{i',j'}$, set $\xF(\xe_{i,j}) = \xe_{i',j'}$ and $\xV(\xe_{i',j'}) = p\xe_{i,j}$;
    \item whenever $V(e_{i,j}) = e_{i',j'}$, set $\xV(\xe_{i,j}) = \xe_{i',j'}$ and $\xF(e_{i',j'}) = p\xe_{i,j}$;
    \item using the previous two rules, extend $\xF$ $\sigma$-linearly and extend $\xV$ $\sigma^{-1}$-linearly; and
    \item let $\Lambda: \xM \times \xM \to \xW$ be the alternating $\xW$-bilinear pairing obtained by bilinearly extending:
    $$\Lambda(\xe_{i,j},\xe_{i',j'}) =\begin{cases} C &\hbox{if $j+j'=q+1$ and $i<i'$,} \\
    -C &\hbox{if $j+j'=q+1$ and $i>i'$,}
    \\ 0 &\hbox{else.} \end{cases}$$
    In particular, $\xM_0$ and $\xM_1$ are Lagrangian with respect to $\Lambda$.
\end{enumerate}
We then define the \emph{tautological lift} of $\widetilde{M}$ to be $$\TL(\widetilde{M}) \coloneqq (\xM,\xF,\xV,\Lambda,\xM=\xM_0\oplus \xM_1).$$
\end{definition}

\begin{proposition} \label{prop: TL(M) is lift}
Assume $\widetilde{M}$ is a unitary $\bt$ module of signature $(a,b)$. Then $\TL(\widetilde{M})$ is a unitary $p$-adic \Dd module of signature $(a,b)$, and $\TL(\widetilde{M})/p\TL(\widetilde{M})$ is isomorphic to $\widetilde{M}$ as a unitary $\bt$ module.
\end{proposition}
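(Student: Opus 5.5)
The plan is to verify the axioms of \cref{def: unitary p-adic Dd} directly on the basis $\{\xe_{i,j}\}$, and then reduce modulo $p$. The key input is that $B$ is preserved by $F$ and $V$, and that $\ker F = \im V$ and $\ker V = \im F$. Together these imply that each basis vector $e_{i,j}$ falls into exactly one of two cases: either $F(e_{i,j})=e_{i',j'}\in B$, or $e_{i,j}=V(e_{i'',j''})$ for a unique $e_{i'',j''}\in B$. Hence $\xF$ is well defined on each $\xe_{i,j}$ by exactly one of rules (2), (3), and the same holds for $\xV$. Note also that the final $\xe_{i',j'}$ in rule (3) should read $\xF(\xe_{i',j'})$.

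\textbf{$p$-adic \Dd{} axioms and homogeneity.} By construction $\xF$ is $\sigma$-linear and $\xV$ is $\sigma^{-1}$-linear. To check $\xF\xV=\xV\xF=p$ on each basis vector, I split into the two cases above. For instance, if $F(e_{i,j})=e_{i',j'}$, then $\xV\xF(\xe_{i,j})=\xV(\xe_{i',j'})=p\xe_{i,j}$, and $\xF\xV(\xe_{i',j'})=\xF(p\xe_{i,j})=p\xe_{i',j'}$. The other case is symmetric, and every basis vector lies in the domain of exactly one case for each operator. Homogeneity of degree $1$ holds because $F$ and $V$ swap $B_0$ and $B_1$, and the lifts send $\xe_{i,j}$ to multiples of $\xe_{1-i,\cdot}$. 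Perfectness and alternation of $\Lambda$ are immediate from the antidiagonal form with unit entries $\pm C$, and $\xM_0$, $\xM_1$ are isotropic by definition.

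\textbf{The main step: $\Lambda(\xF\xx,\xy)=\Lambda(\xx,\xV\xy)^\sigma$.} I will check this on basis vectors $\xx=\xe_{i,j}$, $\xy=\xe_{i',j'}$. The mod-$p$ identity $\lambda(Fx,y)=\lambda(x,Vy)^p$, applied to the basis, constrains which pairs can be nonzero. Suppose $\xF\xe_{i,j}$ is a unit multiple of a basis vector pairing nontrivially with $\xe_{i',j'}$, so that the left side is $\pm C$. Then the mod-$p$ identity forces $V(e_{i',j'})=e_{i,j}$ exactly, so the right side is $(\pm C)^\sigma$. I expect the delicate part to be the signs here: one must use $C^\sigma=-C$ together with the orientation of $\Lambda$ (the sign depends on whether $i<i'$, and $\xF$ swaps $i$). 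The computation should mirror the mod-$p$ identity, where the relation $c^p=-c$ absorbs the same sign. There are then the mixed cases, where $\xF\xe_{i,j}=p\xe$ or $\xV\xe_{i',j'}=p\xe$. I need to show that the factor $p$ appears on both sides simultaneously. The plan is to argue that $F(e_{i,j})=0$ with $\lambda(e_{i,j},V e_{i',j'})$ nonzero is impossible mod $p$, and to use the bijective antipodal pairing $j\leftrightarrow q+1-j$. That pairing is compatible with Kraft diagrams, as in \cref{lem: pairing unicycle,lem: pairing bicycle}: partners of $F$-arrows are $V$-arrows. With this, the $p$-factors match and both sides become $p(\pm C)$ and $(p(\pm C))^\sigma$. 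This case analysis is the step I expect to be the main obstacle.

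\textbf{Reduction and signature.} Modulo $p$, rules (2) and (3) reduce to $F$ and $V$ on $B$, with the $p$-multiples becoming $0$. The decomposition reduces to $M_0\oplus M_1$, and $\Lambda$ reduces to $\lambda$ because $C\equiv c$. So $\TL(\widetilde{M})/p\TL(\widetilde{M})\cong\widetilde{M}$. Finally, $\xM_0/\xF(\xM_1)\cong M_0/F(M_1)$ as $\kk$-spaces, since $\xF(\xM_1)\supseteq p\xM_0$. Its dimension is $q-\dim F(M_1)=\dim M_1[F]=a$, and similarly the other quotient has dimension $b$, so the signature is $(a,b)$.
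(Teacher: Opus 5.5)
Your outline is the paper's: check the axioms of \cref{def: unitary p-adic Dd} on the basis $\{\xe_{i,j}\}$, reduce via $\xe_{i,j}\bmod p\mapsto e_{i,j}$, and read off the signature from $\xM_0/\xF(\xM_1)\cong M_0/F(M_1)$. Your justification $p\xM_0=\xF\xV\xM_0\subseteq\xF(\xM_1)$ is exactly what that step needs. The paper dismisses the compatibility $\Lambda(\xF\xx,\xy)=\Lambda(\xx,\xV\xy)^\sigma$ in one sentence, so your case analysis goes further than the paper does. As sketched, however, it is wrong in two places.

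First, consider the case where both $\xF\xe_{i,j}$ and $\xV\xe_{i',j'}$ are basis vectors. There the mod-$p$ identity forces $V(e_{i',j'})$ to be the $\lambda$-partner $e_{i,j}^\perp$ of $e_{i,j}$ (the unique element of $B$ pairing nontrivially with it), not $e_{i,j}$ itself. The signs then need no separate analysis. On pairs of basis vectors, $\Lambda$ is obtained from $\lambda$ by replacing $0,\pm c$ with $0,\pm C$. Since $c\neq -c$, and $C^\sigma=-C$ mirrors $c^p=-c$, this replacement intertwines $x\mapsto x^p$ with $\sigma$.

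Second, and more seriously, in the genuinely mixed configurations the $p$-factors cannot ``match.'' The construction dictates that exactly one side carries a $p$, so what must be shown is that both sides vanish, not that both equal $p(\pm C)$. Take $Fe_{i,j}=e_{k,l}\neq0$ and $Ve_{i',j'}=0$, so $e_{i',j'}=Fe_{m,n}$ and $\xV\xe_{i',j'}=p\xe_{m,n}$. The left side $\Lambda(\xe_{k,l},\xe_{i',j'})$ vanishes by the mod-$p$ identity, as you observe. The right side $p\,\Lambda(\xe_{i,j},\xe_{m,n})^\sigma$, however, vanishes only if $e_{m,n}\neq e_{i,j}^\perp$. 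That needs a separate fact: if $Fe\neq 0$ then $Fe^\perp=0$. It follows from the mod-$p$ identity at the pair $(e,(Fe)^\perp)$, which gives $e^\perp=V\bigl((Fe)^\perp\bigr)\in\im V=\ker F$. Dually, $Ve\neq0$ implies $Ve^\perp=0$. This handles the other mixed case $Fe_{i,j}=0$, $Ve_{i',j'}\neq0$, where the left side is $p\,\Lambda(\xe_{s,t},\xe_{i',j'})$ with $Ve_{s,t}=e_{i,j}$. This is the precise content of your ``partners of $F$-arrows are $V$-arrows,'' and it comes straight from the mod-$p$ identity, with no appeal to the Kraft-diagram lemmas. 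Only when $e_{i,j}=Ve_{s,t}$ and $e_{i',j'}=Fe_{m,n}$ do both sides carry $p$. In that case the required equality $\Lambda(\xe_{s,t},\xe_{i',j'})=\Lambda(\xe_{i,j},\xe_{m,n})^\sigma$ follows from the mod-$p$ identity at $(e_{m,n},e_{s,t})$ together with alternation.
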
 
\begin{proof}
We begin by showing that $\TL(\widetilde{M})$ is a unitary $p$-adic \Dd module. It is clear that $\xM_0$ and $\xM_1$ are free   
$\xW$-modules of rank $q$, so that $\xM$ is a free  
$\xW$-module of rank $2q$. By construction, $\xF$ is $\sigma$-linear, $\xV$ is $\sigma^{-1}$-linear and $\xF \circ \xV = \xV \circ \xF=p$, so that $(\xM,\xF,\xV)$ is a $p$-adic \Dd module. Next, we observe that $\Lambda$ is a perfect pairing. By construction, it is also  
$\xW$-bilinear and alternating. The condition $\Lambda(\xF(\xx),\xy) = \Lambda(\xx, \xV(\xy))^\sigma$ then follows from the assumption $\lambda(F(x),y)=\lambda(x,V(y))^p$ and the definition of $C$. Finally, it is apparent in the construction that $\xF$ and $\xV$ act homogeneously on $\xM=\xM_0 \oplus \xM_1$ and that $\xM_0$ and $\xM_1$ are totally isotropic with respect to $\Lambda$. We conclude that $\TL(\widetilde{M})$ is a unitary $p$-adic \Dd module.

The isomorphism between $\TL(\widetilde{M})/p\TL(\widetilde{M})$ and $\widetilde{M}$ is explicitly given by $\xe_{i,j} \pmod {p} \mapsto e_{i,j}$. Under this map, since $c \equiv C \pmod{p}$, observe that $\xF, \xV$ and $\Lambda$ correspond to $F,V$ and $\lambda$ respectively. This recovers the structure of $\widetilde{M}$ as a unitary $\bt$ module.

Finally, the statement about the signatures follows from $\TL(\widetilde{M}) /p\TL(\widetilde{M}) \cong \widetilde{M}$, since 
$$\dim_{\kk} (\xM_0/\xF(\xM_1)) = \dim_{\kk}(M_0/F(M_1)) = q-\dim_{\kk}(F(M_1)) = \dim_{\kk}(M_1[F])=a.$$
Exchanging $\xM_0$ and $\xM_1$ gives the formula for $b$.
\end{proof}

If $\overline{M}$ is merely a $\bt$ module, we let $\TL(\overline{M})$ denote the $p$-adic \Dd module defined by only specifying $(\xM,\xF,\xV)$ in \cref{def: TL}. This agrees with the definition in \cite[(2.5)]{Oort05simple}. As is observed there, $\TL(\cdot)$ is not a functor. By construction, it does respect direct sums:
\begin{align}
    \TL(\widetilde{M} \oplus \widetilde{N}) &= \TL(\widetilde{M}) \oplus \TL(\widetilde{N}), \nonumber \\
    \TL(\overline{M} \oplus \overline{N}) &= \TL(\overline{M}) \oplus \TL(\overline{N}). \label{eq: TL oplus}
\end{align}

\subsection{Newton polygons}\label{sec:NewtonPolygon}

Let $\widetilde{M}=(M,F,V,\lambda,M=M_0\oplus M_1)$ be a unitary $\bt$ module and let $\TL(\widetilde{M})$ be its tautological lift. For the purposes of \cref{thm: intersection}, we now compute the \emph{Newton polygon} of $\TL(\widetilde{M})$. This is defined to be the Newton polygon of the underlying $p$-adic \Dd module $\overline{\xM}=(\xM, \xF, \xV)$. By the \Dd-Manin classification,   
$\overline{\xM} \otimes_{\xW} K$ decomposes as a $K[\xF]$-module as follows: 
$$\overline{\xM} \otimes_{\xW} K = \bigoplus_{i=1}^r \overline{\xM}_{m_i, n_i} ^{\lambda_i}.$$
Here $\overline{\xM}_{m_i,n_i}$ is the simple $K[\xF]$-module on which $\xF^{m_i+n_i}=p^{n_i}$. We call $\alpha_i=\frac{m_i}{m_i+n_i}$ the \emph{slope} of $\overline{\xM}_{m_i,n_i}$, we call $\lambda_i$ the \emph{multiplicity} of $\alpha_i$, and we call $\mu_i=\lambda_i(m_i+n_i)$ the \emph{length} of the slope $\alpha_i$. We assume $\alpha_1 < \cdots < \alpha_r$. Then the \emph{Newton polygon} is the polygon from $(0,0)$ to $(2q,q)$ obtained by concatenating the line segments of slope $\alpha_i$ and horizontal length $\mu_i$.

\begin{definition} \label{def: NP(W)}
    Let $w=u_{\ell-1} \cdots u_0$ be a word in the alphabet $\{\word{f},\word{v}\}$. Then we define its slope
    $$\alpha(w)  \coloneqq  \frac{\#\{i \; | \; u_i=\texttt{v}\}}{\ell}.$$
    Now, let $W$ be a multiset of words. We define $\NP(W)$ to be the Newton polygon obtained by letting each $w \in W$ contribute the slope $\alpha(w)$ with length $\ell(w)$.
\end{definition}

\begin{proposition} \label{prop: NP of TL}
Let $\widetilde{M}$ be a unitary $\bt$ module whose underlying $\bt$ module $\overline{M}$ decomposes as $\overline{M} = \bigoplus_{w \in W} \overline{M}(w).$ Then the tautological lift $\TL(\widetilde{M})$ has Newton polygon $\NP(W)$. 
\end{proposition}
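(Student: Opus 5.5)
The plan is to reduce to a single word using additivity of $\TL$ over direct sums, and then compute the slope of the tautological lift of one cycle directly. The Newton polygon ignores $\Lambda$ and the grading $\xM=\xM_0\oplus\xM_1$. It depends only on the isocrystal $(\xM\otimes_{\xW}\mathbx{K},\xF)$. Moreover $\TL(\widetilde{M})$ and $\TL(\overline{M})$ have the same underlying $p$-adic \Dd module.

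\textbf{Reduction to one word.} We may take the basis $B$ of \cref{def: TL} to be one adapted to the decomposition $\overline{M}=\bigoplus_{w\in W}\overline{M}(w)$, so that each summand is spanned by elements of $B$. We can arrange this because the Newton polygon of $\TL$ is insensitive to how a cyclic summand is labeled. Then Equation~\eqref{eq: TL oplus} gives
\[
\TL(\overline{M})=\bigoplus_{w\in W}\TL(\overline{M}(w)).
\]
The Newton polygon of a direct sum of isocrystals is obtained by concatenating the slope segments of the summands in increasing order. So it suffices to show that $\TL(\overline{M}(w))$ is isoclinic of slope $\alpha(w)$ and rank $\ell(w)$. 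Note that $w$ need not be primitive here, since $W$ may contain words $zz$.

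\textbf{One word.} Let $w=u_{\ell-1}\cdots u_0$, with lifted basis $\xe_0,\dots,\xe_{\ell-1}$. By the rules in \cref{def: TL}, the image $\xF(\xe_j)$ equals $\xe_{j+1}$ if $u_j=\word{f}$. If $u_j=\word{v}$, then $\xV(\xe_{j+1})=\xe_j$, so $\xF(\xe_j)=\xF\xV(\xe_{j+1})=p\,\xe_{j+1}$. Hence $\xF(\xe_j)=p^{\epsilon_j}\xe_{j+1}$, where $\epsilon_j=1$ if $u_j=\word{v}$ and $\epsilon_j=0$ otherwise. Iterating around the cycle and using that the $\xe_j$ are $\sigma$-fixed gives
\[
\xF^{\ell}(\xe_j)=p^{m}\xe_j,\qquad m=\#\{i \mid u_i=\word{v}\}.
\]
Thus $\xF^{\ell}=p^{m}$ on the $\mathbx{K}$-span, because this identity is $\sigma^{\ell}$-semilinear and holds on a $\sigma$-stable basis. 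By the Dieudonn\'e--Manin classification, every simple constituent $\overline{\xM}_{m',n'}$ satisfies $\xF^{m'+n'}=p^{n'}$. Comparing the two relations forces every slope to equal $m/\ell=\alpha(w)$.

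One convention needs to be checked. The paper defines the slope of $\overline{\xM}_{m,n}$ as $m/(m+n)$ while writing $\xF^{m+n}=p^{n}$. I will verify against the normalization the paper intends, namely a polygon from $(0,0)$ to $(2q,q)$ with $\alpha(w)$ counting $\word{v}$'s. A cleaner check uses $\xV$ instead. By the symmetric computation, $\xV^{\ell}=p^{\ell-m}$ on this summand, and the slope assigned to $\word{v}$-counts then matches under the contravariant convention. I would phrase the final step so that either reading yields the slope $\alpha(w)$ with length $\ell(w)$, and justify the choice using the duality of contravariant \Dd theory.

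\textbf{Expected obstacle.} The main subtlety is this slope-normalization check, together with confirming that non-primitive words $zz$ cause no trouble. They do not, since $\alpha(zz)=\alpha(z)$ and the lengths add correctly. The rest is the routine verification that $\xF^{\ell}$ acts diagonally with the stated $p$-power.
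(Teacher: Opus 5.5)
Your argument is the paper's proof: split $\TL(\widetilde{M})$ along the words via Equation~\eqref{eq: TL oplus}, then observe that $\xF^{\ell}=p^{\#\{i \mid u_i=\word{v}\}}$ on $\TL(\overline{M}(w))$, so each $w\in W$ contributes slope $\alpha(w)$ with length $\ell(w)$. The normalization mismatch you flag really is in the paper's wording, but it is harmless: since $\overline{M}$ is self-dual, the slope multiset of $\NP(W)$ is symmetric under $\alpha\mapsto 1-\alpha$, so both readings give the same polygon, though not word by word. You should therefore justify it by this symmetry rather than by the $\xV$-computation.
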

\begin{proof}
Write $\overline{\xM}$ for the underlying $p$-adic \Dd module of $\TL(\widetilde{M})$. By Equation~\eqref{eq: TL oplus},
\[\overline{\xM} = \bigoplus_{w \in W} \TL(\overline{M}(w)),\; \text{ and } \quad \overline{\xM} \otimes_{\xW} K  = \bigoplus_{w \in W} \left(\TL(\overline{M}(w)) \otimes_{\xW} K \right).\] 
The slopes of $\TL(\overline{M}(w))$ are calculated in \cite[(2.6)]{Oort05simple}; for completeness, we also calculate them here. It is clear from \cref{def: TL} that on $\TL(\overline{M}(w))$, with $w=u_{\ell-1} \cdots u_0$, we have $\xF^\ell = p^{\#\{i \; | \; u_i=\word{v}\}}$. Thus each $w\in W$  contributes the slope $\alpha(w)$ with length $\ell(w)$. We conclude that the Newton polygon of $\TL(\widetilde{M})$ is $\NP(W)$, as desired.
\end{proof}

\begin{definition} \label{def: balanced}
We shall say a word $w=u_{\ell-1} \cdots u_0$ 
is \emph{balanced} if it contains the same number of occurrences of $\word{f}$ and $\word{v}$: $\#\{i \; | \; u_i=\word{f}\} = \#\{i\; | \; u_i=\word{v}\} = \frac{\ell}{2}$.
\end{definition}

Note that every self-dual word is balanced, but not every balanced word is self-dual; i.e., for a word, being self-dual is a strictly stronger condition than being balanced. 

\begin{corollary} \label{cor: TL ss}
Let $\widetilde{M}$ be a unitary $\bt$ module whose underlying $\bt$ module $\overline{M}$ decomposes as $\overline{M} = \bigoplus_{w \in W} \overline{M}(w).$ Then $\TL(\widetilde{M})$ is supersingular if and only if every $w \in W$ is balanced.
\end{corollary}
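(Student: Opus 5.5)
This follows directly from \cref{prop: NP of TL}. By that proposition, the Newton polygon of $\TL(\widetilde{M})$ is $\NP(W)$. Each $w \in W$ contributes a segment of slope $\alpha(w)$ and horizontal length $\ell(w)$, and the total length is $\sum_{w\in W} \ell(w) = \dim_\kk M = 2q$. Recall that a $p$-adic \Dd module of rank $2q$ is supersingular exactly when its Newton polygon is the straight line from $(0,0)$ to $(2q,q)$, that is, when every slope equals $1/2$. So the task reduces to showing that $\NP(W)$ has all slopes equal to $1/2$ if and only if $\alpha(w) = 1/2$ for every $w \in W$.

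First I would note that the endpoint of $\NP(W)$ is automatically $(2q,q)$. The total rise is $\sum_w \alpha(w)\ell(w) = \sum_w \#\{i \mid u_i = \word{v}\}$. This counts the basis vectors killed by $F$, so it equals $\dim_\kk \ker F = \dim_\kk M/F(M) = q$; equivalently, the lift is a $p$-divisible group of dimension $q$ and height $2q$. A polygon assembled from segments with this endpoint is the straight line of slope $1/2$ exactly when every segment has slope $1/2$. Indeed, if some $\alpha(w)\neq 1/2$, the segment with the smallest slope has slope strictly less than $1/2$, so the polygon bends and the slope sequence is not constant.

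The remaining step is the equivalence $\alpha(w) = 1/2 \iff \#\{i \mid u_i=\word{v}\} = \ell(w)/2 \iff w$ is balanced, which is immediate from \cref{def: NP(W),def: balanced}. Combining these steps gives the corollary in both directions.

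I do not expect any real obstacle here. The one point to handle carefully is the identification of ``supersingular'' with ``all Newton slopes equal $1/2$'' for the underlying $p$-adic \Dd module. This is the standard characterization via the \Dd--Manin classification, and I would take it as the definition consistent with \cref{sec:NewtonPolygon}.
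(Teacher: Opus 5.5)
Your proposal is correct and follows the paper's proof: by \cref{prop: NP of TL} the slopes of $\TL(\widetilde{M})$ are exactly the $\alpha(w)$ for $w \in W$, and $\alpha(w)=1/2$ is precisely the condition that $w$ is balanced. Your extra check that $\NP(W)$ ends at $(2q,q)$ is correct but not needed, since supersingularity already means that every slope equals $1/2$.
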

\begin{proof}
By \cref{prop: NP of TL}, $\TL(\widetilde{M})$ is supersingular if and only if $\alpha(w)=\frac{1}{2}$ for each $w \in W$. By \cref{def: NP(W)}, this happens precisely when $w$ is balanced.
\end{proof}

\begin{example}
We compute the Newton polygons of all tautological lifts in signature $(2,2)$. In signature $(2,2)$, 
all words that are balanced are associated with the Newton polygon of slope $\frac{1}{2}$, joining $(0,0)$ to $(8,4)$, illustrated as $\nu_1$ below. 
The Newton polygon associated to $\gamma_{2,4}$, with  $W=\{\word{ff}, \word{fv},\word{vf},\word{vv}\}$, has three segments, with slopes $0,\frac12,$ and $ 1$, as shown with the polygon $\nu_2$. The remaining ordinary Newton polygon is drawn as $\nu_3$, with slopes 0 and 1. 
\begin{center}
\resizebox{0.4\textwidth}{!}{
\begin{tikzpicture}

\draw[thin,color=gray] (0,0) grid (8.25,4.25);   
    \draw[thick,->] (-0.25,0) -- (8.25,0);
    \draw[thick,->] (0,-0.25) -- (0,4.25);

   \draw[line width=.7mm,color=green!70!blue] (0,0) -- (4,0);
   \draw[line width=.7mm,color=green!70!blue] (4,0) -- (8,4);

    \draw[line width=.7mm,color=blue!60] (0,0) -- (2,0);
    \draw[line width=.7mm,color=blue!60] (2,0) -- (6,2); 
    \draw[line width=.7mm,color=blue!60] (6,2) -- (8,4);    
    \draw[line width=.7mm,color=orange!70!] (0,0) -- (8,4);

    \node[color=orange!70!,xshift=7.5pt,yshift=2.5pt] at (4,2.5) {$\nu_1$};
    \node[color=blue!60,xshift=7.5pt,yshift=2.5pt] at (4,1.5) {$\nu_2$};
    \node[color=green!70!blue,xshift=7.5pt,yshift=5pt] at (4,0.5) {$\nu_3$};
\end{tikzpicture}}
\end{center}  
\end{example}

\subsection{Intersections between Ekedahl--Oort strata and Newton strata}

We now employ all the results we have obtained for Dieudonn\'{e} modules to prove a result about the intersection between the Ekedahl--Oort stratification and the Newton stratification.

In what follows, we assume, following Equation~\eqref{eq: B decomposition 2}:
\begin{enumerate}
    \item $W_1$ is a multiset of self-dual-primitive words whose length is not divisible by $4$; and 
    \item $W_2$ is a multiset of words of even length that are either primitive, but not of the form $w=yy^{\star}$ where $y$ has odd length, or of the form $w=zz$ with $z$ primitive of odd length.
\end{enumerate}
We set \[q \coloneqq \displaystyle\sum_{w \in W_1} \ell(w)/2 + \sum_{w\in W_2} \ell(w)\] and 
\begin{align*}
    b& \coloneqq  \sum_{u_{\ell-1} \cdots u_0 \in W_1} \#\{ 1\leq j \leq \ell/2 \; | \; u_{2j} = \word{v} \} \\[0.5em] &\qquad + \sum_{u_{\ell-1} \cdots u_0 \in W_2} \left( \#\{1\leq j \leq  \ell/2  \; | \; u_{2j} = \word{v}\} + \#\{ 0 \leq j \leq   \ell/2  \; | \; u_{2j-1}=\word{f}\} \right).
\end{align*}

Moreover, we define $a \coloneqq q-b$. Finally, following \cref{remark:multiset W}, we define the following multiset of words:
\begin{equation} \label{eq: def W}
    W \coloneqq  W_1 \cup W_2 \cup \{\tau(w^\star) \; | \; w \in W_2\}.
\end{equation}

This is the multiset that will be used when applying \cref{thm: word Weyl}.

For a permutation $\gamma \in \mathbf{W}(a,b)$, we denote by $\M(a,b)_{\gamma}$ the Ekedahl--Oort stratum of $\M(a,b)$ corresponding to $\gamma$ via \cite[Theorem~6.7]{Moonengsas}. Likewise, given a Newton polygon $\mathcal{N}$, we denote by $\M(a,b)^{\mathcal{N}}$ the Newton stratum of $\M(a,b)$ corresponding to the Newton polygon $\mathcal{N}$. Recall also the definition of $\gamma(W)$ in \cref{def: gamma(W)} and the definition of $\NP(W)$ in \cref{def: NP(W)}.

\begin{theorem} \label{thm: intersection}
Let the notation be as above. The intersection 
$\M(a,b)_{\gamma(W)} \cap \M(a,b)^{\NP(W)}$ is non-empty.
\end{theorem}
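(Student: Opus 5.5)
The plan is to exhibit a single $\kk$-point of $\M(a,b)$ lying in both strata, built from the tautological lift of the unitary $\bt$ module determined by $W_1$ and $W_2$.

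\textbf{Step 1: the unitary $\bt$ module.} Set
\[
\widetilde{M} \coloneqq \bigoplus_{w\in W_1}\widetilde{M}(w)\oplus\bigoplus_{w\in W_2}\widetilde{M}(w,w^\star).
\]
The summands exist by \cref{lem: unitary M(w),lem: unitary M(w)+M(w)-c}, and by \cref{rmk: unicycle signature,rmk: bicycle signature} the total signature is $(a,b)$ with $q$ and $b$ as defined above. Its underlying $\bt$ module is $\bigoplus_{w\in W}\overline{M}(w)$ with $W$ as in \eqref{eq: def W}. By \cref{remark:multiset W}, this decomposition satisfies Properties \eqref{property B}, \eqref{property prim} and \eqref{property 01}. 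Hence \cref{thm: word Weyl} shows that $\widetilde{M}$ corresponds to $\gamma(W)$.

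\textbf{Step 2: the tautological lift.} Form $\TL(\widetilde{M})$. By \cref{prop: TL(M) is lift} it is a unitary $p$-adic \Dd module of signature $(a,b)$ whose reduction mod $p$ is $\widetilde{M}$. By \cref{prop: NP of TL} its Newton polygon is $\NP(W)$. Here one checks that replacing $\overline{M}(w^\star)$ by $\overline{M}(\tau(w^\star))$ does not change slopes or lengths. Via \Dd theory, this gives a unitary $p$-divisible group $(X,\lambda_X,\iota_X)$ over $\kk$ of signature $(a,b)$, as in \cref{def:pdiv group}. Its $p$-torsion corresponds to $\widetilde{M}$ and its isogeny class has Newton polygon $\NP(W)$.

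\textbf{Step 3: realizing $X$ on $\M(a,b)$.} This is the main obstacle: promoting $X$ to a point $(A,\lambda,\iota,\xi)$ with $A[p^\infty]\cong X$ compatibly with the extra structure. I would first use nonemptiness of the Newton stratum $\M(a,b)^{\NP(W)}$. This requires knowing that $\NP(W)$ is an admissible unitary Newton polygon; that holds because it is realized by $X$, via Kottwitz/Rapoport--Richartz and the known nonemptiness of all admissible Newton strata. Pick a point $(A_0,\ldots)$ in that stratum. Then $A_0[p^\infty]$ is isogenous to $X$ compatibly with polarization and $\mathcal{O}_K$-action.

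Next apply the Rapoport--Zink uniformization, or equivalently the standard quasi-isogeny argument (as in Oort's use of tautological lifts, or \cite{ViehmannWedhorn2013}). Modifying $A_0$ by the quasi-isogeny $A_0[p^\infty]\to X$ yields $A$ with the required structure. The polarization stays prime-to-$p$ because $\lambda_X$ is $p$-principal, and the level structure is unchanged since it is prime-to-$p$. The resulting point has $A[p]$ isomorphic to $\widetilde{M}$ as a unitary $\bt$ module, so it lies in $\M(a,b)_{\gamma(W)}$. Its Newton polygon is $\NP(W)$, so it also lies in $\M(a,b)^{\NP(W)}$.

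The care needed in this step is to check that the quasi-isogeny respects the unitary structure and the perfectness of $\Lambda$. This follows since $\Lambda$ in \cref{def: TL} is perfect, $C$ being a unit.
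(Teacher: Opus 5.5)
Your Steps 1 and 2 are exactly the paper's argument. You use the same module $\widetilde{M}$, the same appeal to \cref{thm: word Weyl} through \cref{remark:multiset W}, and the same use of \cref{prop: TL(M) is lift,prop: NP of TL}. The only divergence is the realization step. The paper never passes through a Newton stratum. It invokes \cite[Theorem 1.6 (2)]{ViehmannWedhorn2013}, which says outright that every unitary $p$-divisible group $(X,\lambda_X,\iota_X)$ of signature $(a,b)$ over $\kk$ is isomorphic to $A[p^\infty]$ for some $\kk$-point of $\M(a,b)$. Both stratum memberships are then read off from $X$ directly.

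You instead start from nonemptiness of the admissible Newton strata and modify a point $A_0$ along a quasi-isogeny $A_0[p^\infty]\to X$. This is correct, and it is essentially how such a realization statement is proved. It is not more elementary, though: the nonemptiness you invoke is itself a main result of \cite{ViehmannWedhorn2013}. Your route also needs two facts that the citation hides, which you should make explicit.

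\emph{First}, a point of $\M(a,b)^{\NP(W)}$ must have $p$-divisible group quasi-isogenous to $X$ compatibly with $\iota$ and $\lambda$. This holds because on $B(G,\mu)$ the Kottwitz point is fixed by $\mu$. Moreover, for $\mathsf{GU}(a,b)$ at an inert prime, the Newton point is determined by the Newton polygon of $(\xM,\xF)$.

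\emph{Second}, such a quasi-isogeny respects polarizations only up to a scalar in $\qq_p^\times$. You must therefore rescale the induced quasi-polarization on $A$ by a power of $p$ to obtain a prime-to-$p$ polarization. The leftover unit is harmless. One reason is that $\mathrm{Nm}\colon \zz_{p^2}^\times\to\zz_p^\times$ is surjective, so it can be absorbed by some $\iota(\alpha)$. Alternatively, the Ekedahl--Oort stratum only sees the unitary $\bt$ module, whose polarization is unique up to isomorphism.

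With these points spelled out, your argument is complete. The paper's version buys brevity by citing the realization theorem directly.
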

\begin{proof}
Using \cref{def: unicycle,def: (Serre) bicycles}, we construct the unitary $\bt$ module 
\begin{equation} \label{eq: Mtilde W1 W2}
    \widetilde{M}  \coloneqq  \bigoplus_{w\in W_1} \widetilde{M}(w) \oplus \bigoplus_{w \in W_2} \widetilde{M}(w,w^\star).
\end{equation}

It follows from \cref{lem: unitary M(w),lem: unitary M(w)+M(w)-c} that this is a unitary $\bt$ module. It can be verified using \cref{rmk: unicycle signature,rmk: bicycle signature} that $\widetilde{M}$ has signature $(a,b)$. Moreover, \cref{thm: word Weyl} provides that $\widetilde{M}$ corresponds to the Ekedahl--Oort stratum $\M(a,b)_{\gamma(W)}$ via \cite[Theorem~6.7]{Moonengsas}.

We now construct the tautological lift $\TL(\widetilde{M})$, following \cref{def: TL}. By \cref{prop: TL(M) is lift}, $\TL(\widetilde{M})$ is a unitary $p$-adic \Dd module whose mod-$p$ reduction is isomorphic to $\widetilde{M}$ as a unitary $\bt$ module. Furthermore, \cref{prop: NP of TL} provides that the Newton polygon of $\TL(\widetilde{M})$ is $\NP(W)$. 

Through the Dieudonn\'{e} equivalence, $\TL(\widetilde{M})$ gives rise to a unitary $p$-divisible group $(X,\lambda_X,\iota_X)$. Finally, by \cite[Theorem 1.6 (2)]{ViehmannWedhorn2013}, there exists a point $(A,\lambda,\iota,\xi)$ in $\mathcal{M}(a,b)$ such that the unitary $p$-divisible group $(A[p^\infty], \lambda_{A[p^\infty]}, \iota_{A[p^\infty]})$ is isomorphic to $(X,\lambda_X,\iota_X)$.

By \cref{prop: TL(M) is lift,thm: word Weyl}, this point lies in $\M(a,b)_{\gamma(W)}$, and by \cref{prop: NP of TL}, this point lies in $\M(a,b)^{\NP(W)}$. Thus, the point lies in the intersection $\M(a,b)_{\gamma(W)} \cap \M(a,b)^{\NP(W)}$, which is therefore non-empty. \end{proof}

\begin{remark}
    Note that the unitary structure on the $p$-divisible groups provides the ``$\mathcal{D}$-structure'' described in \cite{ViehmannWedhorn2013}. 
\end{remark}

\begin{corollary}\label{corollary:balancedssintersection1}
Assume every word in $W_2$ is balanced. Then $\M(a,b)_{\gamma(W)} \cap \M(a,b)^{\textnormal{ss}}$ is non-empty, where $\M(a,b)^{\textnormal{ss}}$ the supersingular locus of $\M(a,b)$.
\end{corollary}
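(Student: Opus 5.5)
This is a direct specialization of \cref{thm: intersection}. That theorem gives $\M(a,b)_{\gamma(W)} \cap \M(a,b)^{\NP(W)} \neq \emptyset$, with $W = W_1 \cup W_2 \cup \{\tau(w^\star) \mid w \in W_2\}$ as in Equation~\eqref{eq: def W}. It therefore suffices to show that under the hypothesis, $\NP(W)$ is the supersingular Newton polygon, i.e. the straight line of slope $\tfrac12$ from $(0,0)$ to $(2q,q)$. Then $\M(a,b)^{\NP(W)} = \M(a,b)^{\textnormal{ss}}$ and we are done.

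By \cref{def: NP(W)}, $\NP(W)$ is the straight line of slope $\tfrac12$ exactly when every word in $W$ is balanced (this is also the content of \cref{cor: TL ss}). I will check the three pieces of $W$ in turn.

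\begin{enumerate}
\item \emph{Words in $W_1$.} These are self-dual by assumption. A self-dual word has $w^\star = \tau^n(w)$, and dualizing exchanges the counts of $\word{f}$ and $\word{v}$ while rotation preserves them. So the counts are equal and every $w \in W_1$ is balanced; this is the remark after \cref{def: balanced}.
\item \emph{Words in $W_2$.} These are balanced by hypothesis.
\item \emph{Words $\tau(w^\star)$ for $w \in W_2$.} Dualizing swaps the numbers of $\word{f}$'s and $\word{v}$'s, and rotation preserves letter counts. Since $w$ is balanced, so is $\tau(w^\star)$.
\end{enumerate}

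Hence every word in $W$ is balanced, so every slope $\alpha(w)$ equals $\tfrac12$ and $\NP(W)$ is the supersingular polygon. Alternatively, \cref{cor: TL ss} says directly that the tautological lift $\TL(\widetilde{M})$ built in the proof of \cref{thm: intersection} is supersingular. The point produced there via \cite[Theorem 1.6 (2)]{ViehmannWedhorn2013} then has supersingular $p$-divisible group, so $A$ is supersingular and the point lies in $\M(a,b)^{\textnormal{ss}}$.

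The only step needing care is identifying the Newton stratum with polygon of constant slope $\tfrac12$ with the supersingular locus $\M(a,b)^{\textnormal{ss}}$. This is the standard characterization recalled in \cref{subsec:eo-newton-def}: $A$ is supersingular if and only if all slopes of $A[p^\infty]$ equal $\tfrac12$. So no real obstacle is expected.
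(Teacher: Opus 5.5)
Your proposal is correct and follows essentially the same route as the paper: combine \cref{thm: intersection} with \cref{cor: TL ss}, using that self-dual words (those in $W_1$, which have the form $yy^\star$) are balanced. Your explicit check that $\tau(w^\star)$ inherits balancedness from $w \in W_2$ spells out a step the paper leaves implicit.
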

\begin{proof}
This follows immediately from \cref{thm: intersection,cor: TL ss}, together with the observation that every word of the form $yy^\star$ is balanced.
\end{proof}

If $\widetilde{M}$ is indecomposable and $q$ is odd, then $W_1$ contains a single, balanced word and $W_2$ is empty. Hence the Ekedahl--Oort stratum of $\widetilde{M}$ intersects the supersingular locus. 
Similarly, when $W_1$ is empty and $W_2=\{w\}$ with $w$ self-dual, then $w$ is balanced and $\M(a,b)_{\gamma(W)}$ intersects the supersingular locus.
This leads to the following corollary.

\begin{corollary}\label{corollary:balancedssintersection2} $\M(a,b)_{\gamma(W)} \cap \M(a,b)^{\textnormal{ss}}$ is non-empty if $\gamma(W)$ corresponds to a unitary unicycle or Serre unicycle.  
\end{corollary}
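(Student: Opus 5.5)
The plan is to reduce to \cref{corollary:balancedssintersection1} by splitting into the two cases of the hypothesis. In both cases it suffices to produce a multiset decomposition as in \cref{thm: intersection} in which every word of $W_2$ is balanced. Then \cref{cor: TL ss} shows that the tautological lift $\TL(\widetilde{M})$ is supersingular, and \cref{thm: intersection} gives a point of $\M(a,b)_{\gamma(W)}$ lying in $\M(a,b)^{\textnormal{ss}}$.

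\textbf{Case 1: unitary unicycle.} Suppose $\gamma(W)$ corresponds to a unitary unicycle $\widetilde{M}(w)$. By \cref{def: unitary unicycle}, $w=yy^\star$ with $\ell(y)$ odd and $w$ primitive. We take $W_1=\{w\}$ and $W_2=\emptyset$, which gives $W=\{w\}$. Then $\ell(w)=2\ell(y)$ is not divisible by $4$, so $w$ is admissible in $W_1$. The hypothesis of \cref{corollary:balancedssintersection1} on $W_2$ holds vacuously, and the corollary applies directly. One can also see supersingularity without the corollary: $yy^\star$ contains equally many $\word{f}$'s and $\word{v}$'s, so $\alpha(w)=1/2$ and $\NP(W)$ is the supersingular polygon.

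\textbf{Case 2: Serre unicycle.} Suppose $\gamma(W)$ corresponds to a Serre unicycle $\widetilde{M}(w,w^\star)$. Here $w$ is self-dual and primitive with $4\mid\ell(w)$. By \cref{lem: self-dual word}, $w=yy^\star$, and this case falls under the $W_2$ branch of Equation~\eqref{eq: B decomposition 2}. Indeed $w$ is primitive but not of the form $yy^\star$ with $\ell(y)$ odd, since $\ell(y)=\ell(w)/2$ is even and the decomposition $w=yy^\star$ is forced by \cref{lem: self-dual word}. So we set $W_1=\emptyset$ and $W_2=\{w\}$. Then $W=\{w,\tau(w^\star)\}$, and since $w$ is balanced the hypothesis of \cref{corollary:balancedssintersection1} holds. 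The dual $\tau(w^\star)$ is also balanced, so both lifts of the summands $\overline{M}(w)$ and $\overline{M}(w^\star)$ have slope $1/2$, consistent with \cref{cor: TL ss}.

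\textbf{Main obstacle.} The only delicate point is the bookkeeping between the stratum labelled by $\gamma(W)$ and the chosen word decomposition. In Case 2 we must check that the multiset $W$ of \cref{remark:multiset W}, namely $W=\{w,\tau(w^\star)\}$, is the one fed into \cref{thm: word Weyl}, so that the stratum given by \cref{thm: intersection} is exactly the Serre unicycle stratum. This follows from the construction in the proof of \cref{thm: intersection}, where the module built from $W_1,W_2$ is precisely $\widetilde{M}(w,w^\star)$. The signature $(a,b)$ computed there agrees with \cref{rmk: bicycle signature}, which gives parallel signature. With this identification the corollary is immediate.
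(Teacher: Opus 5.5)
Your proposal is correct and follows the paper's argument: the paper also splits into the unitary unicycle case ($W_1=\{w\}$, $W_2=\emptyset$) and the Serre unicycle case ($W_1=\emptyset$, $W_2=\{w\}$ with $w$ self-dual, hence balanced), and then applies Corollary~\ref{corollary:balancedssintersection1}. Your checks that $w$ is admissible in $W_1$ or $W_2$ and that $\tau(w^\star)$ is balanced spell out bookkeeping the paper leaves implicit.
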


\begin{remark}
The converse of \cref{corollary:balancedssintersection1} is not true. In \cite[Lemma 43]{GroenLupoianParker2026}, the authors prove the existence of a supersingular principally polarized abelian variety $A$ of dimension $5$ whose $p$-torsion group scheme corresponds to the polarized $\bt$ module $\widehat{M}(\word{fffvv},\word{vvvff})$. Applying the Serre tensor construction described in \cref{section:STC} to $A$ yields an abelian variety $\mathcal{O}_K \otimes_{\zz} A$, which admits an action by $\mathcal{O}_K$ of signature $(5,5)$. 
This construction gives rise to a point in $\M(5,5)$. Since the Serre tensor construction of a supersingular abelian variety is supersingular, this point lies in $\M(5,5)^{\textnormal{ss}}$. On the other hand, by \cref{prop: STC bicycle}, its $p$-torsion group scheme corresponds to the unitary $\bt$ module $\widetilde{M}(\word{fffvvfffvv},\word{vvvffvvvff})$. By Equation~\eqref{eq: Mtilde W1 W2}, we have $W_1=\emptyset$ and $W_2=\{\word{fffvvfffvv} \}$. Then Equation~\eqref{eq: def W} yields
\[
W= \{\word{fffvvfffvv}, \word{vvffvvvffv}\}.
\]
Note that using $W_2=\{ \word{vvvffvvvff}\}$ gives the same result. It is clear that the word $\word{fffvvfffvv}$ is not balanced and \cref{def: NP(W)} prescribes that the Newton polygon of the tautological lift has slopes $2/5$ and $3/5$, both with length $10$.  
Thus we have constructed a point in $\M(a,b)_{\gamma(W)} \cap \M(a,b)^{\textnormal{ss}}$ even though the words in $W_2$ are not balanced.

We can use the ``word to Weyl'' algorithm of \cref{sec: word Weyl} to identify the Weyl group coset representative that this corresponds to. To do this, we lexicographically order the double rotations of each component $\{\word{fffvvfffvv}, \word{vvffvvvffv}\}$, which yields the ordered list
\begin{center}
    \begin{tabular}{ccccc}
      $\underbracket[.75pt]{\,\word{fffvvfffvv}\,}_{\Circled[inner ysep=6pt]{1}}$\;, & $\underbracket[.75pt]{\,\word{ffvvfffvvf}\,}_{\Circled[inner ysep=6pt]{2}}$\;, & $\underbracket[.75pt]{\,\word{ffvvvffvvv}\,}_{\Circled[inner ysep=6pt]{3}}$\;, & $\underbracket[.75pt]{\,\word{fvvfffvvff}\,}_{\Circled[inner ysep=6pt]{4}}$\;, & $\underbracket[.75pt]{\,\word{fvvvffvvvf}\,}_{\Circled[inner ysep=6pt]{5}}$\;, \\[2.5em]
      $\underbracket[.75pt]{\,\word{vfffvvfffv}\,}_{\Circled[inner ysep=6pt]{6}}$\;, & $\underbracket[.75pt]{\,\word{vffvvvffvv}\,}_{\Circled[inner ysep=6pt]{7}}$\;, & $\underbracket[.75pt]{\,\word{vvfffvvfff}\,}_{\Circled[inner ysep=6pt]{8}}$\;, & $\underbracket[.75pt]{\,\word{vvffvvvffv}\,}_{\Circled[inner ysep=6pt]{9}}$\;, & $\underbracket[.75pt]{\,\word{vvvffvvvff}\,}_{\Circled[inner ysep=6pt]{10}}$ \\
    \end{tabular}
\end{center}

Thus, $\gamma(W) = \gamma_{1,3,6,7,9} \in \mathbf{W}(5,5)$. 
\end{remark}

\appendix
\section{Counting Indecomposables}\label{appendix:indecomp-count}

For integers $q>0$ and $0 \leq b\leq q$, let $\widetilde{\ind}(q-b,b)$ be the set of isomorphism classes of indecomposable unitary $\bt$ modules with signature $(q-b,b)$ and therefore of $\kk$-dimension $2q$. $\widetilde{\ind}(q-b,b)$ corresponds to the indecomposable Ekedahl--Oort strata of the unitary Shimura variety $\mathcal{M}(q-b,b)$.

By \cref{cor: indecomposables,rmk: sign-fix-indecomp}, we have the following characterization:
\[\widetilde{\ind}(q-b,b) = \begin{cases}
    \widetilde{\uni}(q-b,b) & \text{when $q$ is odd,}\\[0.5em]%
    \widetilde{\bi}(q-b,b) & \text{when $q$ is even and $b \neq q/2$,}\\[0.5em]%
    \widetilde{\bi}(q/2,q/2) \sqcup \widetilde{\uni}_{\serre}(q/2,q/2) & \text{when $q\equiv 0 \bmod{4}$ and $b = q/2$,}\\[0.5em]%
    \widetilde{\bi}(q/2,q/2) \sqcup \widetilde{\bi}_{\serre}(q/2,q/2) & \text{when $q\equiv 2 \bmod{4}$ and $b = q/2$.}
\end{cases}\]

where $\widetilde{\uni}(q-b,b),\,\widetilde{\bi}(q-b,b),\,\widetilde{\uni}_{\serre}(q/2,q/2)$ and $\widetilde{\bi}_{\serre}(q/2,q/2)$ are the sets of isomorphism classes of unitary unicycles (\cref{def: unitary unicycle}), unitary bicycles, Serre unicycles, and Serre bicycles (\cref{def: (Serre) bicycles}) respectively. Isomorphisms here respect the unitary structure.

In this appendix, we give a complete answer to the following combinatorial question:

\begin{question}\label{ques:indecomp-count}
    Consider the unitary Shimura variety $\M(q-b,b)$ and let $T$ be an indecomposable type occurring in this signature. How many Ekedahl--Oort strata of $\M(q-b,b)$ have isomorphism type $T$? 
    
    Equivalently:
    
    What are the cardinalities of the sets $\widetilde{\uni}(q-b,b),\,\widetilde{\bi}(q-b,b),\,\widetilde{\uni}_{\serre}(q/2,q/2)$ and $\widetilde{\bi}_{\serre}(q/2,q/2)$?
\end{question}

We summarize the results that answer \cref{ques:indecomp-count}. First, define,
\begin{align*}
   \widetilde{\ind}(q) &\coloneqq \bigsqcup_{0 \leq b \leq q} \widetilde{\ind}(q-b,b), & \widetilde{\uni}(q) &\coloneqq \bigsqcup_{0 \leq b \leq q} \widetilde{\uni}(q-b,b), & \widetilde{\bi}(q) &\coloneqq \bigsqcup_{0 \leq b \leq q} \widetilde{\bi}(q-b,b);
\end{align*}
which collect isomorphism classes of all indecomposable $\bt$ modules, unitary unicycles and bicycles, respectively, of $\kk$-dimension $2q$. The signature-agnostic counts --- $\# \widetilde{\uni}(q)$, $\# \widetilde{\bi}(q)$ and $\# \widetilde{\ind}(q)$ --- are determined in \cref{cor: uni-uni-count,cor: uni-bi-count,thm: uni-indecomp-count} respectively. 

The refined signature-specific counts for unitary unicycles $\# \widetilde{\uni}(q-b,b)$ and unitary bicycles $\widetilde{\bi}(q-b,b)$ are determined in \cref{cor: uni-uni-count-final,cor: uni-bi-count-final} respectively, while the counts for Serre unicycles $\# \widetilde{\uni}_{\serre}(q-b,b)$ and Serre bicycles $\widetilde{\bi}(q-b,b)$  are both determined in \cref{prop: uni-serre-count}. 

\subsection{Preliminaries}
In this section, we record various notions that are used throughout in what follows.

Our counting arguments will rely crucially on the following general counting principle.
\begin{lemma}\label{lemma: mobius-count}
    Let $n > 0$ be an integer, and let $\mathscr{D}_{\mathbf{P}}(n)$ be a subset of $\mathscr{D}(n)$, the set of positive divisors of $n$, satisfying some property $\mathbf{P}$ such that $n \in \mathscr{D}_{\mathbf{P}}(n)$. Assume you have two families of sets $\set{\mathsf{S}_d}_{d \in \mathscr{D}_{\mathbf{P}}(n)}$ and $\set{\mathsf{X}_d}_{d \in \mathscr{D}_{\mathbf{P}}(n)}$ with
    \[\mathsf{S}_n \cong \bigsqcup_{d \in \mathscr{D}_{\mathbf{P}}(n)} \mathsf{X}_d.\]
    Then,
    \[\#\mathsf{X}_n = \sum_{d \in \mathscr{D}_{\mathbf{P}}(n)} \mu(n/d)\,\#\mathsf{S}_{d},\quad \text{where $\mu$ is the M{\"o}bius function.}\]
\end{lemma}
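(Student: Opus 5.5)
The plan is to reduce this to classical M\"obius inversion on the divisor poset of $n$. The sets $\mathsf{S}_d$ and $\mathsf{X}_d$ are only indexed by $d \in \mathscr{D}_{\mathbf{P}}(n)$, so we read the hypothesis as holding at every level: for each $m \in \mathscr{D}_{\mathbf{P}}(n)$ we have
\[
\mathsf{S}_m \cong \bigsqcup_{d \in \mathscr{D}_{\mathbf{P}}(n),\ d \mid m} \mathsf{X}_d .
\]
This is exactly the situation in the intended applications. For example, $\mathsf{S}_d$ counts words of length $d$ of some shape, $\mathsf{X}_d$ counts the primitive ones, and every word of length $m$ is uniquely a power of a primitive word of length $d \mid m$. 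The property $\mathbf{P}$ is chosen so that $d \mid m$ and $m \in \mathscr{D}_{\mathbf{P}}(n)$ imply $d \in \mathscr{D}_{\mathbf{P}}(n)$, so $\mathscr{D}_{\mathbf{P}}(n)$ is a down-closed subset of $\mathscr{D}(n)$. If the stated hypothesis is meant literally for $n$ alone, we add this hereditary assumption, since it is what the applications supply.

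Set $f(d) = \#\mathsf{X}_d$ and $g(m) = \#\mathsf{S}_m$ for $d, m \in \mathscr{D}_{\mathbf{P}}(n)$. Extend both by $0$ to the divisors of $n$ outside $\mathscr{D}_{\mathbf{P}}(n)$. Because the set is down-closed, the relation $g(m) = \sum_{d \mid m} f(d)$ then holds for all $m \in \mathscr{D}(n)$, with $g(m) = 0$ whenever $m \notin \mathscr{D}_{\mathbf{P}}(n)$.

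Next, apply M\"obius inversion on the divisor lattice of $n$:
\[
\sum_{m \mid n} \mu(n/m)\, g(m) = \sum_{m \mid n} \mu(n/m) \sum_{d \mid m} f(d) = \sum_{d \mid n} f(d) \sum_{d \mid m \mid n} \mu(n/m).
\]
The inner sum is $\sum_{e \mid n/d} \mu(e)$, which equals $[d = n]$. Hence the whole expression equals $f(n) = \#\mathsf{X}_n$. Finally, drop the terms with $m \notin \mathscr{D}_{\mathbf{P}}(n)$, since $g(m) = 0$ there. This gives the claimed formula.

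The main obstacle is not the computation but the hypothesis. It must be arranged that the decomposition holds for every $m \in \mathscr{D}_{\mathbf{P}}(n)$ and that $\mathscr{D}_{\mathbf{P}}(n)$ is closed under taking divisors; otherwise the zero-extension trick fails. In the proof we would state this interpretation explicitly and observe that it holds in each later application, for instance taking $\mathbf{P}$ to be ``$n/d$ odd'' for self-dual words, which is down-closed.
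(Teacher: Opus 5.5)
Your overall strategy is right: read the hypothesis as holding at every level $m\in\mathscr{D}_{\mathbf P}(n)$, then extend by zero and apply classical M\"obius inversion. This is essentially what the paper's one-line proof (``M\"obius inversion on the poset $\mathscr{D}_{\mathbf P}(n)$'') tacitly does. However, the closure condition you impose points the wrong way.

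\begin{itemize}
\item A down-closed subset of $\mathscr{D}(n)$ that contains $n$ is all of $\mathscr{D}(n)$. So your hypothesis makes $\mathbf P$ vacuous, and your argument only proves the case used in Proposition~\ref{prop: prim-count}.
\item Your claim that ``$n/d$ odd'' is down-closed is false. For example, $\mathscr{D}_{\textnormal{co-}\odd}(4)=\{4\}$ does not contain $2$.
\item None of the proper index sets used later is down-closed: $\mathscr{D}_{\textnormal{co-}\odd}(g)$, $\mathscr{D}_b(q)$, $\mathscr{D}_{b,\even}(q)$.
\end{itemize}

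So your key step, ``because the set is down-closed, $g(m)=\sum_{d\mid m}f(d)$ for all $m\mid n$,'' is unjustified exactly where it matters. For $m\notin\mathscr{D}_{\mathbf P}(n)$ you need $\sum_{d\mid m}f(d)=0$, that is, that no element of $\mathscr{D}_{\mathbf P}(n)$ divides $m$.

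The hypothesis you want is up-closure: if $d\in\mathscr{D}_{\mathbf P}(n)$ and $d\mid m\mid n$, then $m\in\mathscr{D}_{\mathbf P}(n)$. Every index set in the paper satisfies it:
\begin{itemize}
\item if $g/d$ is odd then so is $g/m$, since $g/m\mid g/d$;
\item $(q/d)\mid b$ implies $(q/m)\mid b$;
\item $d$ even implies $m$ even.
\end{itemize}
Under up-closure, an $m\notin\mathscr{D}_{\mathbf P}(n)$ has no divisor in $\mathscr{D}_{\mathbf P}(n)$. Both sides of your extended relation then vanish, and your inversion computation goes through verbatim. Equivalently, each interval $[d,n]$ of the poset $\mathscr{D}_{\mathbf P}(n)$ is a full divisor interval, so its M\"obius function is $\mu(n/d)$.

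Some such condition is genuinely necessary, not just a convenience. Take $n=4$ and $\mathscr{D}_{\mathbf P}(4)=\{1,4\}$. The relations $\#\mathsf S_1=\#\mathsf X_1$ and $\#\mathsf S_4=\#\mathsf X_1+\#\mathsf X_4$ give $\#\mathsf X_4=\#\mathsf S_4-\#\mathsf S_1$. The stated formula instead predicts $\#\mathsf X_4=\#\mathsf S_4$, since $\mu(4)=0$.
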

\begin{proof}
    Let the notation be as in the statement of the lemma. The given partition yields
    \[\#\mathsf{S}_n = \sum_{d \in \mathscr{D}_{\mathbf{P}}(n)} \#\mathsf{X}_d.\]
    The claim follows from applying the M{\"o}bius inversion formula on the poset $\mathscr{D}_{\mathbf{P}}(n)$.
\end{proof}

\begin{remark}\label{rmk: co-propert divisors}
Given $\mathscr{D}_{\mathbf{P}}(n)$ containing $n$, we may also consider the opposite poset \[\mathscr{D}_{\textnormal{co-}\mathbf{P}}(n) \coloneqq \setp{d \in \mathscr{D}(n)}{n/d \in \mathscr{D}_{\mathbf{P}}(n)}\]
containing $1$. The map $d \mapsto n/d$ establishes an order-reversing bijection between $\mathscr{D}_{\mathbf{P}}(n)$ and $\mathscr{D}_{\textnormal{co-}\mathbf{P}}(n)$.

Using the notation in \cref{lemma: mobius-count}, we may rewrite the sum as follows:
    \[\#\mathsf{X}_n = \sum_{d \in \mathscr{D}_{\mathbf{P}}(n)} \mu(n/d)\,\#\mathsf{S}_{d} = \sum_{d \in \mathscr{D}_{\textnormal{co-}\mathbf{P}}(n)} \mu(d)\,\#\mathsf{S}_{n/d}.\]

In what follows, the possible properties are:
\begin{enumerate}[label=$\bullet$]
    \item $\mathbf{P}=\emptyset$, i.e., $\mathscr{D}_{\mathbf{P}}(n) = \mathscr{D}(n)$. In this case, $\mathscr{D}_{\textnormal{co-}\mathbf{P}}(n) = \mathscr{D}(n)$.
    \item $\mathbf{P}=\text{odd or even}$, where $\mathscr{D}_{\odd}(n)$ (resp. $\mathscr{D}_{\even}(n)$) collects positive odd (resp. even) divisors of $n$.
\end{enumerate}
\end{remark}

What we will effectively count to answer \cref{ques:indecomp-count} will be words words in the alphabet $\set{\word{f},\word{v}}$ subject to some constraints. We establish some basic notation and results for these words.

\begin{definition}\label{def: basic-comb-set}
    Fix an integer $\ell > 0$. Let $\mathsf{W}_\ell$ be the set of all words of length $\ell$ in $\set{\word{f},\word{v}}$. This set comes equipped with an action of the rotation operator $\tau$. For any $w \in \mathsf{W}_\ell$, one has $\tau^{\ell}(w) = w$.

    For any $w \in \mathsf{W}_{\ell}$, let its \emph{$\tau$-period} be the smallest positive integer $d$ such that $\tau^d(w) = w$. Define 
    \[\mathsf{P}_{\ell,d} = \setp{z \in \mathsf{W}_{\ell}}{\text{$z$ has $\tau$-period $=d$}}.\]
    
    The set of primitive words of length $\ell$ is precisely $\mathsf{P}_{\ell,\ell}$. For ease of notation, define $\mathsf{P}_{\ell} \coloneqq \mathsf{P}_{\ell,\ell}$.

    When $\ell$ is even, within $\mathsf{P}_{\ell}$, we have the following important subset of self-dual words
    \[\mathsf{U}_{\ell} \coloneqq \setp{w \in \mathsf{P}_\ell}{\text{$w$ is self-dual}} = \setp{yy^\star \in \mathsf{P}_{\ell}}{y \in \mathsf{W}_{\ell/2}};\]
    the latter equality follows from \cref{lem: self-dual word}. We denote the set of non-self-dual-words, as $\mathsf{B}_{\ell} \coloneqq \mathsf{P}_{\ell}\setminus \mathsf{U}_{\ell}$. 
    
    When $\ell$ is odd, necessarily, $\mathsf{U}_{\ell} = \emptyset$ and $\mathsf{B}_{\ell} = \mathsf{P}_\ell$.
\end{definition}

\begin{lemma} \label{lemma: word-count}
    One has $\# \mathsf{W}_\ell = 2^{\ell}$.
\end{lemma}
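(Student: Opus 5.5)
The plan is to build an explicit bijection between $\mathsf{W}_\ell$ and the set of functions $\{0,\ldots,\ell-1\}\to\set{\word{f},\word{v}}$, and then count those functions. By \cref{def: basic-comb-set}, a word of length $\ell$ is a string $u_{\ell-1}\cdots u_0$ with each $u_i\in\set{\word{f},\word{v}}$. Two such words are equal exactly when they agree letter by letter. So the map
\[
w=u_{\ell-1}\cdots u_0 \longmapsto (u_0,u_1,\ldots,u_{\ell-1})
\]
identifies $\mathsf{W}_\ell$ with the Cartesian product $\set{\word{f},\word{v}}^{\ell}$. This map is clearly injective, and it is surjective because any choice of letters in each position gives a word.

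The count is then immediate: $\#\set{\word{f},\word{v}}^{\ell}=2^{\ell}$, by the multiplication principle, since there are two independent choices in each of the $\ell$ positions. If one wants a formal argument, induction on $\ell$ works. For $\ell=1$ there are the two words $\word{f}$ and $\word{v}$. For the inductive step, prepending a letter gives a bijection $\mathsf{W}_{\ell+1}\cong \set{\word{f},\word{v}}\times \mathsf{W}_\ell$, which doubles the count.

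There is no real obstacle. The only thing to keep in mind is that $\mathsf{W}_\ell$ means \emph{all} words, not words up to rotation. The $\tau$-action plays no role in this count; it only becomes relevant afterwards, when $\mathsf{W}_\ell$ is partitioned into the sets $\mathsf{P}_{\ell,d}$.
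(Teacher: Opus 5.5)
Your proof is correct. The paper states this lemma without proof as an evident fact, and identifying $\mathsf{W}_\ell$ with $\{\word{f},\word{v}\}^{\ell}$ and applying the multiplication principle is exactly the reasoning it takes for granted; your caveat that $\mathsf{W}_\ell$ consists of all words, not $\tau$-orbits, is the right one for how the lemma feeds into the later M{\"o}bius-inversion counts.
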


\begin{remark}\label{rmk: prim-partition}
    For words in $\mathsf{W}_{\ell}$, the maximal possible $\tau$-period is $\ell$. Assume $d$ occurs as a $\tau$-period of some word in $\mathsf{W}_{\ell}$, by minimality of $d$ and $\ell$ being the maximal such number, we readily deduce that $d\mid \ell$. Moreover, observe that any $z \in \mathsf{P}_{\ell,d}$ may be written as $z = h^{\ell/d}$ for some primitive word $h$ of length $d$. Therefore, the following
    \[\Psi:\mathsf{P}_d \to \mathsf{P}_{\ell,d}:h \mapsto h^{\ell/d}\]
    is a bijection. Hence, we have a partition
    \[\mathsf{W}_{\ell} = \bigsqcup_{d\in \mathscr{D}(\ell)}\mathsf{P}_{\ell,d} \cong \bigsqcup_{d \in \mathscr{D}(\ell)}\mathsf{P}_d.\]
\end{remark}

The following serves as a template on how \cref{lemma: mobius-count,rmk: co-propert divisors} will be applied subsequently.

\begin{proposition}\label{prop: prim-count}
One has $\displaystyle \# \mathsf{P}_{\ell} = \sum_{d\in \mathscr{D}(\ell)}\mu(d)\, 2^{\ell/d}$.
\end{proposition}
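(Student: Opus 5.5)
The count follows by combining the partition of \cref{rmk: prim-partition} with the M\"obius counting principle of \cref{lemma: mobius-count}. Take $n=\ell$ and the property $\mathbf{P}=\emptyset$, so that $\mathscr{D}_{\mathbf{P}}(\ell)=\mathscr{D}(\ell)$, which certainly contains $\ell$. For each $d\in\mathscr{D}(\ell)$ set $\mathsf{S}_d\coloneqq\mathsf{W}_d$ and $\mathsf{X}_d\coloneqq\mathsf{P}_d$.

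The hypothesis of \cref{lemma: mobius-count} is then exactly the partition
\[\mathsf{W}_{\ell}=\bigsqcup_{d\in\mathscr{D}(\ell)}\mathsf{P}_{\ell,d}\cong\bigsqcup_{d\in\mathscr{D}(\ell)}\mathsf{P}_d\]
from \cref{rmk: prim-partition}. This partition uses two facts. First, every $\tau$-period of a word of length $\ell$ divides $\ell$. Second, the map $\Psi\colon h\mapsto h^{\ell/d}$ is a bijection from $\mathsf{P}_d$ onto $\mathsf{P}_{\ell,d}$.

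Because \cref{lemma: mobius-count} is applied on the full divisor poset, I should also check that the same partition holds at every level $d\mid\ell$, not just at the top. That is, for each $d\mid\ell$ we need $\mathsf{W}_d\cong\bigsqcup_{e\mid d}\mathsf{P}_e$, so that $\#\mathsf{W}_d=\sum_{e\mid d}\#\mathsf{P}_e$ holds throughout the poset. This is just \cref{rmk: prim-partition} applied with $d$ in place of $\ell$. These relations are what classical M\"obius inversion on the divisor lattice needs.

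Applying the lemma gives $\#\mathsf{P}_\ell=\sum_{d\mid\ell}\mu(\ell/d)\,\#\mathsf{W}_d$. By \cref{lemma: word-count}, $\#\mathsf{W}_d=2^d$. Reindexing via $d\mapsto\ell/d$, as in \cref{rmk: co-propert divisors}, turns this into the stated formula $\sum_{d\in\mathscr{D}(\ell)}\mu(d)\,2^{\ell/d}$.

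The only real obstacle is justifying the bijection $\Psi$, namely that a word with $\tau$-period $d$ is $h^{\ell/d}$ for a unique primitive $h$ of length $d$. Uniqueness holds because $h$ is the first $d$ letters of the word. Primitivity of $h$ holds because a smaller period of $h$ would give a smaller period of $h^{\ell/d}$. Periods divide $\ell$ because the stabilizer of $w$ in $\langle\tau\rangle\cong\zz/\ell$ is the subgroup generated by $\tau^d$. All of this is already recorded in \cref{rmk: prim-partition}, so the proof reduces to citing it.
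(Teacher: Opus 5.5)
Your proposal is correct and matches the paper's proof: the partition $\mathsf{W}_\ell \cong \bigsqcup_{d\mid\ell}\mathsf{P}_d$ from \cref{rmk: prim-partition}, then \cref{lemma: mobius-count} with $\#\mathsf{W}_d = 2^d$, then the reindexing of \cref{rmk: co-propert divisors}. Your check that the partition holds at every level $d\mid\ell$, not only at $\ell$, is a worthwhile extra, since \cref{lemma: mobius-count} as stated assumes only the top-level relation, which by itself is not enough for M\"obius inversion.
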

\begin{proof}
By \cref{rmk: prim-partition}, we have $\displaystyle\mathsf{W}_{\ell} \cong \bigsqcup_{d\in \mathscr{D}(\ell)} \mathsf{P}_d$, where $\mathsf{P}_d$ is the set of primitive words of length $d$. 

By \cref{lemma: mobius-count,lemma: word-count}, we obtain
    \[\# \mathsf{P}_{\ell} = \sum_{d\in \mathscr{D}(\ell)}\mu(\ell/d)\, \# \mathsf{W}_{d} = \sum_{d\in \mathscr{D}(\ell)}\mu(\ell/d)\, 2^{d} = \sum_{d\in \mathscr{D}(\ell)}\mu(d)\, 2^{\ell/d},\]
    where the latter equality follows from \cref{rmk: co-propert divisors}.
\end{proof}

\subsection{Prologue: Indecomposable \texorpdfstring{$\boldsymbol{\bt}$}{} Modules} 
Let $\ind(m)$ be the set of isomorphism classes of indecomposable $\bt$ modules of $\kk$-dimension $m > 0$. The rotation operator $\tau$ acts on $\mathsf{P}_{m}$ freely with exact order $m$, and therefore by \cref{cor: Kraft} we have 
\[\#\ind(m) = \frac{1}{m}\# \mathsf{P}_m.\]

Therefore, by \cref{prop: prim-count}, we obtain $\displaystyle\#\ind(m) = \frac{1}{m}\sum_{d\in \mathscr{D}(m)}\mu(d)\, 2^{m/d}$.

\subsection{Indecomposable Polarized \texorpdfstring{$\boldsymbol{\bt}$}{} Modules}
For an integer $g>0$, let $\widehat{\ind}(g)$ be the set of isomorphism classes of indecomposable polarized $\bt$ modules of $\kk$-dimension $2g$. By \cref{cor:Oort}, we have
\[\widehat{\ind}(g) = \widehat{\uni}(g) \sqcup \widehat{\bi}(g),\]
where $\widehat{\uni}(g)$ and $\widehat{\bi}(g)$ are the sets of isomorphism classes of unicycles (\cref{def: unicycle}) and bicycles (\cref{def: bicycle}) respectively. Isomorphisms here respect the polarization.

\subsubsection{Unicycles}\label{subsubsec:uni-count}
By \cref{prop: Oort}, $\widehat{\uni}(g)$ is in bijection with the $\tau$-orbits on $\mathsf{U}_{2g}$, the set of primitive self-dual words of length $g$. The rotation operator $\tau$ acts on $\mathsf{U}_{2g}$ freely with exact order $2g$ and therefore
\[\#\widehat{\uni}(g) = \frac{1}{2g}\# \mathsf{U}_{2g}.\]

To determine $\#\mathsf{U}_{2g}$, we introduce the following useful notions.

\begin{definition}\label{def:dual-prim}
    For a word $w = u_{\ell - 1}\cdots u_1u_0$, define the \emph{dual-rotation operator} $\tau^\star$ as \[\tau^\star(u_{\ell - 1}\cdots u_1u_0) = u_0^\star u_{\ell - 1}\cdots u_1.\]

For any $w \in \mathsf{W}_{\ell}$, let its \emph{$\tau^\star$-period} be the smallest positive integer $t$ such that $(\tau^\star)^t(w) = w$. Define 
    \[\mathsf{Y}_{\ell,t} = \setp{y \in \mathsf{W}_{\ell}}{\text{$y$ has $\tau^\star$-period $= t$}}\]

For any word $w \in \mathsf{W}_{\ell}$, observe that the maximal possible $\tau^\star$-period is $2\ell$, since $(\tau^\star)^{\ell}(w) = w^\star$, and therefore $(\tau^\star)^{2\ell}(w) = w$. For ease of notation, define $\mathsf{Y}_{\ell} \coloneqq \mathsf{Y}_{\ell,2\ell}$ and call its elements \emph{dual primitive}.
\end{definition}

\begin{remark}\label{rmk: dual-prim-partition}
For words in $\mathsf{W}_{\ell}$, the maximal possible $\tau^\star$-period is $2\ell$. Assume $t$ occurs as a $\tau^\star$-period of some word in $\mathsf{W}_{\ell}$, by minimality of $t$ and $2\ell$ being the maximal such number, we readily deduce that $t\mid 2\ell$. 

    Assume there exists $w \in \mathsf{W}_\ell$ with $\tau^\star$-period $t\mid \ell$. Therefore $(\tau^\star)^\ell(w) = (\tau^\star)^t(w) = w$. But recall that we must have $(\tau^\star)^\ell(w) = w^\star$, giving us $w = w^\star$, a contradiction.

    Hence a $\tau^\star$-period $t$ of an element in $\mathsf{W}_\ell$ is such that $t\mid 2\ell$ and $t\nmid \ell$. Thus, necessarily, $t = 2d$ with $2d \nmid \ell$. Equivalently, $2d \mid 2\ell$ and $2\nmid (\ell/d)$. Therefore, $d \mid \ell$ and $\ell/d$ is odd. Hence, we have the following partition 
    \[\mathsf{W}_{\ell} = \bigsqcup_{d\in \mathscr{D}_{\textnormal{co-}\odd}(\ell)} \mathsf{Y}_{\ell,2d}.\]
    where, following \cref{rmk: co-propert divisors}, $\mathscr{D}_{\textnormal{co-}\odd}(\ell) = \setp{d \in \mathscr{D}(\ell)}{\ell/d \in \mathscr{D}_{\odd}(\ell)}$.
\end{remark}

\begin{lemma}\label{lemma: dual-word-bij}
For any $d \in \mathscr{D}_{\textnormal{co-}\odd}(\ell)$, we have the following commutative diagram consists of bijections
\[
    \begin{tikzcd}[row sep=large]
        \mathsf{Y}_d \arrow[r, "\Phi_1"] \arrow[d, "\Phi_0"'] & \mathsf{U}_{2d} \arrow[d, "\Phi_2"] \\
        \mathsf{Y}_{\ell,2d} \arrow[r, "\Phi_3"'] & \mathsf{U}_{2\ell,2d}
    \end{tikzcd}
    \]
    where $\mathsf{U}_{2\ell,2d} = \setp{w \in \mathsf{W}_{2\ell}}{w \text{ is self-dual and has $\tau$-period $=2d$}}$ 
    and the maps are
    \begin{align*}
        \Phi_1: h &\mapsto hh^\star & \Phi_2: yy^\star &\mapsto (yy^\star)^{\ell/d} & \Phi_3 : z &\mapsto zz^\star & \Phi_0 &\coloneqq \Phi_3^{-1}\circ\Phi_2\circ\Phi_1.
    \end{align*}
\end{lemma}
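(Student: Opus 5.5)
Since $\Phi_0$ is defined as $\Phi_3^{-1}\circ\Phi_2\circ\Phi_1$, commutativity is automatic once $\Phi_1,\Phi_2,\Phi_3$ are shown to be well-defined bijections onto the stated targets. The plan is to treat each map in turn. The only tool needed is the compatibility identity
\[\tau(hh^\star)=\tau^\star(h)\,\tau^\star(h)^\star\]
for a word $h$, and more generally $\tau^k(hh^\star)=(\tau^\star)^k(h)\,((\tau^\star)^k(h))^\star$. This follows directly from the definitions: rotating $hh^\star$ moves the last letter $u_0^\star$ of $h^\star$ to the front, and this is exactly the first letter of $(\tau^\star)(h)$. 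Consequently, the map $h\mapsto hh^\star$ from $\mathsf W_m$ to $\mathsf W_{2m}$ intertwines $\tau^\star$ with $\tau$. Its image is exactly the set of words $w$ of length $2m$ satisfying $\tau^m(w)=w^\star$, and it is injective.

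For $\Phi_1$, take $h\in\mathsf Y_d$. By the intertwining, the $\tau$-period of $hh^\star$ equals the $\tau^\star$-period of $h$, which is $2d$. Hence $hh^\star$ is primitive, and it is self-dual, so it lies in $\mathsf U_{2d}$. Surjectivity follows from \cref{lem: self-dual word}: every $w\in\mathsf U_{2d}$ has the form $yy^\star$ with $y$ of length $d$. The intertwining then shows that the $\tau^\star$-period of $y$ is $2d$, so $y\in\mathsf Y_d$. For $\Phi_3$, the same argument applies in length $\ell$. If $z\in\mathsf Y_{\ell,2d}$, then $zz^\star$ is self-dual with $\tau$-period $2d$. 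Conversely, let $w\in\mathsf W_{2\ell}$ be self-dual with $\tau$-period $2d$, where $\ell/d$ is odd. We must show $\tau^\ell(w)=w^\star$, so that $w=zz^\star$. By \cref{lem: self-dual word}, the primitive root of $w$ has the form $yy^\star$ with $\ell(y)=d$, and $w=(yy^\star)^{\ell/d}$. Since $\ell/d$ is odd, $\tau^\ell(w)=\tau^{d}(w)=w^\star$. Therefore $w=zz^\star$ where $z$ consists of the first $\ell$ letters of $w$, and $z\in\mathsf Y_{\ell,2d}$ by the intertwining.

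For $\Phi_2$, the map is the restriction of the bijection $\Psi\colon\mathsf P_{2d}\to\mathsf P_{2\ell,2d}$ of \cref{rmk: prim-partition} to self-dual words. A power $h^{n}$ is self-dual if and only if $h$ is, so $\Phi_2$ is a bijection from $\mathsf U_{2d}$ onto $\mathsf U_{2\ell,2d}$. Finally, I will check that $\Phi_3^{-1}\circ\Phi_2\circ\Phi_1$ is the explicit map $h\mapsto (hh^\star)^{(\ell/d-1)/2}h$, which is the first $\ell$ letters of $(hh^\star)^{\ell/d}$. This makes $\Phi_0$ concrete.

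The main obstacle is the surjectivity of $\Phi_3$. This is precisely where the parity hypothesis $d\in\mathscr D_{\textnormal{co-}\odd}(\ell)$ enters: it guarantees that a self-dual word of length $2\ell$ with $\tau$-period $2d$ splits as $zz^\star$ with $|z|=\ell$. Care is needed with indexing conventions, since words are read right to left, so I will verify that $(yy^\star)^{\ell/d}$ really does begin with a block of length $\ell$ whose dual is the complementary block.
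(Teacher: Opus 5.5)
Your proof is correct and rests on the same computation as the paper's: an odd power of $yy^\star$ splits as $xx^\star$. The difference is in where the work is placed.

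The paper declares $\Phi_1$ and $\Phi_3$ bijective ``immediately'' and spends its effort on $\Phi_2$. It uses $\ell/d=2k+1$ to write $(yy^\star)^{\ell/d}=xx^\star$ with $x=(yy^\star)^k y$, phrases this as proving self-duality, and reads off $\Phi_0(h)=(hh^\star)^k h$ from the same formula.

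You instead make the intertwining identity $\tau(hh^\star)=\tau^\star(h)\,\tau^\star(h)^\star$ the central tool. It supplies two facts the paper leaves implicit: the $\tau$-period of $hh^\star$ equals the $\tau^\star$-period of $h$, and the image of $h\mapsto hh^\star$ is exactly $\{w : \tau^m(w)=w^\star\}$.

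You also observe that $\Phi_2$ is just the restriction of $\Psi$ to self-dual words and needs no parity, since $(yy^\star)^n$ is self-dual for every $n$. This moves the hypothesis $d\in\mathscr{D}_{\textnormal{co-}\odd}(\ell)$ to the step that actually needs it, namely surjectivity of $\Phi_3$. There $\ell\equiv d \pmod{2d}$ gives $\tau^\ell(w)=\tau^d(w)=w^\star$. When $\ell/d$ is even this step genuinely fails: \word{fvfv} is self-dual of $\tau$-period $2$ but is not of the form $zz^\star$ with $z$ of length $2$. So the paper's oddness argument is really establishing membership in the image of $\Phi_3$, not self-duality.

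The paper's route is shorter. Yours fills in the steps the paper calls immediate and makes clear which hypothesis each step uses.
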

\begin{proof}
The bijectivity of $\Phi_1$ and $\Phi_3$ follows immediately. Once we show $\Phi_2$ is well-defined, its bijectivity also follows directly. For a primitive self-dual word $yy^\star$ of length $2d$, it is clear that $\Phi_2(yy^\star) = (yy^\star)^{\ell/d}$ has $\tau$-period $2d$; we are left to justify that $(yy^\star)^{\ell/d}$ is self-dual. By assumption, $\ell/d$ is odd; write $\ell/d = 2k + 1$ for some $k$. Hence $(yy^\star)^{\ell/d} = xx^\star$, where $x = (yy^\star)^k y$, and thus $(yy^\star)^{\ell/d}$ is self-dual and $\Phi_2(yy^\star) \in \mathsf{U}_{2\ell,2d}$.

Therefore $\Phi_2$ is a bijection. We may additionally make $\Phi_0$ explicit; we have $\Phi_0(h) = (hh^\star)^k h$.
\end{proof}

\begin{proposition}\label{prop: pol-uni-set-count}
One has $\displaystyle \# \mathsf{U}_{2g} = \sum_{d\in \mathscr{D}_{\odd}(g)}\mu(d)\, 2^{g/d}$.
\end{proposition}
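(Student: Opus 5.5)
We follow the template of \cref{prop: prim-count}, with \cref{rmk: dual-prim-partition} and \cref{lemma: dual-word-bij} replacing \cref{rmk: prim-partition}. Take $\ell = g$. By \cref{rmk: dual-prim-partition} we have the partition
\[\mathsf{W}_{g} = \bigsqcup_{d\in \mathscr{D}_{\textnormal{co-}\odd}(g)} \mathsf{Y}_{g,2d}.\]
By \cref{lemma: dual-word-bij}, each $\mathsf{Y}_{g,2d}$ is in bijection with $\mathsf{Y}_d$, via $\Phi_0$ with $\ell = g$. We therefore obtain
\[\mathsf{W}_g \cong \bigsqcup_{d\in \mathscr{D}_{\textnormal{co-}\odd}(g)} \mathsf{Y}_d.\]

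The first thing to check is that this has the shape required by \cref{lemma: mobius-count}. We set $\mathsf{S}_d = \mathsf{W}_d$ and $\mathsf{X}_d = \mathsf{Y}_d$ on the poset $\mathscr{D}_{\mathbf P}(g) \coloneqq \mathscr{D}_{\textnormal{co-}\odd}(g)$, which contains $g$. The hypothesis of that lemma must hold at every level $d$ of the poset, not only at the top level $g$. It does: if $d \in \mathscr{D}_{\textnormal{co-}\odd}(g)$, then the divisors $d'$ of $d$ with $d/d'$ odd are exactly the elements $d'$ of the poset lying below $d$, since $g/d' = (g/d)(d/d')$ is odd if and only if $d/d'$ is odd. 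So the same partition holds at level $d$. This is the point that makes M\"obius inversion on this subposet legitimate: intervals of this poset are isomorphic to divisor lattices of odd numbers, so the M\"obius function of the poset is the ordinary $\mu(g/d)$.

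Applying the lemma gives
\[\#\mathsf{Y}_g = \sum_{d\in\mathscr{D}_{\textnormal{co-}\odd}(g)} \mu(g/d)\,2^{d}.\]
Reindexing as in \cref{rmk: co-propert divisors} turns this into
\[\#\mathsf{Y}_g = \sum_{d \in \mathscr{D}_{\odd}(g)} \mu(d)\, 2^{g/d}.\]

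Finally, we identify $\#\mathsf{U}_{2g}$ with $\#\mathsf{Y}_g$. This is the bijection $\Phi_1\colon \mathsf{Y}_g \to \mathsf{U}_{2g}$, $h \mapsto hh^\star$, of \cref{lemma: dual-word-bij} in the case $d = \ell = g$. To be safe we verify it directly. By \cref{lem: self-dual word}, every element of $\mathsf{U}_{2g}$ has the form $yy^\star$. The word $hh^\star$ is primitive exactly when $h$ has $\tau^\star$-period $2g$, because applying $\tau$ to $hh^\star$ corresponds to applying $\tau^\star$ to $h$, taking the last $g$ letters as $h$. The main obstacle is this poset and M\"obius bookkeeping; once it is checked, the rest is formal.
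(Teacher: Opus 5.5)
Your proposal is correct and is essentially the paper's proof. You use the same partition of $\mathsf{W}_g$ from \cref{rmk: dual-prim-partition}, the bijections of \cref{lemma: dual-word-bij}, M\"obius inversion over $\mathscr{D}_{\textnormal{co-}\odd}(g)$, and the reindexing of \cref{rmk: co-propert divisors}. The only differences are that the paper composes with $\Phi_1$ at the outset (so its $\mathsf{X}_d$ is $\mathsf{U}_{2d}$) whereas you apply $\Phi_1$ at the end, and that you explicitly check what the paper's \cref{lemma: mobius-count} leaves implicit: the partition holds at every level of the poset, and its M\"obius function is the classical $\mu(g/d)$.
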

\begin{proof}
       By \cref{rmk: dual-prim-partition,lemma: dual-word-bij}, we may partition $\mathsf{W}_{g}$ as \[\mathsf{W}_{g} = \bigsqcup_{d\in \mathscr{D}_{\textnormal{co-}\odd}(g)} \mathsf{Y}_{g,2d} \cong \bigsqcup_{d\in \mathscr{D}_{\textnormal{co-}\odd}(g)} \mathsf{Y}_{d} \cong \bigsqcup_{d\in \mathscr{D}_{\textnormal{co-}\odd}(g)} \mathsf{U}_{2d}.\]
    
    By \cref{lemma: mobius-count,lemma: word-count}, we obtain
    \[\# \mathsf{U}_{2g} = \sum_{d\in \mathscr{D}_{\textnormal{co-}\odd}(g)}\mu(g/d)\, \# \mathsf{W}_{d} = \sum_{d\in \mathscr{D}_{\textnormal{co-}\odd}(g)}\mu(g/d)\,  2^{d} = \sum_{d\in \mathscr{D}_{\odd}(g)}\mu(d)\,  2^{g/d},\]
    where the latter equality follows from \cref{rmk: co-propert divisors}.
    \end{proof}

\begin{corollary}\label{cor: poli-uni-count}
One has $\displaystyle \# \widehat{\uni}(g) = \frac{1}{2g}\sum_{d\in \mathscr{D}_{\odd}(g)}\mu(d)\, 2^{g/d}$.
\end{corollary}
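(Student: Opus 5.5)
The plan is to combine the two facts already established in \cref{subsubsec:uni-count}. By \cref{prop: Oort}, the isomorphism classes of unicycles of $\kk$-dimension $2g$ are in bijection with the $\tau$-orbits on $\mathsf{U}_{2g}$, the set of primitive self-dual words of length $2g$. So I will first count $\tau$-orbits on $\mathsf{U}_{2g}$ and then substitute the count of $\mathsf{U}_{2g}$ from \cref{prop: pol-uni-set-count}.

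\textbf{Step 1: every orbit has exactly $2g$ elements.}
\begin{itemize}
\item[(a)] $\mathsf{U}_{2g}$ is stable under $\tau$. Rotation preserves primitivity. If $w^\star=\tau^n(w)$, then $\tau(w)^\star=\tau(w^\star)=\tau^n(\tau(w))$, so rotation also preserves self-duality.
\item[(b)] $\tau$ acts freely with exact order $2g$. For a primitive word $w$ of length $2g$, the rotation orbit has maximal size $2g$, as recalled right after the definition of primitive words. Hence $\tau^k(w)=w$ forces $2g\mid k$.
\end{itemize}
So every $\tau$-orbit on $\mathsf{U}_{2g}$ has size exactly $2g$, and the number of orbits is $\#\mathsf{U}_{2g}/(2g)$.

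\textbf{Step 2: identify orbits with isomorphism classes.} I would note that $\widehat{M}(w)\cong\widehat{M}(w')$ if and only if $w'$ is a rotation of $w$. The forward direction is the observation after \cref{def: unicycle}. The reverse direction follows from the uniqueness in \cref{prop: Kraft}, since the underlying $\bt$ modules must then be isomorphic. This also uses that the polarization on a unicycle is unique (\cite[Corollary 4.2]{PriesUlmerBT1Fermat}).

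\textbf{Step 3: substitute.} Plugging the formula of \cref{prop: pol-uni-set-count} into $\#\widehat{\uni}(g)=\#\mathsf{U}_{2g}/(2g)$ gives the stated expression.

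The only point I expect to need any care is the freeness in Step 1(b): confirming that no nontrivial rotation fixes a primitive word. This follows directly from the characterization of primitivity as having maximal rotation orbit, so I do not anticipate a genuine obstacle.
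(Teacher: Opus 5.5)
Your proposal is correct and follows essentially the same route as the paper, which likewise identifies $\widehat{\uni}(g)$ with the $\tau$-orbits on $\mathsf{U}_{2g}$ via \cref{prop: Oort}, observes that $\tau$ acts freely with exact order $2g$ so that $\#\widehat{\uni}(g)=\#\mathsf{U}_{2g}/(2g)$, and then substitutes \cref{prop: pol-uni-set-count}. Your Steps 1(a), 1(b) and 2 just spell out details the paper states without proof: stability of $\mathsf{U}_{2g}$ under $\tau$, freeness of the action, and the orbit--isomorphism-class correspondence via Kraft's uniqueness and uniqueness of the polarization.
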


\subsubsection{Bicycles}
By \cref{prop: Oort}, $\widehat{\bi}(g)$ is in bijection with the orbits on $\mathsf{B}_g = \mathsf{P}_g \setminus \mathsf{U}_g$, the set of primitive non-self-dual words of length $g$, under both $\tau$ and dualizing $(\cdot)^\star:w \mapsto w^\star$. The map $\tau$ acts freely with exact order $g$ since $\mathsf{B}_g$ consists of primitive words, and $(\cdot)^\star$ is an involution commuting with the action of $\tau$ and acts freely on the $\tau$-orbits of $\mathsf{B}_g$ since $\mathsf{B}_g$ consists of non-self-dual words. Therefore,
\[\#\widehat{\bi}(g) = \frac{1}{2g}\# \mathsf{B}_{g}.\]

\begin{proposition}\label{prop: pol-bi-set-count}
    One has 
    \[\displaystyle \#\widehat{\bi}(g) = \begin{cases}
        \displaystyle \frac{1}{2g}\sum_{d\in \mathscr{D}(g)}\mu(d)\, 2^{g/d} & \text{when $g$ is odd,}\\[2em]
        \displaystyle \frac{1}{2g}\left(\sum_{d\in \mathscr{D}(g)}\mu(d)\, 2^{g/d} - \sum_{d\in \mathscr{D}_{\odd}(g)}\mu(d)\, 2^{g/2d}\right) & \text{when $g$ is even.}
    \end{cases}\]
\end{proposition}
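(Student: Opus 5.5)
The count follows directly from two facts already in place: $\#\widehat{\bi}(g) = \frac{1}{2g}\#\mathsf{B}_g$, which was established just before the statement, and $\mathsf{B}_g = \mathsf{P}_g \setminus \mathsf{U}_g$ with $\mathsf{U}_g \subseteq \mathsf{P}_g$. Together these give
\[
\#\widehat{\bi}(g) = \frac{1}{2g}\bigl(\#\mathsf{P}_g - \#\mathsf{U}_g\bigr),
\]
so it suffices to compute the two cardinalities on the right.

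The first term is handled by \cref{prop: prim-count} with $\ell = g$, which gives $\#\mathsf{P}_g = \sum_{d\in\mathscr{D}(g)}\mu(d)\,2^{g/d}$.

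For the second term we split on the parity of $g$.

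\emph{Case $g$ odd.} By \cref{def: basic-comb-set}, $\mathsf{U}_g = \emptyset$, since a self-dual word is balanced and hence has even length. The formula reduces to the first line of the statement.

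\emph{Case $g$ even.} Write $g = 2g'$. Applying \cref{prop: pol-uni-set-count} with $g'$ in place of $g$ gives
\[
\#\mathsf{U}_{2g'} = \sum_{d\in\mathscr{D}_{\odd}(g')}\mu(d)\,2^{g'/d} = \sum_{d\in\mathscr{D}_{\odd}(g/2)}\mu(d)\,2^{g/2d}.
\]
The statement, however, indexes this sum by $\mathscr{D}_{\odd}(g)$. These index sets coincide: $g/(g/2) = 2$ and $2$ is not odd, so an odd divisor of $g$ cannot pick up the extra factor of $2$, and conversely any odd divisor of $g/2$ divides $g$. Hence $\mathscr{D}_{\odd}(g) = \mathscr{D}_{\odd}(g/2)$, the exponent is $2^{g/2d}$ in both, and the two sums agree. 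Substituting gives the second line.

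The only delicate point is this re-indexing from $\mathscr{D}_{\odd}(g/2)$ to $\mathscr{D}_{\odd}(g)$, together with taking care to apply \cref{prop: pol-uni-set-count} with $g/2$ rather than $g$. Everything else follows immediately from the earlier results.
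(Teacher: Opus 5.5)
Your proposal is correct and is essentially the paper's own argument: write $\#\widehat{\bi}(g)=\frac{1}{2g}\bigl(\#\mathsf{P}_g-\#\mathsf{U}_g\bigr)$, use $\mathsf{U}_g=\emptyset$ for $g$ odd, and for $g$ even apply \cref{prop: pol-uni-set-count} with $g/2$ in place of $g$, then re-index via $\mathscr{D}_{\odd}(g)=\mathscr{D}_{\odd}(g/2)$. Your justification of that last equality is worded loosely; the clean reason is that an odd $d$ dividing $g=2\cdot(g/2)$ is coprime to $2$ and hence divides $g/2$.
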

\begin{proof}
We determine $\# \mathsf{B}_g$. For $g$ odd, all words of length $g$ are non-self-dual. Therefore, by \cref{prop: prim-count},
    \[\#\mathsf{B}_g = \# \mathsf{P}_{g} = \sum_{d\in \mathscr{D}(g)}\mu(d)\, 2^{g/d}.\]
    
    When $g$ is even, then $\mathsf{B}_{g} = \mathsf{P}_g \setminus \mathsf{U}_{g}$. Therefore, by \cref{prop: prim-count,prop: pol-uni-set-count}, we have
\begin{align*}
    \#\mathsf{B}_g = \# \mathsf{P}_{g} - \#\mathsf{U}_{g} &= \sum_{d\in \mathscr{D}(g)}\mu(d)\, 2^{g/d} - \sum_{d\in \mathscr{D}_{\odd}(g/2)}\mu(d)\, 2^{g/2d}\\[1em] &= \sum_{d\in \mathscr{D}(g)}\mu(d)\, 2^{g/d} - \sum_{d\in \mathscr{D}_{\odd}(g)}\mu(d)\, 2^{g/2d},
\end{align*}
where the latter equality follows since $g$ being even means $g$ and $g/2$ have the same odd divisors.
\end{proof}

\subsection{Indecomposable Unitary \texorpdfstring{$\boldsymbol{\bt}$}{} Modules}
For integers $q>0$ and $0 \leq b\leq q$, recall $\widetilde{\uni}(q-b,b)$ and $\widetilde{\bi}(q-b,b)$, the sets of isomorphism classes of unitary unicycles and unitary bicycles in signatures $(q-b,b)$, and $\widetilde{\uni}_{\serre}(q/2,q/2)$ and $\widetilde{\bi}_{\serre}(q/2,q/2)$, the sets of isomorphism classes of Serre unicycles and unitary bicycles. In this section, we determine their cardinalities.

\subsubsection{Unitary Unicycles} Throughout this section, assume $q>0$ to be odd. Of sole interest is $\widetilde{\uni}(q-b,b)$, the set of isomorphism classes of unitary unicycles. Before we focus on a fixed signature, to ease into our computations, let us first consider the set $\widetilde{\uni}(q)$, which collects the isomorphism classes of all unitary unicycles of $\kk$-dimension $2q$. 

By definition, $\widetilde{\uni}(q)$ is in bijection with the $\tau^2$-orbits on $\mathsf{U}_{2q}$, the set of self-dual words of length $2q$. The operator $\tau^2$ acts on $\mathsf{U}_{2q}$ freely with exact order $q$ and therefore
\[\#\widetilde{\uni}(q) = \frac{1}{q}\# \mathsf{U}_{2q}.\]
Since $q$ is odd, as a consequence of \cref{prop: pol-uni-set-count}, we obtain the following corollary. 

\begin{corollary}\label{cor: uni-uni-count}
    One has $\displaystyle\#\widetilde{\uni}(q) = \frac{1}{q}\sum_{d\in \mathscr{D}(q)}\mu(d)\, 2^{q/d}$.
\end{corollary}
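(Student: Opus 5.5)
The plan is to combine the two facts recorded just before the statement. First, $\#\widetilde{\uni}(q) = \frac{1}{q}\#\mathsf{U}_{2q}$, because $\tau^2$ acts freely on $\mathsf{U}_{2q}$ with exact order $q$. Second, \cref{prop: pol-uni-set-count} applied with $g=q$ gives $\#\mathsf{U}_{2q} = \sum_{d\in\mathscr{D}_{\odd}(q)}\mu(d)\,2^{q/d}$. Since $q$ is odd, every positive divisor of $q$ is odd, so $\mathscr{D}_{\odd}(q)=\mathscr{D}(q)$. Substituting yields the claimed formula.

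The only step needing care is the freeness claim, since the count must use $\tau^2$-orbits rather than $\tau$-orbits.

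\begin{enumerate}
\item By \cref{def: unitary unicycle}, every word $w\in\mathsf{U}_{2q}$ has the form $yy^\star$ with $\ell(y)=q$ odd. Hence $w$ defines a unitary unicycle.
\item By \cref{rmk: don't rotate unicycles!} and the uniqueness part of \cref{thm: structure of unitary BT1s}, two such words give isomorphic unitary $\bt$ modules if and only if they lie in the same $\tau^2$-orbit.
\item Since $w$ is primitive of length $2q$, the group $\langle\tau\rangle\cong\zz/2q$ acts freely on its orbit. As $q$ is odd, $\tau^2$ generates the index-$2$ subgroup, which has order $q$. So $\tau^j(w)=w$ forces $2q\mid j$, and $\tau^2$ acts freely with orbits of size exactly $q$.
\end{enumerate}

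I expect no real obstacle here. Note that parity of signature is not at issue: we are summing over all $b$, so every $\tau^2$-orbit on $\mathsf{U}_{2q}$ is counted once, in whichever signature it has.
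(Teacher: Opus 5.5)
Your proposal is correct and is essentially the paper's argument: $\#\widetilde{\uni}(q)=\frac{1}{q}\#\mathsf{U}_{2q}$ because $\tau^2$ acts freely on $\mathsf{U}_{2q}$, and then \cref{prop: pol-uni-set-count} with $\mathscr{D}_{\odd}(q)=\mathscr{D}(q)$ for $q$ odd. One small point: in your step 3 the oddness of $q$ is not needed for $\tau^2$-orbits to have size $q$; oddness is what makes $yy^\star$ admit unitary structure, what makes $\widetilde{M}(\tau(w))\cong\widetilde{M}(w)^*$ non-isomorphic to $\widetilde{M}(w)$ (since $a\neq b$), and what gives $\mathscr{D}_{\odd}(q)=\mathscr{D}(q)$.
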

The counts in \cref{cor: poli-uni-count,cor: uni-uni-count} illustrate the fact that there are two ways to give a unitary structure to a unicycle of dimension $2q$, for $q$ odd, either with signature $(q - b,b)$ or $(b,q-b)$.

The following notion will be necessary to determine $\#\widetilde{\uni}(q-b,b)$.
\begin{definition} \label{def: bodd}
    Consider a word of the form $ww^\star$, not necessarily primitive, where $\ell = \ell(w)$ is odd. Writing $w = u_{\ell-1}\cdots u_1u_0$, define
    \[\delta_1(ww^\star) \coloneqq \#\{1\leq j \leq \lfloor \ell/2 \rfloor \mid u_{2j-1}=\word{v} \} + \#\{0\leq j \leq \lfloor \ell/2 \rfloor \mid u_{2j}=\word{f}\}.\]
\end{definition}

For a fixed $0 \leq b \leq q$, define
\[\mathsf{U}^{\delta_1 = b}_{2q} \coloneqq \setp{yy^\star \in \mathsf{U}_{2q}}{\delta_1(yy^\star) = b}.\]

By \cref{thm: structure of unitary BT1s,rmk: unicycle signature}, the set $\widetilde{\uni}(q-b,b)$ is in bijection with the $\tau^2$-orbits on $\mathsf{U}^{\delta_1 = b}_{2q}$. The free action of $\tau^2$, on the set, with exact order $q$ gives us
\[\#\widetilde{\uni}(q-b,b) = \frac{1}{q}\# \mathsf{U}^{\delta_1 = b}_{2q}.\]

For any $\ell >0$ odd and $c \geq 0$, define $\mathsf{W}_{\ell}(c) = \setp{w \in \mathsf{W}_\ell}{\delta_1(ww^\star) = c}$.

\begin{lemma}\label{lemma: word-count-odd-wt}
    One has $\displaystyle\# \mathsf{W}_{\ell}(c) = \binom{\ell}{c}.$
\end{lemma}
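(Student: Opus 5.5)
The plan is to build an explicit bijection from $\mathsf{W}_\ell$ to the set of binary strings of length $\ell$ under which $\delta_1(ww^\star)$ becomes the number of ones. Once that is done, the count $\binom{\ell}{c}$ is immediate.

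Write $w = u_{\ell-1}\cdots u_1u_0$. By \cref{def: bodd}, $\delta_1(ww^\star)$ is a sum over positions $i\in\{0,\dots,\ell-1\}$ of an indicator $\chi_i(u_i)$:
\begin{itemize}
\item for $i=2j$ even (with $0\le j\le\lfloor \ell/2\rfloor$), $\chi_i(u_i)=1$ exactly when $u_i=\word{f}$;
\item for $i=2j-1$ odd (with $1\le j\le \lfloor \ell/2\rfloor$), $\chi_i(u_i)=1$ exactly when $u_i=\word{v}$.
\end{itemize}
Since $\ell$ is odd, these two index ranges together cover each $i\in\{0,\dots,\ell-1\}$ exactly once. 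The even indices $0,2,\dots,\ell-1$ number $\lfloor\ell/2\rfloor+1$, and the odd indices $1,3,\dots,\ell-2$ number $\lfloor \ell/2\rfloor$. The main thing to check, and the only real care needed, is this range bookkeeping: no position is counted twice, and none is missed.

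Given that, define $\beta:\mathsf{W}_\ell\to\{0,1\}^\ell$ by $\beta(w)_i=\chi_i(u_i)$. For each fixed $i$, the indicator $\chi_i$ is a bijection $\{\word{f},\word{v}\}\to\{0,1\}$: it is the identity-type map at even positions and the flipped map at odd positions. Hence $\beta$ is a bijection, with $\delta_1(ww^\star)=\sum_i\beta(w)_i$. Note that no primitivity constraint enters, because $\mathsf{W}_\ell$ consists of all words of length $\ell$.

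It follows that $\mathsf{W}_\ell(c)$ corresponds bijectively to the binary strings of length $\ell$ with exactly $c$ ones. There are $\binom{\ell}{c}$ of these, which is zero when $c>\ell$, consistent with the statement for all $c\ge 0$. As a sanity check, summing over $c$ recovers $\#\mathsf{W}_\ell=2^\ell$ from \cref{lemma: word-count}.
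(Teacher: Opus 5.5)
Your proof is correct and is essentially the paper's argument: the paper fixes a reference word with $\delta_1 = 0$ and counts the $\binom{\ell}{c}$ ways to flip $c$ of its letters. Your bijection $\beta$ is exactly this, recording the positions where $w$ differs from the reference word $\word{vfv}\cdots\word{vfv}$.

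Your explicit position-by-position bookkeeping is in fact slightly more careful than the paper's. The paper's stated reference word $\word{fvf}\cdots\word{fvf}$ has $\delta_1 = \ell$ rather than $0$, though this is harmless since $\binom{\ell}{c}=\binom{\ell}{\ell-c}$.
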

\begin{proof}
    Consider the word $w_0=\word{fvfv}\cdots \word{fvf} \in \mathsf{W}_{\ell}$ and observe that $\delta_1(w_0 w_0^\star)=0$. Constructing a word $w\in \mathsf{W}_{\ell}(c)$ amounts to choosing $c$ positions of $w_0$ to invert a letter, for which there are $\displaystyle\binom{\ell}{c}$ possibilities.
\end{proof}

Using the bijection $\Phi_1$ in \cref{lemma: dual-word-bij}, one readily sees $\mathsf{U}^{\delta_1 = b}_{2q}$ is in bijection with
\[\mathsf{Y}_{q}(b) \coloneqq \mathsf{Y}_q \cap \mathsf{W}_{q}(b) = \{y \in \mathsf{Y}_q\ \vert\ \delta_1(yy^\star) = b\}.\]

\begin{proposition}\label{prop: uni-uni-set-count}
One has $\displaystyle \# \mathsf{U}^{\delta_1 = b}_{2q} = \sum_{d\in \mathscr{D}(\gcd(q,b))}\mu(d) \binom{q/d}{b/d}$.
\end{proposition}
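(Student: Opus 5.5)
We will prove the formula by the same partition-and-M\"obius argument as \cref{prop: pol-uni-set-count}, keeping track of the weight $\delta_1$ throughout. By the bijection $\Phi_1$ of \cref{lemma: dual-word-bij}, it suffices to compute $\#\mathsf{Y}_q(b)$. Since $q$ is odd, every divisor $d$ of $q$ has $q/d$ odd, so $\mathscr{D}_{\textnormal{co-}\odd}(q)=\mathscr{D}(q)$. Intersecting the partition of \cref{rmk: dual-prim-partition} with $\mathsf{W}_q(b)$ therefore gives
\[\mathsf{W}_q(b)=\bigsqcup_{d\in\mathscr{D}(q)}\bigl(\mathsf{Y}_{q,2d}\cap\mathsf{W}_q(b)\bigr).\]
The plan is to identify each piece with a set $\mathsf{Y}_d(b')$ for a suitable $b'$, and then invert.

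First rephrase $\delta_1$. For $w=u_{\ell-1}\cdots u_0$ of odd length $\ell$, $\delta_1(ww^\star)$ is the number of positions $j$ at which $u_j$ differs from the letter of the reference word $w_0=\word{fvf}\cdots\word{vf}$. Here $w_0$ has $\word{v}$ in odd positions and $\word{f}$ in even positions, as in the proof of \cref{lemma: word-count-odd-wt}. So $\delta_1$ counts positions where the letter disagrees with the parity pattern ``even $\mapsto\word{f}$, odd $\mapsto\word{v}$''.

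Next, take $h\in\mathsf{Y}_d$ with $d\mid q$, and write $q/d=2k+1$. By \cref{lemma: dual-word-bij}, the corresponding element of $\mathsf{Y}_{q,2d}$ is $z=\Phi_0(h)=(hh^\star)^k h$. The claim is that $\delta_1(zz^\star)=(q/d)\,\delta_1(hh^\star)$. Consider a block of $z$ equal to $h$: it starts at a position that is a multiple of $2d$, hence even, so its mismatches with the pattern are exactly those of $h$. Now consider a block equal to $h^\star$: it starts at a position congruent to $d$ modulo $2d$, which is odd because $d$ is odd. In such a block both the parity of the position and the letter are flipped, so mismatches are preserved. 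Summing over the $2k+1$ blocks gives the claim.

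Consequently $\Phi_0$ restricts to a bijection $\mathsf{Y}_d(b/(q/d))\to\mathsf{Y}_{q,2d}\cap\mathsf{W}_q(b)$ when $(q/d)\mid b$, and the piece is empty otherwise. Reindexing by $e=q/d$, which must divide $\gcd(q,b)$, and using \cref{lemma: word-count-odd-wt}, we obtain
\[\binom{q}{b}=\#\mathsf{W}_q(b)=\sum_{e\in\mathscr{D}(\gcd(q,b))}\#\mathsf{Y}_{q/e}(b/e).\]
This identity holds with $(q,b)$ replaced by $(q/e',b/e')$ for every $e'\mid\gcd(q,b)$. Set $n=\gcd(q,b)$, and put $\mathsf{S}_{n/e'}=\mathsf{W}_{q/e'}(b/e')$ and $\mathsf{X}_{n/e'}=\mathsf{Y}_{q/e'}(b/e')$. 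The family of identities then has exactly the shape required by \cref{lemma: mobius-count} on $\mathscr{D}(n)$. Applying that lemma together with \cref{rmk: co-propert divisors} yields
\[\#\mathsf{Y}_q(b)=\sum_{d\in\mathscr{D}(\gcd(q,b))}\mu(d)\binom{q/d}{b/d},\]
which is the stated formula.

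The main obstacle is the multiplicativity $\delta_1(zz^\star)=(q/d)\,\delta_1(hh^\star)$. It relies on the parity bookkeeping: the $h^\star$-blocks start at odd positions precisely because $d$ is odd. This point needs care, including checking that it matches the indexing conventions in \cref{def: bodd}.
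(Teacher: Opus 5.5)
Your proposal is correct and follows essentially the same route as the paper's proof: partition $\mathsf{W}_q(b)$ by $\tau^\star$-period, transport each piece along $\Phi_0$ using $\delta_1(zz^\star)=(q/d)\,\delta_1(hh^\star)$, then M\"obius-invert against $\#\mathsf{W}_q(b)=\binom{q}{b}$. Your block-parity argument and your explicit family of identities indexed by $\mathscr{D}(\gcd(q,b))$ usefully supply details the paper only asserts. One harmless slip: $\delta_1$ counts positions that \emph{agree} with the pattern even $\mapsto\word{f}$, odd $\mapsto\word{v}$, so $\delta_1(w_0w_0^\star)=\ell$ for $w_0=\word{fvf}\cdots\word{vf}$; you inherited this from the paper's proof of \cref{lemma: word-count-odd-wt}. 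Nothing breaks, because your block argument only uses that this count is unchanged when both the letter and the parity of its position are flipped, and $\#\mathsf{W}_q(b)=\binom{q}{b}$ holds either way.
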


\begin{proof}
By \cref{rmk: dual-prim-partition}, the set $\mathsf{W}_{q}(b)$ may be partitioned into subsets \[\mathsf{Y}_{q,2d}(b) \coloneqq \mathsf{Y}_{q,2d} \cap \mathsf{W}_{q}(b) = \{z \in \mathsf{Y}_{q,2d}\ \vert\ \delta_1(zz^\star) = b\}\]
for $d\mid q$. Note that $q/d$ is odd since $q$ is odd. 

Recall the bijection $\Phi_0$ from \cref{lemma: dual-word-bij}. For $z \in \mathsf{Y}_{q,2d}(b)$, the element $h = \Phi_0^{-1}(z) \in \mathsf{Y}_d$ is such that $zz^\star = (hh^\star)^{q/d}$. Therefore,
\[b = \delta_1(zz^\star) = \frac{q}{d}\cdot \delta_1(hh^\star),\]
giving $\delta_1(hh^\star) = bd/q$, which allows us to conclude that $(q/d)\mid b$ and that $h \in \mathsf{Y}_{d}(bd/q)$. 

It is then straightforward to see that 
\[\Phi_0^{-1}:\mathsf{Y}_{q,2d}(b) \to \mathsf{Y}_{d}(bd/q)\]
is a bijection. Composing with $\Phi_1$, we obtain the bijection $\mathsf{Y}_{q,2d}(b) \cong \mathsf{U}_{2d}^{\delta_1 = bd/q}$.

Define $\mathscr{D}_b(q) = \setp{d \in \mathscr{D}(q)}{(q/d)\mid b}$. Thus, 
\[\mathsf{W}_q(b) = \bigsqcup_{d\in \mathscr{D}_b(q)}\mathsf{Y}_{q,2d}(b) \cong  \bigsqcup_{d\in \mathscr{D}_b(q)}\mathsf{U}_{2d}^{\delta_1 = bd/q},\quad \text{and therefore } \#\mathsf{W}_q(b) = \sum_{d\in \mathscr{D}_b(q)}\#\mathsf{U}_{2d}^{\delta_1 = bd/q}.\]
By \cref{lemma: mobius-count,lemma: word-count-odd-wt}, we obtain
    \[\# \mathsf{U}_{2q}^{\delta_1 = b} = \sum_{d\in \mathscr{D}_{b}(q)}\mu(q/d)\, \# \mathsf{W}_{q}(bd/q) = \sum_{d\in \mathscr{D}_{b}(q)}\mu(q/d)\, \binom{d}{bd/q}.\]
    Observe now that
\[\mathscr{D}_{\textnormal{co-}b}(q) = \setp{d \in \mathscr{D}(q)}{q/d \in \mathscr{D}_b(q)} = \mathscr{D}(q) \cap \mathscr{D}(b) = \mathscr{D}(\gcd(q,b)),\]
and therefore, by \cref{rmk: co-propert divisors}, we obtain
    \[\# \mathsf{U}_{2q}^{\delta_1 = b} = \sum_{d\in \mathscr{D}(\gcd(q,b))}\mu(d) \binom{q/d}{b/d}.\qedhere\]
\end{proof}

\begin{corollary}\label{cor: uni-uni-count-final}
    One has \[\displaystyle \#\widetilde{\uni}(q-b,b) = \frac{1}{q}\sum_{d\in \mathscr{D}(\gcd(q,b))}\mu(d) \binom{q/d}{b/d}.\]
\end{corollary}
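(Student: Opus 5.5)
The plan is to combine the two displayed facts just established. Before the statement we showed that, for $q$ odd, $\widetilde{\uni}(q-b,b)$ is in bijection with the set of $\tau^2$-orbits on $\mathsf{U}^{\delta_1 = b}_{2q}$. This rests on \cref{thm: structure of unitary BT1s}, whose uniqueness clause says unitary unicycles are determined up to $\tau^2$, and on \cref{rmk: unicycle signature}, which identifies the signature with $\delta_1$. So the first step is to confirm that $\mathsf{U}^{\delta_1=b}_{2q}$ is stable under $\tau^2$. A double rotation of $w=yy^\star$ shifts the indices of the letters $u_j$ by two. This preserves the parity classes $\{u_{2j}\}$ and $\{u_{2j-1}\}$ that enter \cref{def: bodd}, except for letters wrapping around the half-way point, where a letter is replaced by its dual. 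In that case the index parity flips too, because $q$ is odd.

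The cleanest way to check this is conceptual rather than by index bookkeeping. By \cref{rmk: don't rotate unicycles!}, $\widetilde{M}(\tau^2(w))\cong\widetilde{M}(w)$ as unitary $\bt$ modules. By \cref{rmk: unicycle signature}, the signature of $\widetilde{M}(w)$ is $(q-\delta_1(w),\delta_1(w))$. Hence $\delta_1$ is constant on $\tau^2$-orbits.

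Next, $\tau^2$ acts freely with every orbit of size exactly $q$. Every $w\in\mathsf{U}_{2q}$ is primitive of length $2q$, so $\tau^k(w)=w$ forces $2q\mid k$. Since $q$ is odd, $\tau^2$ generates the same cyclic subgroup of $\langle\tau\rangle\cong\zz/2q$ as the subgroup of index $2$, namely one of order $q$. Therefore $\tau^{2k}(w)=w$ iff $q\mid k$. This gives $\#\widetilde{\uni}(q-b,b)=\frac1q\#\mathsf{U}^{\delta_1=b}_{2q}$.

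Finally, I substitute the count from \cref{prop: uni-uni-set-count}, $\#\mathsf{U}^{\delta_1 = b}_{2q}=\sum_{d\mid\gcd(q,b)}\mu(d)\binom{q/d}{b/d}$, which yields the stated formula. The only step needing any care is the $\tau^2$-invariance of $\delta_1$, and I expect the argument via isomorphism invariance of the signature to handle it. Everything else is assembly of results already proved.
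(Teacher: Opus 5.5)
Your proposal is correct and follows the paper's route: the corollary is the identity $\#\widetilde{\uni}(q-b,b)=\frac{1}{q}\#\mathsf{U}^{\delta_1=b}_{2q}$, which comes from the $\tau^2$-orbit description via \cref{thm: structure of unitary BT1s} and \cref{rmk: unicycle signature}, combined with \cref{prop: uni-uni-set-count}. Your extra checks that $\delta_1$ is $\tau^2$-invariant and that $\tau^2$ acts freely with orbits of size $q$ just make explicit what the paper asserts; note that the oddness of $q$ is not needed for $\tau^2$ to have order $q$ in $\zz/2q\zz$.
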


\subsubsection{Serre Unicycles \& Bicycles}

Let $q > 0$ be even. For $q \equiv 0 \bmod{4}$, of interest is $\widetilde{\uni}_{\serre}(q/2,q/2)$, the set of isomorphism classes of Serre unicycles; and for $q \equiv 2 \bmod{4}$, of interest is $\widetilde{\bi}_{\serre}(q/2,q/2)$, the set of isomorphism classes of Serre bicycles. 

Recall the notion of the Serre tensor construction of a polarized $\bt$ modules from \cref{section:STC}.

\begin{proposition}\label{prop: uni-serre-count}
    One has 
    \begin{align*}
        \#\widetilde{\uni}_{\serre}(q/2,q/2) &= \frac{1}{q}\sum_{d\in\mathscr{D}_{\odd}(q)}\mu(d) 2^{q/2d}, \qquad \text{when $q \equiv 0 \bmod{4}$,}\\[0.65em]
        \#\widetilde{\bi}_{\serre}(q/2,q/2) &= \frac{1}{q}\sum_{d\in\mathscr{D}(q/2)}\mu(d) 2^{q/2d}, \qquad \text{when $q \equiv 2 \bmod{4}$.}
    \end{align*}
\end{proposition}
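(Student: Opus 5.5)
Both counts follow from the Serre tensor construction together with the polarized counts already established. By \cref{prop: STC unicycle,prop: STC bicycle}, every Serre unicycle (resp. Serre bicycle) of dimension $2q$ is isomorphic to $\STC(\widehat{N})$ for a unicycle (resp. bicycle) $\widehat{N}$ of $\kk$-dimension $q$. By \cref{lemma: stc-inj}, $\STC$ is injective on isomorphism classes. So the plan is to establish bijections
\[
\widehat{\uni}(q/2) \longleftrightarrow \widetilde{\uni}_{\serre}(q/2,q/2) \quad (q\equiv 0 \bmod 4), \qquad \widehat{\bi}(q/2) \longleftrightarrow \widetilde{\bi}_{\serre}(q/2,q/2) \quad (q\equiv 2 \bmod 4),
\]
and then read off the cardinalities from \cref{cor: poli-uni-count} and \cref{prop: pol-bi-set-count}.

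\emph{Unicycle case.} Take $q\equiv 0\bmod 4$ and $g=q/2$, which is even. A unicycle of dimension $2g=q$ is $\widehat{M}(w)$ with $w\in\mathsf{U}_{q}$. Since $\ell(w)=q$ is divisible by $4$, $\STC(\widehat{M}(w))\cong\widetilde{M}(w,w^\star)$ is a Serre unicycle by \cref{prop: STC unicycle}. Conversely, every Serre unicycle has the form $\widetilde{M}(w,w^\star)$ with $w$ self-dual primitive of length divisible by $4$. Hence the map is surjective, and by \cref{lemma: stc-inj} it is injective. Then \cref{cor: poli-uni-count} gives
\[
\#\widetilde{\uni}_{\serre}(q/2,q/2)=\frac{1}{q}\sum_{d\in\mathscr{D}_{\odd}(q/2)}\mu(d)\,2^{q/2d}.
\]
Since $q$ is even, $\mathscr{D}_{\odd}(q/2)=\mathscr{D}_{\odd}(q)$, which yields the stated formula.

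\emph{Bicycle case.} Take $q\equiv 2\bmod 4$, so $g=q/2$ is odd. A bicycle of dimension $q$ is $\widehat{M}(z,z^\star)$ with $z$ primitive of length $g$. Since $g$ is odd, every such $z$ is non-self-dual. By \cref{prop: STC bicycle}, $\STC(\widehat{M}(z,z^\star))\cong\widetilde{M}(zz,z^\star z^\star)$, and these are exactly the Serre bicycles in \cref{def: (Serre) bicycles}. This gives surjectivity, and \cref{lemma: stc-inj} again gives injectivity. The odd case of \cref{prop: pol-bi-set-count} gives $\frac{1}{2g}\sum_{d\mid g}\mu(d)2^{g/d}$, and with $2g=q$ this is exactly the stated formula.

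The only step needing care is surjectivity together with the matching of isomorphism classes. Two points must be checked. First, every unitary $\bt$ module listed as a Serre unicycle or Serre bicycle is actually hit by $\STC$; this follows from the propositions cited above. Second, the Serre types are defined as isomorphism classes of unitary $\bt$ modules, while the identification up to $\tau$ and duality on words was used on the polarized side. Since $\STC$ is injective on isomorphism classes, the identifications on the two sides correspond, and the count transfers cleanly. As a cross-check, one can also count directly: in the bicycle case, \cref{rmk: don't rotate bicycles!} says Serre bicycles are invariant under $\tau$ and $\star$ on $z$. Counting orbits of $\mathsf{P}_{g}$ under the group generated by $\tau$ and $\star$, which acts freely with order $2g=q$, gives the same number.
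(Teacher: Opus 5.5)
Your proposal is correct and follows essentially the paper's own argument: \cref{lemma: stc-inj} together with \cref{prop: STC unicycle,prop: STC bicycle} gives bijections $\widehat{\uni}(q/2)\leftrightarrow\widetilde{\uni}_{\serre}(q/2,q/2)$ and $\widehat{\bi}(q/2)\leftrightarrow\widetilde{\bi}_{\serre}(q/2,q/2)$ via the Serre tensor construction, and the counts are then read off from \cref{cor: poli-uni-count} (using $\mathscr{D}_{\odd}(q/2)=\mathscr{D}_{\odd}(q)$) and from the odd case of \cref{prop: pol-bi-set-count}. Your orbit-counting cross-check for Serre bicycles, via the free action of $\langle\tau,\star\rangle$ of order $q$ on $\mathsf{P}_{q/2}$, does not appear in the paper but is consistent with its count.
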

\begin{proof}
    By \cref{lemma: stc-inj} and \cref{prop: STC unicycle,prop: STC bicycle}$, \widetilde{\uni}_{\serre}(q/2,q/2)$ and $\widetilde{\bi}_{\serre}(q/2,q/2)$ are in bijection with $\widehat{\uni}(q/2)$ and $\widehat{\bi}(q/2)$, respectively, via the Serre tensor construction. 

    Assume $q\equiv 0 \bmod{4}$, i.e., $q/2$ is even. The result for $\widetilde{\uni}_{\serre}(q/2,q/2)$ follows from \cref{cor: poli-uni-count} along with the observation that $q$ and $q/2$ have the same odd divisors. Assume $q\equiv 2 \bmod{4}$, i.e., $q/2$ is odd. The result for $\widetilde{\bi}_{\serre}(q/2,q/2)$ follows from \cref{prop: pol-bi-set-count}.
\end{proof}

\subsubsection{Unitary Bicycles I. Counting by Dimension}
Assume $q > 0$ to be even, and recall $\widetilde{\bi}(q)$, the set of isomorphism classes of all unitary bicycles of $\kk$-dimension $2q$.

By \cref{rmk: don't rotate bicycles!}, $\widetilde{\bi}(q)$ is in bijection with the orbits on $\mathsf{B}_q$, the set of primitive non-self-dual words of length $q$, under the action $w \mapsto \tau(w)^\star$.
This is a free action on $\mathsf{B}_{q}$ with exact order $q$ and therefore
\[\#\widetilde{\bi}(q) = \frac{1}{q}\# \mathsf{B}_{q}.\]
As a consequence of \cref{prop: pol-bi-set-count}, since $q$ is even, we obtain the following corollary.

\begin{corollary}\label{cor: uni-bi-count}
    One has  $\displaystyle\#\widetilde{\bi}(q) =  \frac{1}{q}\left(\sum_{d\in \mathscr{D}(q)}\mu(d)\, 2^{q/d} - \sum_{d\in \mathscr{D}_{\odd}(q)}\mu(d)\, 2^{q/2d}\right).$
\end{corollary}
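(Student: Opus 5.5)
The statement is immediate once the count $\#\widetilde{\bi}(q)=\frac{1}{q}\#\mathsf{B}_q$ is in hand, so the plan has two steps. First I justify that identity. Then I substitute the formula for $\#\mathsf{B}_q$ obtained inside the proof of \cref{prop: pol-bi-set-count}, in the even case.

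For the first step, I invoke \cref{thm: structure of unitary BT1s} and \cref{rmk: don't rotate bicycles!}. Unitary bicycles of $\kk$-dimension $2q$ are the modules $\widetilde{M}(w,w^\star)$ with $w\in\mathsf{B}_q$. Moreover, $\widetilde{M}(w,w^\star)\cong\widetilde{M}(w',w'^\star)$ exactly when $w'$ lies in the orbit of $w$ under the group generated by $\tau^2$ and $\rho\colon w\mapsto\tau(w)^\star$.

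Since $\rho^2=\tau^2$ (because $\tau$ commutes with $\star$), this group is cyclic and generated by $\rho$. The order of $\rho$ is exactly $q$: indeed $\rho^q=\tau^q\circ(\star)^q=\id$ because $q$ is even.

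It remains to check that the action of $\rho$ on $\mathsf{B}_q$ is free. Suppose $\rho^k(w)=w$ with $0<k<q$. If $k$ is even, then $\tau^k(w)=w$, which contradicts primitivity. If $k$ is odd, then $\tau^k(w)^\star=w$, so $w^\star=\tau^{-k}(w)$ and $w$ is self-dual, which contradicts $w\in\mathsf{B}_q$. Hence every orbit has size $q$, and $\#\widetilde{\bi}(q)=\frac1q\#\mathsf{B}_q$.

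For the second step, I use the computation in the proof of \cref{prop: pol-bi-set-count}, with $g=q$ even:
\[\#\mathsf{B}_q=\#\mathsf{P}_q-\#\mathsf{U}_q=\sum_{d\in\mathscr{D}(q)}\mu(d)\,2^{q/d}-\sum_{d\in\mathscr{D}_{\odd}(q)}\mu(d)\,2^{q/2d}.\]
This combines \cref{prop: prim-count} with \cref{prop: pol-uni-set-count} applied to $g=q/2$, together with the fact that $q/2$ and $q$ have the same odd divisors. Dividing by $q$ gives the stated formula.

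The only step that needs real care is the freeness of the $\rho$-action, specifically the odd-$k$ case. That is where non-self-duality of $w$ is used, and it must be combined with the fact that $q$ is even so that $\rho$ has order exactly $q$ rather than $2q$. Everything else is direct substitution.
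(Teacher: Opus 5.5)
Your proposal is correct and follows essentially the paper's route: you identify $\widetilde{\bi}(q)$ with the orbits of $w\mapsto\tau(w)^\star$ on $\mathsf{B}_q$, observe that this action is free of exact order $q$, and then substitute the even-case count $\#\mathsf{B}_q=\#\mathsf{P}_q-\#\mathsf{U}_q$ from \cref{prop: pol-bi-set-count}. The one difference is that you spell out the freeness argument (the even-$k$ and odd-$k$ cases, using primitivity and non-self-duality respectively), which the paper only asserts.
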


\subsubsection{Counting Indecomposables by Dimension}
Recall the set $\widetilde{\ind}(q)$ for any integer $q>0$, which collects the isomorphism classes of all indecomposable unitary $\bt$ modules of $\kk$-dimension $2q$. We have,
\[\widetilde{\ind}(q) = \begin{cases}
    \widetilde{\uni}(q) & \text{when $q$ odd,}\\[1em]
    \widetilde{\bi}(q) \sqcup \widetilde{\uni}_{\serre}(q/2,q/2) & \text{when $q \equiv 0 \bmod{4}$,}\\[1em]
    \widetilde{\bi}(q) \sqcup  \widetilde{\bi}_{\serre}(q/2,q/2) & \text{when $q \equiv 2 \bmod{4}$.}
\end{cases}
\]

\begin{theorem}\label{thm: uni-indecomp-count}
    One has $\displaystyle\#\widetilde{\ind}(q) = \frac{1}{q}\sum_{d\in \mathscr{D}(q)}\mu(d)\, 2^{q/d}$.
\end{theorem}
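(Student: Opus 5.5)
The plan is to reduce everything to the identity $\#\mathsf{P}_q = \sum_{d\in\mathscr{D}(q)}\mu(d)\,2^{q/d}$ from \cref{prop: prim-count}. The target is therefore $\#\widetilde{\ind}(q) = \frac{1}{q}\#\mathsf{P}_q$. I will use the displayed case decomposition of $\widetilde{\ind}(q)$ just before the theorem and add up the counts already obtained for each indecomposable type. I split into three cases by $q$ modulo $4$.

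\emph{Case $q$ odd.} Here $\widetilde{\ind}(q)=\widetilde{\uni}(q)$. \cref{cor: uni-uni-count} already gives exactly $\frac{1}{q}\sum_{d\in\mathscr{D}(q)}\mu(d)\,2^{q/d}$. Equivalently, in \cref{prop: pol-uni-set-count} with $g=q$ every divisor of $q$ is odd, so $\mathscr{D}_{\odd}(q)=\mathscr{D}(q)$. Nothing further is needed.

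\emph{Case $q$ even.} By \cref{cor: uni-bi-count},
\[
\#\widetilde{\bi}(q)=\frac{1}{q}\Bigl(\#\mathsf{P}_q-\sum_{d\in\mathscr{D}_{\odd}(q)}\mu(d)\,2^{q/2d}\Bigr).
\]
So it suffices to show that the Serre contribution equals the subtracted term, namely $\frac{1}{q}\sum_{d\in\mathscr{D}_{\odd}(q)}\mu(d)\,2^{q/2d}$. Conceptually, the subtracted term is $\frac{1}{q}\#\mathsf{U}_q$, and the Serre tensor construction should recover exactly those self-dual words. For the proof I will simply compare the formulas in \cref{prop: uni-serre-count}.
\begin{enumerate}
\item If $q\equiv 0 \bmod 4$, the Serre unicycle count is literally $\frac{1}{q}\sum_{d\in\mathscr{D}_{\odd}(q)}\mu(d)\,2^{q/2d}$, so the two terms cancel.
\item If $q\equiv 2\bmod 4$, the Serre bicycle count is $\frac{1}{q}\sum_{d\in\mathscr{D}(q/2)}\mu(d)\,2^{q/2d}$. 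Since $q/2$ is odd, $\mathscr{D}(q/2)=\mathscr{D}_{\odd}(q)$, and again the terms cancel.
\end{enumerate}
Adding the bicycle and Serre counts in either subcase leaves $\frac{1}{q}\#\mathsf{P}_q$, as claimed.

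The only point needing care is matching the index sets of divisors, that is, checking that odd divisors of $q$ coincide with odd divisors of $q/2$, and with all divisors of $q/2$ when $q/2$ is odd. This is elementary, so I expect no real obstacle beyond correctly invoking the earlier counts.
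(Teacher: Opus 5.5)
Your proposal is correct and follows the paper's own proof essentially step for step. The odd case is \cref{cor: uni-uni-count}, and for even $q$ you add \cref{cor: uni-bi-count} to the Serre count in \cref{prop: uni-serre-count}. When $q\equiv 2 \bmod 4$, the cancellation rests on $\mathscr{D}_{\odd}(q)=\mathscr{D}(q/2)$, exactly as in the paper.
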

\begin{proof}
When $q$ is odd, the statement is exactly the statement of \cref{cor: uni-uni-count}.  When $q \equiv 0 \bmod{4}$, the statement follows from \cref{prop: uni-serre-count,cor: uni-bi-count}. Assume $q \equiv 2 \bmod{4}$. Using \cref{prop: uni-serre-count,cor: uni-bi-count}, one has
\begin{align*}
    \# \widetilde{\ind}(q) &= \# \widetilde{\bi}(q) + \# \widetilde{\bi}_{\serre}(q/2,q/2)\\[0.75em]
    &= \frac{1}{q}\left(\sum_{d\in \mathscr{D}(q)}\mu(d)\, 2^{q/d} - \sum_{d\in \mathscr{D}_{\odd}(q)}\mu(d)\, 2^{q/2d}\right) + \frac{1}{q}\sum_{d\in \mathscr{D}(q/2)}\mu(d) \, 2^{q/2d} = \frac{1}{q}\sum_{d\in \mathscr{D}(q)}\mu(d)\, 2^{q/d},
\end{align*}
since $q$ is even and $q/2$ is odd and therefore $\mathscr{D}_{\odd}(q) = \mathscr{D}_{\odd}(q/2) = \mathscr{D}(q/2)$.
\end{proof}

\subsubsection{Unitary Bicycles II. Counting by Signature}
Assume $q>0$ to be even and we turn our attention to $\widetilde{\bi}(q-b,b)$, the set of isomorphism classes of unitary bicycles with signature $(q-b,b)$. That is, we describe how unitary bicycles in $\widetilde{\bi}(q)$ are distributed across signatures.

The following notion will be necessary to determine our counts in this section.
\begin{definition}
    Consider a word $w = u_{\ell-1}\cdots u_1u_0$, where $\ell = \ell(w)$ is even. Define
    \[\delta_0(w) = \# \{0\leq j < \ell/2 \; | \; u_{2j}=\word{v}\} + \#\{ 1\leq j \leq \ell/2 \; | \; u_{2j-1}=\word{f}\}.\]
\end{definition}

Define $\mathsf{W}_\ell^{\delta_0 = c} \coloneqq  \setp{w \in \mathsf{W}_\ell}{\delta_0(w) = c}$, where $\ell > 0$ is even and $c\geq 0$. Similar to \cref{lemma: word-count-odd-wt}, we obtain the following lemma.

\begin{lemma}\label{lemma: word-count-even-wt}
    One has $\displaystyle\# \mathsf{W}^{\delta_0 = c}_{\ell,c} = \binom{\ell}{c}.$
\end{lemma}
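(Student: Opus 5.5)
The approach is to mimic the proof of \cref{lemma: word-count-odd-wt}: produce a single base word on which $\delta_0$ vanishes, and then observe that every letter flip changes $\delta_0$ by exactly one.

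First, I will rewrite $\delta_0$ as a sum of indicator functions over all positions $0\le i\le \ell-1$. Every even index $i=2j$ with $0\le j<\ell/2$ contributes $1$ precisely when $u_i=\word{v}$. Every odd index $i=2j-1$ with $1\le j\le \ell/2$ contributes $1$ precisely when $u_i=\word{f}$. Since $\ell$ is even, these two index ranges together cover each of $0,\dots,\ell-1$ exactly once. Hence
\[\delta_0(w)=\sum_{i=0}^{\ell-1}\chi_i(u_i),\]
where $\chi_i(\word{v})=1$ and $\chi_i(\word{f})=0$ for even $i$, and the opposite convention holds for odd $i$.

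Next, I will introduce the base word $w_0=u_{\ell-1}\cdots u_0$ with $u_i=\word{f}$ for $i$ even and $u_i=\word{v}$ for $i$ odd. Concretely, $w_0=\word{vfvf}\cdots\word{vf}$. Every term $\chi_i(u_i)$ vanishes on $w_0$, so $\delta_0(w_0)=0$.

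Finally, I will set up the bijection. Each word $w\in\mathsf{W}_\ell$ is determined by the set $S(w)=\{i\mid u_i\neq (w_0)_i\}$ of positions where it differs from $w_0$. By the sum formula, $\delta_0(w)=\#S(w)$. The assignment $w\mapsto S(w)$ is therefore a bijection from $\mathsf{W}_\ell^{\delta_0=c}$ to the $c$-element subsets of $\{0,\dots,\ell-1\}$, which gives the count $\binom{\ell}{c}$.

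I expect no real obstacle. The only point that needs care is the index bookkeeping: the even indices run over $0\le j<\ell/2$ and the odd indices over $1\le j\le\ell/2$, and I must confirm that together they partition $\{0,\dots,\ell-1\}$ without overlap. This holds precisely because $\ell$ is even. I will also note that the subscript in $\mathsf{W}^{\delta_0=c}_{\ell,c}$ is read as $\mathsf{W}^{\delta_0=c}_{\ell}$.
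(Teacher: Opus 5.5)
Your proof is correct and is the argument the paper intends. The paper gives no separate proof here, saying only that the lemma follows similarly to \cref{lemma: word-count-odd-wt}, whose proof is exactly this flip-from-a-base-word argument; your $w_0=\word{vfvf}\cdots\word{vf}$, your check that the even and odd index ranges partition $\{0,\dots,\ell-1\}$, and your reading of the subscript as $\mathsf{W}^{\delta_0=c}_{\ell}$ are all right.
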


For a fixed $0 \leq b \leq q$, define
\begin{align*}
    \mathsf{B}^{\delta_0 = b}_{q} &\coloneqq \mathsf{B}_q \cap \mathsf{W}^{\delta_0 = b}_{q}; & \mathsf{P}^{\delta_0 = b}_{q} &\coloneqq \mathsf{P}_q \cap \mathsf{W}^{\delta_0 = b}_{q};\text{ and} & \mathsf{U}^{\delta_0 = b}_{q} &\coloneqq \mathsf{U}_q \cap \mathsf{W}^{\delta_0 = b}_{q}
\end{align*}
where $\mathsf{B}_q,\mathsf{P}_q$ and $\mathsf{U}_q$ are as in \cref{def: basic-comb-set}. Therefore, we have $\mathsf{B}^{\delta_0 = b}_{q} = \mathsf{P}^{\delta_0 = b}_{q} \setminus \mathsf{U}^{\delta_0 = b}_{q}$.

By \cref{thm: structure of unitary BT1s,rmk: bicycle signature}, the set $\widetilde{\bi}(q-b,b)$ is in bijection with the orbits on $\mathsf{B}^{\delta_0 = b}_{q}$ under the operator $\tau_\star: w \mapsto \tau(w)^\star$; observe that $\tau_{\star}^2 = \tau^2$. Primitivity and non-self-duality of elements in $\mathsf{B}^{\delta_0 = b}_{q}$ allow us to conclude the action of $\tau_\star$ is free with exact order $q$. This gives us
\[\#\widetilde{\bi}(q-b,b) = \frac{1}{q}\# \mathsf{B}^{\delta_0 = b}_{q} = \frac{1}{q}(\# \mathsf{P}^{\delta_0 = b}_{q} - \# \mathsf{U}^{\delta_0 = b}_{q}).\]

In following propositions we determine $\# \mathsf{P}^{\delta_0 = b}_{q}$ and $\# \mathsf{U}^{\delta_0 = b}_{q}$.

\begin{proposition}\label{prop: uni-bi-prim=count}
    For $q$ even, one has 
    \[\# \mathsf{P}^{\delta_0 = b}_{q} = \begin{cases}
        \displaystyle\sum_{d \in \mathscr{D}_{\textnormal{co-}\even}(q)}\mu(d) \binom{q/d}{q/2d} + \sum_{d \in \mathscr{D}_{\textnormal{co-}\odd}(q)}\mu(d) 2^{q/d} & \text{when $b = q/2$,}\\[2em]
        \displaystyle\sum_{d\in\mathscr{D}_{\textnormal{w.co-}\even}(\gcd(q,b))}\mu(d) \binom{q/d}{b/d} & \text{when $b \neq q/2$,}
    \end{cases}\]
    where $\mathscr{D}_{\textnormal{w.co-}\even}(\gcd(q,b)) := \setp{d\in \mathscr{D}(\gcd(q,b))}{q/d\text{ is even}}$ and \emph{w.co-even} stands for ``weakly co-even''.
\end{proposition}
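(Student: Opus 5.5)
The plan is to stratify all words of length $q$ by their $\tau$-period, as in \cref{rmk: prim-partition}, and then apply Möbius inversion (\cref{lemma: mobius-count,rmk: co-propert divisors}). The new ingredient is to track how $\delta_0$ behaves under $z=h^{q/e}$, where $h$ is primitive of length $e\mid q$. This depends on the parity of $e$.

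\emph{Step 1: $e$ even.} Each copy of $h$ starts at an even position, so the parities of the indices are preserved. Hence $\delta_0(z)=(q/e)\,\delta_0(h)$, and $\delta_0(z)=b$ forces $(q/e)\mid b$ and $\delta_0(h)=be/q$.

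\emph{Step 2: $e$ odd.} Consecutive copies of $h$ start at positions of alternating parity. So in $h^2$ each letter $u_j$ of $h$ occurs once at an even index and once at an odd index. If $u_j=\word{v}$ it counts once (at the even index); if $u_j=\word{f}$ it counts once (at the odd index). Thus $\delta_0(h^2)=e$, and therefore $\delta_0(z)=(q/2e)\cdot e=q/2$ for every primitive $h$ of odd length $e$. I expect this parity bookkeeping to be the main point needing care; everything else is formal.

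\emph{Case $b\neq q/2$.} By Step 2, no word of odd period lies in $\mathsf{W}^{\delta_0=b}_q$. By Step 1, $h\mapsto h^{q/e}$ identifies
\[
\mathsf{W}^{\delta_0=b}_q \cong \bigsqcup \mathsf{P}^{\delta_0=be/q}_e ,
\]
where the union is over $e\mid q$ with $e$ even and $(q/e)\mid b$. For each such $e$ we have $be/q\neq e/2$. So the same decomposition holds at every level $e$ of this poset, with all indices in the same poset. Applying \cref{lemma: mobius-count} with $\#\mathsf{W}^{\delta_0=c}_e=\binom{e}{c}$ (\cref{lemma: word-count-even-wt}) gives
\[
\#\mathsf{P}^{\delta_0=b}_q=\sum_e \mu(q/e)\binom{e}{be/q}.
\]
Substituting $d=q/e$ via \cref{rmk: co-propert divisors}, the index set becomes $\{d\mid\gcd(q,b) : q/d \text{ even}\}$. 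This is $\mathscr{D}_{\textnormal{w.co-}\even}(\gcd(q,b))$, and the summand becomes $\mu(d)\binom{q/d}{b/d}$.

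\emph{Case $b=q/2$.} For every $n\mid q$ define
\[
F(n)=\begin{cases}\#\mathsf{P}^{\delta_0=n/2}_n & n \text{ even},\\ \#\mathsf{P}_n & n \text{ odd},\end{cases}
\qquad
S(n)=\sum_{m\mid n}F(m).
\]
For $n$ even, Steps 1 and 2 show that the words of length $n$ with $\delta_0=n/2$ are exactly the $h^{n/m}$ with either $m$ odd (any primitive $h$) or $m$ even and $\delta_0(h)=m/2$. Hence $S(n)=\binom{n}{n/2}$ by \cref{lemma: word-count-even-wt}. For $n$ odd, all divisors are odd, so $S(n)=\#\mathsf{W}_n=2^n$ by \cref{rmk: prim-partition,lemma: word-count}. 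Ordinary Möbius inversion on $\mathscr{D}(q)$ gives
\[
F(q)=\sum_{d\mid q}\mu(d)\,S(q/d).
\]
Splitting this sum according to the parity of $q/d$ produces exactly
\[
\sum_{d\in\mathscr{D}_{\textnormal{co-}\even}(q)}\mu(d)\binom{q/d}{q/2d}+\sum_{d\in\mathscr{D}_{\textnormal{co-}\odd}(q)}\mu(d)\,2^{q/d}.
\]
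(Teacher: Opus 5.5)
Your proposal is correct and follows essentially the same route as the paper: stratify by $\tau$-period, show that odd periods force $\delta_0=q/2$ via $\delta_0(h^2)=e$, use multiplicativity of $\delta_0$ for even periods, and then apply M\"obius inversion separately for $b\neq q/2$ and for $b=q/2$ (your $F$ and $S$ are the paper's $\kappa_{\mathsf{P}}$ and $\kappa_{\mathsf{W}}$). You also check explicitly that the decomposition holds at every level of the divisor poset, using $be/q\neq e/2$ in the first case and recomputing $S(n)$ for every $n\mid q$ in the second; the paper's \cref{lemma: mobius-count} leaves this implicit, and it is exactly what makes the inversion legitimate.
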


\begin{proof}
    By \cref{rmk: prim-partition}, the set $\mathsf{W}^{\delta_0 = b}_{q}$ may be partitioned into following subsets, indexed by $d\mid q$,
\[\mathsf{P}^{\delta_0 = b}_{q,d} = \mathsf{P}_{q,d} \cap \mathsf{W}^{\delta_0 = b}_{q} = \{z \in \mathsf{P}_{q,d}\ \vert\ \delta_0(z) = b\}.\]

We first focus on the contribution made by $\mathsf{P}^{\delta_0 = b}_{q,d}$ to the set $\mathsf{W}^{\delta_0 = b}_{q}$ when $d$ is odd. Fix $d \in \mathscr{D}_{\odd}(q)$.

We will show that $\mathsf{P}_{q,d} = \mathsf{P}_{q,d}^{\delta_0 = q/2}$, i.e., $\delta_0(z) = q/2$ for any $z \in \mathsf{P}_{q,d}$, and therefore $\mathsf{P}_{q,d}^{\delta_0 = b} = \emptyset$ when $b \neq q/2$.

Consider $z \in \mathsf{P}_{q,d}$, i.e., $z$ is a word of length $q$ with $\tau$-period $d$, and recall the bijection $\Psi$ from \cref{rmk: prim-partition}. The element $h = \Psi^{-1}(z) \in \mathsf{P}_d$, the set of primitive words of length $d$, is such that $z = h^{q/d}$. Since $d$ is odd and $q$ is even, necessarily $q/d$ is even and we may write
\[z = (h^2)^{q/2d}\]
Since $\ell(h) = d$ is odd, one may readily deduce from the definition of $\delta_0$ 
that $\delta_0(h^2) = d$ and hence
\[\delta_0(z) = \frac{q}{2d}\cdot \delta_0(h^2) = \frac{q}{2d}\cdot d = \frac{q}{2}.\]
Thus, $z \in \mathsf{P}_{q,d}^{\delta_0 = q/2}$, and we conclude $\mathsf{P}_{q,d} = \mathsf{P}_{q,d}^{\delta_0 = q/2}$ and, via $\Psi$, $\mathsf{P}_{q,d}^{\delta_0 = q/2} \cong \mathsf{P}_d$.

Therefore, for any $d \in \mathscr{D}_{\odd}(q)$, we conclude
\[\# \mathsf{P}^{\delta_0 = b}_{q,d} = \begin{cases}
    0 & \text{if $b \neq q/2$,}\\[0.5em]
    \#\mathsf{P}_d & \text{if $b = q/2$.}
\end{cases}\]

Assume now $b \neq q/2$. Our arguments above tell us that no odd divisors $d$ of $q$ contribute in this case. 

Therefore, consider $\mathsf{P}^{\delta_0 = b}_{q,d}$ for $d \in \mathscr{D}_{\even}(q)$. An almost exact argument as the one given in \cref{prop: uni-uni-set-count} for $\Phi_0^{-1}$ allows us to conclude that $\Psi^{-1}$ from \cref{rmk: prim-partition} induces the bijection $\mathsf{P}^{\delta_0 = b}_{q,d} \cong \mathsf{P}_d^{\delta_0 = bd/q}$, the set of primitive words in $\mathsf{W}_d^{\delta_0 = bd/q}$, and necessarily $(q/d)\mid b$. Hence,

\[\mathsf{W}^{\delta_0 = b}_{q} = \bigsqcup_{d \in \mathscr{D}_{b,\even}(q)}\mathsf{P}^{\delta_0 = b}_{q,d} \cong  \bigsqcup_{d \in \mathscr{D}_{b,\even}(q)}\mathsf{P}_d^{\delta_0 = bd/q,}\]
where $\mathscr{D}_{b,\even}(q) = \setp{d \in \mathscr{D}(q)}{d \text{ is even and }(q/d)\mid b}$. Therefore,
By \cref{lemma: mobius-count,lemma: word-count-even-wt}, we obtain
    \[\# \mathsf{P}^{\delta_0 = b}_{q} = \sum_{d\in \mathscr{D}_{b,\even}(q)}\mu(q/d)\, \# \mathsf{W}^{\delta_0 = bd/q}_{d} = \sum_{d\in \mathscr{D}_{b,\even}(q)}\mu(q/d)\, \binom{d}{bd/q}.\]
    Observe now that
\[\mathscr{D}_{\textnormal{co-}b,\even}(q) = \setp{d \in \mathscr{D}(q)}{q/d \in \mathscr{D}_{b,\even}(q)} = \setp{d \in \mathscr{D}(\gcd(q,b))}{q/d\text{ is even}} \eqqcolon \mathscr{D}_{\text{w.co-}\even}(\gcd(q,b)).\]

Hence, when $b \neq q/2$, by \cref{rmk: co-propert divisors}, we obtain 
    $\displaystyle\# \mathsf{P}^{\delta_0 = b}_{q} = \sum_{d\in \mathscr{D}_{\text{w.co-}\even}(\gcd(q,b))}\mu(d) \binom{q/d}{b/d}$.

Assume now $b = q/2$. In the case when $d \in \mathscr{D}_{\odd}(q)$, we have already argued that $\#\mathsf{P}^{\delta_0 = q/2}_{q,d} = \# \mathsf{P}_d$.

Consider any $z \in \#\mathsf{P}^{\delta_0 = q/2}_{q,d}$ for $d$ even, and recall once again the bijection $\Psi$ from \cref{rmk: prim-partition}. The element $h = \Psi^{-1}(z) \in \mathsf{P}_d$ is such that $z = h^{q/d}$. Since $\ell(h) = d$ is even, one may consider $\delta_0(h)$ and observe that
\[\frac{q}{2} = \delta_0(z) = \frac{q}{d}\cdot \delta_0(h).\]
Therefore $h \in \mathsf{P}_d^{\delta_0 = d/2}$ and hence one deduces that $\Psi$ induces the bijection $\mathsf{P}^{\delta_0 = d/2}_{d} \cong \mathsf{P}^{\delta_0 = q/2}_{q,d}$. 

Thus, 
\begin{align*}
\mathsf{W}^{\delta_0 = q/2}_{q} &= \bigsqcup_{d \in \mathscr{D}(q)}\mathsf{P}^{\delta_0 = q/2}_{q,d} \cong \bigsqcup_{d \in \mathscr{D}_{\even}(q)}\mathsf{P}^{\delta_0 = d/2}_{d}\; \sqcup \bigsqcup_{d \in \mathscr{D}_{\odd}(q)}\mathsf{P}_{d}.
\end{align*}

We will employ the classical M{\"o}bius inversion on the following two arithmetic functions defined on $\mathscr{D}(q)$:
\begin{align*}
    \kappa_{\mathsf{W}}(d) &= \begin{cases}
       \# \mathsf{W}^{\delta_0 = d/2}_{d} & \text{if $d$ is even,}\\[0.25em] 
       \# \mathsf{W}_{d} & \text{if $d$ is odd,} 
    \end{cases} &
    \kappa_{\mathsf{P}}(d) &= \begin{cases}
       \# \mathsf{P}^{\delta_0 = d/2}_{d} & \text{if $d$ is even,}\\[0.25em] 
       \# \mathsf{P}_{d} & \text{if $d$ is odd.} 
    \end{cases}
\end{align*}
Taking cardinalities in the bijection above gives us $\displaystyle\kappa_{\mathsf{W}}(q) = \sum_{d \in \mathscr{D}(q)} \kappa_{\mathsf{P}}(d)$. By M{\"obius} inversion, we obtain
\[\# \mathsf{P}^{\delta_0 = q/2}_{q} = \kappa_{\mathsf{P}}(q) = \sum_{d \in \mathscr{D}(q)} \mu(q/d)\,\kappa_{\mathsf{W}}(d) =  \sum_{d \in \mathscr{D}_{\even}(q)} \mu(q/d)\,\# \mathsf{W}^{\delta_0 = d/2}_{d} + \sum_{d \in \mathscr{D}_{\odd}(q)} \mu(q/d)\,\#\mathsf{W}_{d}.\]
By \cref{lemma: word-count,lemma: word-count-even-wt}, we have
\[\# \mathsf{P}^{\delta_0 = q/2}_{q} = \sum_{d \in \mathscr{D}_{\even}(q)} \mu(q/d)\, \binom{d}{d/2} + \sum_{d \in \mathscr{D}_{\odd}(q)} \mu(q/d)\,2^d.\]
Therefore, by \cref{rmk: co-propert divisors}, we obtain
    \[\# \mathsf{P}^{\delta_0 = q/2}_{q} = \sum_{d \in \mathscr{D}_{\text{co-}\even}(q)} \mu(d)\, \binom{q/d}{q/2d} + \sum_{d \in \mathscr{D}_{\text{co-}\odd}(q)} \mu(d)\,2^{q/d}.\qedhere\]
\end{proof}

\begin{proposition}\label{prop: uni-bi-self-dual=count}
$\# \mathsf{U}^{\delta_0 = b}_{q} = 0$ when $q \equiv 0 \bmod{4}$ and $b\neq q/2$ or when $q \equiv 2\bmod{4}$ and $b$ odd, and
    \[\# \mathsf{U}^{\delta_0 = b}_{q} = \begin{cases}
        \displaystyle\sum_{d \in \mathscr{D}_{\odd}(q)} \mu(d)\, 2^{q/2d} & \text{when $q \equiv 0 \bmod{4}$ and $b = q/2$},\\[2em]
        \displaystyle\sum_{d \in \mathscr{D}_{\odd}(\gcd(q,b))} \mu(d)\, \binom{q/2d}{b/2d} & \text{when $q \equiv 2 \bmod{4}$ and $b$ is even}.
    \end{cases}\]
\end{proposition}

\begin{proof}
Consider the ambient set $\mathsf{U}_q$, the set of primitive self-dual words $yy^\star$ with $\ell(y) = q/2$.

Assume $q \equiv 0 \bmod{4}$, i.e., $q/2$ is even. Therefore any word $yy^\star \in \mathsf{U}_q$ is such that $\ell(y) = q/2$ is even. By \cref{thm: structure of unitary BT1s,rmk: bicycle signature} the $\tau$-orbits on $\mathsf{U}_q$ are in bijection with $\widetilde{\uni}_{\serre}(q/2,q/2)$ and $\mathsf{U}_q = \mathsf{U}_q^{\delta_0 = q/2}$. The free action of $\tau$ on $\mathsf{U}_q$ with exact order $q$ gives us
\[\#\widetilde{\uni}_{\serre}(q/2,q/2) = \frac{1}{q}\# \mathsf{U}_q = \frac{1}{q}\#\mathsf{U}^{\delta_0 = q/2}_{q}.\]
Therefore, by \cref{prop: uni-serre-count}, we obtain
    \[\# \mathsf{U}_q^{\delta_0 = q/2} = \#\mathsf{U}_q  = q\cdot \#\widetilde{\uni}(q/2,q/2) = \sum_{d \in \mathscr{D}_{\odd}(q)} \mu(d)\, 2^{q/2d},\]
and necessarily $\#\mathsf{U}_{q}^{\delta_0 = b} = \emptyset$ for $b \neq q/2$.

Assume now $q \equiv 2 \bmod{4}$, i.e., $q/2$ is odd. Therefore any word $yy^\star \in \mathsf{U}_q$ with $\ell(y) = q/2$ odd. For such words, we may consider both $\delta_1(yy^\star)$ and $\delta_0(yy^\star)$. Inspecting both definitions, one may verify that
\[\delta_0(yy^\star) = 2\cdot \delta_1(yy^\star).\]
Therefore, the set $\mathsf{U}_q^{\delta_0 = b} = \emptyset$ when $b$ is odd and $\mathsf{U}_q^{\delta_0 = b} \cong \mathsf{U}_q^{\delta_1 = b/2}$ when $b$ is even. 

Hence, when $b$ is even, by \cref{prop: uni-uni-set-count}, we obtain
\[\#\mathsf{U}^{\delta_0 = b}_{q} = \#\mathsf{U}^{\delta_1 = b/2}_{q} = \sum_{d\in \mathscr{D}(\gcd(q/2,b/2))}\mu(d) \binom{q/2d}{b/2d}.\]
Since $q/2$ is odd, we have $\mathscr{D}(\gcd(q/2,b/2)) = \mathscr{D}_{\odd}(\gcd(q,b))$, allowing us to conclude
\[\#\mathsf{U}^{\delta_0 = b}_{q} = \sum_{d\in \mathscr{D}_{\odd}(\gcd(q,b))}\mu(d) \binom{q/2d}{b/2d}.\qedhere\]
\end{proof}

\begin{corollary}\label{cor: uni-bi-count-final}
One has $\displaystyle\#\widetilde{\mathsf{\bi}}(q-b,b)= \frac{1}{q}(\# \mathsf{P}^{\delta_0 = b}_{q} - \# \mathsf{U}^{\delta_0 = b}_{q})$, giving
\[\#\widetilde{\mathsf{\bi}}(q/2,q/2) = \begin{cases}
        \displaystyle\frac{1}{q}\left(\sum_{d \in \mathscr{D}_{\textnormal{co-}\even}(q)}\mu(d) \binom{q/d}{q/2d} + \sum_{d \in \mathscr{D}_{\textnormal{co-}\odd}(q)}\mu(d)\, 2^{q/d} - \sum_{d \in \mathscr{D}_{\odd}(q)}\mu(d)\, 2^{q/2d} \right) & \text{if $q \equiv 0 \bmod{4}$,}\\[1.5em]
        \displaystyle\frac{1}{q}\left(\sum_{d \in \mathscr{D}_{\textnormal{co-}\even}(q)}\mu(d) \binom{q/d}{q/2d} + \sum_{d \in \mathscr{D}_{\textnormal{co-}\odd}(q)}\mu(d)\, 2^{q/d}\right) & \text{if $q \equiv 2 \bmod{4}$;}
    \end{cases} \]
    and, for $b \neq q/2$,
    \[
\#\widetilde{\bi}(q-b,b) = \begin{cases} \displaystyle \frac{1}{q} \left(\sum_{d\in\mathscr{D}_{\textnormal{w.co-}\even}(\gcd(q,b))}\mu(d) \binom{q/d}{b/d} - \sum_{d \in \mathscr{D}_{\odd}(\gcd(q,b))} \mu(d)\, \binom{q/2d}{b/2d}\right) &\hbox{if $q\equiv 2 \bmod 4$ and $b$ is even} \\[1.5em]
\displaystyle \frac{1}{q} \sum_{d\in\mathscr{D}_{\textnormal{w.co-}\even}(\gcd(q,b))}\mu(d) \binom{q/d}{b/d} &\hbox{else.}
\end{cases}
\]
\end{corollary}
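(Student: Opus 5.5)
The statement to prove is \cref{cor: uni-bi-count-final}. The plan is to combine three ingredients that are already in place. First comes the orbit-counting identity $\#\widetilde{\bi}(q-b,b)=\frac{1}{q}\#\mathsf{B}^{\delta_0=b}_q$, established just before \cref{prop: uni-bi-prim=count}. That identity rests on three facts: by \cref{thm: structure of unitary BT1s} and \cref{rmk: don't rotate bicycles!}, isomorphism classes of unitary bicycles correspond to $\tau_\star$-orbits on primitive non-self-dual words; by \cref{rmk: bicycle signature}, the signature is read off as $b=\delta_0(w)$; and $\tau_\star$ acts freely with exact order $q$. The other two ingredients are the counts of $\#\mathsf{P}^{\delta_0=b}_q$ from \cref{prop: uni-bi-prim=count} and of $\#\mathsf{U}^{\delta_0=b}_q$ from \cref{prop: uni-bi-self-dual=count}. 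Since $\mathsf{B}^{\delta_0=b}_q=\mathsf{P}^{\delta_0=b}_q\setminus\mathsf{U}^{\delta_0=b}_q$ with $\mathsf{U}^{\delta_0=b}_q\subseteq \mathsf{P}^{\delta_0=b}_q$, the first displayed formula follows immediately. Before using the orbit count I would also check that $\delta_0$ is invariant under $\tau_\star$, so that the signature is well defined on orbits. Rotating by one shifts the parity of every index and dualizing flips every letter, so the two terms in $\delta_0$ exchange roles and the total is preserved.

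The remaining work is case bookkeeping. For $b=q/2$ I would substitute the first case of \cref{prop: uni-bi-prim=count}. When $q\equiv 0 \bmod 4$ and $b=q/2$, I subtract $\sum_{d\in\mathscr{D}_{\odd}(q)}\mu(d)2^{q/2d}$ from \cref{prop: uni-bi-self-dual=count}. When $q\equiv 2\bmod 4$ and $b=q/2$, the integer $b$ is odd, so $\#\mathsf{U}^{\delta_0=b}_q=0$ and nothing is subtracted. This gives the two displayed formulas for $\#\widetilde{\bi}(q/2,q/2)$.

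For $b\neq q/2$ I would use the second case of \cref{prop: uni-bi-prim=count}, namely the sum over $\mathscr{D}_{\textnormal{w.co-}\even}(\gcd(q,b))$. Here \cref{prop: uni-bi-self-dual=count} gives $\#\mathsf{U}^{\delta_0=b}_q=0$ unless $q\equiv 2 \bmod 4$ and $b$ is even, in which case we subtract $\sum_{d\in\mathscr{D}_{\odd}(\gcd(q,b))}\mu(d)\binom{q/2d}{b/2d}$. The cases are exhaustive: if $q\equiv 0\bmod 4$ and $b\ne q/2$, then $\mathsf{U}$ vanishes; if $q\equiv 2\bmod 4$ and $b$ is odd, $\mathsf{U}$ also vanishes. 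Together these are the ``else'' case, which matches the stated formula.

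The main subtlety, rather than any difficulty, is confirming that the case splits in the two propositions align. In particular, when $q\equiv 2\bmod 4$ the value $b=q/2$ is odd, so it falls into the vanishing case for $\mathsf{U}$. I would state this explicitly so that no self-dual correction is wrongly subtracted in parallel signature.
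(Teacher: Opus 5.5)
Your proposal is correct and follows essentially the same route as the paper. You take $\#\widetilde{\bi}(q-b,b)=\frac{1}{q}\bigl(\#\mathsf{P}^{\delta_0=b}_q-\#\mathsf{U}^{\delta_0=b}_q\bigr)$ from the $\tau_\star$-orbit count and substitute \cref{prop: uni-bi-prim=count,prop: uni-bi-self-dual=count}, noting that $b=q/2$ is odd when $q\equiv 2 \bmod 4$, so nothing is subtracted in parallel signature. Your additional check that $\delta_0$ is $\tau_\star$-invariant, so that the signature is well defined on orbits, is valid and is a detail the paper leaves implicit.
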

\begin{proof}
The expressions are obtained from the formulae for $\# \mathsf{P}^{\delta_0 = b}_{q}$ and $\# \mathsf{U}^{\delta_0 = b}_{q}$ in \cref{prop: uni-bi-prim=count,prop: uni-bi-self-dual=count} respectively. We justify the expression for $\#\widetilde{\mathsf{\bi}}(q/2,q/2)$. By \cref{prop: uni-bi-prim=count}, we have
    \[\# \mathsf{P}^{\delta_0 = q/2}_{q}=\sum_{d \in \mathscr{D}_{\textnormal{co-}\even}(q)}\mu(d) \binom{q/d}{q/2d} + \sum_{d \in \mathscr{D}_{\textnormal{co-}\odd}(q)}\mu(d)\, 2^{q/d};\]
    while, by \cref{prop: uni-bi-self-dual=count}, we have
    \[\# \mathsf{U}^{\delta_0 = q/2}_{q} = \begin{cases}
        \displaystyle\sum_{d \in \mathscr{D}_{\odd}(q)}\mu(d)\, 2^{q/2d} & \text{if $q \equiv 0 \bmod{4}$,}\\[0.25em]
        0 & \text{if $q \equiv 2 \bmod{4}$}.
    \end{cases}\]
    The latter expression for $q \equiv 2 \bmod{4}$ is obtained since $b = q/2$ is odd.

    Since $\displaystyle\#\widetilde{\mathsf{\bi}}(q/2,q/2)= \frac{1}{q}(\#\mathsf{P}^{\delta_0 = q/2}_{q} - \#\mathsf{U}^{\delta_0 = q/2}_{q})$, we obtain the given expression.
\end{proof}

\subsection{Some Formulae \& Tables of Values}

\begin{example}
    In this example, we record $\#\widetilde{\ind}(q-b,b)$ for $1 \leq b \leq 3$. We perform this computation by applying the formulae in \cref{cor: uni-uni-count-final,cor: uni-bi-count-final,prop: uni-serre-count}, along with the observation on which indecomposable types occur in which signature (\cref{rmk: sign-fix-indecomp}). 
    
    The outputs depend on some congruence conditions on $q$ with respect to $b$. Packaging the various possible cases using the floor function to, we obtain:
    \begin{align*}
        \#\widetilde{\ind}(q-1,1) &= 1, & \#\widetilde{\ind}(q-2,2) &= \left\lfloor \frac{q-1}{2} \right\rfloor,  & \#\widetilde{\ind}(q-3,3) &= \left\lfloor \frac{(q-1)(q-2)}{6} \right\rfloor.
    \end{align*}
\end{example}

Taking duals of words $w \mapsto w^\star$ induces a bijection $\widetilde{\ind}(q-b,b) \cong \widetilde{\ind}(b,q-b)$. We record in \cref{table:unitary-uni-bi-list,table:parallel-sig-list} the numerical value of $\#\widetilde{\ind}(q-b,b)$ for $2 \leq q \leq 20$ and $1 \leq b \leq q/2$. In the case $q = 2$, the only indecomposable unitary $\bt$ module occurs in signature $(1,1)$.

\begin{table}[h!]
\renewcommand{\arraystretch}{1.4}

\begin{footnotesize}
\centering
\begin{tabular}{|*{10}{>{\centering\arraybackslash}m{0.75cm}|}  } 
\hline
\diagbox[height=1.75\baselineskip]{$q$}{$b$} & $1$ & $2$ & $3$ & $4$ & $5$ & $6$ & $7$ & $8$ & $9$ \\
\hline
$3$ & $1$ & \none & \none & \none & \none & \none & \none & \none & \none 
 \\
 \hline
$4$ & $1$ & \none & \none & \none & \none &  \none & \none & \none & \none 
 \\
\hline
$5$ & $1$ & $2$ & \none & \none & \none & \none & \none & \none & \none 
 \\
 \hline
$6$ & $1$ & $2$ & \none & \none & \none &  \none & \none & \none & \none 
 \\
\hline
$7$ & $1$ & $3$ & $5$ & \none & \none & \none & \none & \none & \none 
 \\
\hline
$8$ & $1$ & $3$ & $7$ & \none & \none & \none & \none&  \none & \none 
 \\
\hline
$9$ & $1$ & $4$ & $9$ & $14$ & \none & \none & \none & \none & \none 
 \\
 \hline
 $10$ & $1$ & $4$ & $12$ & $20$ & \none& \none & \none  &  \none & \none 
 \\
\hline
$11$ & $1$ & $5$ & $15$ & $30$ & $42$ & \none & \none & \none & \none \\
 \hline
 $12$ & $1$ & $5$ & $18$ & $40$ & $66$ & \none & \none & \none & \none \\
\hline
$13$ & $1$ & $6$ & $22$ & $55$ & $99$ & $132$ & \none & \none & \none \\
 \hline
$14$ & $1$ & $6$ & $26$ & $70$ & $143$ & $212$ & \none &  \none & \none \\
\hline
$15$ & $1$ & $7$ & $30$ & $91$ & $200$ & $333$ & $429$ & \none & \none  \\
\hline
$16$ & $1$ & $7$ & $35$ & $112$ & $273$ & $497$ & $715$ & \none & \none  \\
\hline
$17$ & $1$ & $8$ & $40$ & $140$ & $364$ & $728$ & $1144$ & $1430$ & \none \\
 \hline
 $18$ & $1$ & $8$ & $45$ & $168$ & $476$ & $1026$ & $1768$ & $2424$& \none   \\
\hline
$19$ & $1$ & $9$ & $51$ & $204$ & $612$ & $1428$ & $2652$ & $3978$ & $4862$  \\
 \hline
$20$ & $1$ & $9$ & $57$ & $240$ & $775$ & $1932$ & $3876$ & $6288$ & $8398$ \\
\hline
\end{tabular}
\end{footnotesize}
\caption{$\#\widetilde{\ind}(q-b,b)$ for $3 \leq q \leq 20$ and $1 \leq b < q/2$.}
\label{table:unitary-uni-bi-list}
\end{table}

By \cref{rmk: sign-fix-indecomp}, the indecomposable types captured in $\widetilde{\ind}(q-b,b)$ for $b < q/2$ are unitary unicycles, when $q$ is odd, and unitary bicycles, when $q$ is even.

The indecomposable types captured in $\widetilde{\ind}(q/2,q/2)$ are unitary bicycles, Serre unicycles (when $q/2$ is even) and Serre bicycles (when $q/2$ is odd).

\begin{table}[h!]
\renewcommand{\arraystretch}{1.4}
\begin{footnotesize}
\begin{center}
\begin{tabular}{|>{\centering\arraybackslash}m{2cm}|*{10}{>{\centering\arraybackslash}m{1cm}|}}

\hline
$(q/2,q/2)$ & $(1,1)$ & $(2,2)$ & $(3,3)$ & $(4,4)$ & $(5,5)$ & $(6,6)$ & $(7,7)$ & $(8,8)$ & $(9,9)$ & $(10,10)$ \\
 \hline
\emph{Unitary Bicycles} &0 & 0 &  2 &  6 &  22 &  70 &  236 &  784 &  2672 & 9174\\
\emph{Serre Unicycles} & 1 &  0 & 1 & 0 & 3 & 0 & 9 & 0 & 28 & 0 \\
\emph{Serre Bicycles} &0 & 1 & 0 & 2 & 0 & 5 & 0 & 16 & 0 & 51\\
\hline
Total & 1 & 1 & 3 & 8 & 25 & 75 & 245 & 800 & 2700 & 9225\\
 \hline
\end{tabular}
\end{center}
\end{footnotesize}
\caption{$\#\widetilde{\ind}(q/2,q/2)$ for even $q$ where $1 \leq q/2 \leq 10$.}
\label{table:parallel-sig-list}
\end{table}

\bibliographystyle{alpha}
\bibliography{citations}

\end{document}